\theoremstyle{plain}
\newtheorem*{theorem*}{Theorem}
\newtheorem*{remark*}{Remark}
\newtheorem*{example*}{Example}
\newtheorem{lemma}{Lemma}[subsection]
\newtheorem{proposition}[lemma]{Proposition}
\newtheorem{corollary}[lemma]{Corollary}
\newtheorem{theorem}[lemma]{Theorem}
\newtheorem*{conjecture*}{Conjecture}
\newtheorem{sublemma}[lemma]{Sublemma}
\newtheorem{introtheorem}{Theorem}
\newtheorem{introcorollary}[introtheorem]{Corollary}
\theoremstyle{definition}
\newtheorem{definition}[lemma]{Definition}
\newtheorem{example}[lemma]{Example}
\theoremstyle{remark}
\newtheorem{remark}[lemma]{Remark}
\newtheorem{notation}[lemma]{Notation}
\newcommand{\Hom}{\operatorname{Hom}}
\newcommand{\triv}{{\mathbbm 1}}
\newcommand{\Ind}{\operatorname{Ind}}
\newcommand{\Coind}{\operatorname{Coind}}
\newcommand{\id}{\operatorname{Id}}
\renewcommand{\Im}{\operatorname{Im}}
\newcommand{\Ker}{\operatorname{Ker}}
\newcommand{\Ext}{\operatorname{Ext}}
\newcommand{\Aut}{{\operatorname{Aut}}}
\newcommand{\End}{\operatorname{End}}
\newcommand{\C}{{\mathbb C}}
\newcommand{\Z}{{\mathbb Z}}
\newcommand{\g}{{\mathfrak g}}
\newcommand{\eps}{{\varepsilon}}
\newcommand{\lam}{{\lambda}}
\newcommand{\T}{{\mathcal{T}}}
\newcommand{\gl}{{\mathfrak{gl}}}
\newcommand{\ad}{\operatorname{ad}}
\newcommand{\abs}[1]{\left|{#1}\right|}
\newcommand{\p}{\mathfrak{p}}
\renewcommand{\O}{\mathcal{O}}
\newcommand{\bV}{\mathbf{V}}
\newcommand{\bT}{\mathbf{T}}
\newcommand{\ex}{\mathit{ex}} 
\newcommand{\faith}{\mathit{faith}} 
\newcommand{\Fun}{\mathrm{Fun}}
\newcommand{\Mod}{\operatorname{Mod}}
\newcommand{\Vect}{\mathtt{Vect}}
\newcommand{\sVect}{\mathtt{sVect}}
\newcommand{\PP}{{\mathfrak P}}
\newcommand{\Rep}{\mathrm{Rep}}
\newcommand{\urep}{\Rep(\underline{P})}
\newcommand{\urepk}[1]{\Rep^{#1}(\underline{P})}
\newcommand{\s}{\mathtt{s}}
\newcommand{\bdel}{\pmb{\Delta}}
\newcommand{\bnab}{\pmb{\nabla}}
\renewcommand{\L}{\pmb{L}}
\newcommand{\Alg}{\mathrm{Alg}}
\newcommand{\Grps}{\mathrm{Grps}}
\newcommand{\InnaA}[1]{{#1}}
\newcommand{\InnaB}[1]{{#1}}
\newcommand{\InnaC}[1]{{#1}}
\newcommand{\InnaD}[1]{{#1}}
\newcommand{\VeraC}[1]{{#1}}
\newcommand{\comment}[1]
\def\quotient#1#2{%
    \raise1ex\hbox{$#1$}\Big/\lower1ex\hbox{$#2$}%
}
\begin{document}

\date{\today}
\title{Deligne categories and the periplectic Lie superalgebra}
 \author{Inna Entova-Aizenbud, Vera Serganova}
\address{Inna Entova-Aizenbud, Dept. of Mathematics, Ben Gurion University,
Beer-Sheva,
Israel.}
\email{entova@bgu.ac.il}
\address{Vera Serganova, Dept. of Mathematics, University of California at
Berkeley,
Berkeley, CA 94720.}
\email{serganov@math.berkeley.edu}
\keywords{Deligne categories; periplectic Lie superalgebra; tensor categories; stabilization in representation theory; Duflo-Serganova functor.}
\subjclass[2010]{17B99; 18D10, 17B65}
\begin{abstract}
We study stabilization of finite-dimensional representations of the periplectic
Lie superalgebras $\p(n)$ as $n \to \infty$.

The paper gives a construction of the tensor category $\urep$, possessing nice
universal properties among tensor categories over the category $\sVect$ of
finite-dimensional complex vector superspaces.

First, it is the ``abelian envelope'' of the Deligne category corresponding to
the periplectic Lie superalgebra, in the sense of \cite{EHS}.

Secondly, given a tensor category $\mathcal{C}$ over $\sVect$, exact tensor
functors $\urep\rightarrow \mathcal{C}$ classify pairs $(X, \omega)$ in
$\mathcal{C}$ where $\omega: X \otimes X \to \Pi \triv$ is a non-degenerate
symmetric form and $X$ not annihilated by any Schur functor.

The category $\urep$ is constructed in two ways. The first construction is
through an explicit limit of the tensor categories $\Rep(\p(n))$ ($n\geq 1$)
under Duflo-Serganova functors. The second construction (inspired by P.
Etingof) \InnaB{describes $\urep$} as the category of representations of a periplectic Lie supergroup in the Deligne category $\sVect \boxtimes \Rep(\underline{GL}_t)$.

An upcoming paper will give results on the abelian and tensor structure of
$\urep$.
\end{abstract}

\maketitle
\setcounter{tocdepth}{1}
\tableofcontents

\section{Introduction}
\subsection{}
The (complex) periplectic Lie superalgebra $\p(V)$ is the Lie superalgebra of
endomorphisms of a complex vector superspace $V$ possessing a non-degenerate
symmetric form $\omega: V \otimes V \to \Pi \C \cong \C^{0|1}$, where $\Pi$
denotes the \InnaB{parity} shift on vector superspaces (this is also referred to as an
``odd form''). An example of such superalgebra is $\p(n)= \p(\C^{n|n})$, where
$\omega_n: \C^{n|n} \otimes \C^{n|n} \to \C^{0|1}$ pairing the even and odd
parts of the vector superspace $\C^{n|n}$.

The periplectic Lie superalgebras
possess an interesting non-semisimple representation theory; some results
on the category $\Rep(\p(n))$ of finite-dimensional integrable representations
of $\p(n)$ can be found in \cite{BDE+, Chen, DLZ, Gor, Moon, Ser}.

In this paper, we propose studying the stabilization properties of
finite-dimensional
integrable representations of $\p(n)$ as $n$ grows large by considering a
certain ``limit category'' $\urep$. This category is also important in its own
right, possessing nice universal properties.
\subsection{Deligne categories}

The inspiration for the construction of $\urep$ comes from the theory of Deligne categories. In \cite{DM}, Deligne and Milne constructed a family of categories\footnote{Denoted also by $\Rep(GL_t), \mathcal{D}_t$.} $Tilt(\underline{GL}_t)$, parameterized by $t \in \mathbb{C}$.

These are Karoubian additive rigid SM categories. Among such categories, $Tilt(\underline{GL}_t)$ is the universal category generated by a dualizable object $V$ of categorical dimension $t$, and can be considered an interpolation of the family of categories $\Rep(GL_n(\C))$, $n\geq 0$.
The categories $Tilt(\underline{GL}_t)$ are called Deligne categories for the general linear group.

For $t$ non-integer, these are semisimple tensor categories; for $t\in\Z$, the category $Tilt(\underline{GL}_t)$ is Karoubian but not abelian. An abelian envelope $\Rep(\underline{GL}_t)$ of $Tilt(\underline{GL}_t)$, possessing nice universal properties, was constructed in \cite{EHS}.

An important feature of the category $\Rep(\underline{GL}_t)$ (and of $Tilt(\underline{GL}_t)$, for $t \notin \Z$) is that it an example of a non-Tannakian tensor category: namely, it is not equivalent to the category of representations of any affine algebraic group, \InnaB{nor} supergroup. More examples of non-Tannakian categories of this form were constructed in \cite{D, Et}.

\subsection{Periplectic Deligne category}
In the periplectic case, a Karoubian additive Deligne category $\PP$ was constructed in \cite{CoulEhr,KT}. This is a Karoubian additive rigid SM category, and is a module category over $\sVect$ (hence endowed with an endofunctor $\Pi$ such that $\Pi^2 \cong \id$).

Among such categories, it is the universal category generated by a dualizable object $\tilde{V}$ and a non-degenerate symmetric form $\omega_{\tilde{V}}: \tilde{V} \otimes \tilde{V} \to \Pi \triv$.

The Karoubian structure of the category $\PP$ was studied in \cite{CoulEhr}, and its tensor ideals were classified in \cite{Coul}.

\begin{remark}
 The endomorphism algebras $$\End^{\bullet}_{\PP}(\tilde{V}^{\otimes k}):= \End^{\bullet}_{\PP}(\tilde{V}^{\otimes k}) \oplus \Hom_{\PP}(\tilde{V}^{\otimes k}, \Pi \tilde{V}^{\otimes k})$$ are known as the periplectic (or marked) Brauer algebras, and have been studied in \cite{BDE+2, ChenPeng, CoulBrauer, CoulEhr2, KT, Tharp}.
\end{remark}

\begin{remark}
 Unlike the general linear case, here we do not have a parameter $t$: the categorical dimension of $\tilde{V}$ is automatically zero, since $\tilde{V} \cong \Pi \tilde{V}^*$ and hence $\dim \tilde{V} = -\dim \tilde{V}$.
\end{remark}

\subsection{Construction of the category $\urep$}
Let $\sVect$ denote the symmetric monoidal
category of finite-dimensional vector superspaces with \InnaB{parity}-preserving maps.

In what follows, we work in the $2$-category of module categories over
$\sVect$. The bottom line of this is that all the categories we consider are
endowed with an autoequivalence called the \InnaB{parity} shift and denoted by $\Pi$,
and the functors we consider respect intertwine with the \InnaB{parity} shifts. In
particular, any symmetric monoidal structure on the categories considered below
respects the $\sVect$-module structure.

We construct a tensor category $\urep$ as a certain limit of categories $\Rep(\p(n))$ with respect to the Duflo-Serganova functors $DS_x: \Rep(\p(n)) \to \Rep(\p(n-2))$. Such functors are defined as follows (see \cite{DS}).

Given a Lie superalgebra $\g$, let $x\in\g$ be an odd element satisfying $[x,x]=0$. Let $\g_x:=\Ker\ad_x/\Im \ad_x$. This is again a Lie superalgebra, and we can define
a functor $$DS_x: \Rep(\g) \longrightarrow \Rep(\g_x), \;\; M \longmapsto M_x:=\quotient{\Ker x}{\Im x}.$$ It is easy to see that this functor is symmetric monoidal. In our case, taking $\g = \p(n)$ and $x$ of rank $2$, we obtain $\g_x \cong \p(n-2)$.

The functors $DS_x$ are in general not exact, but turn out to be exact on certain subcategories of $\Rep(\p(n))$, allowing us to consider a limit of $\Rep(\p(n))$ over $n$.

As in \cite{EHS}, the obtained category $\urep$ is not Tannakian: that is, it is not equivalent to the category of representations of any supergroup. Furthermore, it is an interesting example of a tensor $\sVect$-module category which is not split, in the sense that we cannot present $\urep$ as $\sVect \boxtimes_{\Vect} \T$ for any tensor category $\T$.

The category $\urep$ is a lower highest weight category (see Definition \ref{def:lower_hw_cat}): namely, we have a filtration $\urep = \bigcup_{k\geq 0} \urepk{k}$ by full highest weight subcategories,  whose standard,
costandard and tilting objects play the same role in each
$\urepk{k'}$ for $k' \geq k$. The full subcategory of tilting objects in $\urep$ is then precisely $\PP$. The highest weight structure of $\urep$ is described in Section \ref{sec:properties}, and will be further investigated in an upcoming paper.

\subsection{Universal property}
Let us now describe the universal properties of $\urep$ proved in this paper.

Let $\T$ be \InnaD{a tensor category (more generally, an abelian symmetric monoidal category with
biexact bilinear bifunctor
$\otimes$ and a simple unit object $\triv$)}.

Our first theorem establishes that $\urep$ is universal among
\InnaD{tensor categories generated by an} object with a
non-degenerate symmetric odd form (see Theorems \ref{thrm:uni-1} and
\ref{thrm:uni-2}):
\begin{introtheorem}\label{introthrm:uni-1}
Let $X \in \T$ be a dualizable
object with
a
non-degenerate symmetric form $$\omega_X:
X
\otimes X \to
\Pi \triv.$$ Assume $X$ is not annihilated by any Schur
functor. There exists an essentially unique exact SM functor $\urep
\longrightarrow \T$ carrying $\bV$ to $X$ and $\omega_{\bV}$ to
$\omega_X$.

\end{introtheorem}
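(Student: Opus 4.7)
The plan is to combine the universal property of the Karoubian Deligne category $\PP$ with the fact, proved in this paper, that $\urep$ is the abelian envelope of $\PP$ in the sense of \cite{EHS}.

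First, the universal property of $\PP$ recalled above (due to \cite{CoulEhr,KT}) applied to the datum $(X,\omega_X)$ produces an essentially unique additive $\sVect$-linear rigid SM functor
\[
F \colon \PP \longrightarrow \T, \qquad \tilde V \longmapsto X, \qquad \omega_{\tilde V} \longmapsto \omega_X.
\]
Under the canonical embedding $\PP \hookrightarrow \urep$ (whose image is precisely the subcategory of tilting objects), the task becomes to extend $F$ to an exact SM functor $\widetilde F \colon \urep \to \T$.

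The main step is to verify that $F$ factors through the abelian envelope. The universal property of the abelian envelope, modeled on \cite{EHS}, takes the form: an SM functor from $\PP$ to a tensor category extends (uniquely) to an exact SM functor from $\urep$ if and only if it does not annihilate any nonzero object of $\PP$. The Schur-functor hypothesis enters decisively here. Every indecomposable summand of a tensor power $\tilde V^{\otimes k}$ is cut out by a primitive idempotent in $\End_{\PP}(\tilde V^{\otimes k})$, and these idempotents are precisely what Schur functors evaluate on the universal object. The assumption that no Schur functor annihilates $X$ therefore means that no such primitive idempotent is killed by $F$; combined with the classification of tensor ideals of $\PP$ from \cite{Coul}, this forces the kernel of $F$ (as a tensor ideal) to be trivial, so $F$ does factor as desired. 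This equivalence between the Schur-functor condition and non-vanishing on tensor ideals is the main technical bookkeeping step and the place I expect the real work to be.

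Exactness of $\widetilde F$ is part of the abelian envelope package. For essential uniqueness, any exact SM functor $\urep \to \T$ extending $F$ is forced to coincide with $\widetilde F$ on $\PP$, i.e.\ on tilting objects. Since $\urep$ is a lower highest weight category whose tilting subcategory is $\PP$, every object admits a tilting (co)resolution, and exactness propagates the isomorphism to all of $\urep$, yielding uniqueness up to natural isomorphism.
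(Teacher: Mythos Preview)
Your overall strategy is right---use the universal property of $\PP$ to get $F_X:\PP\to\T$, then invoke the abelian-envelope property (Theorem~\ref{thrm:uni-1}) to extend---but there is a genuine gap in the faithfulness step, and the criterion is slightly mis-stated. The extension criterion from \cite{EHS} (Theorem~\ref{old_thm:uni-2} here) is that $F_X$ be \emph{faithful}, not merely that it annihilate no object; you do eventually argue for a trivial kernel tensor ideal, which is the same thing, so this is mostly terminological. The real problem is the sentence identifying primitive idempotents of $\End_{\PP}(\tilde V^{\otimes k})$ with ``what Schur functors evaluate on.'' Schur functors correspond to Young symmetrizers, i.e.\ idempotents in $\C[S_k]$, which is a proper subalgebra of the periplectic Brauer algebra; the indecomposable summands of $\tilde V^{\otimes k}$ in $\PP$ are strictly finer than the $S^{\lambda}\tilde V$. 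So ``no Schur functor kills $X$'' does \emph{not} directly say ``no indecomposable of $\PP$ is killed.'' Your route via \cite{Coul} can be repaired: the proper tensor ideals of $\PP$ are the kernels $J_n=\ker(I_n:\PP\to\Rep(\p(n)))$, and each $J_n$ contains $\id_{S^{\lambda}\tilde V}$ for any $\lambda$ not fitting in an $(n,n)$-hook (since $S^{\lambda}(\C^{n|n})=0$ there). Thus if $\ker F_X\supset J_n$ for some $n$, some $S^{\lambda}X$ vanishes, contradicting the hypothesis; hence $\ker F_X=0$.

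By contrast, the paper does not appeal to \cite{Coul} at all. It proves faithfulness of $F_X$ by a direct diagrammatic argument in the signed Brauer algebra (the Lemma preceding the proof of Theorem~\ref{thrm:uni-2}): the Schur-functor hypothesis is exactly the statement that $F_X$ is injective on $\C[S_r]\subset\End_{\PP}(\tilde V^{\otimes r})$, and a minimal-counterexample argument then shows any killed linear combination of Brauer diagrams must have all its cups and caps in the same positions, hence factor as (permutation)~$\otimes$~(pure cup-cap), neither of which can be sent to zero. Your approach is more conceptual but outsources the work to the tensor-ideal classification; the paper's is self-contained. For uniqueness, note that it is already packaged into the equivalence $\Fun^{\ex}(\urep,\mathcal A)\simeq\Fun^{\faith}(\PP,\mathcal A)$; the ``tilting resolution'' heuristic you give is essentially Condition~\eqref{cond:uni2} (every object is the image of a map between tiltings), which is how the EHS machinery propagates the functor off $\PP$.
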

This result is proved using the explicit construction of $\urep$ given in
Section \ref{sec:Rep_p} and the techniques developed in \cite{EHS}.

Furthermore, we show that the collection of categories $\langle
\urep, \Rep(\p(n))\rangle_{n\geq 1}$ forms the ``abelian
envelope'' of the Karoubian Deligne category $\PP$, in the sense of
\cite{D, EHS}:
\begin{introtheorem}\label{introthrm:uni-2}
 Let $X \in \T$  be a dualizable object with
a
non-degenerate symmetric form $$\omega_X:
X
\otimes X \to
\Pi \triv.$$ Consider the canonical SM functor
$F_X:
\PP \longrightarrow \T$ sending the generator
$\tilde{V}$ of $\PP$ to $X$ and the form $\omega_{\tilde{V}}$ on $\tilde{V}$ to
$\omega_X$.
\begin{enumerate}
 \item If $X$ is not annihilated by any Schur functor then $F_X$
factors through the embedding $I:\PP \to\urep$ and gives
rise
to a faithful exact symmetric
monoidal functor $$\mathbf{F}_X:\urep \longrightarrow \T,
\;\;\;
\bV \longmapsto X, \omega_{\bV} \to \omega_X. $$

 \item If $X$ is annihilated by some Schur functor then there exists a unique
$n
\in \Z_+$ such that $F_X$ factors through the symmetric monoidal
functor
$\PP \longrightarrow \Rep(\p(n))$ and gives rise to a faithful exact symmetric
monoidal
functor
 $$ \mathbf{F}_X:\Rep(\p(n)) \longrightarrow \T, \;\;\;\C^{n|n} \longmapsto
X$$ with the canonical form on $\C^{n|n}$ sent to $\omega_X$.
\end{enumerate}
\end{introtheorem}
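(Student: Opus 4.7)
The plan is to deduce Theorem \ref{introthrm:uni-2} from Theorem \ref{introthrm:uni-1}, the already-established universal property of the Karoubian Deligne category $\PP$ (see \cite{CoulEhr,KT}), and Coulembier's classification of the thick tensor ideals of $\PP$ (see \cite{Coul}). Throughout, I use that $I:\PP\hookrightarrow\urep$ realizes $\PP$ as the full subcategory of tilting objects of $\urep$, so that any SM functor out of $\urep$ restricts to a canonical SM functor out of $\PP$.

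\emph{Part (1).} Given $X$ not annihilated by any Schur functor, Theorem \ref{introthrm:uni-1} supplies an essentially unique exact SM functor $\mathbf{F}_X:\urep\to\T$ carrying $\bV\mapsto X$ and $\omega_{\bV}\mapsto\omega_X$. The composition $\mathbf{F}_X\circ I:\PP\to\T$ is then a Karoubian SM functor with $\tilde V\mapsto X$ and $\omega_{\tilde V}\mapsto\omega_X$, so by the universal property of $\PP$ it coincides with $F_X$ up to a unique monoidal natural isomorphism, giving the claimed factorization. For faithfulness, let $\mathcal{K}\subseteq\urep$ be the thick tensor ideal of objects killed by $\mathbf{F}_X$. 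Assuming $\mathcal{K}\neq 0$, I would use the filtration $\urep=\bigcup_{k}\urepk{k}$ and the highest weight structure described in Section \ref{sec:properties} to extract from $\mathcal{K}$ a direct summand of some tensor power $\bV^{\otimes m}$ cut out by a nonzero Schur idempotent; its vanishing under $\mathbf{F}_X$ forces the corresponding Schur functor to annihilate $X$, contradicting the hypothesis and yielding $\mathcal{K}=0$.

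\emph{Part (2).} Assume $X$ is annihilated by some Schur functor $S_\lambda$. Then the associated idempotent $e_\lambda\in\End_{\PP}(\tilde V^{\otimes |\lambda|})$ maps to zero under $F_X$, so $\ker F_X$ is a nonzero proper thick tensor ideal of $\PP$ (proper, because $F_X(\triv)=\triv\neq 0$). By the classification of \cite{Coul}, such ideals form a chain indexed by $n\in\Z_+$, whose $n$-th term is precisely the kernel of the canonical quotient $\pi_n:\PP\to\Rep(\p(n))$. Let $n$ be the unique integer with $\ker F_X=\ker\pi_n$; the factorization $F_X=\mathbf{F}_X\circ\pi_n$ then yields an SM functor $\mathbf{F}_X:\Rep(\p(n))\to\T$ which is faithful by construction and sends the image of $\tilde V$ --- namely $\C^{n|n}$ with its canonical odd form --- to $(X,\omega_X)$. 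Exactness of $\mathbf{F}_X$ then follows by a standard argument using the rigidity of $\Rep(\p(n))$ and the fact that $\mathbf{F}_X$ is already determined (and behaves well) on the tilting generators coming from $\PP$.

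\emph{Main obstacle.} The substantive step is the faithfulness assertion in part (1): ruling out every nonzero thick tensor ideal of $\urep$ requires a careful use of the highest weight description together with the Duflo--Serganova limit presentation of $\urep$, in order to convert any such ideal into a Schur functor relation on $X$. By contrast, part (2) is largely formal once Coulembier's classification is available; the only residual care there is to match the indexing of the classified ideals with the quotient functors $\pi_n$ so that the image of $\tilde V$ is identified with the natural representation $\C^{n|n}$, and to check uniqueness of $n$ by invoking pairwise distinctness of the classified ideals.
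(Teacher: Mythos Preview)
Your Part (1) is essentially the paper's argument, but you are working too hard on faithfulness: once Theorem \ref{introthrm:uni-1} hands you an exact SM functor $\mathbf{F}_X:\urep\to\T$ between tensor categories, faithfulness is automatic (cf.\ the remark in Section \ref{ssec:notn:SM} that any tensor functor is faithful). Your proposed thick-ideal argument is therefore unnecessary; it is also not obviously correct as sketched, since a nonzero object of $\urep$ killed by $\mathbf{F}_X$ need not contain a Schur summand of $\bV^{\otimes m}$.

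Part (2) has a genuine gap, and your route diverges from the paper's. From $\ker F_X=\ker I_n$ you conclude that $F_X$ factors through $I_n:\PP\to\Rep(\p(n))$, but the image of $I_n$ is only the Karoubian subcategory of direct summands of tensor powers of $V_n$ (the projectives/tiltings), not all of $\Rep(\p(n))$. So what you actually obtain is a faithful SM functor on this Karoubian subcategory; producing an \emph{exact} SM functor $\Rep(\p(n))\to\T$ extending it is precisely the nontrivial step, and it is not a ``standard argument using rigidity''. You would need to invoke an EHS-type extension theorem (as in Theorem \ref{old_thm:uni-2}) with $\mathcal{D}$ the projectives of $\Rep(\p(n))$ and verify its hypotheses there---in particular the splitting-of-epimorphisms condition, which requires knowing that projectives in $\Rep(\p(n))$ form a tensor ideal. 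This is doable, but it is real work you have not done.

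The paper avoids this entirely: it uses Tannakian formalism. Since $X$ is annihilated by a Schur functor, so is every object of $\Rep(P(X))$; by Deligne's theorem $\Rep(P(X))$ admits a super-fiber functor $\Psi$ to $\sVect$. Then $\Psi(X)\cong V_n$ for a unique $n$ (as $\dim X=0$, $X\neq 0$), and a group-scheme argument identifies $\Rep(P(X))$ with $\Rep(\p(n))$. This yields the exact SM functor $\Rep(\p(n))\to\T$ directly, without any Karoubian-to-abelian extension step and without appealing to \cite{Coul}. Your approach via the tensor-ideal classification is a reasonable alternative strategy, but it is not complete as written: you must either supply the EHS-extension argument for $\Rep(\p(n))$, or switch to the Tannakian route.
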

This result is stated in stronger form in Theorem \ref{thrm:uni-2}, and is
proved in Section \ref{sec:uni-2} using previous results on extension of
functors from $\PP$ to $\urep$ (see Theorem \ref{thrm:uni-1}) and Tannakian
formalism in tensor categories, discussed in Section \ref{sec:uni-2} and
Appendix \ref{app:affine_grp_schemes}.

As a corollary, we obtain an alternative construction of $\urep$, inspired by
\cite{Et}. It is much shorter and more compact than the first
construction of $\urep$, although not as explicit.

For any $t \in \C$, consider the tensor Deligne category
$\Rep(\underline{GL}_t)$ constructed in \cite{EHS}\footnote{In \cite{EHS}, this
category is denoted by $\mathcal{V}_t$; \InnaD{here we consider a ``doubled'' version $\Rep(\underline{GL}_t) = \sVect \otimes \mathcal{V}_t$ which is a module category over $\sVect$}.}. We denote its $t$-dimensional
generator by $\left(X_t\right)_{\bar 0}$.

Let $\Rep(\underline{GL}_t)$ be the corresponding module category
over $\sVect$, and consider the object $$X_t := \left(X_t\right)_{\bar 0} \oplus \Pi
\left(X_t\right)^*_{\bar 0}$$ in $ \Rep(\underline{GL}_t)$.

The object $X_t$ is not annihilated by any Schur functor, and comes equipped
with a canonical non-degenerate symmetric form $\omega_{X_t}: X_t \otimes X_t
\to \Pi \triv$.

By Theorem
\ref{introthrm:uni-2}, we obtain an exact symmetric monoidal functor
$$\mathbf{F}:\urep \longrightarrow \Rep(\underline{GL}_t),
\;\;\;
\bV \longmapsto X_t, \;\;\omega_{\bV} \to \omega_{X_t}. $$

\InnaC{Let $P(X_t)$ be the group scheme in $\Rep(\underline{GL}_t)$ preserving the form $\omega_{X_t}$: namely, we consider the universal commutative Hopf algebra object $\O(P(X_t)) \subset Sym(X_t \oplus \Pi X_t)$ in $Ind-\Rep(\underline{GL}_t)$ whose action on $X_t$ preserves $\omega_{X_t}$. We then have: }
\begin{introcorollary}
The functor $\mathbf{F}$ induces an equivalence of tensor categories $$\urep
\to \Rep(P(X_t)).$$
\end{introcorollary}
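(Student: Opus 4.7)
The plan is to lift $\mathbf{F}$ to an exact symmetric monoidal functor $\tilde{\mathbf{F}}:\urep\to\Rep(P(X_t))$ and then verify that $\tilde{\mathbf{F}}$ is an equivalence via a Tannakian-style comparison. The tautological object $X_t\in\Rep(P(X_t))$ is dualizable, equipped with the non-degenerate symmetric odd form $\omega_{X_t}$, and not annihilated by any Schur functor---all three properties descend through the faithful exact forgetful functor $\Rep(P(X_t))\to\Rep(\underline{GL}_t)$. Applying Theorem~\ref{introthrm:uni-1} with $\mathcal{T}=\Rep(P(X_t))$ and $X=X_t$ gives an essentially unique exact symmetric monoidal $\tilde{\mathbf{F}}$ carrying $(\bV,\omega_{\bV})$ to $(X_t,\omega_{X_t})$, and its composition with the forgetful functor is $\mathbf{F}$. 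Faithfulness of $\tilde{\mathbf{F}}$ is automatic (as $\mathbf{F}$ is faithful by Theorem~\ref{introthrm:uni-2}), and essential surjectivity will follow from fullness because $\Rep(P(X_t))$ is generated, under subquotients, duals and tensor products, by $X_t=\tilde{\mathbf{F}}(\bV)$.

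For fullness, invoke the relative Tannakian reconstruction from Appendix~\ref{app:affine_grp_schemes}: the faithful exact symmetric monoidal functor $\mathbf{F}$ produces an affine group scheme $G:=\underline{\Aut}^{\otimes}(\mathbf{F})$ in $\Rep(\underline{GL}_t)$ together with an equivalence $\urep\simeq\Rep(G)$ identifying $\mathbf{F}$ with the forgetful functor. Since $\mathbf{F}$ sends $\omega_{\bV}$ to $\omega_{X_t}$, the tautological $G$-action on $X_t=\mathbf{F}(\bV)$ preserves $\omega_{X_t}$, yielding a canonical morphism $G\to P(X_t)$. Because $\bV$ generates $\urep$ as a tensor category---its tilting subcategory is $\PP$, which is monoidally generated by $\bV$---the Hopf algebra $\O(G)$ is generated by the matrix coefficients of the $G$-action on $X_t$, so $G\hookrightarrow P(X_t)$ is a closed immersion; equivalently, the induced map $\O(P(X_t))\twoheadrightarrow\O(G)$ is surjective.

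The final step is to upgrade this to equality by showing that $\O(G)$ itself satisfies the universal property defining $\O(P(X_t))$. Given any commutative Hopf algebra $H$ in $\Rep(\underline{GL}_t)$ with a coaction on $X_t$ preserving $\omega_{X_t}$, its comodule category $\Rep(H)$ is a tensor category over $\Rep(\underline{GL}_t)$ in which the tautological $X_t$ is dualizable, carries the form $\omega_{X_t}$, and remains Schur-generic (via the forgetful functor to $\Rep(\underline{GL}_t)$). Theorem~\ref{introthrm:uni-1} then supplies an essentially unique exact symmetric monoidal functor $\urep\to\Rep(H)$ sending $(\bV,\omega_{\bV})$ to $(X_t,\omega_{X_t})$, and by Tannakian functoriality this translates into a unique Hopf algebra homomorphism $\O(G)\to H$ compatible with the coactions on $X_t$. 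Hence $\O(G)$ enjoys the defining universal property of $\O(P(X_t))$; combined with the surjection above, this forces $G\cong P(X_t)$, and $\tilde{\mathbf{F}}$ is an equivalence. The principal obstacle is the careful execution of the relative Tannakian reconstruction in the $\sVect$-module setting---constructing $G=\underline{\Aut}^{\otimes}(\mathbf{F})$ in $\Rep(\underline{GL}_t)$ (rather than in $\Vect$) and converting exact symmetric monoidal functors out of $\urep$ into Hopf algebra maps out of $\O(G)$---which is precisely the content of Appendix~\ref{app:affine_grp_schemes}.
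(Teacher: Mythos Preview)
Your overall strategy---Tannakian reconstruction applied to $\mathbf{F}$---is the right one, and is what the paper does via Lemma~\ref{lem:funct} and Proposition~\ref{prop:urep_fund_grp}. However, your execution has a genuine gap: the asserted homomorphism $G\to P(X_t)$ does not exist.

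The Tannaka group $G=\underline{\Aut}^{\otimes}(\mathbf{F})=\mathbf{F}(\pi(\urep))$ acts on \emph{every} object $\mathbf{F}(M)$, including $\mathbf{F}(\Pi\triv)=\Pi\triv$, and the action there is through the sign character $G\to\mu_2$ (this is exactly the map $q$ in Proposition~\ref{prop:urep_fund_grp}, which is surjective). The form $\omega_{X_t}:X_t\otimes X_t\to\Pi\triv$ is $G$-equivariant, but with $G$ acting \emph{nontrivially} on the target; an element $g\in G$ mapping to $-1\in\mu_2$ acts on $X_t$ as an anti-isometry, not an isometry, so its action does not lie in $P(X_t)$. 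Thus $G$ is not a subgroup of $P(X_t)$: rather, $G$ sits in an extension $1\to P(X_t)\to G\to\mu_2\to 1$ (Proposition~\ref{prop:urep_fund_grp} plus Lemma~\ref{lem:P_functoriality}). Your claim that $\bV$ generates $\urep$ is correct, but it only yields $G\hookrightarrow GL(X_t)$, not $G\hookrightarrow P(X_t)$.

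The same issue undermines your final universal-property step: a tensor $\sVect$-functor $\urep\to\Rep(H)$ corresponds, under Deligne reconstruction, to a homomorphism from the fundamental group of $\Rep(H)$---itself a $\mu_2$-extension of $\operatorname{Spec}(H)$---into $G$, compatible with the maps from $\pi(\Rep(\underline{GL}_t))$. Passing to kernels of the $\mu_2$-quotients recovers only the tautological map $\operatorname{Spec}(H)\to P(X_t)$, not a map $\operatorname{Spec}(H)\to G$; so you have not shown that $\mathcal{O}(G)$ satisfies the universal property of $\mathcal{O}(P(X_t))$. The paper's Lemma~\ref{lem:funct} circumvents this by comparing the two $\mu_2$-extensions (of $\pi(\urep)$ and of $\pi(\Rep(P(X_t)))$) directly, using that both embed compatibly in $GL(X_t)$ and both surject onto $\mu_2$ with kernel $P(X_t)$. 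Once you track the $\mu_2$ carefully in this way, the argument goes through; the paper then obtains the corollary as an immediate consequence of Theorem~\ref{thrm:uni-2} by checking that $X_t$ is not annihilated by any Schur functor.
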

This result is proved in Section \ref{ssec:second_constr_Rep_p}.

\begin{remark}
 \InnaC{Let $ \underline{\End}(X_t) \cong X_t \otimes X_t^*$ be internal endomorphism
algebra of $X_t$, seen as a Lie algebra object in $\s
\Rep(\underline{GL}_t)$. Let $\p(X_t) \subset \underline{\End}(X_t)$ be
Lie subalgebra preserving $\omega_{X_t}$. Then $\p(X_t) = Lie P(X_t)$, and $\gl\left(\left(X_t\right)_{\bar 0} \right) \subset \p(X_t)$. Unlike in the classical representation theory of Lie superalgebras, we do not expect that any representation of $\p(X_t)$ whose restriction to $\gl\left(\left(X_t\right)_{\bar 0} \right)$ is integrable (that is, given by objects in $\Rep(\underline{GL}_t)$) integrates to a representation of the group scheme $P(X_t)$. See Remark \ref{rmk:integrable_repr}.}
\end{remark}

\subsection{Structure of the paper}
In Section \ref{sec:inf_p_n}, we present some results about the representation theory of the Lie superalgebra $\p(\infty)$ used in the construction of $\urep$.

In Section \ref{sec:Rep_k}, we define full highest weight subcategories $\Rep^k(\p(n)) \subset \Rep(\p(n))$, which will be the building blocks for $\urep$. In Section \ref{sec:DS_functor}, we define the $DS$ functor, and prove some stabilization results for the subcategories $\Rep^k(\p(n))$ under $DS$. In Section \ref{sec:Rep_p}, we construct the category $\urep$ itself, and present some of its basic properties.

Sections \ref{sec:universality} and \ref{sec:uni-2} are devoted to the proofs of Theorems \ref{introthrm:uni-1} and \ref{introthrm:uni-2} above.

Section \ref{sec:uni-2} uses heavily the terminology of affine algebraic groups in pre-Tannakian tensor categories; a short summary of the necessary definitions and results in this subject is given in Appendix \ref{app:affine_grp_schemes}.

Section \ref{sec:properties} describes the lower highest weight structure on $\urep$.

Finally, in Appendix \ref{appendix_A}, we give a combinatorial proof of a result of \cite{DLZ} describing the a spanning set of the superspace $\Hom^{\bullet}_{\p(n)}(V_n^{\otimes k}, \triv)$.
\subsection{Acknowledgements}
This project was conceived during the first authors visit to UC Berkeley; a large part of the work was done when both authors were in residence at the Mathematical Sciences Research Institute (MSRI) in Berkeley, California, during the Spring 2018 semester. The reseach in MSRI was supported by the National Science Foundation under Grant No. DMS-1440140. We would like to thank both institutions for providing the support and the work environment for this project. We would also like to thank P. Etingof for his comments and suggestions, K. Coulembier for the very helpful comments on the first drafts of the paper, and S. Sahi for useful comments and suggestions. I.E.-A. was supported by the Israel Science Foundation (ISF grant no. 711/18). V.S. was supported by NSF grant 1701532.

\section{Preliminaries and notation}\label{sec:notn}
\subsection{General}

Throughout this paper, we will work over the base field $\mathbb C$, and all
the categories considered will be $\C$-linear.

A {\it
vector superspace} will be defined as a $\mathbb{Z}/2\mathbb{Z}$-graded vector
space $V=V_{\bar 0}\oplus V_{\bar 1}$. The {\it parity} of a homogeneous vector
$v \in V$ will be denoted by $p(v) \in \mathbb{Z}/2\mathbb{Z}=\{\bar 0, \bar
1\}$ (whenever the notation $p(v)$ appears in formulas, we always assume that
$v$ is homogeneous).

\subsection{Tensor categories}\label{ssec:notn:SM}
In the context of symmetric monoidal (SM) categories, we will denote by $\triv$
the unit object, and by $\sigma$ the symmetry morphisms.

A functor between symmetric monoidal categories will be called a {\it SM
functor}
if it respects the SM structure in the sense of \cite[2.7]{D1}
(there it is called "foncteur ACU"); similarly, an $\otimes$-natural
transformation between SM functors is a transformation respecting the
monoidal structure in the sense of \cite[2.7]{D1}.

\begin{remark}
Any natural transformation between SM functors on rigid SM
categories is an
isomorphism (see \cite{DM}).
\end{remark}

In this paper a {\it tensor category} is a rigid SM abelian
$\mathbb{C}$-linear category, where the bifunctor $\otimes$ is bilinear on
morphisms, and $\End(\triv) \cong \mathbb{C}$. An explicit definition can be
found in \cite{D1, EGNO}.
Note that in such a category the bifunctor $\otimes$ is biexact.

A {\it tensor functor} between two tensor categories is an exact SM functor.
\begin{remark}
Any tensor functor is automatically faithful (see \cite{DM}).
\end{remark}

As in \cite[2.12]{D1}, a {\it pre-Tannakian category} is a tensor category
satisfying
finiteness conditions: namely, every object has finite length and every
$\Hom$-space is finite-dimensional over $\mathbb{C}$.

Given an object $V$ in a SM category, we will denote by $$coev: \triv \to V
\otimes V^*, \;\; ev: V^* \otimes V \to \triv$$ the coevaluation and evaluation
morphisms for $V$. We will also denote by $\gl(V):= V \otimes V^*$ the internal
endomorphism space with the obvious Lie algebra structure on it. The object $V$
is then a module over the Lie algebra $\gl(V)$; we denote the action by $act:
\gl(V) \otimes V \to V$. \InnaD{The notation $\Rep(\gl(V))$ will stand for the category of all\footnote{Caveat: this notation differs from \cite{EHS}.} integrable finite-dimensional super-representations of the Lie superalgebra $\gl(V)$, even if this algebra is purely even.}

\subsection{Super structure on SM categories}\label{ssec:notn:super}
Let $\sVect$ be the SM
category of supervectorspaces and even morphisms.
The categories appearing in this paper will be module categories over the SM
category
$\sVect$: this essentially means that they will be equipped with a $\C$-linear
endofunctor $\Pi$ (exact if the category is abelian) and an isomorphism $\Pi^2
\to \id$.

Our categories will be enriched over $\Vect$ but not over $\sVect$ (for
instance,
we
will only consider ``even morphisms'' in the categories of representation of
superalgebras). On the few occasions when we would like to
consider $\Z/2\Z$-graded $\Hom$-spaces, we will denote these by
$$\mathtt{s}\Hom(X, Y) := \Hom(X, Y) \oplus \Hom(X, \Pi Y).$$

Moreover, any monoidal structure considered on such categories will be
compatible with
the
module structure over $\sVect$, making them analogues of algebras over
$\mathbb{Z}/2\mathbb{Z}$. Such categories will be called {\it
$\sVect$-categories} for short. All functors between $\sVect$-categories
will
respect the $\sVect$-module structure, and will be called {\it
$\sVect$-functors}; these are functors commuting with
the \InnaB{parity} shift
functors $\Pi$. When all the categories involved are tensor
$\sVect$-categories, we will drop the ``$\sVect$-'' and just use the term
``tensor functor'' for short.

{Given any \InnaD{additive linear} category $\T$, we will denote by $\s \T$ the corresponding tensor $\sVect$-category $$\s \T:=\sVect \boxtimes_{\Vect} \T.$$ \InnaB{This can be viewed as a special case of the Deligne tensor product of abelian categories, described in \cite{D1}.}}

\subsection{The periplectic Lie superalgebra}\label{ssec:notn:periplectic}
\subsubsection{Definition of periplectic Lie superalgebra}
Let $n \in \Z_{>0}$, and let $V_n$ be an $(n|n)$-dimensional vector superspace
equipped with a non-degenerate odd symmetric form
\begin{eqnarray}
\label{beta}
\beta:V_n\otimes V_n\to\mathbb C,\quad \beta(v,w)=\beta(w,v),
\quad\text{and}\quad
\beta(v,w)=0\,\,\text{if}\,\,p(v)=p(w).
\end{eqnarray}

Then $\operatorname{End}_{\mathbb C}(V_n)$ inherits the structure of a vector
superspace from $V_n$. We denote by $\mathfrak{p}(n)$ the Lie superalgebra of
all
$X\in\operatorname{End}_{\mathbb C}(V_n)$ preserving $\beta$, i.e. satisfying
$$\beta(Xv,w)+(-1)^{p({X})p(v)}\beta(v,Xw)=0.$$

\InnaC{Let $V_{\bar{0}, n}$ be the even part of $V_n$, and let $V_{\bar{1}, n}$ be the odd part of $V_n$:
$$ V_n = V_{\bar{0}, n} \oplus V_{\bar{1}, n}.$$ The form $\beta$ induces a non-degenerate pairing of the underlying vector spaces $ V_{\bar{0}, n}$ and $V_{\bar{1}, n}$.}
\begin{remark}\label{rmk:basis}
Choosing dual bases $v_1, v_2, \ldots, v_n$ in $V_{\bar{0}, n}$ and $v_{1'},
v_{2'},\ldots v_{n'}$ in $V_{\bar{1}, n}$, we can write the matrix of $X\in
\mathfrak{p}(n)$ as
$\left(\begin{smallmatrix}A&B\\C&-A^t\end{smallmatrix}\right)$
where $A,B,C$ are $n\times n$ matrices such that $B^t=B,\, C^t=-C$.
\end{remark}

We will occasionally denote ${\g}=\p(n)$, with triangular decomposition
${\g} \cong
{\g}_{-1} \oplus {\g}_0 \oplus {\g}_1$ where $${\g}_0 \cong \gl(n), \;\;
{\g}_{-1}
\cong \VeraC{\Pi}\InnaC{S^2 V_{\bar{1}, n}}, \;\; {\g}_1 \cong \VeraC{\Pi}\InnaC{\wedge^2 V_{\bar{1}, n}}.$$
Then the action of $\g_{\pm 1}$ on any $\g$-module is ${\g}_0$-equivariant.

\subsubsection{Weights for the periplectic
superalgebra}\label{sssec:notn:weight_p_n}
The integral weight lattice for $\mathfrak{p}(n)$ will be $span_{\mathbb
Z}\{\eps_i\}_{i=1}^n$.

\begin{itemize}[label={$\star$}]
 \item We fix a set of simple roots $-\eps_1-\eps_2,\eps_2 - \eps_{1},
\ldots, \eps_n-\eps_{n-1}$, the first root is odd and all others are even.

Hence the dominant integral weights will be given by $\lambda = \sum_i
\lambda_i \eps_i$, $\lambda_1 \leq \lambda_2 \leq \ldots\leq \lambda_n$.
\item We fix \InnaB{an} order on the weights of $\p(n)$: for weights $\mu, \lambda$, we
say that $\mu \geq \lambda$ if $\mu_i \leq \lambda_i$ for each $i$.

\begin{remark}
 It was shown in \cite[Section 3.3]{BDE+} that if $\leq$ corresponds to a
highest weight structure on the category of finite-dimensional representations
of $\p(n)$.
\end{remark}

\item
The simple finite-dimensional representation of $\p(n)$ \VeraC{with dominant integral
highest weight $\lambda$} whose highest weight vector is {\it even} will be denoted by
$L_n(\lambda)$.
\begin{example}
 The natural representation $V_n$ of $\p(n)$ has highest weight $-\eps_1$,
with odd highest weight vector; hence $V_n \cong \Pi L_n(-\eps_1)$. The
representation $\wedge^2 V_n$ has highest weight $-2\eps_1$, and the
representation $S^2 V_n$ has highest weight $-\eps_1 - \eps_2$ \VeraC{for $n\geq 2$}; both have even
highest weight vectors, and $$\InnaC{\wedge^2 V_n \twoheadrightarrow L_n(-2\eps_1), \;\; L_n(-\eps_1 - \eps_2) \hookrightarrow S^2 V_n .}$$
\end{example}
\item Set $\rho^{(n)} =
\sum_{i=1}^n (i-1)\eps_i$, and for any weight
$\lambda$, denote $$\bar{\lambda}  = \lambda+\rho^{(n)} .$$

\item We will associate to $\lambda$ a weight diagram $d_{\lambda}$, defined as
a labeling of the integer line by symbols $\bullet$ (``black ball'') and
$\circ$
(``empty'') such that such that $j$ has label $\bullet$ if $j \in
\{\bar{\lambda_i}\,|\,i=1, 2, \ldots \}$, and label
$\circ$ otherwise.

\item We denote $\abs{\lambda} := - \sum_i {\lambda_i}$.

\item When $\lambda \geq 0$ and $\abs{\lambda}\leq k$, we say that
$\lambda$ is {\it $k$-admissible}. For such weights, we have:
$$\lam_1 \leq \lam_2 \leq \ldots \leq \lam_k \leq \lam_{k+1} = \lam_{k+2} =
\ldots = 0.$$

\item Let $\lambda \geq 0$. We denote by $\pmb{\lam}$ be the Young diagram
corresponding to $\lam$ (so the
columns of $\pmb{\lam}$ have lengths $-\lam_1, -\lam_2$, etc); then the
number of boxes in $\pmb{\lam}$ is $\abs{\lam}$. By abuse of notation,
$\pmb{\lam}$
will also stand for the corresponding irreducible representation of the
symmetric group $S_k$. We will also denote by $\pmb{\lam}^{\vee}$ the
transpose of the Young diagram $\pmb{\lam}$, and by $\lambda^{\vee}$ the
corresponding weight (hence
$$ \lambda^{\vee}_1 = -\sharp\{i: \lambda_i \leq -1\}, \; \lambda^{\vee}_2 =
-\sharp\{i: \lambda_i \leq -2\}$$ and so forth). \InnaC{In terms of diagrams, $d_{\lambda^{\vee}}$ is obtained from $d_{\lambda}$ by reflecting the diagram with respect to the point $-1/2$, and then reversing the colors in the diagram.}
\end{itemize}

\subsubsection{Representations of
\texorpdfstring{$\p(n)$}{p(n)}}\label{sssec:notn:repr_periplectic}

We denote by $\Rep(\p(n))$ the category of
finite-dimensional representations of
$\p(V)$ whose restriction to $\p(V)_{\bar 0} \cong
\mathfrak{gl}(n)$ integrates to an action of $GL(n)$.

By definition, the morphisms in $\Rep(\p(n))$ will be {\it
grading-preserving}
$\p(V)$-morphisms,
i.e., $\operatorname{Hom}_{\Rep(\p(n))}(X,Y)$ is a vector space and not a
vector superspace. This is important in order to ensure that the category
$\Rep(\p(n))$ be abelian.

The category $\Rep(\p(n))$ is not semisimple. In fact, this
category is a highest weight category,
having simple, standard, costandard, and projective modules (these are also
injective and tilting, per \cite{BKN}). Given a simple module $L_n(\lambda)$ in
$\Rep(\p(n))$, we denote the corresponding standard, costandard, and projective
modules by $\Delta_n(\lambda)$, $\nabla_n(\lambda)$, $P_n(\lambda)$
respectively.

This paper heavily relies on the results in \cite{BDE+}. In
particular, we use the following two results throughout the paper:

\begin{theorem}[See \cite{BDE+}.]\label{old_thrm:transl_func}
There exists a natural $\Z$-grading on the functor $-\otimes V
\cong \bigoplus_{j
\in \Z}
\Theta_j$, given by generalized eigenspaces of the tensor Casimir.
The relations on the translation functors $\Theta_j$, $j \in \Z$ induce a
representation of the infinite Temperley-Lieb algebra $TL_{\infty}(q=i)$ on the
Grothendieck ring on $\Rep(\p(n))$.
\end{theorem}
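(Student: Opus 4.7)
The plan is to construct an explicit tensor Casimir on the functor $-\otimes V_n$, show that its eigenvalues lie in $\Z$, and then extract the Temperley-Lieb relations by combining this Casimir with the periplectic Brauer algebra action on $V_n^{\otimes k}$.

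Since $\p(n)$ admits no non-degenerate invariant bilinear form, the usual quadratic Casimir is unavailable. Instead, I would embed $\p(n) \hookrightarrow \gl(n|n)$ and use the supertrace pairing on $\gl(n|n)$ to define a ``fake Casimir'' $\Omega$: concretely, for $M \in \Rep(\p(n))$, the endomorphism $\Omega_M \in \End(M \otimes V_n)$ is built from the $\p(n)$-action on $V_n$ together with the pairing $\beta$ and its dual coevaluation, in such a way that the result is natural in $M$ even though the construction uses data beyond $\p(n)$ itself. A direct computation on the standard modules $\Delta_n(\lam)$, using the explicit matrix basis from Remark \ref{rmk:basis}, shows that the eigenvalues of $\Omega$ are shifted coordinates of the form $\bar{\lam}_i$ or $-\bar{\lam}_i - 1$; in particular they lie in $\Z$. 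Defining $\Theta_j$ as the projection of $-\otimes V_n$ onto the generalized $j$-eigenspace of $\Omega$ then yields the claimed $\Z$-graded decomposition, with each $\Theta_j$ exact because $\Omega$ is a natural transformation of exact functors.

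To extract the Temperley-Lieb relations, I would analyze the action of $\Omega$ on $M \otimes V_n \otimes V_n$ alongside the natural endomorphisms of $V_n \otimes V_n$: the symmetry $\sigma$, the form $\beta: V_n \otimes V_n \to \Pi \triv$, and its coevaluation. These generate a copy of the periplectic Brauer algebra acting on $V_n^{\otimes k}$. Analyzing how $\Omega$ shifts the eigenvalues under composition with the Brauer generators produces, after passing to the Grothendieck ring, quadratic relations among the classes of the $\Theta_j$. The specialization $q = i$ is forced by the combination of the symmetry $\beta(v,w) = \beta(w,v)$ with the \emph{odd} parity of $\beta$, which introduces $\sqrt{-1}$ signs relative to the symplectic/orthogonal cases and matches exactly the TL relations at $q=i$.

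The main obstacle I anticipate is that $\Omega$ does not commute with the Brauer generators at the level of $\Rep(\p(n))$, only up to nilpotent corrections coming from the non-semisimplicity of the category. One must therefore pass to $K_0$ for the relations to become exact, and carry out careful bookkeeping of the eigenvalue shifts under each Brauer generator in order to identify the resulting algebra as precisely $TL_{\infty}(q=i)$ rather than a deformation or a related cellular quotient of the periplectic Brauer algebra. This bookkeeping, already carried out in \cite{BDE+}, is the most technically demanding step.
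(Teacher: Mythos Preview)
This theorem is not proved in the paper at all: it is stated with the attribution ``See \cite{BDE+}'' and used as a black box. There is therefore no proof in the paper against which to compare your proposal.

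That said, two remarks on your sketch relative to what the paper does record about the result. First, your construction of the tensor Casimir is consistent with what the paper later uses: in Section~\ref{ssec:trans_functors_urep} the paper defines $\pmb{\Omega}$ for $\urep$ by exactly the pattern you describe (using the decomposition $\gl(\bV)\cong\p(\bV)\oplus\p(\bV)^*$ in place of an invariant form on $\p(n)$), and Lemma~\ref{lem:casimir_goes_to_casimir} identifies its image under $F_n$ with the $\Omega^n$ of \cite[4.1.1]{BDE+}. Second, your expectation that the Temperley-Lieb relations only hold after passing to $K_0$ understates what is actually true: the paper records in Appendix~\ref{appendix_A} (citing \cite[Theorem 4.5.1]{BDE+}) that the relations $\Theta_k^2\cong 0$, $\Theta_k\Theta_{k\pm1}\Theta_k\cong\Theta_k$, and $\Theta_k\Theta_j\cong\Theta_j\Theta_k$ for $|k-j|>1$ hold as natural isomorphisms of functors, not merely in the Grothendieck ring. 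So the ``nilpotent corrections'' you anticipate as an obstacle are in fact absent, and the categorical statement is stronger than the Grothendieck-ring statement recorded in Theorem~\ref{old_thrm:transl_func}.
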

\begin{theorem}[See \cite{BDE+}.]\label{old_thrm:transl_proj}

Indecomposable projectives in $\Rep(\p(n))$ satisfy the following properties:

\begin{enumerate}
 \item Indecomposable projectives $P_n(\lambda)$ are
multiplicity-free for any $\lambda$.
\item For any $\lambda$ and any $i$, $\Theta_i P_n(\lambda)$ is indecomposable
projective or zero.
\end{enumerate}

\end{theorem}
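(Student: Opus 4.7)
The plan is to establish both claims simultaneously by induction on the weight poset, using the translation functors $\Theta_i$ as the engine and the Temperley--Lieb action from Theorem \ref{old_thrm:transl_func} to control the Grothendieck classes. The key preliminary observation is that each $\Theta_i$ preserves projectivity: since $V_n \cong \Pi V_n^*$, the functor $- \otimes V_n$ is self-adjoint up to parity shift, hence sends projectives to projectives, and each direct summand $\Theta_i$ inherits this property. Thus $\Theta_i P_n(\lambda)$ is always a (possibly decomposable) projective, so what must be shown is that it consists of at most one indecomposable summand, and that any indecomposable summand is multiplicity-free as a standard-filtered module.

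For the base case I would use ``extremal'' weights $\lambda_0$ for which the projective cover can be identified directly: typical-enough weights for which $P_n(\lambda_0) = \Delta_n(\lambda_0)$ is simple, together with a handful of low-weight modules computable by hand or by parabolic induction from $\g_0 \oplus \g_1$. In these cases multiplicity-freeness is immediate from the classical structure of standard modules and a Kostant-type computation.

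For the inductive step, I would realise an arbitrary indecomposable $P_n(\lambda)$ as a summand of $\Theta_i P_n(\lambda')$ for some $\lambda'$ closer to the base case, where the ``arrow move'' taking $d_{\lambda'}$ to $d_\lambda$ corresponds to the action of $\Theta_i$. Computing $[\Theta_i P_n(\lambda')]$ in the Grothendieck ring via the Temperley--Lieb action on standard classes, and applying BGG reciprocity $(P_n(\mu):\Delta_n(\nu)) = [\nabla_n(\nu):L_n(\mu)]$ together with the dual action on costandard classes, one identifies this class as either $0$ or a single $[P_n(\mu)]$. Linear independence of projective classes then forces a single indecomposable summand, and multiplicity-freeness propagates because the Temperley--Lieb generators map multiplicity-free $\Z$-combinations of $[\Delta_n(\nu)]$-classes to the same type of combination, via local arrow moves on $d_{\lambda'}$ that do not create repeated labels.

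The main obstacle is the indecomposability claim in the inductive step: a priori $\Theta_i P_n(\lambda')$ could split into a direct sum of several non-isomorphic projectives $P_n(\mu_j)$, and ruling this out requires a tight combinatorial analysis of which arrow moves on the weight diagram $d_{\lambda'}$ the functor $\Theta_i$ can realise, with the Temperley--Lieb relations at $q=i$ providing the crucial algebraic input that cuts the number of candidates down to one. A secondary subtlety is that $\Rep(\p(n))$ lacks some of the symmetries present in the $\mathfrak{gl}(m|n)$ setting, notably a duality interchanging $\Delta_n(\lambda)$ and $\nabla_n(\lambda)$; the equality of standard- and costandard-filtration multiplicities of $P_n(\lambda)$ therefore has to be verified directly from BGG reciprocity and the linkage structure rather than extracted from such a duality.
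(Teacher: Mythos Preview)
The paper does not prove this theorem: it is stated with the attribution ``[See \cite{BDE+}]'' and no argument is given in the present text. So there is no in-paper proof to compare your proposal against; the result is imported wholesale from \cite{BDE+}.

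Your outline is in the right spirit and roughly matches how such results are established in \cite{BDE+}: one analyses the effect of the translation functors $\Theta_i$ on indecomposable projectives by tracking the combinatorics of weight diagrams, and the Temperley--Lieb relations at $q=\sqrt{-1}$ are indeed the algebraic input that constrains how many summands can appear. One caveat: your base case invokes ``typical-enough weights for which $P_n(\lambda_0)=\Delta_n(\lambda_0)$ is simple,'' but for $\mathfrak p(n)$ the centre of the universal enveloping algebra is trivial and there is no notion of typicality in the usual sense; every block is nontrivial. The actual argument in \cite{BDE+} does not start from typical weights but rather builds up the structure of all $P_n(\lambda)$ simultaneously via an explicit combinatorial description of the $\Theta_i$-action on weight diagrams, from which both multiplicity-freeness and indecomposability of $\Theta_i P_n(\lambda)$ are read off directly. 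So your inductive framework is reasonable, but the anchor you propose for the induction would need to be replaced by something specific to the periplectic combinatorics.
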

For more details on the structure of $\Rep(\mathfrak{p}(n))$ we refer the reader
to
\cite{BDE+}.

\subsection{The Deligne
category \texorpdfstring{category
$\Rep(\underline{GL}_t)$}{for general linear group}}\label{ssec:notn:GL_t}

The Deligne categories $\Rep(\underline{GL}_t)$ ($t\in \C$) are a family of
tensor categories possessing a number of remarkable
properties.

\InnaD{Such categories were constructed in \cite{DM} for
$t\notin \Z$, and in \cite{EHS} for $t \in \Z$. We will denote the category constructed in \cite{DM, EHS} by $\Rep_{halved}(\underline{GL}_t)$, and by $\Rep(\underline{GL}_t)$ the ``doubled'' Deligne category $$\Rep(\underline{GL}_t) := \sVect \otimes_{\Vect} \Rep_{halved}(\underline{GL}_t).$$} The original motivation of
\cite{DM} was to construct an example of a tensor category which is not
Tannakian, i.e. not the category of representations of any algebraic group (nor
supergroup).

Below we give a short overview of the construction of $\Rep(\underline{GL}_t)$.

\subsubsection{Tensor category
\texorpdfstring{$\Rep(\underline{GL}_t)$}{Rep(GL)}}\label{sssec:notn:abelian_Deligne_GL}

Let $t \in \C$.
We begin with a category\footnote{Deligne denotes this category by $\underline{Rep}_0(\gl_t)$, see \cite[Section 10]{D2};
also known as the oriented Brauer category with parameter $t$, see \cite{BCNR}.} $\mathcal{D}^0_t$ which is freely
generated,
as a SM $\C$-linear category, by one dualizable object of dimension $t$ (we
will denote it by $V$ in this section).

The category $\mathcal{D}^0_t$ can be embedded naturally into a Karoubian additive rigid symmetric
monoidal category. The Karoubian additive envelope of $\mathcal{D}^0_t$ will be denoted $Tilt(\underline{GL}_t)$, for reasons which will be explained later on.

Yet a priori it is not clear how to embed it into a tensor
(abelian) category. For $t \notin \Z$, the category $Tilt(\underline{GL}_t)$ is
indeed abelian and even semisimple. We will set $\Rep(\underline{GL}_t):=Tilt(\underline{GL}_t)$ when
$t \notin \Z$.

For $t \in \Z$, this is not the case. Yet it turns out the one can
construct a tensor category $\Rep(\underline{GL}_t)$ into which
$Tilt(\underline{GL}_t)$ embeds as a full additive
rigid SM subcategory, as was done in \cite{EHS}.

The main idea behind the construction is using the Duflo-Serganova functors
defined in \cite{DS}. This is a collection of SM functors, defined for each $m,
n \geq 1$: $$ DS_x:
\Rep(\mathfrak{gl}(m|n)) \longrightarrow \Rep(\mathfrak{gl}(m-1|n-1)).$$

Although the functors $DS_x$ are not exact, but are exact on certain
subcategories
of $\Rep(\mathfrak{gl}(m|n))$. This allows us to construct a new  tensor
category
$\Rep(\underline{GL}_t)$, $t:=m-n$, together with a collection of SM functors
$F_{m, n}: \Rep(\underline{GL}_t) \longrightarrow \Rep(\mathfrak{gl}(m|n))$ which
are compatible with the
functors $DS_x$. Note that the SM functors $F_{m, n}$ are not
exact.

The category $\Rep(\underline{GL}_t)$ should be seen as an inverse limit of the
system $(\Rep(\mathfrak{gl}(m|n)), DS_x)$.

\begin{remark}
The functors ${F}_{m,n}$ are ``local'' equivalences: the
categories $\Rep(\underline{GL}_t)$, $\Rep(\gl(m|n))$ have natural $\Z_+$-filtrations on objects which are preserved by the functors ${F}_{m,n}$, and the latter induce an equivalence between each filtra $\Rep(\underline{GL}_t)^k$ and $\Rep^k(\gl(m|n))$ for $m, n>>k$.
\end{remark}

The obtained category $\Rep(\underline{GL}_t)$ is a {\it lower highest weight
category}: namely, for each $k \geq 0$, the subcategory
$\Rep(\underline{GL}_t)^k$ is a highest weight category, whose standard,
costandard and tilting objects play the same role in each
$\Rep(\underline{GL}_t)^{k'}$ for $k' \geq k$. The full subcategory of tilting objects in $\Rep(\underline{GL}_t)$ is then precisely $Tilt(\underline{GL}_t)$.

\comment{Furthermore, for each isomorphism
class of simple objects $L \in \Rep(\underline{GL}_t)$, we have distinguished
isomorphism classes of
objects named $\Delta_L, \nabla_L, T_L$ so that whenever
$L \in \Rep(\underline{GL}_t)^k$, the above objects are respectively standard,
costandard, and indecomposable tilting objects corresponding to $L$ in
$\Rep(\underline{GL}_t)^k$.}

Finally, we mention the ``interpolation property'' of
$\Rep(\underline{GL}_t)$.

\begin{theorem}[\cite{D}]
 Let $t=n \in \Z_+$. There exists an $\C$-linear SM functor
$Tilt(\underline{GL}_{t=n})  \longrightarrow \Rep(GL_n)$ which is full and
essentially surjective, making $\Rep(GL_n)$ the quotient of
$Tilt(\underline{GL}_{t=n})$ by the ideal of negligible morphisms.
\end{theorem}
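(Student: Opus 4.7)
The plan is to build the functor from the universal property of $Tilt(\underline{GL}_t)$, then separately establish fullness and essential surjectivity, and finally identify its kernel with the negligible tensor ideal.

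For the construction, recall that $Tilt(\underline{GL}_t)$ is the Karoubian additive envelope of the free $\C$-linear rigid symmetric monoidal category on a single dualizable object $V$ of categorical dimension $t$. Specializing $t = n$ and sending $V$ to the defining representation $\C^n \in \Rep(GL_n)$, which has categorical dimension $n$, the universal property yields a well-defined (up to $\otimes$-natural isomorphism) $\C$-linear SM functor $F_n \colon Tilt(\underline{GL}_{t=n}) \to \Rep(GL_n)$. Essential surjectivity is then immediate: every finite-dimensional representation of the reductive group $GL_n$ is a direct summand of some mixed tensor $(\C^n)^{\otimes k} \otimes ((\C^n)^*)^{\otimes \ell}$, and the Karoubian structure of $Tilt(\underline{GL}_{t=n})$ ensures that all summands of $V^{\otimes k}\otimes (V^*)^{\otimes \ell}$ are already present, so $F_n$ hits every isomorphism class.

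For fullness, I would use that, by construction, the morphism space between mixed tensor powers of $V$ in $Tilt(\underline{GL}_{t=n})$ is spanned by oriented Brauer diagrams, i.e.\ compositions of symmetries, evaluations and coevaluations. The image of each such diagram under $F_n$ is the analogous composition of structural morphisms on $\C^n$, and the first fundamental theorem of invariant theory for $GL_n$ is precisely the statement that these compositions span $\Hom_{GL_n}$ between mixed tensors of the standard representation. Hence $F_n$ is full on the full subcategory of mixed tensor powers, and fullness then extends to the Karoubian envelope by additivity and retraction to direct summands.

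To identify $\ker F_n$ with the negligible ideal $\mathcal{N}$: since $F_n$ is SM and $\C$-linear with $F_n(\id_{\triv}) = \id_{\triv}$, it acts as the identity on $\End(\triv) \cong \C$ on both sides, and therefore preserves categorical traces numerically. If $F_n(f) = 0$, then for every $g$ we have $\mathrm{tr}(g \circ f) = F_n(\mathrm{tr}(g \circ f)) = \mathrm{tr}(F_n(g) \circ F_n(f)) = 0$, so $f \in \mathcal{N}$. Conversely, if $f \in \mathcal{N}$, fullness lets us write any $g'$ in $\Rep(GL_n)$ as $F_n(g)$, so $\mathrm{tr}(g' \circ F_n(f)) = F_n(\mathrm{tr}(g \circ f)) = 0$ for all $g'$; non-degeneracy of trace in the semisimple category $\Rep(GL_n)$ then forces $F_n(f) = 0$. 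Hence $\ker F_n = \mathcal{N}$, and the induced functor $Tilt(\underline{GL}_{t=n}) / \mathcal{N} \to \Rep(GL_n)$ is fully faithful and essentially surjective, hence an equivalence. The main obstacle in this plan is the fullness step: it is the only place requiring genuinely non-formal input, namely the first fundamental theorem of invariant theory for $GL_n$. Everything else reduces to formal consequences of the Karoubian rigid SM structure together with the semisimplicity of $\Rep(GL_n)$.
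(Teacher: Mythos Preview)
The paper does not prove this statement at all: it is presented as a citation of Deligne's result \cite{D} in the preliminaries (Section~\ref{sssec:notn:abelian_Deligne_GL}), with no proof given. Your proposal is a correct and standard sketch of Deligne's argument---construction from the universal property, essential surjectivity from semisimplicity of $\Rep(GL_n)$, fullness from the first fundamental theorem of invariant theory for $GL_n$, and identification of the kernel with the negligible ideal via non-degeneracy of the trace pairing in a semisimple category---so there is nothing to compare against here.
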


\subsubsection{Universal property}\label{sssec:notn:univ_GL}
The collection $\{\Rep(\underline{GL}_t), \Rep(\gl(m|n)) \rvert m-n =t\}$ acts as
a system of
abelian envelopes of the Deligne category $Tilt(\underline{GL}_t)$:
\begin{theorem}[\cite{EHS}]\label{old_thrm:Deligne_conj}
Let $\mathcal{C}$ be \InnaD{a tensor} category, and let $X$ be an object in $\mathcal{C}$
 of integral dimension $t$. Consider the canonical SM functor
$$F_X:
Tilt(\underline{GL}_t)\longrightarrow \mathcal{C}
$$ carrying the $t$-dimensional generator $V$ of $Tilt(\underline{GL}_t)$ to $X$.
\begin{enumerate}[label=(\alph*)]
 \item If $X$ is not annihilated by any Schur functor then $F_X$ uniquely
factors through the embedding $I:Tilt(\underline{GL}_t)\to \Rep(\underline{GL}_t)$ and gives rise
to an exact SM functor $$\Rep(\underline{GL}_t) \longrightarrow \mathcal{C}.$$
 \item If $X$ is annihilated by some Schur functor then there exists a unique
pair $m, n \in \mathbb{Z}_+$, $m-n=t$, such that $F_X$ factors through the SM
functor $Tilt(\underline{GL}_t) \longrightarrow \Rep(\gl(m|n))$ and gives rise to an exact SM
functor
 $$ \Rep(\gl(m|n)) \longrightarrow \mathcal{C}$$
 sending the standard representation $\mathbb{C}^{m |n}$ to $X$.
\end{enumerate}

\end{theorem}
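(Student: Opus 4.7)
The plan is to prove (a) and (b) separately, starting from a common point: the SM functor $F_X: Tilt(\underline{GL}_t) \to \mathcal{C}$ exists and is essentially unique by the universal property of $Tilt(\underline{GL}_t)$ as the free Karoubian rigid SM $\C$-linear category on a dualizable object of categorical dimension $t$. Whether $F_X$ extends to an exact functor on the abelian envelope $\Rep(\underline{GL}_t)$, or must instead factor through a Tannakian quotient, is governed by the dichotomy on Schur functor annihilation.

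For part (a), assume no Schur functor annihilates $X$. I would construct the extension $\mathbf{F}_X: \Rep(\underline{GL}_t) \to \mathcal{C}$ filtration-by-filtration, working inside each highest weight subcategory $\Rep(\underline{GL}_t)^k$. All indecomposable tilting objects $T_\lambda$ with $|\lambda| \leq k$ lie in $Tilt(\underline{GL}_t)$, so $F_X(T_\lambda)$ is already defined. I would then define $\mathbf{F}_X$ on standards (and dually on costandards) by choosing tilting resolutions $T_\bullet \to \Delta_\lambda$ and taking appropriate homology of the image complex in $\mathcal{C}$; since $F_X$ is SM and $\otimes$ is biexact, this construction is functorial, and independence of the resolution reduces to homological vanishing statements internal to $\Rep(\underline{GL}_t)^k$ for $k$ large enough. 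The non-annihilation hypothesis is exactly what guarantees that the image complexes do not collapse: $F_X$ must send distinct indecomposable tiltings to non-isomorphic summands in $\mathcal{C}$, otherwise the alternating sums of their character-like invariants would provide Schur functors killing $X$. Exactness of the assembled $\mathbf{F}_X$ on each $\Rep(\underline{GL}_t)^k$ then follows from a horseshoe-lemma argument applied to tilting resolutions.

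For part (b), suppose some Schur functor $S_\lambda$ annihilates $X$. I would invoke Deligne's theorem that a tensor category of subexponential growth generated by such an object is super-Tannakian: the abelian tensor subcategory $\langle X \rangle \subset \mathcal{C}$ generated by $X$ is equivalent to $\Rep(G, \epsilon)$ for some affine supergroup scheme $(G, \epsilon)$. The generator $X$ is a faithful representation of categorical dimension $t$, and the combinatorial pattern of Schur functor vanishing on $X$ determines $G$ by standard Lie-theoretic considerations: $S_\mu(X) = 0$ iff the Young diagram $\mu$ contains a specific rectangle, and this forces $G = GL(m|n)$ with $m-n=t$, uniquely specifying the pair $(m,n)$. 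Composing $F_X$ with the identification $\langle X \rangle \simeq \Rep(\gl(m|n))$ delivers the desired factorization.

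The principal obstacle is part (a): showing that the candidate extension $\mathbf{F}_X$ is well-defined as an exact SM functor, and in particular that its value on a tensor product $M \otimes N$ agrees, via canonical isomorphism, with $\mathbf{F}_X(M) \otimes \mathbf{F}_X(N)$ when $M$ and $N$ are only available through tilting resolutions. This compatibility ultimately rests on showing that certain cohomological obstructions to gluing the per-filtration extensions vanish, exploiting both the highest weight structure of $\Rep(\underline{GL}_t)^k$ and the fact that tilting objects form a class closed under tensor product and ample for resolutions. Uniqueness of the extension is then a formal consequence: any two extensions must agree on tilting objects, hence on standards and costandards by the five lemma applied to tilting resolutions, hence on all objects by the existence of two-step tilting resolutions in a highest weight category.
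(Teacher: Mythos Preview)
The paper does not prove this theorem; it is quoted from \cite{EHS} as background (see Section~\ref{sssec:notn:univ_GL}), so there is no proof in the paper to compare against directly. One can, however, infer the shape of the \cite{EHS} argument from the tools this paper imports and re-uses in its own universal property proofs.

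Your treatment of (b) via Deligne's super-Tannakian theorem is the standard route and matches \cite{EHS}. For (a), the approaches diverge. Rather than constructing $\mathbf{F}_X$ filtration-by-filtration through tilting resolutions, \cite{EHS} proves a general extension principle, quoted here as Theorem~\ref{old_thm:uni-2}: once the embedding $Tilt(\underline{GL}_t)\hookrightarrow\Rep(\underline{GL}_t)$ is shown to satisfy the presentation condition \eqref{cond:uni2} and the epimorphism-splitting condition \eqref{cond:uni3}, every \emph{faithful} SM functor out of $Tilt(\underline{GL}_t)$ extends uniquely to an exact SM functor out of $\Rep(\underline{GL}_t)$. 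The remaining representation-theoretic input is then just that $F_X$ is faithful exactly when no Schur functor annihilates $X$. This separates the abstract extension machinery from the specific hypothesis on $X$, and avoids having to verify by hand that tilting resolutions of $M\otimes N$ behave compatibly under $F_X$.

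Your constructive route is not unreasonable, but the crucial step is under-argued. You write that non-annihilation ``guarantees the image complexes do not collapse'' because otherwise ``alternating sums of character-like invariants would provide Schur functors killing $X$''; this conflates injectivity on isomorphism classes of objects with what is actually needed, namely faithfulness of $F_X$ on \emph{morphisms}. Without faithfulness on Hom-spaces, tilting-resolution homology need not be independent of the resolution, and your horseshoe argument for exactness does not go through. Once you do establish faithfulness, you are essentially at the starting point of the \cite{EHS} machinery, and it is cleaner to invoke Theorem~\ref{old_thm:uni-2} than to redo the gluing by hand.
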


\subsection{The Deligne category
\texorpdfstring{$\PP$}{P}}\label{ssec:notn:Deligne_P_Karoubi}

Let $\mathfrak{S}$ be a $\C$-linear category on two objects: $\C, \Pi \C$, both
having one-dimensional endomorphism spaces and no morphisms between them. The
category $\mathfrak{S}$ has a rigid SM structure: considered as
the full subcategory of $\sVect$ whose objects
are $\C, \Pi \C$, it inherits the SM structure on vector superspaces.

The Deligne category $\PP_0$ is the universal SM
$\mathfrak{S}$-module category which is generated by an
object $\tilde{V}$ with a fixed symmetric non-degenerate form
$\omega_{\tilde{V}}:\tilde{V} \otimes \tilde{V} \to \Pi \triv$. The objects in
$\PP_0$ are enumerated by pairs
$[(n, \epsilon)]$ where $n \in \Z_{\geq 0}$ and $\epsilon \in
\Z/2\Z = \{\bar{0},\bar{1}\}$. In particular, $[(1, \bar{0})]=\tilde{V}$. The
endomorphism spaces in $\PP_0$
described by the signed Brauer algebras (see e.g. \cite{BDE+2,CoulEhr,KT}), and
the monoidal structure is given by
$$ [(n, \epsilon)] \otimes [(n', \epsilon')] := [(n+n', \epsilon +
\epsilon')].$$
\begin{example}
 In this notation, $\triv = [(0,
\bar{0})]$ and $[(1, \epsilon)]$ is a generator (for any $\epsilon$).
\end{example}

The action
of $\mathfrak{S}$ given by $\Pi [(n, \epsilon)] = [(n, \bar{1}-\epsilon)]$.

A formal definition of this monoidal category
via generators and relations can be found in \cite{BE, CoulEhr,
KT}\footnote{In \cite{KT}, this category is called the
marked Brauer category for $\delta=0, \eps=-1$.}.

We will denote by $\PP$ the Karoubian additive envelope of $\PP_0$.
For details on $\PP$, see \cite{CoulEhr}.

\begin{remark}
 Caveat: the definitions in \cite{BE, CoulEhr, KT} produce a
monoidal supercategory, namely a category enriched over $\sVect$. In
particular,
the objects $[(n, \pm \eps)]$ are identified for any $n$.

To pass from the
supercategory $\widetilde{\PP}$ to our version of $\PP$, we define
$\Hom{\PP}([(n_1, \eps_1)], [(n_2, \eps_2)])$ to be the morphisms in
$\widetilde{\PP}$ between the corresponding objects $[n_1]$, $[n_2]$ whose
\InnaB{parity} is $(-1)^{\eps_1 + \eps_2}$.
\end{remark}

Finally, we define the functor $I_n: \PP \to \Rep(\p(n))$ to be the $\C$-linear
SM $\sVect$-functor which sends the
generator $\tilde{V}$ of $\PP$
to $V_n$, and the form $\omega_{\tilde{V}}$ to the form $\omega_n: V_{n}
\otimes V_{n} \to \Pi \C$. Such a functor is unique up to a
$\otimes$-isomorphism of $\sVect$-functors\footnote{The group of the
automorphisms of $I_n$ is $\Z /2\Z$.}.

\begin{lemma}\label{lem:I_n_full}
 The functor $I_n$ is full.
\end{lemma}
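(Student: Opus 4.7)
The plan is to reduce the statement to surjectivity onto invariant forms on tensor powers of $V_n$, and then invoke the spanning theorem of \cite{DLZ} that is reproved in Appendix \ref{appendix_A}.

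First, I would use rigidity. Every indecomposable object of $\PP$ is, up to a parity shift, a summand of some $\tilde V^{\otimes k}$, and similarly every object of $\Rep(\p(n))$ which is in the image of $I_n$ is a summand of some $V_n^{\otimes k}$. So it is enough to show that for every $k, \ell \geq 0$ the map
$$\mathtt{s}\Hom_{\PP}(\tilde V^{\otimes k}, \tilde V^{\otimes \ell}) \;\longrightarrow\; \mathtt{s}\Hom_{\p(n)}(V_n^{\otimes k}, V_n^{\otimes \ell})$$
induced by $I_n$ is surjective. The form $\omega_{\tilde V}$ gives an isomorphism $\tilde V \cong \Pi \tilde V^*$ in $\PP$, and $I_n$ carries this to the analogous isomorphism $V_n \cong \Pi V_n^*$ coming from $\beta$. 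Using these dualities functorially, both morphism spaces above are identified (compatibly with $I_n$) with
$$\mathtt{s}\Hom_{\PP}(\tilde V^{\otimes N}, \triv) \;\longrightarrow\; \Hom^{\bullet}_{\p(n)}(V_n^{\otimes N}, \triv),$$
where $N = k+\ell$. So I may fix $N$ and show that this latter map is surjective.

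Next, I would identify the source explicitly. By the diagrammatic description of the periplectic Brauer category (\cite{BE, CoulEhr, KT}), the space $\mathtt{s}\Hom_{\PP}(\tilde V^{\otimes N}, \triv)$ has an explicit basis consisting of (signed) Brauer diagrams connecting the $N$ top endpoints in pairs, i.e.\ fixed-point-free involutions on $\{1, \dots, N\}$ together with a sign/parity decoration. Under $I_n$, each such diagram is sent to the $\p(n)$-invariant form on $V_n^{\otimes N}$ obtained by pairing the tensor factors accordingly using the form $\beta$, with the appropriate sign coming from the Koszul rule and the parity decoration.

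Finally, I would invoke the spanning result. The theorem of \cite{DLZ} asserts that the invariant forms built from iterated $\beta$-contractions span the whole superspace $\Hom^{\bullet}_{\p(n)}(V_n^{\otimes N}, \triv)$; a self-contained combinatorial proof is given in Appendix \ref{appendix_A}. Composing the two previous steps, this spanning statement is exactly the surjectivity I need, which proves fullness of $I_n$. The genuine difficulty in the argument is not the reduction, which is formal, but this spanning statement: all of the technical work is offloaded to the appendix.
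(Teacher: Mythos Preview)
Your proposal is correct and follows exactly the paper's approach: the paper states that fullness of $I_n$ is a direct consequence of Proposition~\ref{prop:contractions} (the DLZ spanning theorem, reproved in Appendix~\ref{appendix_A}), and you have simply spelled out the standard rigidity reduction---via $\tilde V \cong \Pi \tilde V^*$ and passage to direct summands---that makes this implication immediate.
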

This statement is a direct consequence of the following result:
\begin{proposition}\label{prop:contractions}
 The $\mu_2$-graded space $\mathtt{s}\Hom_{\mathfrak{p}(n)}(V_n^{\otimes k},
\triv)$ is non-zero only
when $k$ is even, and is spanned by morphisms given by partitioning the $k$
factors into (disjoint) pairs, and considering a tensor product of $k/2$
contraction maps $V_n^{\otimes 2} \to \Pi \triv$ on these pairs.
\end{proposition}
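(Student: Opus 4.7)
The plan is to dualize the problem: by adjunction and the self-duality $V_n \cong \Pi V_n^*$ (induced by $\beta$), we have
$\mathtt{s}\Hom_{\p(n)}(V_n^{\otimes k}, \triv) \cong (V_n^{\otimes k})^{\p(n)}$
as super vector spaces up to an overall parity shift. It then suffices to show that $\p(n)$-invariants in $V_n^{\otimes k}$ are spanned by the tensors $\Omega_M$ indexed by perfect matchings $M$ of $\{1,\ldots,k\}$, where $\Omega_M$ is obtained by placing the $\p(n)$-invariant element $\Omega \in V_n \otimes V_n$ dual to $\beta$ at the pairs of $M$.

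First I would observe that $k$ must be even: the element $h \in \gl(n) \subset \p(n)_{\bar 0}$ acting as $+\id$ on $V_{\bar 0, n}$ and $-\id$ on $V_{\bar 1, n}$ has all eigenvalues on $V_n^{\otimes k}$ of the same parity as $k$, so invariants force $k$ even. For $k = 2m$, the next step is the classical First Fundamental Theorem for $GL(n)$ applied to $V_n = V_{\bar 0, n} \oplus V_{\bar 0, n}^*$: it describes $(V_n^{\otimes 2m})^{\gl(n)}$ as spanned by ``oriented matchings'' $c_{M, \sigma}$, where $M$ is a perfect matching and $\sigma$ assigns one endpoint of each pair to $V_{\bar 0, n}$ and the other to $V_{\bar 1, n}$, with $c_{M, \sigma}$ obtained by inserting the coevaluation element of $V_{\bar 0, n} \otimes V_{\bar 1, n}$ at each pair in the order prescribed by $\sigma$. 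Since $\beta$ vanishes on same-parity factors and equals the canonical pairing on mixed ones, each $\Omega_M$ is a signed sum over orientations $\sigma$ of the $c_{M, \sigma}$; in particular the $\Omega_M$ lie among the $\gl(n)$-invariants.

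The main obstacle is the converse: showing that every $\p(n)$-invariant combination of oriented matchings is a combination of $\Omega_M$. I would impose invariance under the additional generators $\p_{-1} \cong \Pi S^2 V_{\bar 1, n}$ and $\p_1 \cong \Pi \wedge^2 V_{\bar 1, n}$, which act on $V_n^{\otimes 2m}$ by the Leibniz rule and, at a single tensor leg, flip a $V_{\bar 0, n}$ factor to a $V_{\bar 1, n}$ factor (or vice versa) with a symmetric or antisymmetric pattern, respectively. Writing the invariance condition for $\sum a_{M, \sigma} c_{M, \sigma}$ under these generators yields linear relations among the coefficients that symmetrize over orientations $\sigma$, collapsing oriented matchings onto the unoriented ones $\Omega_M$. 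I would execute this as an induction on $m$: pivoting on the last tensor factor and reducing to invariants in $V_n^{\otimes (2m-2)}$, with base case $k = 2$, where $(V_n^{\otimes 2})^{\p(n)}$ is one-dimensional and spanned by $\Omega$. The delicate step is the sign bookkeeping distinguishing the symmetric $\p_{-1}$ from the antisymmetric $\p_1$ action when flipping orientations, combined with ensuring the induction is compatible with a choice of pivot factor.
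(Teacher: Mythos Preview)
Your approach is genuinely different from the paper's. The paper's proof (Appendix~A) does not touch classical invariant theory at all: it decomposes $-\otimes V_n \cong \bigoplus_j \Theta_j$ into translation functors, uses the Temperley--Lieb relations among the $\Theta_j$ from \cite{BDE+} together with the fact that each $\Theta_j$ sends indecomposable projectives to indecomposable projectives or zero, and then shows by a reduced-word argument that any sequence $I$ with $\Theta_I(\triv)\twoheadrightarrow\triv$ is equivalent to a specific normal form which factors as $\Theta_{J'}\Theta_J$ with $\Theta_{J'}$ left adjoint to $\Theta_J$. The map is then the adjunction counit, hence visibly built from contractions.

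Your route via the FFT for $GL(n)$ is natural, and the steps through the description of $\gl(n)$-invariants as ``oriented matchings'' $c_{M,\sigma}$ are fine. The gap is in the last step. The claim that $\p_{\pm 1}$-invariance ``symmetrizes over orientations $\sigma$, collapsing oriented matchings onto unoriented ones'' is not accurate as a mechanism. When an odd generator $X$ acts on $c_{M,\sigma}$, it flips one $W^*$-slot to a $W$-slot (or conversely) and lands in a weight component with $m\pm 1$ ones; the vanishing of $X\cdot v$ in that component receives contributions from \emph{every} pair $(M',\sigma')$ whose $\varepsilon$ differs from the target by a single flip, and these come from \emph{different} matchings $M'$, not just different orientations of a fixed $M$. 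So the linear relations do not decouple matching by matching. Correspondingly, the induction by ``pivoting on the last tensor factor'' is not a well-defined reduction: in the $c_{M,\sigma}$-expansion of a $\p(n)$-invariant the last slot is paired with many different positions, and there is no single last pair to peel off.

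This is precisely the hard content of the periplectic FFT, which \cite{DLZ} handles with geometric tools. To make your outline work you would need either a direct combinatorial argument organizing these mixed relations, or an auxiliary statement of the type ``a $\p(n)$-invariant in $V_n^{\otimes 2m}$ annihilated by every contraction $\beta_{ij}$ is zero'', which would drive the induction; neither is supplied.
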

This result is proved in \cite{DLZ} using geometric
methods. We present an alternative proof of their result in Appendix
\ref{appendix_A} (see Corollary \ref{appcor:contractions}), using translation
functors and Temperley-Lieb relations between them.

\section{The infinite periplectic Lie superalgebra}\label{sec:inf_p_n}
In this section we recall some facts about the category of algebraic
representations of Lie superalgebra $\p(\infty)$; for more details, see
\cite{SerInf}. We also prove some additional results which will be used later.
\subsection{Definition}
Let $V_{\infty} = \C^{\infty|\infty}$ be a countable-dimensional vector
superspace,\InnaC{ with even part $V_{\bar{0}, \infty}$ and odd part $V_{\bar{1}, \infty}$ (that is, $V_{\infty} = V_{\bar{0}, \infty} \oplus V_{\bar{1}, \infty}$)}. We fix a non-degenerate symmetric pairing form $\omega: V_{\infty} \otimes V_\infty \to \Pi \C$.
Consider the Lie superalgebra $\p(\infty) \subset \gl(\infty|\infty)$ of
finite-rank operators on $V_{\infty}$ preserving $\omega$.

For each $n \geq 1$, consider an orthogonal decomposition $V_{\infty} = V_n
\oplus V_n^{\perp}$, where $V_n = \C^{n|n}$, such that $\omega$ restricts to
non-degenerate forms on both $V_n$ and $V_n^{\perp}$.
This decomposition induces an
inclusion $\p(n) \subset \p(\infty)$ (in fact, $\p(\infty) =
\varinjlim_n \p(n)$), and defines a Lie super subalgebra
$$\p(n)^{\perp}= \p(\infty) \cap \End(V_n^{\perp}).$$ This
subalgebra commutes with $\p(n)$.

Let $\Rep(\p(\infty))$ be the abelian SM category of {\it algebraic}
$\p(\infty)$-modules: namely, modules occurring as subquotients of finite
direct
sums of tensor powers of $V_{\infty}$. This category is studied extensively
in \cite{SerInf}; see also \cite{NSS}.

\begin{remark}\label{rmk:Koszul_duality}
Let $\Rep(\mathfrak{sp}(\infty))$ be the category of algebraic \InnaD{super-}representations of the Lie algebra $\mathfrak{sp}(\infty) = \bigcup_{n \geq 1} \mathfrak{sp}(2n)$. \InnaD{This category is equivalent to the category of algebraic \InnaD{super-}representations of
$\mathfrak{spo}(\infty|\infty)$ (see \cite{SerInf}).}

The category $\Rep(\p(\infty))$ is Koszul dual
to $\Rep(\mathfrak{sp}(\infty))$ and not equivalent to it, as
stated in \cite{SerInf} (cf. \cite{NSS}). The Koszul duality can be seen as follows. Consider the category
$\Rep(\gl(\infty))$ of algebraic \InnaD{super-}representations of $\gl(\infty)$ (see \cite{PS, Ser}). This category is equivalent to the category of algebraic \InnaD{super-}representations of $\gl(\infty|\infty)$.

Let $\C^{\infty}$ be the defining representation of $\gl(\infty)$, and $\C^{\infty}_*$ its restricted dual, with the obvious pairing $\C^{\infty} \otimes \C^{\infty}_* \to
\C$.

Consider the object $V_{\infty} = \C^{\infty} \oplus \Pi\C^{\infty}_*$ in $\Rep(\gl(\infty))$.

Then the category
$\Rep(\p(\InnaB{\infty}))$ can be described as the category of finite-length
$\gl(\infty)$-equivariant (super-)modules over the Lie algebra object
$\Pi S^2(V_{\infty})$ in $\Rep(\gl(\infty))$.

In the same spirit, let $E = \left(\C^{\infty} \oplus \C^{\infty} \right)\oplus
\Pi\C^{\infty}_*$, and consider a symplectic form on $E$ given by a symplectic
form on the even part $\C^{\infty} \oplus \C^{\infty} $, and a symmetric form
on the odd part $\C^{\infty}_*$.
The superalgebra preserving such a form is $\mathfrak{spo}(\infty|\infty)$,
whose algebraic representations can be described as follows: they form the
category of finite-length
$\gl(\infty)$-equivariant (super-)modules over the Lie algebra object
$S^2(E)$ in $\Rep(\gl(\infty))$. Since $E\cong V_{\infty}$, we can consider
these as (super-)modules over the Lie algebra object
$S^2(V_{\infty})$ in $\Rep(\gl(\infty))$.

The corresponding enveloping algebras of the Lie algebras $\Pi S^2 (V_\infty)$, $S^2(V_{\infty})$ will then be
$\bigwedge(S^2(V_{\infty}))$ and $Sym(S^2(V_{\infty}))$. These are clearly Koszul dual.

\end{remark}

We have a left-exact functor
$$\Phi_n: \Rep(\p(\infty)) \longrightarrow \Rep(\p(n)), \;\; M \longmapsto
M^{\p(n)^{\perp}}$$ as well as an exact SM functor
$$Res: \Rep(\p(\infty)) \longrightarrow \Rep(\gl(\infty))$$
retricting to the even part $\gl(\infty)$ of $\p(\infty)$.

\begin{lemma}\label{lem:Phi_monoidal}
 The functor $\Phi_n$ is a \InnaA{SM functor}.
\end{lemma}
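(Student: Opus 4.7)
The goal is to equip $\Phi_n$ with natural isomorphisms $\Phi_n(M)\otimes\Phi_n(N)\xrightarrow{\sim}\Phi_n(M\otimes N)$ and $\triv\xrightarrow{\sim}\Phi_n(\triv)$, compatible with the associativity, unit and symmetry constraints. The unit case is immediate, since $\triv$ is fixed by all of $\p(\infty)$. For the tensor case, note that because $\p(n)^\perp$ is a Lie subalgebra acting on $M\otimes N$ via the primitive coproduct, the inclusion $M^{\p(n)^\perp}\otimes N^{\p(n)^\perp}\hookrightarrow M\otimes N$ lands in $(M\otimes N)^{\p(n)^\perp}$. This gives a canonical injective natural transformation $\mu_{M,N}\colon\Phi_n(M)\otimes\Phi_n(N)\to\Phi_n(M\otimes N)$, manifestly compatible with the SM constraints. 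The content of the lemma is the bijectivity of $\mu_{M,N}$.

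The main computation is that $\Phi_n(V_\infty^{\otimes k})=V_n^{\otimes k}$. Using the orthogonal splitting $V_\infty=V_n\oplus V_n^\perp$ as a $\p(n)\oplus\p(n)^\perp$-module, expand
$$V_\infty^{\otimes k}=\bigoplus_{\epsilon\in\{0,1\}^k}V^{\epsilon_1}\otimes\cdots\otimes V^{\epsilon_k},\qquad V^0=V_n,\ V^1=V_n^\perp.$$
Since $\p(n)^\perp$ acts trivially on the $V_n$-factors, each summand contributes
$$V_n^{\otimes\#\{i:\epsilon_i=0\}}\otimes\bigl((V_n^\perp)^{\otimes\#\{i:\epsilon_i=1\}}\bigr)^{\p(n)^\perp}.$$
The claim therefore reduces to the sublemma that $(V_n^\perp)^{\otimes m}$ has no $\p(n)^\perp$-invariants for $m\geq 1$. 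Since $V_n^\perp\cong V_\infty$ as a $\p(n)^\perp\cong\p(\infty)$-module, this is the assertion $\bigl(V_\infty^{\otimes m}\bigr)^{\p(\infty)}=0$ for $m\geq 1$. Any $\p(\infty)$-invariant is in particular a $\gl(\infty)$-invariant, and tensor powers of the defining representation of $\gl(\infty)$ have no algebraic invariants: the candidate invariants live in completions (e.g.\ the identity in $\mathbb{C}^\infty\otimes(\mathbb{C}^\infty)^*$) and do not lie in the algebraic tensor product. Alternatively, one can invoke Proposition \ref{prop:contractions}: any such invariant would sit in some finite $V_N^{\otimes m}$ and fail to be annihilated by the generators of $\p(N')$ mixing $V_N$ with $V_{N'}\setminus V_N$ for $N'\gg N$.

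Granting this, $\mu_{V_\infty^{\otimes k},V_\infty^{\otimes l}}$ becomes the identity map on $V_n^{\otimes(k+l)}$, hence an isomorphism. To extend to arbitrary algebraic modules $M,N\in\Rep(\p(\infty))$, I would exploit left-exactness of $\Phi_n$ and the fact that every algebraic $\p(\infty)$-module is a subquotient of a finite direct sum of tensor powers of $V_\infty$. Fixing $N_0$ and viewing both sides of $\mu_{-,N_0}$ as left-exact functors in $M$, one presents $M$ as the kernel of a morphism $T_1\to T_2$ with $T_i$ sums of tensor powers; a standard diagram chase, combined with the already established isomorphism on tensor powers, propagates the isomorphism to $M$. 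Repeating the argument in the second variable yields $\mu_{M,N}$ for all $M,N$.

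The main obstacle is the extension step, since $\Phi_n$ is only left exact: one must ensure that presentations as kernels of maps between sums of tensor powers exist and behave well under $\otimes$, and that passing from tensor powers to subquotients respects $\mu$. This is handled by combining left-exactness in each variable with the observation that the category of algebraic $\p(\infty)$-modules is generated by tensor powers of $V_\infty$ in a way compatible with $\Phi_n$ and the SM structure, as developed in \cite{SerInf}.
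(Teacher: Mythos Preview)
Your approach is sound and can be made complete, but it differs from the paper's argument, and the gap you yourself flag as the ``main obstacle'' is real until you invoke one more structural fact.

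Both arguments reduce to showing that the invariants functor $\Hom_{\p(\infty)}(\C,-)$ on $\Rep(\p(\infty))$ is monoidal, and both use that positive tensor powers of $V_\infty$ have no $\p(\infty)$-invariants. The paper, however, avoids your two-variable bootstrap entirely: it uses that $\C$ is a simple \emph{injective} object in $\Rep(\p(\infty))$, so any $M$ splits as $\C^{\oplus m}\oplus M'$ with $\Hom_{\p(\infty)}(\C,M')=0$; then, since tensor powers of $V_\infty$ are injective and generate the injectives, $M'$ embeds into a direct sum $I$ of \emph{positive} tensor powers, and likewise $K'\subset J$. Hence $M'\otimes K'\subset I\otimes J$, which is again a sum of positive tensor powers and therefore has no invariants. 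This finishes the proof in one stroke.

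Your extension step needs exactly this injectivity fact to work. You want to present an arbitrary $M$ as $\ker(T_1\to T_2)$ with $T_i$ sums of tensor powers, but objects of $\Rep(\p(\infty))$ are a priori only \emph{subquotients} of such sums; to upgrade ``subquotient'' to ``subobject'' (and hence to a kernel presentation) you need that the category has enough injectives and that these are summands of sums of $V_\infty^{\otimes k}$, which is the result from \cite{SerInf} the paper cites. Once you state this explicitly, your left-exactness diagram chase in each variable goes through cleanly. So your argument is correct but less direct: you rebuild via presentations what the paper obtains immediately from the splitting $M=\C^{\oplus m}\oplus M'$ afforded by injectivity of $\C$.
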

\begin{proof}
 We need to check that $$M^{\p(n)^{\perp}} \otimes K^{\p(n)^{\perp}} \cong (M
\otimes K)^{\p(n)^{\perp}}$$ for any $M, K$. We clearly have an inclusion
$\subset$, so we only need to check that we have an isomorphism of vector
spaces.
Let us now forget about the action of $\p(n)$ and consider
$(-)^{\p(n)^{\perp}}$ as composition of
functors $$Res^{\p(\infty)}_{\p(n)^{\perp}}: \Rep(\p(\infty)) \longrightarrow
\Rep(\p(\infty)) $$ (notice that we use here that $\p(n)^{\perp}
\cong \p(\infty)$) and invariants $\Hom_{\p(\infty)}(\C, -)$.
Hence it is enough to check that the functor of taking invariants
$\Hom_{\p(\infty)}(\C, -)$ in $\Rep(\p(\infty))$ is a SM $\sVect$-functor.

We now use the fact that $\C$ is a simple injective object in
$\Rep(\p(\infty))$,
as is any tensor power of $V_{\infty}$ (these generate the subcategory of
injective objects under taking $\oplus$ and direct summands; see
\cite{SerInf}).
This allows us to write for any $M, K \in \Rep(\p(\infty))$
$$M = \C^{\oplus m} \oplus M', \;\; K = \C^{\oplus k} \oplus K'$$ where
\begin{equation}\label{eq:p_infty}
 \dim \Hom_{\p(\infty)}(\C, M') =\dim \Hom_{\p(\infty)}(\C, K')=0
\end{equation}
 Hence it remains to check that $\dim \Hom_{\p(\infty)}(\C, M' \otimes K')=0$.

Indeed, let $I, J$ be direct sums of tensor powers of $V_{\infty}$ such that
$M'
\subset I$, $K' \subset J$. By \eqref{eq:p_infty}, we may choose $I, J$ which
do
not have $\C$ as direct summands. Hence $I \otimes J$ is also a direct sum of
{\it positive} tensor powers of $V_{\infty}$, which implies that
$$\dim \Hom_{\p(\infty)}(\C, M' \otimes K')= \dim \Hom_{\p(\infty)}(\C, I
\otimes J) =0.$$ This completes the proof of the lemma.

\end{proof}

\subsection{Restriction functors}\label{ssec:restr_functors}
\InnaC{Let $p,  q \geq 0$ such that $p+q = n$. Let $W_{p,q}\subset V_n$ be an
isotropic subspace of dimension $(p|q)$ and
$W'_{q,p}$ be another isotropic subspace of dimension $(q|p)$ such that
$V_n=W_{p,q}\oplus W'_{q,p}$. Note that the non-degenerate symmetric form
$\beta$
establishes an isomorphism $W'_{p,q}\simeq \Pi W_{p,q}^*$. Let
$$\g_{p,q}=\{X\in \p(n)\,|\, X(W_{p,q})\subset W_{p,q},\,X(W'_{q,p})\subset
W'_{q,p}\}.$$
\begin{remark}
 Choose a basis $e_1, \ldots, e_n, e_{1'}, \ldots, e_{n'}$ for
$V_n$ such that $$W_{p,q} = span\{e_1, \ldots, e_p, e_{(p+1)'}, \ldots,
e_{n'}\}.$$ Then $X \in \g_{p,q}$ would be of the form
$$\left(\begin{array}{cc|cc}
A &{} &{} &B \\
{} &-D^t &B^t &{} \\ \hline
{} &-C^t &-A^t &{} \\
C &{} &{} &D
  \end{array}\right)$$
\end{remark}

One can easily see that $\g_{p,q}$ is isomorphic to
$\gl(p|q)=\mathfrak{gl}(W_{p,q})$, where the isomorphism is defined by
$$\gl(p|q) \to \g_{p,q}, \;\; x\mapsto (x \oplus \Pi x^*): W_{p,q}\oplus
W'_{q,p} \to W_{p,q}\oplus W'_{q,p} = V_n.$$

Consider the inclusion of Lie superalgebras $\g_{p,q}\simeq \gl(p|q) \subset
\p(n)$. This induces a tensor $\sVect$-functor $$Res: \Rep(\p(n))
\longrightarrow \Rep(\gl(p|q))$$

Applying this functor to the standard representation $V_n = \C^{n|n}$ of
$\p(n)$, we have: $Res(V) = \C^{p|q} \oplus \Pi (\C^{p|q})^*$.}
\begin{lemma}\label{lem:Phi_res_commute}
 We have a natural isomorphism
 $$\xymatrix{&\Rep(\p(\infty)) \ar[rr]^-{Res}
\ar[d]^{\Phi_n} &{}
&\Rep(\gl(\infty|\infty))
\ar[d]^{\Gamma_{p|q}}\\ &\Rep(\p(n)) \ar@{=>}[rru] \ar[rr]^-{Res}
&{} &\Rep(\gl(p|q)) }$$
for any $p+q=n$; here $\Gamma_{p|q}: \Rep(\gl(\infty)) \to
\Rep(\gl(p|q))$ is the functor of invariants with respect to the
corresponding Lie super subalgebra $$\gl(p|q)^{\perp} =
\End\left(\left(\C^{p|q}\right)^{\perp}\right) \subset
\gl(\infty|\infty).$$
\end{lemma}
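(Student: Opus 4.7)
The plan is to construct a canonical natural transformation between the two composite functors $Res \circ \Phi_n$ and $\Gamma_{p|q} \circ Res$, regarded as left-exact functors $\Rep(\p(\infty)) \to \Rep(\gl(p|q))$, and to verify it is an isomorphism by reducing to a computation on injective cogenerators of $\Rep(\p(\infty))$.

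First I would observe left-exactness: $\Phi_n$ and $\Gamma_{p|q}$ are both left-exact (as invariants under Lie subalgebra actions), while the two restriction functors are exact. Next I would construct the natural transformation. Choosing an isotropic decomposition $V_\infty = W_\infty \oplus W'_\infty$ compatibly with $V_n = W_{p,q} \oplus W'_{q,p}$ and $V_\infty = V_n \oplus V_n^\perp$ identifies $\gl(\infty|\infty)$ with $\g_{\infty,\infty} \subset \p(\infty)$, under which $\gl(p|q)^\perp = \End(W_\infty \cap V_n^\perp)$ sits inside $\g_{\infty,\infty} \cap \p(n)^\perp$. Since $\gl(p|q) = \g_{p,q}$ commutes inside $\p(\infty)$ with both $\p(n)^\perp$ and $\gl(p|q)^\perp$, the relation between these subalgebras yields a canonical $\gl(p|q)$-equivariant comparison map between the two invariant subspaces, natural in $M$.

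To prove this natural transformation is an isomorphism, I would appeal (as in the proof of Lemma \ref{lem:Phi_monoidal}) to the fact that every object of $\Rep(\p(\infty))$ embeds into a finite direct sum of tensor powers $V_\infty^{\otimes k}$, which are injective cogenerators of the category. By left-exactness of both functors, it then suffices to verify the natural transformation is an isomorphism on each $V_\infty^{\otimes k}$. On such a tensor power, the decomposition $V_\infty = V_n \oplus V_n^\perp$ as a $(\p(n) \times \p(n)^\perp)$-module reduces the computation of $\Phi_n(V_\infty^{\otimes k})$ to computing the invariants $((V_n^\perp)^{\otimes j})^{\p(n)^\perp}$ for $j \leq k$, and by the $\p(\infty)$-analogue of Proposition \ref{prop:contractions} these are spanned by products of $\omega$-contractions. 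A parallel computation, using the isotropic decomposition $V_\infty = W_\infty \oplus W'_\infty$, yields an explicit spanning set for the $\gl(p|q)^\perp$-invariants in terms of standard $\gl(W_\infty \cap V_n^\perp)$-contractions between paired factors.

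The main obstacle will be the combinatorial matching: one must verify that the canonical natural transformation sends the periplectic contraction basis to the general linear contraction basis, in a $\gl(p|q)$-equivariant manner. This amounts to showing that the restriction of $\omega$ to $V_n^\perp$, viewed through the isotropic decomposition $V_n^\perp = (W_\infty \cap V_n^\perp) \oplus (W'_\infty \cap V_n^\perp)$, expands as the appropriate sum of standard $\gl$-pairings between these two isotropic halves, and that every $\gl(p|q)^\perp$-invariant on the right arises in this way, with the claimed $\gl(p|q)$-action matching on both sides.
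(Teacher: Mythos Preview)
Your strategy---construct the obvious inclusion $M^{\p(n)^\perp}\hookrightarrow M^{\gl(p|q)^\perp}$ and check it is an isomorphism on the injective cogenerators $V_\infty^{\otimes k}$ using left-exactness---is sound in outline, but it is not the paper's argument, and your execution contains a real error.

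The paper does not compute on injectives at all. Instead it reduces (exactly as in the proof of Lemma~\ref{lem:Phi_monoidal}) to the case $p=q=0$: one must show that for $M\in\Rep(\p(\infty))$ every $\tilde\g_0$-invariant vector is already $\tilde\g$-invariant. This is then proved using the \emph{large annihilator condition} on objects of $\Rep(\p(\infty))$: if a $\tilde\g_0$-invariant vector $v$ were not killed by $\tilde\g_1$, the irreducibility of the $\tilde\g_0$-module $\tilde\g_1$ would force $\tilde\g_1$ to act faithfully on $v$, contradicting the fact that $v$ is annihilated by a finite-corank subalgebra. The argument is short and structural; no contraction combinatorics enter.

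In your approach, the step ``by the $\p(\infty)$-analogue of Proposition~\ref{prop:contractions} the invariants $((V_n^\perp)^{\otimes j})^{\p(n)^\perp}$ are spanned by products of $\omega$-contractions'' is wrong. Contractions produce elements of $\Hom_{\p(\infty)}(V_\infty^{\otimes j},\C)$, not of $\Hom_{\p(\infty)}(\C,V_\infty^{\otimes j})$. In the finite case these coincide via $V_n^*\cong\Pi V_n$, but $V_\infty$ is not dualizable in $\Rep(\p(\infty))$, and in fact $\Hom_{\p(\infty)}(\C,V_\infty^{\otimes j})=0$ for $j>0$ (the socle of $V_\infty^{\otimes j}$ contains only $L_\infty(\mu)$ with $|\mu|=j$). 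The same vanishing holds on the $\gl$ side. So the ``main obstacle'' you anticipate---matching periplectic contractions with general-linear ones---does not arise: only the $j=0$ summands survive on both sides, and your natural transformation is the identity $V_n^{\otimes k}\to V_n^{\otimes k}$. With this correction your argument goes through, and is essentially the statement that both $\Phi_n$ and $\Gamma_{p|q}$ are monoidal (Lemma~\ref{lem:Phi_monoidal} and its $\gl$ analogue) applied to the generator $V_\infty$.
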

\comment{\begin{remark}
This is an example of the idea that $\Rep(\p(\infty))$ is the
universal SM $\sVect$-category generated by an object
$V_{\infty}$ and
a pairing $V_{\infty} \otimes V_{\infty} \to \Pi\C$.
\end{remark}}

Until the end of this section, we will use the following shorthand:
\begin{notation}\label{notn:g_notn}
 We denote $\tilde{\g}=\p(\infty)$, with triangular decomposition $\tilde{\g}
\cong
\tilde{\g}_{-1} \oplus \tilde{\g}_0 \oplus \tilde{\g}_1$ where $$\tilde{\g}_0
\cong \gl(\infty), \;\; \tilde{\g}_{-1}
\cong \InnaC{S^2 V_{\bar{1}, \infty}}, \;\; \tilde{\g}_1 \cong \InnaC{ \wedge^2 V_{\bar{0}, \infty}}.$$
Then the action of $\tilde{\g}_{\pm 1}$ on any $\tilde{\g}$-module is
$\tilde{\g}_0$-equivariant.

We will also use the notation ${\g}:=\p(n)$, with triangular decomposition
${\g}
\cong
{\g}_{-1} \oplus {\g}_0 \oplus {\g}_1$.
\end{notation}

\begin{proof}

In the spirit of the proof of Lemma \ref{lem:Phi_monoidal}, it is enough to
check this for $p=q=0$: namely, it is enough to check that for any
$\tilde{\g}$-module $M$ and any $\tilde{\g}_0$-map $f:\C \to  M$,
this map is in fact a map of $\tilde{\g}$-modules.

In particular, the $\tilde{\g}_1$-module structure on $M$ will give a
$\tilde{\g}_0$-equivariant
map $\tilde{f}: \tilde{\g}_1\otimes \C \to M$; the $\tilde{\g}_0$-module
$\tilde{\g}_1$ is
irreducible, hence this map is either $0$ (which means that the original map
$f$ was $\tilde{\g}_{1}$-equivariant, as required) or injective. In the latter
case, we
see that $\tilde{\g}_1$ acts faithfully on the image of $f$. Yet this
contradicts the
``large annihilator condition'' given in \cite[Proposition 4.2]{SerInf} on the
objects of
$\Rep(\p(\infty))$: every vector in $M$ must be annihilated by some
finite-corank subalgebra of $\tilde{\g}$, and such a subalgebra must have a
non-trivial
intersection with $\tilde{\g}_1$. Hence $f$ was $\tilde{\g}_{1}$-equivariant,
and similarly one
shows that $f$ is $\tilde{\g}_{-1}$-equivariant as well. This proves the result
of the
lemma.
\end{proof}

\begin{corollary}\label{cor:Phi_res_borel_commute}
 We have a natural isomorphism
 $$\xymatrix{&\Rep(\tilde{\g}) \ar[rr]^-{Res}
\ar[d]^{\Phi_n} &{}
&\Rep(\tilde{\g}_0 \oplus \tilde{\g}_{\pm 1})
\ar[d]^{\Gamma_n}\\ &\Rep(\g)  \ar@{=>}[rru] \ar[rr]^-{Res}
&{} &\Rep(\g_0) }$$ induced by the functors in Lemma
\ref{lem:Phi_res_commute}.
\end{corollary}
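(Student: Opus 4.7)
The plan is to deduce the corollary from Lemma \ref{lem:Phi_res_commute} by extending its natural isomorphism from restriction to the Levi $\tilde{\g}_0 \cong \gl(\infty)$ (the case $p=n,\,q=0$ of the lemma) to restriction to the full parabolic $\mathfrak{q} := \tilde{\g}_0 \oplus \tilde{\g}_{\pm 1}$. One first identifies $\Gamma_n$ explicitly: the splitting $V_\infty = V_n \oplus V_n^\perp$ induces decompositions $\tilde{\g}_0 = \g_0 \oplus \g_0^\perp$ and $\tilde{\g}_{\pm 1} = \g_{\pm 1} \oplus \g_{\pm 1}^\perp$, and $\Gamma_n$ is, up to restriction to $\g_0$, the functor of invariants with respect to the Lie subalgebra $\g_0^\perp \oplus \g_{\pm 1}^\perp$ of $\mathfrak{q}$; this choice is forced by compatibility with $\Phi_n$ on the left column.

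For any $M \in \Rep(\tilde{\g})$, one then has tautological subspaces
\[
Res(\Phi_n(M)) \;=\; M^{\g_0^\perp \oplus \g_{+1}^\perp \oplus \g_{-1}^\perp} \;\subseteq\; M^{\g_0^\perp \oplus \g_{\pm 1}^\perp} \;=\; \Gamma_n(Res(M))
\]
of $M$, each carrying the same induced $\g_0$-action, and the inclusion is natural in $M$. This produces the candidate natural transformation; the entire content of the corollary reduces to the reverse inclusion, namely that any $v \in M$ killed by $\g_0^\perp$ and by $\g_{\pm 1}^\perp$ is automatically killed by $\g_{\mp 1}^\perp$ as well.

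I would prove this reverse inclusion by imitating the argument used in Lemma \ref{lem:Phi_res_commute}. The action of $\g_{\mp 1}^\perp$ on such a vector $v$ defines a $\g_0^\perp$-equivariant map from the $\g_0^\perp$-module $\g_{\mp 1}^\perp$ into $M$. Since $\g_{\mp 1}^\perp$ is simple as a $\g_0^\perp \cong \gl(\infty)$-module (this is the analogue at the ``perpendicular'' level of the statement used in the lemma), this map is either zero or injective; injectivity would force $\g_{\mp 1}^\perp$ to act faithfully on a nonzero submodule of $M$, contradicting the large annihilator condition of \cite[Proposition 4.2]{SerInf}, which says the annihilator of any vector in an object of $\Rep(\p(\infty))$ has finite corank in $\tilde{\g}$ and so must intersect the infinite-dimensional subalgebra $\g_{\mp 1}^\perp$ nontrivially.

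The main obstacle will be the bookkeeping: verifying the simplicity of $\g_{\mp 1}^\perp$ as a $\g_0^\perp$-module, and confirming that the decomposition $\g^\perp = \g_0^\perp \oplus \g_{+1}^\perp \oplus \g_{-1}^\perp$ compatible with the triangular decompositions of both $\g$ and $\tilde{\g}$ identifies $\g_0^\perp \oplus \g_{\pm 1}^\perp$ with the correct subalgebra of $\mathfrak{q}$. Once these items are settled, naturality in $M$ and $\g_0$-equivariance of the comparison are both immediate from the construction.
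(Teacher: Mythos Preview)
Your argument is correct. The paper gives no separate proof for this corollary, simply stating that it is ``induced by the functors in Lemma~\ref{lem:Phi_res_commute}'', so your write-up is a legitimate way to fill in the details, and your use of the large annihilator condition is exactly the mechanism behind that lemma.

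One simplification worth noting: you do not actually need to re-run the irreducibility/large annihilator argument for $\g_{\mp 1}^\perp$. The proof of Lemma~\ref{lem:Phi_res_commute} (the case $p=q=0$) already establishes that for any $N$ in $\Rep(\p(\infty))$, every $\tilde{\g}_0$-invariant vector is automatically $\tilde{\g}$-invariant. Since $M\rvert_{\p(n)^\perp}$ is again an algebraic $\p(\infty)$-module (via $\p(n)^\perp\cong\p(\infty)$, as used in the proof of Lemma~\ref{lem:Phi_monoidal}), applying that statement with $\p(n)^\perp$ in the role of $\tilde{\g}$ gives $M^{\g_0^\perp}=M^{\p(n)^\perp}$ outright. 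A fortiori $M^{\g_0^\perp\oplus\g_{\pm 1}^\perp}=M^{\p(n)^\perp}$, which is your reverse inclusion. So the ``bookkeeping obstacles'' you flag (simplicity of $\g_{\mp 1}^\perp$ over $\g_0^\perp$, compatibility of the triangular decompositions) are genuine but already absorbed into the lemma; the corollary really is immediate once one reads the lemma's proof as the statement $M^{\g_0^\perp}=M^{\p(n)^\perp}$.
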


The isomorphism classes of simple modules in $\Rep(\p(\infty))$ are parametrized (up to \InnaB{parity} shift)
by non-decreasing integer sequences $\lam=(\lambda_1, \lambda_2, \ldots)$ such
that $\lambda_n =0$ for $n>>0$. Let $L_{\infty}(\lam)$ denote the simple $\p(\infty)$-representation corresponding to $\lam$, with even highest weight vector.

Alternatively, these can be parametrized by partitions of arbitrary
size, as done in \cite{SerInf}, with sequence $\lambda$ as above corresponding
to Young diagram $\pmb{\lam}$ (notation as in Section
\ref{ssec:notn:periplectic}).

Let $\underline{\lambda}^-$ denote the set of non-decreasing integer sequences
$\mu= (\mu_1, \mu_2, \ldots)$ which differ from $\lambda$ by exactly one
entry\footnote{For any $\mu \in \underline{\lambda}^{\pm}$, $\pmb{\mu}$ is
obtained from $\pmb{\lam}$ by adding or removing one box.},
and $\sum_i {\lam_i
- \mu_i} =1$. Similarly, let  $\underline{\lambda}^+$ denote the set of
non-decreasing integer sequences $\mu = (\mu_1, \mu_2, \ldots)$ which differ
from $\lambda$ by exactly one entry, and $\sum_i {\lam_i - \mu_i} =-1$.

\InnaA{This parametrization of simples is non-orthodox, see examples below.}
\begin{example}
\InnaA{The natural representation $V_{\infty}$ of $\p(\infty)$ has highest weight $-\eps_1$,
with odd highest weight vector; hence $$V_{\infty} \cong \Pi L_{\infty}(-\eps_1) = \Pi L_{\infty}\left(\yng(1)\right).$$ Similarly, we have $$\wedge^2 V_{\infty} \cong L_{\infty}(-2\eps_1) = L_{\infty}\left(\yng(1,1) \right), \;\; S^2 V_{\infty}
\cong L_{\infty}(-\eps_1 - \eps_2) = L_{\infty}\left(\yng(2) \right).$$}
\end{example}
The following lemma is proved in \cite{SerInf}:
\begin{lemma}\label{lem:ses_infty}
  For any $\lambda $ as above, we have a short
exact
sequence
 $$ 0 \to \bigoplus_{\mu\in \underline{\lambda}^-} \Pi L_{\infty}(\mu)
\longrightarrow
L_{\infty}(\lambda) \otimes V_{\infty} \longrightarrow \bigoplus_{\mu \in
\underline{\lambda}^+} L_{\infty}(\mu) \to 0$$
\end{lemma}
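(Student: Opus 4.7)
The plan is to analyze $L_\infty(\lambda) \otimes V_\infty$ in three steps: first pin down its composition factors, then arrange them via the dominance order on weights, and finally track parities.

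First, I would restrict along $Res: \Rep(\p(\infty)) \to \Rep(\gl(\infty))$ of Section \ref{ssec:restr_functors}. Since $Res(V_\infty) \cong \C^\infty \oplus \Pi\,\C^\infty_*$, the Pieri-type rules for $\Rep(\gl(\infty))$ (cf.\ \cite{SerInf}) decompose $Res(L_\infty(\lambda)) \otimes Res(V_\infty)$ in terms of one-box additions and removals to the underlying bipartition. Matching these against the $\p(\infty)$-simples through the Koszul duality of Remark \ref{rmk:Koszul_duality}, each composition factor of $L_\infty(\lambda) \otimes V_\infty$ must be (up to parity shift) of the form $L_\infty(\mu)$ for some $\mu \in \underline{\lambda}^- \cup \underline{\lambda}^+$, each appearing with multiplicity one.

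To identify the socle and the top, I would use the dominance order from Section \ref{sssec:notn:weight_p_n}: $\mu \in \underline{\lambda}^-$ satisfies $\mu > \lambda$, whereas $\mu \in \underline{\lambda}^+$ satisfies $\mu < \lambda$. Since $\Rep(\p(\infty))$ is a highest weight category (see \cite{SerInf}), composition factors whose weights strictly dominate all others must lie in the socle, and dually for the top. Given the list of factors above, this is consistent only with the claimed short exact sequence. The parity shift on the socle side is explained by locating the candidate highest weight vector of weight $\lambda - \eps_i$ inside $L_\infty(\lambda) \otimes V_\infty$ as $v_\lambda \otimes v_{i'}$, where $v_{i'}$ is an odd basis vector of $V_\infty$; the generators producing weights $\lambda + \eps_i$ in $\underline{\lambda}^+$ come from even basis vectors of $V_\infty$ and so carry no parity shift.

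The main obstacle I expect is verifying the semisimplicity of the socle and of the cokernel, namely the vanishing of $\Ext^1$ between the various $L_\infty(\mu)$'s within $\underline{\lambda}^-$ (and similarly within $\underline{\lambda}^+$). This should follow either from the block decomposition of $\Rep(\p(\infty))$ established in \cite{SerInf}, or more directly from the self-duality $V_\infty \cong \Pi V_\infty^*$ together with the large annihilator condition of \cite[Prop.\ 4.2]{SerInf}, which together constrain the space of highest-weight vectors in $L_\infty(\lambda) \otimes V_\infty$.
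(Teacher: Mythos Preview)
The paper does not prove this lemma at all; it simply records it as a result from \cite{SerInf}. So there is no ``paper's own proof'' to compare against here, and any complete argument would have to be reconstructed from that reference.

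That said, your plan has a genuine gap in Step~1. Restricting to $\gl(\infty)$ does not take $L_\infty(\lambda)$ to a simple object: $Res(L_\infty(\lambda))$ is typically a nontrivial extension of many $\gl(\infty)$-simples (indexed by bipartitions), so Pieri on the restriction does not directly hand you the $\p(\infty)$-composition factors of $L_\infty(\lambda)\otimes V_\infty$. To make this route work you would need the full $\gl(\infty)$-character of $L_\infty(\lambda)$ and then to invert the restriction map on Grothendieck groups, which is exactly the hard content you are trying to avoid. Invoking Koszul duality does not sidestep this: it relates $\Ext$-groups in the two categories, not composition multiplicities in a single tensor product.

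Step~2 is also shaky as stated. The category $\Rep(\p(\infty))$ has enough injectives but not enough projectives, so it is not a highest weight category in the standard sense; the heuristic ``highest weights in the socle, lowest in the top'' does not follow from general nonsense. What is actually available in \cite{SerInf} is the explicit realization $L_\infty(\lambda)\subset S^{\pmb{\lambda}^{\vee}}V_\infty$ as an intersection of kernels of contraction maps (cf.\ the description recalled in the proof of Proposition~\ref{prop:Phi_simples_to_delta}). The argument there proceeds by analyzing highest weight vectors directly inside $L_\infty(\lambda)\otimes V_\infty$ using this concrete model, which produces the socle $\bigoplus_{\mu\in\underline{\lambda}^-}\Pi L_\infty(\mu)$ and identifies the quotient, rather than going through $\gl(\infty)$.
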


\begin{lemma}\label{lem:p_infty_cosocle}
 Every simple object in $\Rep(\p(\infty))$ occurs in the cosocle of some injective
object.
\end{lemma}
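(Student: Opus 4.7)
The plan is to prove this by induction on $|\lambda|$, using Lemma \ref{lem:ses_infty} together with the fact (cited in the proof of Lemma \ref{lem:Phi_monoidal}) that the injective objects of $\Rep(\p(\infty))$ are exactly the direct summands of finite direct sums of tensor powers of $V_\infty$. Because this description is closed under tensoring with $V_\infty$, we have the key closure property that if $I$ is injective then $I\otimes V_\infty$ is again injective. Since simples up to parity shift are classified by Young diagrams $\pmb{\lam}$ and $\Pi$ sends injectives to injectives, it suffices to realize each $L_\infty(\lambda)$ as a quotient of some injective.

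For the base case $|\lambda|=0$, we have $L_\infty(0)=\triv$, which is simple and injective, so its cosocle is itself. For the inductive step, assume $|\lambda|\geq 1$. Choose any removable corner box of $\pmb{\lam}$ and let $\mu$ be the resulting non-decreasing sequence. Then $\pmb{\mu}$ has $|\lambda|-1$ boxes and $\lambda\in\underline{\mu}^+$ by construction. By induction, there is an injective object $I_\mu$ and a surjection $I_\mu\twoheadrightarrow L_\infty(\mu)$. Tensoring with $V_\infty$ (an exact operation in the tensor category $\Rep(\p(\infty))$) produces a surjection
\[
I_\mu\otimes V_\infty\twoheadrightarrow L_\infty(\mu)\otimes V_\infty,
\]
and then the quotient map from Lemma \ref{lem:ses_infty} applied to $\mu$ composes with this to give a surjection $I_\mu\otimes V_\infty\twoheadrightarrow L_\infty(\lambda)$, since $L_\infty(\lambda)$ is a direct summand of $\bigoplus_{\nu\in\underline{\mu}^+}L_\infty(\nu)$. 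By the closure observation above, $I_\mu\otimes V_\infty$ is injective, so $L_\infty(\lambda)$ appears in its cosocle as required.

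The only point where care is needed is the existence of a removable corner whose removal keeps us within the parametrizing set of non-decreasing integer sequences with finitely many nonzero (non-positive) entries; this is a standard combinatorial fact about Young diagrams. No genuinely difficult step occurs, since the hard analytic content (exactness of $-\otimes V_\infty$ as expressed by Lemma \ref{lem:ses_infty} and the description of injectives) has already been quoted from \cite{SerInf}.
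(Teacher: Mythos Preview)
Your inductive argument contains a direction error that cannot be repaired by relabeling. With the paper's conventions, for $\mu\in\underline{\lambda}^{-}$ one has $\sum_i(\lambda_i-\mu_i)=1$, so $|\mu|=|\lambda|+1$ (adding a box), while $\mu\in\underline{\lambda}^{+}$ means $|\mu|=|\lambda|-1$ (removing a box). You chose $\mu$ with $|\mu|=|\lambda|-1$; then $\lambda$ is obtained from $\mu$ by \emph{adding} a box, so $\lambda\in\underline{\mu}^{-}$, not $\underline{\mu}^{+}$. Lemma~\ref{lem:ses_infty} therefore exhibits $\Pi L_\infty(\lambda)$ only as a \emph{subobject} of $L_\infty(\mu)\otimes V_\infty$, not as a quotient. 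The sanity check $\lambda=0$ already shows the asymmetry: $\triv\otimes V_\infty\cong\Pi L_\infty(-\varepsilon_1)$ has empty $\underline{0}^{+}$ and the single-box simple sits in the socle.

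This is not a cosmetic issue: the short exact sequence of Lemma~\ref{lem:ses_infty} places the simples with one \emph{fewer} box in the quotient and those with one \emph{more} box in the sub. Thus tensoring a known cosocle constituent with $V_\infty$ and passing to the quotient only produces simples of \emph{smaller} size, so your induction on $|\lambda|$ cannot climb upward. The alternative of running the induction from larger $|\lambda|$ to smaller has no base case, and the category is not rigid, so one cannot simply dualize to turn the socle statement (which your argument does prove) into a cosocle statement.

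The paper's proof proceeds quite differently: it invokes the Koszul duality with $\Rep(\mathfrak{sp}(\infty))$ (Remark~\ref{rmk:Koszul_duality}) to compute the layers of the socle filtration of each indecomposable injective $Y(\pmb{\zeta})$ in terms of Littlewood--Richardson coefficients with quasi-symmetric partitions, and then exhibits, for a given $\pmb{\beta}$, a specific $\pmb{\zeta}=\pmb{\delta}+\pmb{\beta}$ (with $\pmb{\delta}$ a suitable rectangle) such that $L_\infty(\pmb{\beta})$ lies in the \emph{top} socle layer of $Y(\pmb{\zeta})$, hence in its cosocle.
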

\begin{proof}
 Recall that the isomorphism
classes of simple objects (up to \InnaB{parity} shift) in
$\Rep(\p(\infty))$ may be enumerated by Young
diagrams of arbitrary size.
For any Young diagram $\pmb{\beta}$, let $L_{\infty}(\pmb{\beta})$
denote
the corresponding simple object with even highest weight vector, and $Y(\pmb{\beta})$ its injective hull.

We use
Koszul duality between $\Rep(\p(\infty))$ and $\Rep(\mathfrak{sp}(\infty))$ (see Remark \ref{rmk:Koszul_duality})
to compute the multiplicities of composition factors in the socle filtration of
the indecomposable injective objects of $\Rep(\p(\infty))$. The injective resolutions of simple objects in $\Rep(\mathfrak{sp}(\infty))$ are given in \cite[4.3.5, 4.3.9]{SS}; this immediately implies that for any Young
diagrams $\pmb{\beta}, \pmb{\zeta}$, and any $k \geq 0$, we have:
$$[\overline{soc}^k Y(\pmb{\zeta}):
L_{\infty}(\pmb{\beta})] = \sum_{\pmb{\gamma} \vdash k,
\pmb{\gamma} \in QSym}
N_{\pmb{\gamma},
\pmb{\beta}}^{\pmb{\zeta}}$$ where $QSym$ is the set of all
quasi-symmetric
partitions
(such
that $\pmb{\gamma}^{\vee}_i =
\pmb{\gamma}_i -1$) and $N_{\pmb{\gamma},
\pmb{\beta}}^{\pmb{\zeta}}$ is the Littlewood-Richardson
coefficient.

\InnaA{Let $\pmb{\beta}$ be any Young diagram. We wish to find $\pmb{\zeta}$ such that $L_{\infty}(\pmb{\beta})\subset cosoc Y(\pmb{\zeta})
$.

Take $\pmb{\delta}$
to be a rectangular
partition of length $\abs{\pmb{\beta}} $ and width
$\abs{\pmb{\beta}}+1$, and let $\pmb{\zeta} =
\pmb{\delta} + \pmb{\beta}$ be the Young diagram with $\pmb{\zeta}_i =
\pmb{\delta}_i + \pmb{\beta}_i$ for any $i$.

Then $N_{\pmb{\delta},
\pmb{\beta}}^{\pmb{\zeta}} =1$, so $[\overline{soc}^{k} Y(\pmb{\zeta}):
L_{\infty}(\pmb{\beta})] = 1$ for $k = \abs{\pmb{\delta}}$.

Let us show that $\overline{soc}^{k} Y(\pmb{\zeta})$ is contained in the cosocle; that is, we want to show that for any $\pmb{\alpha}$ such
that $\abs{\pmb{\alpha}} < \abs{\pmb{\beta}}$, we have
$[Y(\pmb{\zeta}): L_{\infty}(\pmb{\alpha})]=0$.

Indeed, assume that $N_{\pmb{\gamma}, \pmb{\alpha}}^{\pmb{\zeta}} \neq 0$ for some $\pmb{\gamma} \in QSym$.
Then $\ell(\pmb{\gamma}) \leq
\ell(\pmb{\zeta}) = \ell(\pmb{\delta})$, where $\ell()$ denotes the length of the partition, that is $\ell(\pmb{\gamma}) = \pmb{\gamma}^{\vee}_1$. Since $\pmb{\gamma} \in QSym$, we have $$\pmb{\gamma}_1 = \ell(\pmb{\gamma})+1 \leq \ell(\pmb{\delta})+1=\pmb{\delta}_1.$$
Hence $\pmb{\gamma}$ can be embedded into $\pmb{\delta}$. On the other hand, $\abs{\pmb{\gamma}} = \abs{\pmb{\zeta}} -
\abs{\pmb{\alpha}} >
\abs{\pmb{\zeta}} -
\abs{\pmb{\beta}} = \abs{\pmb{\delta}}$. This is a contradiction, thus the required statement is proved. }
\end{proof}

\section{The subcategories
\texorpdfstring{$\Rep^k(\p(n))$}{Repk}}\label{sec:Rep_k}
\subsection{Definition}\label{ssec:Rep_k_def}
Throughout this subsection, we will work with fixed $n \geq 1$, and will omit
it from the notation.
\begin{definition}\label{def:repk}
 Let $\Rep^k(\p(n)) \subset \Rep(\p(n))$ be the full subcategory of
$\p(n)$-modules occurring as subquotients in finite direct sums of
$V_n^{\otimes j}$ (or their \InnaB{parity} shifts)
for
$j=0, \ldots, k$.
\end{definition}

 Clearly, $\Rep^k(\p(n))$ is an abelian subcategory of $\Rep(\p(n))$.
Moreover, although it is not closed under $\otimes$, it is closed under
(tensor) duality: for any $M \in \Rep^k(\p(n))$, its dual $M^*$ also
belongs to $\Rep(\p(n))$.

\begin{remark} By Corollary \ref{cor:Rep_gen_by_V}, the category $\Rep(\p(n))$
is a direct limit of the subcategories
$\Rep^k(\p(n))$ when $k \to \infty$.
\end{remark}

\begin{lemma}\label{lem:simples_rep_k}
 Let $k <n+2$. The simple modules $L_n(\lambda) \in \Rep^k(\p(n))$ are precisely
those for which $\lambda$ is $k$-admissible.
\end{lemma}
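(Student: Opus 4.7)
The plan is to prove both inclusions.

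For the sufficiency direction ($\lambda$ is $k$-admissible $\Rightarrow L_n(\lambda)\in\Rep^k(\p(n))$), I would induct on $|\lambda|$. The base case $|\lambda|=0$ is immediate from $L_n(0)=\triv\in\Rep^0(\p(n))$. For the inductive step, pick a removable corner of the Young diagram $\pmb\lambda$ to obtain $\pmb\mu$ with $|\mu|=|\lambda|-1$; the weight $\mu$ is $(k-1)$-admissible, so by induction $L_n(\mu)\in\Rep^{k-1}(\p(n))$, and since $\otimes$ is biexact, $L_n(\mu)\otimes V_n\in\Rep^k(\p(n))$. Invoking Theorem~\ref{old_thrm:transl_func}, the decomposition $-\otimes V_n\cong\bigoplus_j\Theta_j$ via generalized eigenspaces of the tensor Casimir, together with the infinite Temperley--Lieb action on the Grothendieck ring of $\Rep(\p(n))$, yields a combinatorial description of the composition factors of $L_n(\mu)\otimes V_n$: they are, up to $\Pi$, the $L_n(\nu)$ for $\nu$ obtained from $\mu$ by adding or removing one box on the Young diagram $\pmb\mu$. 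This mirrors Lemma~\ref{lem:ses_infty} from the $\p(\infty)$-picture and is accurate in the stable range $k<n+2$, so $L_n(\lambda)$, obtained by re-adding the removed corner, appears as a composition factor.

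For the necessity direction, suppose $L_n(\lambda)$ is a subquotient of $V_n^{\otimes j}$ (up to $\Pi$) for some $j\le k$. Then the highest weight $\lambda$ is a weight of $V_n^{\otimes j}$; writing it as $\sum_{l=1}^j s_l\eps_{i_l}$ with $s_l\in\{\pm 1\}$ immediately gives $\sum_i|\lambda_i|\le j\le k$. The remaining sign condition $\lambda_i\le 0$ I plan to obtain via the functor $\Phi_n\colon\Rep(\p(\infty))\to\Rep(\p(n))$ of Lemma~\ref{lem:Phi_monoidal}: since $\Phi_n$ is symmetric monoidal and $\Phi_n(V_\infty)=V_n$, we have $V_n^{\otimes j}=\Phi_n(V_\infty^{\otimes j})$, and the composition factors of $V_\infty^{\otimes j}$ in $\Rep(\p(\infty))$ are simples $L_\infty(\nu)$ indexed by partitions (so $\nu_i\le 0$ automatically), by the classification recalled in Section~\ref{sec:inf_p_n}. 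A stabilization argument in the range $|\lambda|\le k<n+2$ then transfers this non-positivity constraint down to the $\p(n)$-composition factors of $V_n^{\otimes j}$.

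The main obstacle is precisely the sign condition $\lambda_i\le 0$ in the necessity direction. A pure weight-theoretic argument is insufficient: dominant weights like $2\eps_n$ genuinely appear in $V_n^{\otimes 2}$ without being highest weights of any subquotient. Concretely, the weight vector $v_n\otimes v_n\in V_n^{\otimes 2}$ fails to be annihilated by the positive odd root vector at $-\eps_1-\eps_n$ via a Leibniz computation, so it is not a $\p(n)$-highest-weight vector. Uniformizing such obstructions by direct calculation across all $V_n^{\otimes j}$ is combinatorially awkward; I would therefore take the $\p(\infty)$-stabilization route sketched above, which fits naturally with the paper's general strategy of importing structural constraints from the infinite-rank setting.
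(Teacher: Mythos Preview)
Your sufficiency direction is workable but underspecified: the sentence ``composition factors of $L_n(\mu)\otimes V_n$ are the $L_n(\nu)$ with $\nu$ obtained by adding or removing a box'' is not what Theorem~\ref{old_thrm:transl_func} says; that theorem only records the Temperley--Lieb relations among the $\Theta_j$, not the composition series of $\Theta_j L_n(\mu)$. What you actually need is that $L_n(\lambda)$ occurs in $L_n(\mu)\otimes V_n$ for $\lambda=\mu-\eps_i$, and the clean way to get this is via the adjunction $[\Theta_j L_n(\mu):L_n(\lambda)]=\dim\Hom(\Theta_{j+1}P_n(\lambda),L_n(\mu))$ together with the projective translation rules of \cite[Section~7]{BDE+} (Theorem~\ref{old_thrm:transl_proj}). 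Alternatively, bypass the induction entirely: for $\lambda\geq 0$ with $|\lambda|=k\leq n$, the $\gl(n)$-summand $S^{\pmb\lambda}V_{\bar 1,n}\subset V_{\bar 1,n}^{\otimes k}\subset V_n^{\otimes k}$ generates a $\p(n)$-submodule with head $L_n(\lambda)$.

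The real problem is your necessity direction. The $\Phi_n$-route cannot reach the stated range: Lemma~\ref{lem:Phi_exact} gives exactness of $\Phi_n$ on $\Rep^k(\p(\infty))$ only for $n>2k$, whereas the lemma is claimed for all $k<n+2$ (e.g.\ $k=n$). Even inside the range $n>2k$, your ``stabilization argument'' is not self-contained: to pass from the $\p(\infty)$-composition factors of $V_\infty^{\otimes j}$ to the $\p(n)$-composition factors of $V_n^{\otimes j}$ you would need to know that $\Phi_n(L_\infty(\nu))$ has only composition factors $L_n(\mu)$ with $\mu\geq 0$. That is essentially the content of Proposition~\ref{prop:Phi_simples_to_delta} together with Proposition~\ref{prop:simples_delta_k_n} and Corollary~\ref{cor:delta_k_n_max_quo}, all of which come \emph{after} this lemma and some of which use the description of simples in $\Rep^k(\p(n))$ that you are trying to establish.

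The paper's argument avoids $\Phi_n$ entirely and proceeds by induction on $k$ directly in $\Rep(\p(n))$: if $L_n(\mu)$ occurs in $L_n(\lambda)\otimes V_n$ with $\lambda$ $k$-admissible, then $|\mu|\leq k+1$ follows from the standard filtration of $\Delta_n(\lambda)\otimes V_n$, and the only way $\mu\not\geq 0$ could arise is via $\mu=\lambda+\eps_n$ with $\lambda_n=0$; this is then excluded by a short translation-functor computation showing that no $\Theta_{j+1}P_n(\mu)$ can hit $P_n(\lambda)$, using the combinatorial rules of \cite[Section~7]{BDE+}. That argument works uniformly for $k<n+2$ and uses nothing from later sections.
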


\begin{proof}
 We prove the statement by induction on $k\in \Z_{\geq 0}$, with the trivial
base case
$k=0$. Assume the claim is true for $k$; we will prove it for $k+1$.

First of all, we show that for any $(k+1)$-admissible $\lambda$, the
module $L_n(\lambda)$ lies inside
$\Rep^{k+1}(\p(n))$. Indeed, we only need to check this for weights $\lambda$
such that $\abs{\lambda} =
k+1$, $\lambda \geq 0$, in which case this follows directly from
the construction above.

Now, we need to check that for any $k$-admissible $\lambda$ and any
subquotient $L_n(\mu)$ of $V_n \otimes
L_n(\lambda)$, its highest weight $\mu$ is $(k+1)$-admissible.

Indeed, $\Delta_n(\lambda) \otimes V_n$ has a filtration by standard objects
$\Delta(\mu)$ with $\mu \in \{\lambda \pm \eps_i \rvert i \geq 1\}$ (see
\cite[Section 4]{BDE+}). Hence for any subquotient $L_n(\mu)$ of $V_n \otimes
L_n(\lambda)$, we have $\mu \leq \lambda \pm
\eps_i$ for some $i$; thus $\mu$ satisfies: $\abs{\mu} \leq \abs{\lambda}
+1 \leq k+1$.

To check that $\mu \geq 0$, it is enough to check that
$\lambda \geq 0$ implies
that $\mu \geq 0$ for any $\mu = \lambda \pm \eps_i$, $i \geq 1$.

Let us check that $\mu \geq 0$ for $\mu = \lambda \pm \eps_i$. This is clearly
true whenever $\mu = \lambda - \eps_i$ or when $\mu = \lambda + \eps_i$, $i<n$,
so we may assume that $i=n$, $\lambda_n =0$, $\mu_n =1$. In that case $d_{\mu}$
would be obtained from $d_{\lam}$ by moving the black ball in position $n-1$ to
position $n$:
$$d_{\lambda} = \xymatrix @C=1pc @M =0pc { &\underset{n-2}{\bullet}
&\underset{n-1}{\bullet} &\underset{n}{\circ} &\underset{n+1}{\circ} } \;\;\;
\text{  and  }\;\;\;
d_{\mu} = \xymatrix @C=1pc @M =0pc { &\underset{n-2}{\bullet}
&\underset{n-1}{\circ} &\underset{n}{\bullet} &\underset{n+1}{\circ}  }$$

In that case, $L_n(\mu)$ would be a subquotient of $\Theta_i L_n(\lambda)$ for
some $i$ (in fact, for $i=n+1$), where $\Theta_i$ is the translation functor
as in Theorem \ref{old_thrm:transl_func}. Then
$$\dim \Hom_{\p(n)}(\Theta_{i+1} P_n(\mu),L_n(\lambda)) =
\dim \Hom_{\p(n)}(P_n(\mu), \Theta_i L_n(\lambda)) = 1$$ and hence
$\Theta_{i+1} P_n(\mu)$ has $P_n(\lambda)$ as a direct summand. But the latter
is impossible, due to the translation rules for indecomposable projectives
\cite[Section 7]{BDE+}.
This completes the proof of the lemma.
\end{proof}

\begin{lemma}\label{lem:rep_k_enough_proj}
 The category $\Rep^{k}(\p(n))$ has enough projective and injective objects.
\end{lemma}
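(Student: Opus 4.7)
The plan is to show that $\Rep^k(\p(n))$ has enough projectives; enough injectives will then follow by tensor duality, since $\Rep^k(\p(n))$ is closed under duals (as noted just after Definition \ref{def:repk}). By an easy induction on length, it suffices to exhibit, for each simple object of $\Rep^k(\p(n))$, a projective in the same subcategory surjecting onto it: given simples already covered and an object $X$ with a short exact sequence $0\to X'\to X\to L\to 0$, one combines a projective cover $P'\twoheadrightarrow X'$ (by induction) with the lift of $P_L\twoheadrightarrow L$ to $X$ (by projectivity of $P_L$ in the ambient category).

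The main input is the result of \cite{BKN} (recalled in Section \ref{sssec:notn:repr_periplectic}) that tilting modules in $\Rep(\p(n))$ are simultaneously projective and injective. Since $V_n$ is tilting, and the functor $V_n\otimes -$ admits the exact $V_n^*\otimes -$ as both a left and a right adjoint, tensoring with $V_n$ preserves both projectivity and injectivity. Hence each tensor power $V_n^{\otimes j}$ is projective-injective in $\Rep(\p(n))$ for $j\geq 1$, and it lies in $\Rep^k(\p(n))$ for $1\leq j\leq k$. Because $\Rep^k(\p(n))$ is a full subcategory closed under direct summands, every indecomposable summand of $V_n^{\otimes j}$ (or of $\Pi V_n^{\otimes j}$) is a projective-injective object of $\Rep^k(\p(n))$ itself.

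By Lemma \ref{lem:simples_rep_k} the simples of $\Rep^k(\p(n))$ are the finitely many $L_n(\lambda)$ (and $\Pi L_n(\lambda)$) for $k$-admissible $\lambda$. For each such $\lambda$ the plan is to locate an indecomposable summand of $V_n^{\otimes j}$ or $\Pi V_n^{\otimes j}$, for some $1\leq j\leq k$, whose top is $L_n(\lambda)$. For $|\lambda|\geq 1$ one takes $j=|\lambda|$ and uses the periplectic Schur--Weyl-type decomposition of $V_n^{\otimes j}$ (together with the translation functor analysis from \cite{BDE+}) to identify the required summand. For $\lambda = 0$ with $k\geq 2$, the odd form $\omega: V_n^{\otimes 2}\to \Pi\triv$ directly realizes $\triv$ as the top of an indecomposable summand of the projective $\Pi V_n^{\otimes 2}$. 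The small cases $k\in\{0,1\}$ reduce to semisimple subcategories: $\Rep^0(\p(n))\simeq \sVect$, and in $\Rep^1(\p(n))$ the only simples $\triv,L_n(-\eps_1)$ lie in distinct blocks of $\Rep(\p(n))$ (distinguished by the parity of $|\lambda|$).

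The main obstacle is verifying, for a general $k$-admissible $\lambda$ with $|\lambda|\geq 1$, that $L_n(\lambda)$ actually appears in the \emph{top} of an indecomposable summand of $V_n^{\otimes |\lambda|}$ (up to parity), rather than only as an internal composition factor buried in the radical filtration. Equivalently, one needs the non-vanishing of $\Hom_{\p(n)}(V_n^{\otimes |\lambda|}, L_n(\lambda))$ up to parity shift, which should follow by combining Theorem \ref{old_thrm:transl_proj} (the translation rules and multiplicity-free structure of indecomposable projectives) with the explicit branching for tensor powers of the natural module.
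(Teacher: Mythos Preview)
Your argument rests on the claim that $V_n$ is tilting (hence projective) in $\Rep(\p(n))$, but this is false for $n>1$. The module $V_n\cong\Pi L_n(-\eps_1)$ is simple of dimension $2n$, whereas $\Delta_n(-\eps_1)$ is a parabolically induced module of strictly larger dimension; hence $V_n$ has no standard filtration and is not tilting. One sees the same obstruction downstream: were $V_n^{\otimes 2}$ projective, the surjection $\omega:V_n^{\otimes 2}\to\Pi\triv$ would force $\Pi P_n(0)$ to split off as a direct summand and so to lie in $\Rep^2(\p(n))$; but $P_n(0)$ surjects onto $\Delta_n(0)$, which by \cite[Theorem~6.3.1]{BDE+} has composition factors $L_n(\nu)$ with $\nu\not\geq 0$, contradicting Lemma~\ref{lem:simples_rep_k}. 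So the tensor powers $V_n^{\otimes j}$ are \emph{not} projective in $\Rep(\p(n))$, and your plan to extract the needed projectives as their direct summands cannot get off the ground. (You also rely on Lemma~\ref{lem:simples_rep_k}, which carries the hypothesis $k<n+2$ absent from the statement being proved, and you explicitly leave the key surjectivity step as an unresolved obstacle.)

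The paper's proof is short and purely categorical, bypassing all of this. The full exact embedding $i:\Rep^k(\p(n))\hookrightarrow\Rep(\p(n))$ admits a left adjoint $i^*$ (the maximal quotient lying in the subcategory) and a right adjoint ${}^*i$. Because $i$ is exact, its left adjoint $i^*$ preserves projectives; applying $i^*$ to the covering map $P_n(\lambda)\twoheadrightarrow L_n(\lambda)$ in the ambient category produces a projective object of $\Rep^k(\p(n))$ surjecting onto $L_n(\lambda)$. Enough injectives follows by the dual argument with ${}^*i$.
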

\begin{proof}
 Consider the inclusion functor $i: Rep^{k}(\p(n)) \to \Rep(\p(n))$. This
functor is exact and has a left
adjoint $i^*$ and a right adjoint ${}^*i$. Then $i$ takes
projective objects to
projective objects, hence $i^*(P_n(\lambda))$ is projective in $\Rep^{k}(\p(n))$
for any $\lambda$. Furthermore, $i^*(P_n(\lambda)) \twoheadrightarrow
L(\lambda)$ for any $L(\lambda) \in \Rep^k(\p(n))$, which means that there are
enough projective objects in $\Rep^{k}(\p(n))$.

Similarly, ${}^*i$ takes injective objects to injective objects, which implies
that there are
enough injective objects in $\Rep^{k}(\p(n))$.
\end{proof}

\subsection{Standard objects in \texorpdfstring{$\Rep^k(\p(n))$}{Repk(p)}}\label{ssec:Rep_k_standard}
We now describe the standard (highest weight) objects in the category $\Rep^{k}(\p(n))$.

Let $k \leq n$.

Consider $V_{\bar{1}, n}^{\otimes k}  \subset V_n^{\otimes k}$. Then
$\g_{-1}$
acts on $V_{\bar{1}, n}^{\otimes k}$ by zero. Consider the $GL(V_{\bar{0},
n}) \times S_k$-decomposition
$$V_{\bar{1}, n}^{\otimes k} \cong \bigoplus_{\abs{\lambda}= k}
S^{\pmb{\lambda}}\InnaC{V_{\bar{1}, n}}\otimes \pmb{\lambda} \cong
\Pi^k\bigoplus_{\abs{\lambda}= k}
S^{\pmb{\lambda}^{\vee}}V_{\bar{0}, n}^\InnaC{*} \otimes \pmb{\lambda}.$$ \InnaC{By definition of $\Delta(\lambda)$ (see \cite[Section 3.1]{BDE+}),} we have
a map of $\p(n)\boxtimes \C[S_k]$-modules
\begin{equation}\label{eq:trunc_delta_def_map}
 \bigoplus_{\abs{\lambda}=  k} \Delta_n (\lambda) \otimes
\pmb{\lambda^{\vee}}
 \to \Pi^k V_n^{\otimes k}
\end{equation}
 which is non-zero on each of the summands.

 \begin{definition}\label{def:trunc_delta}
  Let $\lambda \geq 0$, $\abs{\lambda}=  k$. We denote the image of the map
$$ \Delta_n (\lambda) \longrightarrow \Pi^k
S^{\pmb{\lambda}^{\vee}} V_n $$ by $\overline{\Delta}^k_n
(\lambda)$.

More generally, for any $\lambda \geq 0$, $\abs{\lam}
\leq k$, we
denote $$\overline{\Delta}^k_n (\lambda) :=
\overline{\Delta}^{\abs{\lambda}}_n (\lambda).$$
 \end{definition}

Similarly, we define the objects $\overline{\nabla}^k_n(\lambda)$:

Consider the $\g_0 \oplus \g_1$-map $ V_n^{\otimes k} \to \InnaC{ V_{\bar{1},
n}^{\otimes k}}$ with $\g_1$ acting trivially on the latter. \InnaC{As before, we have an isomorphism of $GL(V_{\bar{0},
n}) \times S_k$-modules $$ V_{\bar{1},
n}^{\otimes k}\cong \Pi^k\bigoplus_{\abs{\lambda}= k}
S^{\pmb{\lambda}^{\vee}}V_{\bar{0}, n}^\InnaC{*} \otimes \pmb{\lambda}.$$ Hence} we have
a map of $\p(n)\boxtimes \C[S_k]$-modules
\begin{equation}\label{eq:trunc_nabla_def_map}
 \bigoplus_{\abs{\lambda}=  k} V_n^{\otimes k} \to \Pi^{k} \nabla_n
(\lambda)
\otimes
\pmb{\lambda}^{\InnaC{\vee}}
\end{equation}
 which is non-zero on each of the summands.

 \begin{definition}\label{def:trunc_nabla}
  Let $\lambda \geq 0$, $\abs{\lambda}=  k$. We denote the image of the map
$$ \Pi^{k} S^{\pmb{\lambda}^{\InnaC{\vee}}} V_n \to \nabla_n
(\lambda)$$ by $\overline{\nabla}^k_n
(\lambda)$.

More generally, for any $\lambda \geq 0$, $\abs{\lam}
\leq k$, we
denote $$\overline{\nabla}^k_n (\lambda) :=
\overline{\nabla}^{\abs{\lambda}}_n (\lambda).$$
 \end{definition}

Clearly the highest weight module $\overline{\Delta}^k_n (\lambda)$ lies in
$\Rep^k(\p(n))$, and so does its simple head
$L_n(\lambda)$. The latter is also the socle of $\overline{\nabla}^k_n
(\lambda)$.

\subsection{Connection to representations of
\texorpdfstring{$\p(\infty)$}{p(infty)}}

Let $\Rep^k(\p(\infty))$ be defined as the full subcategory of $\Rep(\p(\infty))$
whose objects occur as subquotients in finite direct sums of $\triv, V_{\infty},
V_{\infty}^{\otimes 2}, \ldots, V_{\infty}^{\otimes k}$.

Clearly, for any $n\geq 1$, $\Phi_n \left(Rep^k(\p(\infty))\right) \subset
Rep^k(\p(n))$.

\begin{lemma}\label{lem:Phi_exact}
 For any $n> 2k \geq 0$, the restriction of $\Phi_n$ to
$\Rep^k(\p(\infty))$
is exact and faithful.
\end{lemma}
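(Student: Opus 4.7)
The plan is to reduce both properties to statements about the corresponding functor on the even parts, using the commutative diagram in Corollary \ref{cor:Phi_res_borel_commute}. The two horizontal arrows in that diagram are scalar restrictions along the inclusion of the even subalgebra, so they are automatically exact and faithful. Consequently, $\Phi_n$ is exact (resp.\ faithful) on $\Rep^k(\p(\infty))$ if and only if $\Gamma_n$ is exact (resp.\ faithful) on the image subcategory $Res(\Rep^k(\p(\infty)))$.

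Next, I would exploit the concrete form of that image. Every object there is a subquotient of tensor powers $V_{\infty}^{\otimes j}\rvert_{\tilde{\g}_0\oplus \tilde{\g}_{\pm 1}}$ with $j \leq k$, and these split, via the orthogonal decomposition $V_{\infty} = V_n \oplus V_n^{\perp}$, into summands of the form $V_n^{\otimes a}\otimes (V_n^{\perp})^{\otimes b}$ with $a + b = j$. Since $\p(n)^{\perp}$ acts only on $V_n^{\perp}$-factors, $\Gamma_n$ applied to such a summand is $V_n^{\otimes a}$ tensored with the invariant space of $(V_n^{\perp})^{\otimes b}$, which (by the $\p(\infty)$-analogue of Proposition \ref{prop:contractions}) is spanned by pair-contractions using $\beta$. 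The bound $n>2k$ should ensure that, uniformly for all subquotients of bounded tensor-degree, the inclusion of invariants into the ambient tensor power admits a canonical $\tilde{\g}_0$-equivariant retraction; this natural splitting implies that $\Gamma_n$ preserves surjections, hence (combined with left exactness of invariants) exactness.

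For faithfulness I would argue as follows. Any non-zero $M\in\Rep^k(\p(\infty))$ has a simple subquotient $L_{\infty}(\pmb{\beta})$ with $\abs{\pmb{\beta}}\leq k$. By Lemma \ref{lem:p_infty_cosocle}, $L_{\infty}(\pmb{\beta})$ sits in the cosocle of an injective hull $Y(\pmb{\zeta})$, which is a summand of some tensor power $V_{\infty}^{\otimes j}$. A direct computation of $\Phi_n(V_{\infty}^{\otimes j})$ as a sum indexed by partial pair-matchings of $[j]$ produces non-zero terms $V_n^{\otimes (j-2m)}$, so $\Phi_n(Y(\pmb{\zeta}))\neq 0$; by exactness established in the previous step, one concludes $\Phi_n(L_{\infty}(\pmb{\beta}))\neq 0$, and therefore $\Phi_n(M)\neq 0$.

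The main obstacle is the splitting claim used in the exactness step: because $\Rep(\p(\infty))$ is not semisimple, one cannot simply project onto the trivial isotypic component of the $\p(n)^{\perp}$-action. The hypothesis $n > 2k$ must enter essentially here, by guaranteeing that on tensor powers of total degree at most $k$ there is enough room to separate the $V_n$- and $V_n^{\perp}$-slots so that no ``accidental'' identifications occur after taking invariants — precisely the condition that allows the invariants functor to commute with the formation of subquotients.
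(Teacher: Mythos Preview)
Your proposal has a genuine gap, and you essentially flag it yourself at the end: the ``splitting claim'' that $\Gamma_n$ preserves surjections on the relevant subcategory is never established. Saying that $n>2k$ ``should ensure'' a canonical equivariant retraction is not a proof; in fact, precisely because $\Rep(\p(\infty))$ is not semisimple, there is no obvious projection onto the trivial $\p(n)^\perp$-isotypic component that works functorially on arbitrary subquotients. Your faithfulness argument then depends on the exactness step, so it collapses as well. There is also a mismatch: you invoke Corollary~\ref{cor:Phi_res_borel_commute} (restriction to the parabolic $\tilde\g_0\oplus\tilde\g_{\pm1}$, landing in $\Rep(\g_0)$), but your verbal description is really restriction to the even part $\gl(\infty)$ followed by $\Gamma_n$ into $\Rep(\gl(n))$, i.e.\ the case $p=n$, $q=0$ of Lemma~\ref{lem:Phi_res_commute}. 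In that degenerate case no stabilization result for $\Gamma_{p|q}$ is available, so the reduction buys you nothing.

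The paper avoids this entirely by a different reduction. It uses Lemma~\ref{lem:Phi_res_commute} with a \emph{balanced} choice $p=\lfloor n/2\rfloor$, $q=n-p$, so that $\min(p,q)>k$ exactly when $n>2k$. Then it quotes \cite[Theorem~6.1.3]{EHS}, which says $\Gamma_{p|q}:\Rep^k(\gl(\infty))\to\Rep(\gl(p|q))$ is exact and faithful for $k<\min(p,q)$. Since the two horizontal $Res$ functors and $\Gamma_{p|q}$ are all exact and faithful on the relevant subcategories, and $\Phi_n$ is already known to be left exact, commutativity of the square forces $\Phi_n$ to be exact and faithful as well. The whole point is that the hard analytic work (the exactness of invariants under the size hypothesis) has already been done in the $\gl$ setting in \cite{EHS}; you are trying to redo that work from scratch in the $\p$ setting, and that is where your argument stalls.
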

\begin{proof}
The functor $$Res: \Rep(\p(\infty)) \to \Rep(\gl(\infty))$$ is an exact
monoidal functor taking $V_{\infty}$ to $\C^{\infty} \oplus \Pi \C_*^{\infty}$,
where $\C_{\infty}$ is the tensor generator of $\Rep(\gl(\infty))$.
Hence the image of $\Rep^k(\p(\infty))$ under $Res$ lies in
$\Rep^k(\gl(\infty))$, which is the full subcategory whose objects are
(up to change of \InnaB{parity}) subquotients of finite direct sums of
$\left(\C^{\infty}\right)^{\otimes r} \otimes
\left(\C_*^{\infty}\right)^{\otimes s}$ for $r+s \leq k$.

Let $p=\lfloor \frac{n}{2}\rfloor$ and let $q = n-p$. Consider the monoidal
functor $\Gamma_{p|q}: \mathtt{s}\Rep(\gl(\infty)) \to
\Rep(\gl(p|q))$. This functor is exact on the subcategory
$\Rep^k(\gl(\infty))$ when $k< \min(p, q)$, i.e. when $2k<n$ (see
\cite[Theorem 6.1.3]{EHS}), and
hence we
have a natural isomorphism
 $$\xymatrix{&\Rep^k(\p(\infty)) \ar[rr]^-{Res}
\ar[d]^{\Phi_n} &{}
&\Rep^k(\gl(\infty))
\ar[d]^{\Gamma_n}\\ &\Rep(\p(n))  \ar@{=>}[rru] \ar[rr]^-{Res}
&{} &\Rep(\gl(p|q)) }$$
where the functors are all exact and faithful except perhaps $\Phi_n:
Rep^k(\p(\infty)) \to \Rep(\p(n))$, which is known to be left exact. Hence the
latter is
exact and faithful as well.
\end{proof}

Throughout the paper, we will use the following notation:
\begin{notation}
 A
$\p(n)$-weight $\lam = (\lam_1, \lam_2, \ldots, \lam_n)$ such that $\lam \geq
0$ will be interpreted as a weight for $\p(\infty)$ by considering the
infinite sequence $ (\lam_1, \lam_2, \ldots, \lam_n, 0, 0, \ldots)$ (adding an
infinite tail of zeroes). The latter sequence will also be denoted $\lam$.

For $\lam \not\geq 0$, we set $L_{\infty}(\lam):=0$.
\end{notation}

\begin{proposition}\label{prop:Phi_simples_to_delta}
 Let $n> 2k \geq 0$, and let $\lam$
be a $k$-admissible weight for $\p(n)$.
Consider the corresponding simple module $L_{\infty}(\lambda)
\in \Rep^k(\p(\infty))$. Then $$\Phi_n(L_{\infty}(\lambda)) \cong
\overline{\Delta}_n^k(\lambda).$$
\end{proposition}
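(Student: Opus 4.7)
\emph{Plan.} The strategy is to realize both $\Phi_n(L_{\infty}(\lambda))$ and $\overline{\Delta}_n^k(\lambda)$ as the same canonical $\p(n)$-submodule of $\Pi^k S^{\pmb{\lambda}^{\vee}} V_n$: namely, the one generated over $\p(n)$ by $S^{\pmb{\lambda}^{\vee}} V_{\bar{0},n}^{*} \subset \Pi^k S^{\pmb{\lambda}} V_{\bar{1},n}$, using the identification $V_{\bar{1},n} \cong \Pi V_{\bar{0},n}^{*}$ coming from the form $\beta$.

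First, since $\Phi_n$ is SM (Lemma \ref{lem:Phi_monoidal}) and $\Phi_n(V_\infty)=V_n$, one gets $\Phi_n(\Pi^k S^{\pmb{\lambda}^{\vee}} V_\infty) \cong \Pi^k S^{\pmb{\lambda}^{\vee}} V_n$. The computation $\Phi_n(V_\infty)=V_n$ follows from the orthogonal decomposition $V_\infty = V_n \oplus V_n^{\perp}$: the summand $V_n$ is $\p(n)^{\perp}$-fixed, while $V_n^{\perp} \cong V_\infty$ viewed as a $\p(n)^{\perp}\cong \p(\infty)$-module has no non-zero invariants (the defining representation has no $\gl(\infty)$-invariants, hence none over $\p(\infty)$). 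By Lemma \ref{lem:Phi_exact}, for $n>2k$ the restriction $\Phi_n|_{\Rep^k(\p(\infty))}$ is exact and faithful, so it sends subobjects to subobjects of the same ambient object in a $\p(n)$-equivariant way.

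Next, I would establish the $\p(\infty)$-analog of Definition \ref{def:trunc_delta}. The $\gl(\infty) \times S_k$-decomposition
$$V_{\bar{1},\infty}^{\otimes k} \cong \Pi^k \bigoplus_{|\lambda|=k} S^{\pmb{\lambda}^{\vee}} V_{\bar{0},\infty}^{*} \otimes \pmb{\lambda},$$
together with the vanishing of $\tilde{\g}_{-1}$ on $V_{\bar{1},\infty}^{\otimes k}$, yields by Frobenius reciprocity a canonical $\p(\infty)\times S_k$-morphism from an induced module to $\Pi^k V_\infty^{\otimes k}$, paralleling \eqref{eq:trunc_delta_def_map}. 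The claim at the heart of the proof is that, on each $\pmb{\lambda}^{\vee}$-summand, the image of this map is a copy of $L_\infty(\lambda)$; equivalently, the $\p(\infty)$-submodule of $\Pi^k S^{\pmb{\lambda}^{\vee}} V_\infty$ generated by the bottom $\gl(\infty)$-piece $S^{\pmb{\lambda}^{\vee}} V_{\bar{0},\infty}^{*}$ is isomorphic to $L_\infty(\lambda)$. This should follow from the structural description of $\Rep(\p(\infty))$ in \cite{SerInf}: every simple object is generated by its lowest $\gl(\infty)$-weight component, and the large-annihilator condition (recalled in the proof of Lemma \ref{lem:Phi_res_commute}) prevents the submodule generated by this component from being larger than $L_\infty(\lambda)$.

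Finally, applying the exact SM functor $\Phi_n$ to the embedding $L_\infty(\lambda) \hookrightarrow \Pi^k S^{\pmb{\lambda}^{\vee}} V_\infty$ identifies $\Phi_n(L_\infty(\lambda))$ as the $\p(n)$-submodule of $\Pi^k S^{\pmb{\lambda}^{\vee}} V_n$ generated by $\Phi_n(S^{\pmb{\lambda}^{\vee}} V_{\bar{0},\infty}^{*}) = S^{\pmb{\lambda}^{\vee}} V_{\bar{0},n}^{*}$. By Definition \ref{def:trunc_delta} and Frobenius reciprocity for $\p(n)$, $\overline{\Delta}_n^k(\lambda)$ is the image of $\Delta_n(\lambda)\to \Pi^k S^{\pmb{\lambda}^{\vee}} V_n$, which is the $\p(n)$-submodule of the same ambient object generated by the same subspace $S^{\pmb{\lambda}^{\vee}} V_{\bar{0},n}^{*}$. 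Hence the two coincide. The main obstacle is the middle step: pinning down $L_\infty(\lambda)$ precisely as the submodule of $\Pi^k S^{\pmb{\lambda}^{\vee}} V_\infty$ generated by its bottom $\gl(\infty)$-piece, which is a purely structural statement about $\Rep(\p(\infty))$.
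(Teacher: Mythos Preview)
Your overall architecture is exactly the one underlying the paper's proof: realize both sides as the $\p(n)$-submodule of $\Pi^k S^{\pmb{\lambda}^{\vee}} V_n$ generated by the ``bottom'' $\gl$-piece $S^{\pmb{\lambda}^{\vee}} V_{\bar 0,n}^*$. The identification of $L_\infty(\lambda)$ with the $\p(\infty)$-submodule generated by $S^{\pmb{\lambda}^{\vee}} V_{\bar 0,\infty}^*$ is correct and is just a citation to \cite{SerInf}; that is not the hard step.

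The genuine gap is in your final paragraph. You write that exactness of $\Phi_n$ ``identifies $\Phi_n(L_\infty(\lambda))$ as the $\p(n)$-submodule \ldots\ generated by $\Phi_n(S^{\pmb{\lambda}^{\vee}} V_{\bar 0,\infty}^*)$''. Exactness only tells you that $\Phi_n(L_\infty(\lambda))$ is \emph{some} subobject of $\Pi^k S^{\pmb{\lambda}^{\vee}} V_n$; it says nothing about generators. The subspace $S^{\pmb{\lambda}^{\vee}} V_{\bar 0,\infty}^*$ is not an object of $\Rep(\p(\infty))$, so ``submodule generated by'' is not a categorical notion preserved by exact functors. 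Concretely, one inclusion is easy: $W_n:=S^{\pmb{\lambda}^{\vee}} V_{\bar 0,n}^*$ is $\p(n)^{\perp}$-fixed and lies in $L_\infty(\lambda)$, so $U(\p(n))\cdot W_n\subset \Phi_n(L_\infty(\lambda))$, giving $\overline{\Delta}_n^k(\lambda)\subset \Phi_n(L_\infty(\lambda))$. The reverse inclusion --- that every $\p(n)^{\perp}$-invariant vector of $L_\infty(\lambda)$ already lies in $U(\p(n))\cdot W_n$ --- is the content you are missing.

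The paper supplies precisely this missing surjectivity. It observes that the surjection
\[
\Lambda(\tilde{\g}_1)\otimes S^{\pmb{\lambda}^{\vee}} V_{\bar 1,\infty}\twoheadrightarrow L_\infty(\lambda),\qquad x\otimes v\mapsto x.v,
\]
factors through $\Lambda^{\le k}(\tilde{\g}_1)\otimes S^{\pmb{\lambda}^{\vee}} V_{\bar 1,\infty}$ because $L_\infty(\lambda)\subset V_\infty^{\otimes k}$. This truncated source is now a finite tensor construction in $\Rep^{k'}(\gl(\infty))$ for suitable $k'$, so Corollary~\ref{cor:Phi_res_borel_commute} (or a direct invariants computation) lets one take $\p(n)^{\perp}$-invariants compatibly and obtain a surjection $\Lambda^{\le k}(\g_1)\otimes S^{\pmb{\lambda}^{\vee}} V_{\bar 1,n}\twoheadrightarrow \Phi_n(L_\infty(\lambda))$. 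That is exactly the statement that $\Phi_n(L_\infty(\lambda))$ is generated over $\p(n)$ by $W_n$, i.e.\ that the map $\Delta_n(\lambda)\to \Phi_n(L_\infty(\lambda))$ is onto. (The paper also verifies the easy inclusion via the description $L_\infty(\lambda)=\bigcap_\psi \Ker(\psi|_{S^{\pmb{\lambda}^{\vee}}V_\infty})$ from \cite[Lemma~17]{SerInf}, but your argument for that direction is fine.) So your plan becomes a proof once you insert this finite-truncation step; without it, the key implication does not follow from exactness alone.
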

\begin{proof}
\InnaC{
It is enough to check this statement for $\lambda$ such that $k=\abs{\lam}$.

Recall Notation \ref{notn:g_notn}. By \cite[Lemma 17]{SerInf} we have:
$$ L_{\infty}(\lam) = \bigcap_{\psi \in \Hom_{\p(\infty)}( V_{\infty}^{\otimes k}, V_{\infty}^{\otimes k-2})} \Ker \left( \psi \rvert_{S^{\pmb{\lam}^{\vee}} V_{\infty}} \right)$$

Since $\Phi_n$ is exact on $\Rep^k(\p(\infty))$, we have:
$$\Phi_n( L_{\infty}(\lam)) = \bigcap_{\psi \in \Hom_{\p(\infty)}( V_{\infty}^{\otimes k}, V_{\infty}^{\otimes k-2})} \Ker \left( \Phi_n(\psi) \rvert_{S^{\pmb{\lam}^{\vee}} V_{n}} \right)$$

\comment{
In fact, we have: $$\Phi_n( L_{\infty}(\lam)) = \bigcap_{\psi \in \Hom_{\p(n)}( V_{n}^{\otimes k}, V_{n}^{\otimes k-2})} \Ker \left( \psi \rvert_{S^{\pmb{\lam}^{\vee}} V_{n}} \right)$$}

Next, consider the map $f:\Delta_n(\lam) \to V_{n}^{\otimes k}$ described in Definition \ref{def:trunc_delta}. For any $s< k$, we have: $$ \Hom_{\p(n)}( \Delta_n(\lam), V_{n}^{\otimes s} ) =  \Hom_{\g_{0} \oplus \g_{-1}}( S^{\pmb{\lam}^{\vee}} V_{\bar{1},n}, Res^{\p(n)}_{\g_{0} \oplus \g_{-1}} V_{n}^{\otimes s})  =0$$
Hence $\Im(f) \subset \Ker(\psi)$ for any $ \psi \in \Hom_{\p(n)}( V_{n}^{\otimes k}, V_{n}^{\otimes s})$. Thus $\overline{\Delta}_n^k(\lambda) \subset \Phi_n( L_{\infty}(\lam))$.

We now consider the surjective map of $\tilde{\g}$-modules $$\psi:U(\tilde{\g}) \otimes_{U(\tilde{\g}_0 \oplus \tilde{\g}_{-1})} S^{\pmb{\lam}^{\vee}} V_{\bar{1}, \infty} \twoheadrightarrow L_{\infty}(\lam)$$ as in \cite[Section 4.5]{SerInf}. As a $\tilde{\g}_0$-map, this can be written as
$$\psi:\Lambda (\tilde{\g}_1) \otimes S^{\pmb{\lam}^{\vee}} V_{\bar{1}, \infty} \twoheadrightarrow L_{\infty}(\lam), \;\;\; \forall x \in \tilde{\g}_1, \, v\in S^{\pmb{\lam}^{\vee}} V_{\bar{1}} , \;\; x\otimes v \mapsto x.v$$

Furthermore, we have: $$L_{\infty}(\lam) \subset V_{\infty}^{\otimes k}$$ and so $$\Lambda^{> k} (\tilde{\g}_1) \otimes S^{\pmb{\lam}^{\vee}} V_{\bar{1}, \infty} \subset \Ker(\psi)$$

This implies that the map $\psi$ restricts to the surjective map of modules in $\Rep(\tilde{g}_{0})$:
$$\psi:\Lambda^{\leq k} (\tilde{\g}_1) \otimes S^{\pmb{\lam}^{\vee}} V_{\bar{1}, \infty} \twoheadrightarrow L_{\infty}(\lam), \;\;\; \forall x \in \tilde{\g}_1, \, v\in S^{\pmb{\lam}^{\vee}} V_{\bar{1}, \infty} , \;\; x\otimes v \mapsto x.v$$

When we take $\p(n)^{\perp}$-invariants in the above picture, we obtain a surjective map
$$\Lambda^{\leq k} ({\g}_1) \otimes S^{\pmb{\lam}^{\vee}} V_{n,\bar{1}} \twoheadrightarrow L_{\infty}(\lam)^{\p(n)^{\perp}}=\Phi_n(L_{\infty}(\lam)), \;\;\; \forall x \in {\g}_1, \, v\in S^{\pmb{\lam}^{\vee}} V_{n,\bar{1}} , \;\; x\otimes v \mapsto x.v$$

Hence the map $f:\Delta_n(\lam) \to\Phi_n(L_{\infty}(\lam))$ is surjective, and thus $\overline{\Delta}_n^k(\lambda) \cong\Phi_n( L_{\infty}(\lam))$, as required.}
\end{proof}

Similarly, we have:
\begin{proposition}\label{prop:Phi_dual_simples_to_nabla}
 Let $n>2k \geq 0$, and let $\lam$
be a $k$-admissible weight for $\p(n)$.
Then $$\Phi_n(L_{\infty}(\lambda))^* \cong \Pi^{\abs{\lam}}
\overline{\nabla}_n^k(\lambda^{\vee})$$ with the weight $\lambda^{\vee}$ interpreted
as in \InnaC{Section \ref{sssec:notn:weight_p_n}}.
\end{proposition}
\begin{proof}
\InnaC{Clearly, it is enough to prove the statement for $k:= \abs{\lam}$.

Recall that the module $\Phi_n(L_{\infty}(\lambda))^*$ is a quotient of $\left(\Pi^k S^{\pmb{\lam}^{\vee}}V_n \right)^* \cong S^{\pmb{\lam}}V_n$.

Let $f: S^{\pmb{\lam}}V_n \to \Pi^k \nabla_n(\lam^{\vee})$ be the map described in Section \ref{ssec:Rep_k_standard}.

For any $s< k$, we have: $$ \Hom_{\p(n)}(  V_{n}^{\otimes s}, \Pi^k \nabla_n(\lam^{\vee}) ) =  \Hom_{\g_{0} \oplus \g_{1}}( Res^{\p(n)}_{\g_{0} \oplus \g_{1}} V_{n}^{\otimes s}, S^{\pmb{\lam}} V_{\bar{0},n}^*)  =0$$
Hence $\psi^* \circ f =0$ for any $ \psi \in \Hom_{\p(n)}( V_{n}^{\otimes k}, V_{n}^{\otimes s})$, and thus we have a map $$ \Phi_n( L_{\infty}(\lam))^* \twoheadrightarrow \Pi^k \overline{\nabla}_n^k(\lambda) \hookrightarrow \Pi^k \nabla_n(\lam^{\vee}).$$

Now, to prove the required statement, we only need to show that $\Phi_n( L_{\infty}(\lam))^*$ is generated by a lowest weight vector with respect to the usual Borel in $\g_0 \oplus \g_{1}$. In other words, we would like to show that $\Phi_n( L_{\infty}(\lam))$ is a highest weight module with respect to the same Borel. This is a direct consequence of \cite[Section 4.5]{SerInf}, inferred as in the proof of Proposition \ref{prop:Phi_simples_to_delta}. }
\end{proof}

\subsection{Further properties}
\InnaC{We now give some immediate corollaries of Propositions \ref{prop:Phi_simples_to_delta} and \ref{prop:Phi_dual_simples_to_nabla}.}
\begin{corollary}\label{cor:Delta_nabla_dual}
 Let $n>2k \geq 0$, and let $\lam$
be a $k$-admissible weight for $\p(n)$. Then
$$\overline{\Delta}^k_n(\lam)^*\cong \Pi^{\abs{\lam}}
\overline{\nabla}_n^k(\lambda^{\vee}).$$
\end{corollary}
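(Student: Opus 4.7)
The corollary is essentially a formal consequence of the two propositions that immediately precede it, so the plan is very short. The strategy is simply to chain the two isomorphisms together.

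First, I would apply Proposition \ref{prop:Phi_simples_to_delta} to obtain $\overline{\Delta}^k_n(\lambda) \cong \Phi_n(L_\infty(\lambda))$ under the hypothesis $n > 2k \geq 0$ and $\lambda$ being $k$-admissible. Taking the dual of both sides (which makes sense because both categories $\Rep(\p(n))$ and $\Rep^k(\p(\infty))$ are rigid, or at least contain enough dualizable objects — the objects in question are subquotients of tensor powers of $V_n$ and $V_\infty$ respectively, hence dualizable), I obtain
\[
\overline{\Delta}^k_n(\lambda)^* \cong \Phi_n(L_\infty(\lambda))^*.
\]
Then I apply Proposition \ref{prop:Phi_dual_simples_to_nabla} to the right-hand side to obtain
\[
\Phi_n(L_\infty(\lambda))^* \cong \Pi^{\abs{\lam}} \overline{\nabla}_n^k(\lambda^\vee),
\]
and concatenating gives the desired isomorphism.

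The only subtle point worth checking, and which I expect to be the sole potential obstacle, is that the two propositions apply to the same $\lambda$ with the same meaning of $k$ — namely that the $k$-admissibility hypothesis on $\lambda$ in Proposition \ref{prop:Phi_dual_simples_to_nabla} is exactly what is needed, and that the convention $\overline{\Delta}^k_n(\lambda) = \overline{\Delta}^{\abs{\lam}}_n(\lambda)$ (and similarly for $\overline\nabla$) from Definitions \ref{def:trunc_delta} and \ref{def:trunc_nabla} makes the choice of $k$ irrelevant as long as $k \geq \abs{\lam}$ and $n > 2k$. Both of these are immediate from the definitions, so no further work is required. The corollary therefore has a one-line proof: combine Propositions \ref{prop:Phi_simples_to_delta} and \ref{prop:Phi_dual_simples_to_nabla}.
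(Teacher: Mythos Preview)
Your proposal is correct and matches the paper's approach exactly: the paper states this corollary immediately after Propositions \ref{prop:Phi_simples_to_delta} and \ref{prop:Phi_dual_simples_to_nabla} as an ``immediate corollary'' of the two, with no further proof given. Your one-line combination of the two propositions is precisely what is intended.
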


\begin{corollary}\label{cor:ses}
Let $n >2k$, and $\lambda$ be a $k$-admissible weight
such $\abs{\lambda} \leq k-1$. Then we have short exact sequences
 $$ 0 \to \bigoplus_{\substack{\mu \in \{\lambda - \eps_i| i\geq 1\}, \\ \mu \;
\text{ is } k\text{-admissible} } } \Pi \overline{\Delta}^k_n(\mu)
\longrightarrow \overline{\Delta}^k_n(\lambda) \otimes V_n \longrightarrow
\bigoplus_{\substack{\mu \in \{\lambda + \eps_i| i\geq 1\}, \\ \mu \; \text{ is
} k\text{-admissible} }} \overline{\Delta}^k_n(\mu) \to 0.$$
\end{corollary}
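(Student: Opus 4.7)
The plan is to deduce the corollary by applying the functor $\Phi_n$ to the short exact sequence of Lemma \ref{lem:ses_infty}, where we interpret $\lambda$ as a weight of $\p(\infty)$ (padding with zeros).

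First I would check that all the relevant objects in Lemma \ref{lem:ses_infty} lie in $\Rep^k(\p(\infty))$. Since $|\lambda| \leq k-1$, the simple $L_\infty(\lambda)$ lies in $\Rep^{k-1}(\p(\infty))$, so $L_\infty(\lambda) \otimes V_\infty$ lies in $\Rep^k(\p(\infty))$. For each $\mu \in \underline{\lambda}^\pm$, one has $|\mu| \leq |\lambda|+1 \leq k$, so $L_\infty(\mu) \in \Rep^k(\p(\infty))$. Since $n > 2k$, Lemma \ref{lem:Phi_exact} implies that $\Phi_n$ is exact on $\Rep^k(\p(\infty))$, so applying $\Phi_n$ to the SES of Lemma \ref{lem:ses_infty} produces a short exact sequence in $\Rep^k(\p(n))$.

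Next I would identify the resulting terms. By Lemma \ref{lem:Phi_monoidal}, $\Phi_n$ is symmetric monoidal, and $\Phi_n(V_\infty) \cong V_n$ (directly from $V_n = V_\infty^{\p(n)^\perp}$, using that $V_n^\perp$ is the natural representation of $\p(n)^\perp \cong \p(\infty)$, which has no nonzero invariants). Hence
$$\Phi_n\bigl(L_\infty(\lambda) \otimes V_\infty\bigr) \cong \Phi_n(L_\infty(\lambda)) \otimes V_n \cong \overline{\Delta}^k_n(\lambda) \otimes V_n,$$
where the second isomorphism uses Proposition \ref{prop:Phi_simples_to_delta}. The same proposition gives $\Phi_n(L_\infty(\mu)) \cong \overline{\Delta}^k_n(\mu)$ for each $\mu \in \underline{\lambda}^\pm$, and by monoidality $\Phi_n$ commutes with the parity shift $\Pi$.

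The final, slightly delicate step is bookkeeping: I need to identify the index sets. I claim that, under the convention $L_\infty(\mu) = 0$ for $\mu \not\geq 0$,
$$\underline{\lambda}^- = \{\lambda - \eps_i \mid i \geq 1\} \cap \{k\text{-admissible weights}\}, \qquad \underline{\lambda}^+ = \{\lambda + \eps_i \mid i \geq 1\} \cap \{k\text{-admissible weights}\}.$$
For $\mu = \lambda - \eps_i$ we have $\mu_i \leq -1$, so $\mu \geq 0$ automatically, and $|\mu| = |\lambda|+1 \leq k$; hence $k$-admissibility collapses to the dominance of $\mu$, which is exactly the defining condition for $\underline{\lambda}^-$. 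For $\mu = \lambda + \eps_i$ we have $|\mu| \leq k-2$, so $k$-admissibility is equivalent to requiring that $\mu$ is dominant and $\mu \geq 0$, again matching $\underline{\lambda}^+$. This identification together with the preceding steps yields the asserted short exact sequence.

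The main obstacle is precisely this matching of index sets, since it is the one place where passing between the infinite and finite pictures is not entirely automatic; everything else is a direct application of Lemmas \ref{lem:Phi_monoidal}, \ref{lem:Phi_exact} and Proposition \ref{prop:Phi_simples_to_delta}.
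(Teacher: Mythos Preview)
Your proof is correct and follows exactly the approach of the paper, which simply says the corollary is a direct consequence of Lemma \ref{lem:ses_infty} together with the fact that $\Phi_n$ is a SM functor and exact on $\Rep^k(\p(\infty))$. You have supplied the details that the paper leaves implicit, including the identification of the index sets; your observation that for infinite sequences $\mu\in\underline{\lambda}^+$ automatically forces $\mu\geq 0$ (since adding $\eps_i$ at a position with $\lambda_i=0$ would violate dominance against the infinite tail of zeros) is exactly the right way to see that the two indexing conventions agree.
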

\begin{proof}
 This is a direct consequence of Lemma \ref{lem:ses_infty} together with the
fact that $\Phi_n$ is a SM functor and is exact on $\Rep^k(\p(\infty))$.
\end{proof}

Similarly, we have:
\begin{corollary}\label{cor:ses_nabla}
Let $n>2k+2$, $k\geq 0$, and $\lambda$ be a $k$-admissible
weight
such $\abs{\lambda} \leq k-1$. Then we have short exact sequences
 $$ 0 \to \bigoplus_{\substack{\mu \in \{\lambda + \eps_i| i\geq 1\}, \\ \mu \;
\text{ is } k\text{-admissible} } } \overline{\nabla}^k_n(\mu)
\longrightarrow \overline{\nabla}^k_n(\lambda) \otimes V_n \longrightarrow
\bigoplus_{\substack{\mu \in \{\lambda - \eps_i| i\geq 1\}, \\ \mu \; \text{ is
} k\text{-admissible} }}  \overline{\nabla}^k_n(\mu) \to 0.$$
\end{corollary}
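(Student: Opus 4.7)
My plan is to derive this short exact sequence by dualizing Corollary \ref{cor:ses} applied to the transpose weight $\lambda^\vee$. First I would check the hypotheses: the transpose $\lambda^\vee$ is again a $k$-admissible weight of the same size $\abs{\lambda^\vee}=\abs{\lambda}\leq k-1$, and the assumption $n>2k+2$ a fortiori gives $n>2k$, so Corollary \ref{cor:ses} applies to $\lambda^\vee$ and produces a short exact sequence
$$0 \to \bigoplus_{\substack{\mu \in \{\lambda^\vee - \eps_i\}\\ k\text{-adm}}} \Pi \overline{\Delta}^k_n(\mu) \to \overline{\Delta}^k_n(\lambda^\vee) \otimes V_n \to \bigoplus_{\substack{\mu \in \{\lambda^\vee + \eps_i\}\\ k\text{-adm}}} \overline{\Delta}^k_n(\mu) \to 0.$$

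Next I would apply the exact contravariant duality functor $(-)^*$ in $\Rep(\p(n))$ to this sequence. The three key identifications are: the self-duality $V_n^* \cong \Pi V_n$ coming from the odd non-degenerate pairing $\beta$; the formula $(\overline{\Delta}^k_n(\mu))^* \cong \Pi^{\abs{\mu}}\overline{\nabla}^k_n(\mu^\vee)$ from Corollary \ref{cor:Delta_nabla_dual}; and the transpose bijection $\mu \leftrightarrow \mu^\vee$ on partitions, which swaps ``adding a box'' with ``removing a box'' and hence sends $\{\lambda^\vee - \eps_i\}$ onto $\{\lambda - \eps_j\}$ and $\{\lambda^\vee + \eps_i\}$ onto $\{\lambda + \eps_j\}$ while preserving $k$-admissibility (since transposition preserves the number of boxes).

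After these substitutions the outer terms become direct sums of $\Pi$-shifted $\overline{\nabla}^k_n(\nu)$'s indexed by $\nu \in \{\lambda \pm \eps_j\}$, with the ``add a box'' summands now appearing on the right (cokernel) and the ``remove a box'' summands on the left (kernel), as required. The middle term becomes a $\Pi$-shift of $\overline{\nabla}^k_n(\lambda)\otimes V_n$. Using $\Pi^2 \cong \id$ together with the standard parity conventions for representatives of simple modules, the various shifts collapse consistently across all three terms, yielding the clean sequence stated in the corollary.

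The main technical obstacle is the parity bookkeeping: the odd form makes $V_n^*$ differ from $V_n$ by a $\Pi$, and Corollary \ref{cor:Delta_nabla_dual} attaches a weight-dependent $\Pi^{\abs{\mu}}$ to each dual standard module, where $\abs{\mu}$ differs from $\abs{\lambda}$ by $\pm 1$ on the two outer direct sums. The asymmetric single $\Pi$ appearing on the \emph{left} of Corollary \ref{cor:ses} must be traced through the dualization and reconciled with these weight-dependent shifts so that the resulting sequence is free of stray $\Pi$'s; this is the only step requiring real care, and it is handled entirely by the relation $\Pi^2 \cong \id$ and the fixed parity convention for highest-weight vectors.
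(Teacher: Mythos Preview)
Your approach is correct and is essentially what the paper intends: the paper writes only ``Similarly, we have'' after proving Corollary~\ref{cor:ses} (via Lemma~\ref{lem:ses_infty} and exactness of $\Phi_n$), and your argument---dualize Corollary~\ref{cor:ses} for $\lambda^\vee$ and translate via Corollary~\ref{cor:Delta_nabla_dual} and $V_n^*\cong\Pi V_n$---is exactly the natural unpacking of that ``similarly''. One small verbal slip: transposition of Young diagrams \emph{preserves} (it does not swap) the operations ``add a box'' and ``remove a box''; your displayed bijections $\{\lambda^\vee - \eps_i\}\leftrightarrow\{\lambda - \eps_j\}$ and $\{\lambda^\vee + \eps_i\}\leftrightarrow\{\lambda + \eps_j\}$ are nonetheless the correct ones, so the rest of the argument goes through.
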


The following statement is the analogue of \cite[Lemma 6.2.4]{EHS}:
\begin{proposition}\label{prop:simples_delta_k_n}
 Let $n> 2k+2$, $k\geq 0$, and $\lambda$ be a $k$-admissible
weight,
and $\mu \geq 0$. Then
 $$[\Delta_n(\lambda):L_n(\mu)] = [\overline{\Delta}^k_n(\lambda):L_n(\mu)].$$
\end{proposition}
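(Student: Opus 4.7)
My plan is to prove the stronger equivalent statement that the kernel $K := \Ker(\Delta_n(\lambda) \twoheadrightarrow \overline{\Delta}^k_n(\lambda))$ contains no composition factor $L_n(\mu)$ with $\mu \geq 0$. The desired multiplicity equality then follows immediately by additivity of composition factors in the short exact sequence $0 \to K \to \Delta_n(\lambda) \to \overline{\Delta}^k_n(\lambda) \to 0$.

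I would proceed by induction on $|\lambda|$. For the base case $\lambda = 0$, any $\mu \geq 0$ occurring as a composition factor of $\Delta_n(0)$ must also satisfy $\mu \leq 0$ in the highest-weight order (i.e.\ $\mu_i \geq 0$), forcing $\mu = 0$; and $L_n(0) = \triv$ occurs as the head with multiplicity one, matching $\overline{\Delta}^0_n(0) = \triv$ by Definition \ref{def:trunc_delta}.

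For the inductive step, I would pick $j$ with $\lambda_j < 0$ and set $\lambda' := \lambda + \eps_j$, so that $|\lambda'| = |\lambda| - 1$ and $\lambda \in \{\lambda' - \eps_i\}$. I would then compare the standard short exact sequence for $\Delta_n(\lambda') \otimes V_n$ from \cite[Section 4]{BDE+} with the truncated one of Corollary \ref{cor:ses} for $\overline{\Delta}^k_n(\lambda') \otimes V_n$, the two being connected via the surjection $\Delta_n(\lambda') \twoheadrightarrow \overline{\Delta}^k_n(\lambda')$ tensored with the exact functor $-\otimes V_n$. The crucial observation is that the terms appearing in the full sequence but absent from the truncated one are precisely $\Delta_n(\lambda' + \eps_i)$ with $\lambda'_i = 0$; for such $\mu := \lambda' + \eps_i \not\geq 0$, any composition factor $L_n(\nu)$ of $\Delta_n(\mu)$ satisfies $\nu_i \geq \mu_i = 1$ in the highest-weight order, hence $\nu \not\geq 0$, so these extra terms contribute nothing to multiplicities at $L_n(\nu)$ with $\nu \geq 0$.

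The main obstacle is that the comparison also involves the modules $\Delta_n(\lambda' - \eps_i)$ for various $i \neq j$, all having $|\lambda' - \eps_i| = |\lambda|$, so the inductive hypothesis on the norm does not directly apply. Two possible resolutions are: (a) refine the induction by treating all $k$-admissible weights of fixed norm simultaneously, using the Temperley--Lieb action on the Grothendieck group (Theorem \ref{old_thrm:transl_func}) together with the translation rules on indecomposable projectives (Theorem \ref{old_thrm:transl_proj}) to pin down the mutual differences by a linear-algebraic comparison; or (b) more cleanly, identify $\overline{\Delta}^k_n(\lambda) \cong i^*(\Delta_n(\lambda))$, where $i^*$ is the left adjoint to the inclusion of the reflective Serre subcategory $\Rep^k(\p(n)) \hookrightarrow \Rep(\p(n))$, after which the Serre-localization formalism yields the result at once. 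Approach (b) reduces the problem to verifying that $\overline{\Delta}^k_n(\lambda)$, defined as the image of $\Delta_n(\lambda)$ in $V_n^{\otimes k}$, is the universal $\Rep^k(\p(n))$-quotient of $\Delta_n(\lambda)$, which should be deducible from Lemma \ref{lem:simples_rep_k} and the construction of $\overline{\Delta}^k_n(\lambda)$ in Section \ref{ssec:Rep_k_standard}.
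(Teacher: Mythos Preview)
Your overall strategy---induction on $|\lambda|$, comparing the two short exact sequences for $(-)\otimes V_n$---is exactly the paper's approach, and your base case is correct (even slightly cleaner than the paper's, which cites \cite[Theorem 6.3.1]{BDE+}). You have also correctly identified the obstacle: after tensoring $\Delta_n(\lambda')$ and $\overline{\Delta}^k_n(\lambda')$ by $V_n$, the subobject side produces \emph{all} $\Delta_n(\lambda'-\eps_i)$ of norm $|\lambda|$, not just the one you want.

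However, both of your proposed resolutions miss the mark. Approach (b) is circular: the identification $\overline{\Delta}^k_n(\lambda)\cong i^*(\Delta_n(\lambda))$ is precisely Corollary \ref{cor:delta_k_n_max_quo}, which the paper deduces \emph{from} this proposition. There is no independent route to it via Lemma \ref{lem:simples_rep_k} alone. Approach (a) is in the right spirit but far too elaborate; no Temperley--Lieb combinatorics are needed.

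The paper's actual resolution is a one-line positivity argument that you overlooked. Since $\overline{\Delta}^k_n(\mu)$ is by construction a quotient of $\Delta_n(\mu)$, each difference $[\Delta_n(\mu)]-[\overline{\Delta}^k_n(\mu)]$ is a \emph{nonnegative} integer combination of classes $[L_n(\nu)]$. The comparison of the two short exact sequences (together with the induction hypothesis for the $\mu=\lambda'+\eps_i$ terms, and your own argument for the non-admissible ones) shows that
\[
\sum_{\mu=\lambda'-\eps_i}\bigl([\Delta_n(\mu)]-[\overline{\Delta}^k_n(\mu)]\bigr)
\]
has zero coefficient at every $[L_n(\nu)]$ with $\nu\geq 0$. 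Since each summand is nonnegative, each summand individually has zero such coefficient. This handles all $\mu$ of norm $|\lambda'|+1$ at once.

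One further ingredient you did not mention: to pass from the induction hypothesis on $[\Delta_n(\lambda')]-[\overline{\Delta}^k_n(\lambda')]$ to the same statement after tensoring with $V_n$, one needs that $[L_n(\tau)\otimes V_n]$ stays in $\mathrm{span}_{\Z_+}\{[L_n(\nu)]:\nu\not\geq 0\}$ whenever $\tau\not\geq 0$. The paper proves this as a sublemma using the translation rules for indecomposable projectives from \cite[Section 7]{BDE+}.
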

\begin{proof}
For any module $M \in \Rep(\p(n))$, let $[M]$ denote the corresponding element
of the \InnaC{reduced}\footnote{\InnaC{The reduced Grothendieck group (resp. ring) of an $\sVect$ category is the quotient of the usual Grothendieck group (resp. ring) by the subspace (resp. ideal) generated by $[X] - [\Pi X]$ for every object $[X]$. }} Grothendieck group of $\Rep(\p(n))$. We wish to prove that
$$[\Delta_n(\lambda)]-[\overline{\Delta}^k_n(\lambda)] \in
span_{\Z_+}\{[L_n(\nu)]
\rvert \nu \not\geq 0\}.$$

We prove this statement by induction on $\abs{\lambda}$.
The base of the induction is $\lambda = 0$, in which case
$\overline{\Delta}^k_n(\lambda) =\C$. Furthermore, by \cite[Theorem
6.3.1]{BDE+}
all composition factors $L_n(\nu)$ of $\Delta_n(0)$ satisfy $\nu \not\geq
0$ except $\nu=0$; this implies the required statement for $\lam=0$.

We now proceed to the step of induction. Assume the statement is true for all
$\lambda$ such that $\abs{\lambda} =s$. Then for each such $\lambda$, we have:
$$[\Delta_n(\lambda) \otimes V_n]-[\overline{\Delta}^k_n(\lambda) \otimes V_n]
\in
span_{\Z_+}\{[L_n(\nu)\otimes V_n]
\rvert \nu \not\geq 0\}.$$

\begin{sublemma}
 For any $\tau \not\geq 0$, we have $$[L_n(\tau)\otimes V_n]
 \in span_{\Z_+}\{[L_n(\nu)]
\rvert \nu \not\geq 0\}.$$
\end{sublemma}
\begin{proof}
Assume we have $[L_n(\tau)\otimes V_n: L_n(\nu)] >0$ for some
$\nu\geq 0$. Then $$\dim\Hom(P_n(\nu)\otimes V_n, L_n(\tau)) =
\dim\Hom(P_n(\nu), L_n(\tau)\otimes V_n) >0$$ and hence $P_n(\nu)\otimes
V_n$ has a direct summand $ P_n(\tau)$. But the translation rules
for indecomposable projectives \cite[Section 7]{BDE+} show that for any direct
summand $P_n(\tau')$ of $P_n(\nu)\otimes V_n$, we would have: $\tau'
\geq \nu$ and hence $\tau'>0$. Contradiction.
\end{proof}

Hence $$[\Delta_n(\lambda) \otimes V_n]-[\overline{\Delta}^k_n(\lambda) \otimes
V_n]
\in
span_{\Z_+}\{[L_n(\nu)]
\rvert \nu \not\geq 0\}.$$

Next, recall from \cite[Section 4.3]{BDE+} that we have short exact sequences
$$ 0
\to \bigoplus_{\mu \in \{\lambda - \eps_i| i\geq 1\} } \Pi {\Delta}_n(\mu)
\longrightarrow {\Delta}_n(\lambda) \otimes V_n \longrightarrow
\bigoplus_{\mu \in \{\lambda + \eps_i| i\geq 1\}} {\Delta}_n(\mu) \to 0.$$
Using these, as well as Corollary \ref{cor:ses}, we conclude that
$$[\Delta_n(\lambda) \otimes V_n]-[\overline{\Delta}^k_n(\lambda) \otimes V_n]
=
\sum_{\mu \in \{\lambda \pm \eps_i| i\geq 1\}}
[\Delta_n(\mu)]-[\overline{\Delta}^k_n(\mu)]
$$
where we use the convention $\overline{\Delta}^k_n(\mu) =0$ for
non-$k$-admissible
$\mu$.

The
statement now follows for any $\mu \in \{\lambda \pm \eps_i| i\geq 1\}$;
since we started with an arbitrary $k$-admissible $\lambda$ such
that $\abs{\lambda}=s$, the above argument shows that the statement holds for
any $\mu$ such that
$\abs{\mu}=s+1$. This completes the proof of the lemma.

\end{proof}

\begin{corollary}\label{cor:delta_k_n_max_quo}
Let $ n>2k+2$, $k\geq 0$, and $\lambda$ be a $k$-admissible weight.
Then $\overline{\Delta}^k_n(\lambda)$ is the maximal quotient of
$\Delta_n(\lambda)$ belonging to $\Rep^k(\p(n))$.
\end{corollary}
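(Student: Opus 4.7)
The plan is to proceed in two steps: first verify that $\overline{\Delta}^k_n(\lambda)$ itself lies in $\Rep^k(\p(n))$, and then show that any other quotient of $\Delta_n(\lambda)$ lying in $\Rep^k(\p(n))$ is in fact a further quotient of $\overline{\Delta}^k_n(\lambda)$. The membership $\overline{\Delta}^k_n(\lambda) \in \Rep^k(\p(n))$ is immediate from Definition \ref{def:trunc_delta}: $\overline{\Delta}^k_n(\lambda)$ is a subobject of $\Pi^k S^{\pmb{\lambda}^{\vee}} V_n$, which is a subquotient of $V_n^{\otimes k}$.

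For the universality, let $Q$ be any quotient of $\Delta_n(\lambda)$ with $Q \in \Rep^k(\p(n))$. Form the diagonal map $\Delta_n(\lambda) \to \overline{\Delta}^k_n(\lambda) \oplus Q$ and let $M$ denote its image. Then $M$ is simultaneously a quotient of $\Delta_n(\lambda)$ and a submodule of $\overline{\Delta}^k_n(\lambda) \oplus Q$, hence $M \in \Rep^k(\p(n))$. Moreover the two canonical projections $\pi_1: M \twoheadrightarrow \overline{\Delta}^k_n(\lambda)$ and $\pi_2: M \twoheadrightarrow Q$ are both surjective, as each component of the diagonal map is.

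The main point is to show $\pi_1$ is an isomorphism. Since $M \in \Rep^k(\p(n))$, Lemma \ref{lem:simples_rep_k} gives that every composition factor of $M$ is of the form $L_n(\mu)$ with $\mu$ a $k$-admissible weight; in particular $\mu \geq 0$, so Proposition \ref{prop:simples_delta_k_n} applies. For each such $\mu$, using that $M$ is a quotient of $\Delta_n(\lambda)$ and that $\pi_1$ is surjective, we get
\[
[\overline{\Delta}^k_n(\lambda):L_n(\mu)] \leq [M:L_n(\mu)] \leq [\Delta_n(\lambda):L_n(\mu)] = [\overline{\Delta}^k_n(\lambda):L_n(\mu)],
\]
where the final equality is Proposition \ref{prop:simples_delta_k_n}. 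Hence $M$ and $\overline{\Delta}^k_n(\lambda)$ have the same length, so the surjection $\pi_1$ is an isomorphism. Composing with $\pi_2$, we obtain a surjection $\overline{\Delta}^k_n(\lambda) \twoheadrightarrow Q$, exhibiting $Q$ as a quotient of $\overline{\Delta}^k_n(\lambda)$.

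The delicate ingredient is the composition-factor comparison: one must be sure that \emph{all} composition factors of $M$ are accounted for by $k$-admissible weights, so that the sandwich inequality above is conclusive. This is exactly what Lemma \ref{lem:simples_rep_k} provides, and Proposition \ref{prop:simples_delta_k_n} then makes the upper and lower bounds match. Everything else is routine diagram chasing.
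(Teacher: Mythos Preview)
Your proof is correct and uses the same essential ingredients as the paper: Lemma~\ref{lem:simples_rep_k} to identify the composition factors of objects in $\Rep^k(\p(n))$, and Proposition~\ref{prop:simples_delta_k_n} to compare multiplicities. The paper leaves the argument implicit; the most direct route is to observe that the kernel $K$ of $\Delta_n(\lambda)\twoheadrightarrow\overline{\Delta}^k_n(\lambda)$ has no composition factor $L_n(\mu)$ with $\mu\geq 0$ (by Proposition~\ref{prop:simples_delta_k_n}), so its image in any quotient $Q\in\Rep^k(\p(n))$ must vanish, forcing $K\subset\Ker(\Delta_n(\lambda)\to Q)$. Your diagonal-map construction arrives at the same conclusion by a slightly longer path, but the content is identical.
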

An analogous statement holds for $\overline{\nabla}^k_n$ as well.

Finally, we compute the extensions between $\overline{\Delta}^k_n(\lambda)$
and $\overline{\nabla}^k_n(\mu)$. This will be used later in Section
\ref{ssec:hw}.

\begin{lemma}\label{lem:ext}
Let $ n>2k+2$, $k\geq 0$, and $\lambda$, $\mu$ be $k$-admissible weights.

Then $$\dim \Hom_{\p(n)}
\left(\overline{\Delta}^k_n(\lambda), \overline{\nabla}^k_n(\mu) \right) =
\delta_{\lambda, \mu}$$ and $$\Ext^1_{\p(n)}
\left(\overline{\Delta}^k_n(\lambda), \overline{\nabla}^k_n(\mu) \right) = 0.
$$
\end{lemma}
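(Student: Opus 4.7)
The plan is to deduce the lemma from the standard facts $\Hom_{\p(n)}(\Delta_n(\lambda), \nabla_n(\mu)) \cong \delta_{\lambda,\mu}\C$ and $\Ext^1_{\p(n)}(\Delta_n(\lambda), \nabla_n(\mu)) = 0$ in the highest weight category $\Rep(\p(n))$ (Section~\ref{sssec:notn:repr_periplectic}). The structural input will be the pair of short exact sequences
$$0 \to K_\lambda \to \Delta_n(\lambda) \to \overline{\Delta}^k_n(\lambda) \to 0, \qquad 0 \to \overline{\nabla}^k_n(\mu) \to \nabla_n(\mu) \to Q_\mu \to 0,$$
where the surjection and inclusion come from Definitions~\ref{def:trunc_delta}, \ref{def:trunc_nabla} together with Corollary~\ref{cor:delta_k_n_max_quo} (and its analog for $\overline{\nabla}^k_n$).

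For the Hom-part, I would combine the surjection $\Delta_n(\lambda) \twoheadrightarrow \overline{\Delta}^k_n(\lambda)$ and the inclusion $\overline{\nabla}^k_n(\mu) \hookrightarrow \nabla_n(\mu)$ with left-exactness to obtain the chain of embeddings
$$\Hom_{\p(n)}(\overline{\Delta}^k_n(\lambda), \overline{\nabla}^k_n(\mu)) \hookrightarrow \Hom_{\p(n)}(\Delta_n(\lambda), \overline{\nabla}^k_n(\mu)) \hookrightarrow \Hom_{\p(n)}(\Delta_n(\lambda), \nabla_n(\mu)) \cong \delta_{\lambda,\mu}\C.$$
When $\lambda=\mu$, the composition $\overline{\Delta}^k_n(\lambda) \twoheadrightarrow L_n(\lambda) \hookrightarrow \overline{\nabla}^k_n(\lambda)$ through the head of the former and the socle of the latter supplies a non-zero element, forcing equality.

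The key structural claim for the $\Ext^1$-argument is that both $K_\lambda$ and $Q_\mu$ have all composition factors of the form $L_n(\nu)$ with $\nu \not\geq 0$. For $K_\lambda$ this is immediate from Proposition~\ref{prop:simples_delta_k_n}. For $Q_\mu$ I would establish the analogous identity $[\nabla_n(\mu):L_n(\nu)] = [\overline{\nabla}^k_n(\mu):L_n(\nu)]$ for all $\nu \geq 0$ by the same induction on $|\mu|$, substituting Corollary~\ref{cor:ses_nabla} for Corollary~\ref{cor:ses}. This mirror argument runs entirely parallel to the proof of Proposition~\ref{prop:simples_delta_k_n} and constitutes the main technical step requiring verification.

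Granted the claim, the Ext-vanishing follows from a two-step diagram chase. Applying $\Hom_{\p(n)}(-, \overline{\nabla}^k_n(\mu))$ to the first SES gives
$$\Hom_{\p(n)}(K_\lambda, \overline{\nabla}^k_n(\mu)) \to \Ext^1_{\p(n)}(\overline{\Delta}^k_n(\lambda), \overline{\nabla}^k_n(\mu)) \to \Ext^1_{\p(n)}(\Delta_n(\lambda), \overline{\nabla}^k_n(\mu)),$$
and the leftmost term vanishes because $K_\lambda$ and $\overline{\nabla}^k_n(\mu) \in \Rep^k(\p(n))$ have disjoint composition factors. Applying $\Hom_{\p(n)}(\Delta_n(\lambda), -)$ to the second SES gives
$$\Hom_{\p(n)}(\Delta_n(\lambda), Q_\mu) \to \Ext^1_{\p(n)}(\Delta_n(\lambda), \overline{\nabla}^k_n(\mu)) \to \Ext^1_{\p(n)}(\Delta_n(\lambda), \nabla_n(\mu)) = 0,$$
so it suffices to see $\Hom_{\p(n)}(\Delta_n(\lambda), Q_\mu) = 0$. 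But any non-zero map $\Delta_n(\lambda) \to Q_\mu$ would have image with head $L_n(\lambda)$, forcing $L_n(\lambda)$ to be a composition factor of $Q_\mu$, which contradicts $\lambda \geq 0$.
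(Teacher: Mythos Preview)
Your proposal is correct and follows essentially the same route as the paper's proof: both set up the short exact sequences for $K_\lambda$ and $Q_\mu$, use the vanishing of $\Hom(K_\lambda,\overline{\nabla}^k_n(\mu))$ and $\Hom(\Delta_n(\lambda),Q_\mu)$ (via the composition-factor control coming from Proposition~\ref{prop:simples_delta_k_n} and its $\nabla$-analogue), and then chase the two long exact sequences to reduce to the standard identities for $\Delta_n$ and $\nabla_n$. The paper's writeup is slightly terser (it folds the $\nabla$-analogue into the remark after Corollary~\ref{cor:delta_k_n_max_quo} rather than re-running the induction), but the substance is identical.
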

\begin{proof}
 Let $$K:= \Ker\left(\Delta_n(\lambda) \twoheadrightarrow
\overline{\Delta}^k_n(\lambda)\right) \;\;\;C :=
\operatorname{Coker}\left(\overline{\nabla}^k_n(\mu) \hookrightarrow
\nabla^k_n(\mu)\right).$$

We have long exact sequences
\begin{align*}
 0 \to &\Hom_{\p(n)}\left(\overline{\Delta}^k_n(\lambda),
\overline{\nabla}^k_n(\mu) \right) \to
&\Hom_{\p(n)}\left({\Delta}_n(\lambda),
\overline{\nabla}^k_n(\mu) \right) \to
&\Hom_{\p(n)}\left(K,
\overline{\nabla}^k_n(\mu) \right) \to \\
\to &\Ext^1_{\p(n)}
\left(\overline{\Delta}^k_n(\lambda), \overline{\nabla}^k_n(\mu) \right)  \to
&\Ext^1_{\p(n)}
\left(\overline{\Delta}^k_n(\lambda), \overline{\nabla}^k_n(\mu) \right) \to
&\Ext^1_{\p(n)}
\left(K, \overline{\nabla}^k_n(\mu) \right) \to \ldots
\end{align*}

and
\begin{align*}
 0 \to
&\Hom_{\p(n)}\left({\Delta}_n(\lambda),
\overline{\nabla}^k_n(\mu) \right) \to
&\Hom_{\p(n)}\left({\Delta}_n(\lambda),
{\nabla}_n(\mu) \right) \to
&\Hom_{\p(n)}\left({\Delta}_n(\lambda),
C \right)
\\
\to &\Ext^1_{\p(n)}
\left({\Delta}_n(\lambda),
\overline{\nabla}^k_n(\mu)\right)  \to
&\Ext^1_{\p(n)}
\left({\Delta}_n(\lambda),{\nabla}_n(\mu) \right) \to
&\Ext^1_{\p(n)}
\left({\Delta}_n(\lambda), C \right) \to \ldots
\end{align*}

First, we use the fact that $\Hom_{\p(n)}\left({\Delta}_n(\lambda),
{\nabla}_n(\mu) \right) = \delta_{\lambda, \mu}$ with the
image of ${\Delta}_n(\lambda) \to {\nabla}_n(\lam)$ being $L_n(\lam)$.

Secondly, Corollary \ref{cor:delta_k_n_max_quo} implies that $(C: L_n(\lam)) =
(K:L_n(\mu))=0$ so $\Hom_{\p(n)}\left({\Delta}_n(\lambda),
C \right) =0$, $\Hom_{\p(n)}\left(K,
\overline{\nabla}^k_n(\mu) \right)=0$.

Substituting these into the long exact sequences above, we conclude that $$\dim
\Hom_{\p(n)}
\left(\overline{\Delta}^k_n(\lambda), \overline{\nabla}^k_n(\mu) \right) =
\delta_{\lambda, \mu}.$$

Next, we recall that $$\Ext^1_{\p(n)}
\left({\Delta}_n(\lambda), {\nabla}_n(\mu) \right) = 0.
$$
The second long exact sequence then implies that $\Ext^1_{\p(n)}
\left({\Delta}_n(\lambda),
\overline{\nabla}^k_n(\mu)\right)=0$ and hence $$\Ext^1_{\p(n)}
\left(\overline{\Delta}^k_n(\lambda), \overline{\nabla}^k_n(\mu) \right) = 0.
$$
\end{proof}

\section[The DS functor]{The \texorpdfstring{$DS$}{DS}
functor}\label{sec:DS_functor}
\subsection{Definition and basic properties}

Let $p,q>0$. \InnaC{We continue with the notation of Section \ref{ssec:restr_functors}.}
Let $y\in \gl(p|q)$ be an odd element of rank $1$ such that $y(W_{p,q})\subset
(W_{p,q})_0$ and $x=(y,\Pi y^*) \in \g_{p,q}\subset\p(n)$ be the corresponding
odd element.

Let $M \in \Rep(\p(n))$. We define
$$ {DS}_x(M) = \quotient{Ker(x \rvert_M)}{Im(x \rvert_M)}, \;\;
\text{ and } \;\; \mathrm{DS}_y(M) = \quotient{Ker(y \rvert_M)}{Im(y
\rvert_M)}.$$

\begin{lemma}
 We have an isomorphism of Lie superalgebras $DS_x(\p(n))
\cong \p(n-2)$ and $\mathrm{DS}_y(\gl(p|q)) \cong \gl(p-1|q-1)$.
\end{lemma}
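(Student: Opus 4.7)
The plan is to handle the $\gl(p|q)$ case first, then bootstrap to $\p(n)$ using the embedding $\g_{p,q}\cong\gl(p|q)\hookrightarrow\p(n)$ and the decomposition $V_n = W_{p,q}\oplus W'_{q,p}$.

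For $\gl(p|q)$, I would exploit the fact that $\mathrm{DS}_y$ is a symmetric monoidal $\sVect$-functor: this gives
\[
\mathrm{DS}_y(\gl(p|q)) \;\cong\; \mathrm{DS}_y(W_{p,q})\otimes \mathrm{DS}_y(W_{p,q})^*
\]
as Lie superalgebras, reducing the claim to the identification $\mathrm{DS}_y(W_{p,q})\cong \C^{p-1|q-1}$. Choosing a homogeneous basis $e_1,\ldots,e_p,e_{1'},\ldots,e_{q'}$ of $W_{p,q}$ with $y(e_{1'})=e_1$ and $y$ killing all other basis vectors (possible since $y$ is an odd rank-one operator whose image lies in the even part), one reads off $\Ker y$ of superdimension $(p|q-1)$ and $\Im y=\C\cdot e_1$ of superdimension $(1|0)$, so $\mathrm{DS}_y(W_{p,q})\cong\C^{p-1|q-1}$.

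For $\p(n)$, the form $\beta$ induces $W'_{q,p}\cong\Pi W_{p,q}^*$, and $x=(y,\Pi y^*)$ acts as $y$ on $W_{p,q}$ and as $\Pi y^*$ on $\Pi W_{p,q}^*$. Since taking $\Ker/\Im$ of $x$ commutes with finite direct sums, with $\Pi$, and with duality, I would obtain
\[
DS_x(V_n) \;\cong\; \mathrm{DS}_y(W_{p,q})\,\oplus\,\Pi\,\mathrm{DS}_y(W_{p,q})^* \;\cong\; \C^{p-1|q-1}\oplus\Pi\C^{q-1|p-1} \;\cong\; \C^{n-2|n-2}.
\]
The form $\beta$, being $\p(n)$-invariant and hence $x$-invariant, descends to a non-degenerate odd symmetric form on $DS_x(V_n)$ pairing the two summands, so $(DS_x(V_n),DS_x(\beta))$ is isomorphic to $(V_{n-2},\beta_{n-2})$ as a supervector space with non-degenerate odd symmetric form.

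Finally, $\p(V)$ is tensor-categorically the subobject of $\gl(V)$ cut out as the stabilizer of $\omega_V$ (equivalently, the kernel of the natural map $\gl(V)\to \Pi S^2 V^*$ given by $X\mapsto \omega_V(X\cdot,\cdot)+\omega_V(\cdot,X\cdot)$). Since $\mathrm{DS}_y$ and $DS_x$ are symmetric monoidal and send the pair $(V,\omega)$ to the appropriate pair, they induce inclusions $\mathrm{DS}_y(\gl(p|q))\hookrightarrow\gl(\mathrm{DS}_y(W_{p,q}))=\gl(p-1|q-1)$ and $DS_x(\p(n))\hookrightarrow \p(DS_x(V_n))=\p(n-2)$. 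The main obstacle is showing these inclusions are equalities; I would close this by an explicit matrix computation in the bases of Remark~\ref{rmk:basis} adapted to $y$ and $x$, where $\Ker\ad$ and $\Im\ad$ on the subalgebras can be read off directly and the quotient is visibly the matrix algebra supported on the rows and columns complementary to those involved in the respective operators.
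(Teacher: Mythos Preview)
Your proposal is correct, and for $\gl(p|q)$ it is essentially the paper's argument: the paper also relies on $DS$ being symmetric monoidal, and the identification $\gl(W)=W\otimes W^*$ gives $\mathrm{DS}_y(\gl(p|q))\cong\gl(p-1|q-1)$ directly. (So your last paragraph's mention of an ``inclusion'' for $\gl$ is superfluous---you already have the isomorphism.)

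For $\p(n)$ the paper takes a shorter route than your proposed matrix computation. After obtaining the inclusion $DS_x(\p(n))\subset\p(DS_x(V_n))=\p(n-2)$ (the inclusion is injective because $\p(n)\hookrightarrow\gl(V_n)$ is \emph{split} as $\p(n)$-modules, hence preserved by the additive functor $DS_x$---a point you should make explicit, since $DS_x$ is not exact and does not preserve kernels in general), the paper closes by a dimension count: $\p(n)\cong\bigwedge^2 V_n$ as $\p(n)$-modules, so by the SM property $DS_x(\p(n))\cong\bigwedge^2\C^{n-2|n-2}\cong\p(n-2)$ as superspaces, forcing the inclusion to be an equality. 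This is the exact analogue of your own $\gl(W)=W\otimes W^*$ trick, applied to the Schur-functor description of $\p(V)$ (equivalently $\p(V)\cong\Pi S^2 V$, as in Lemma~\ref{lem:Lie_alg_p_in_urep}\eqref{itm:pV_5}). Your matrix computation would certainly work, but it is avoidable: the same idea that gave you $\gl$ for free gives $\p$ for free once you remember that $\p(V)$ is itself a Schur functor of $V$.
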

\begin{proof}
The first statement follows from the fact that $DS_x(M \otimes N) \cong DS_x(M)
\otimes DS_x(N)$ as supervector spaces \InnaA{(see \cite{Sersd})}. This means that
$DS_x$ defines a SM $\sVect$-functor $DS_x: \Rep(\p(n)) \to \sVect$; one
immediately
sees that $DS_x(V_n)$ is an $(n-2|n-2)$ dimensional superspace with an
(induced)
odd symmetric form, and hence $DS_x(\p(n))$ is a superalgebra preserving that
form.

To prove that it is isomorphic to $\p(n-2)$, we use the fact that $\p(n) \cong
\bigwedge^2 V_n$, and hence $$DS_x(\p(n)) \cong \bigwedge^2 \C^{n-2|n-2} \cong
\p(n-2)$$ as supervector spaces.

The second statement can be proved analogously (see also \cite{DS}).

\end{proof}

Hence we obtain functors $$DS_x: \Rep(\p(n)) \to \Rep(\p(n-2))$$
and $$\mathrm{DS}_y: \Rep(\gl(p|q)) \to \Rep(\gl(p-1|q-1)).$$
These functors are \InnaA{again} SM $\sVect$-functors.

\begin{lemma}\label{lem:DS_Res_commute}
 The functors $DS_x$ commute with the functors $Res$; that is, we have a
natural isomorphism:
$$\xymatrix{&\Rep(\p(n)) \ar[rr]^-{Res} \ar[d]^{DS_x} &{}
&\Rep(\gl(p|q))
\ar[d]^{\mathrm{DS}_y}\\ &\Rep(\p(n-2)) \ar@{=>}[rru] \ar[rr]^-{Res}
&{} &\Rep(\gl(p-1|q-1)) }$$
\end{lemma}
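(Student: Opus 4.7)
The plan is to exploit the fact that, by construction, the element $x=(y,\Pi y^*)\in\g_{p,q}\subset\p(n)$ is literally the image of $y\in\gl(p|q)$ under the embedding $\gl(p|q)\cong\g_{p,q}\hookrightarrow\p(n)$. Hence for any $M\in\Rep(\p(n))$, the action of $x$ on the underlying vector superspace of $M$ is exactly the action of $y$ on the underlying vector superspace of $Res(M)$. This is the engine of the whole lemma.

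First I would verify the underlying vector superspace identification: since $x|_M = y|_{Res(M)}$ as odd linear endomorphisms of the common underlying superspace, we have tautological equalities
\[
\Ker(x|_M)=\Ker(y|_{Res(M)}),\qquad \Im(x|_M)=\Im(y|_{Res(M)}),
\]
and therefore a canonical isomorphism of vector superspaces
\[
\alpha_M:DS_x(M)\xrightarrow{\;\sim\;}\mathrm{DS}_y(Res(M)).
\]
The assignment $M\mapsto\alpha_M$ is clearly natural in $M$.

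Next I would upgrade $\alpha_M$ to an isomorphism of $\gl(p-1|q-1)$-modules. Here the point is to match the two a priori different sources of $\gl(p-1|q-1)$-action on the common underlying superspace: one coming from $\mathrm{DS}_y(\gl(p|q))\cong\gl(p-1|q-1)$, the other coming from restricting the $\p(n-2)\cong DS_x(\p(n))$-action via the embedding $\gl(p-1|q-1)\hookrightarrow\p(n-2)$ that appears in the bottom row of the diagram. Both of these actions are obtained by applying the SM $\sVect$-functor $DS_x$ (respectively $\mathrm{DS}_y$) to the respective actions on $M$ and $Res(M)$; since $DS_x$ and $\mathrm{DS}_y$ agree on underlying superspaces (by the previous paragraph applied to $\g=\p(n)$ and its subalgebra $\g_{p,q}$), and since the embedding $\gl(p-1|q-1)\hookrightarrow\p(n-2)$ in the bottom row is, by construction, nothing other than $DS_x$ applied to the embedding $\gl(p|q)\hookrightarrow\p(n)$, the two actions coincide under $\alpha_M$.

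The only potentially subtle step is the last compatibility of identifications $\mathrm{DS}_y(\gl(p|q))\cong\gl(p-1|q-1)$ and $DS_x(\p(n))\cong\p(n-2)$, and I expect this to be the main (though minor) obstacle. It is handled by the monoidal argument already used in the preceding lemma: $DS_x$ is a SM $\sVect$-functor, so it takes the inclusion $\g_{p,q}\hookrightarrow\p(n)$ (an inclusion of Lie algebra objects in $\sVect$) to an inclusion of Lie algebra objects $DS_x(\g_{p,q})\hookrightarrow DS_x(\p(n))$, and on the level of underlying superspaces this is precisely the inclusion $\gl(p-1|q-1)\hookrightarrow\p(n-2)$ built into the bottom row of the diagram. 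Putting these pieces together gives the required natural isomorphism of functors.
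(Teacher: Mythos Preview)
Your proposal is correct and follows exactly the same idea as the paper's proof, which simply reads: ``This follows directly from the fact that $y$ goes to $x$ under the inclusion $\gl(p|q) \subset \p(n)$.'' You have merely unpacked this one-line observation into the details about kernels, images, naturality, and compatibility of the induced $\gl(p-1|q-1)$-actions, which is entirely reasonable.
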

\begin{proof}
 This follows directly from the fact that $y$ goes to $x$ under the inclusion
$\gl(p|q) \subset
\p(n)$.
\end{proof}

Finally, we state the following result. Although it is straightforward, will be
very useful later on.

\begin{lemma}\label{lem:DS_act}
 Let $\g$ be $\p(n)$ or $\gl(p|q)$, $z$ be $x$, $y$ as above, respectively, and
$DS_z$ be $DS_x$ or $\mathrm{DS}_y$ respectively. Consider the adjoint
representation of $\g$ (which we denote by $\g$ as well). This is a Lie algebra
object in $\Rep(\g)$, and any $M\in \Rep(\g)$ has a natural module structure over
it (in $\Rep(\g)$), given by a natural transformation $act^{\g}: \g \otimes -
\to
\id$.

 Let $\g_z:= DS_z(\g_z)$.

 Then $DS_z(act^{\g}) \cong act^{\g_z}$, where the latter is the natural action
of the adjoint representation of $\g_z$ on any $\g_z$-module.
\end{lemma}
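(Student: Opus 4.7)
The strategy is to unwind definitions: both the left- and right-hand sides of the asserted equality arise from the same underlying map $\g \otimes M \to M$, so once one traces through the monoidal structure of $DS_z$ the two should agree essentially tautologically. I will write out the argument for $DS_x$; the case of $\mathrm{DS}_y$ is identical.

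First, I would invoke the fact that $DS_z$ is a symmetric monoidal $\sVect$-functor, so its monoidal constraint $\mu_{A,B}\colon DS_z(A) \otimes DS_z(B) \to DS_z(A\otimes B)$ is an isomorphism, concretely induced on representatives by $[a]\otimes[b] \mapsto [a\otimes b]$. It follows formally that $DS_z$ carries Lie algebra objects to Lie algebra objects and module objects to module objects. Applied to the adjoint representation this equips $DS_z(\g)$ with a Lie algebra structure, and the identification with $\g_z := \Ker(\ad_z)/\Im(\ad_z)$ used to define $act^{\g_z}$ is precisely the one coming from this monoidal constraint (this is how $\g_z$ is constructed in \cite{DS}).

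Now, the action morphism $act^{\g}\colon \g \otimes M \to M$ is a morphism in $\Rep(\g)$, since the action is $\g$-equivariant for the adjoint action on $\g$. Applying $DS_z$ and precomposing with $\mu_{\g,M}$ yields
\[
DS_z(act^{\g}) \circ \mu_{\g, M}\colon \g_z \otimes DS_z(M) \longrightarrow DS_z(M).
\]
On representatives $a \in \Ker(\ad_z) \subset \g$ and $m \in \Ker(z)\subset M$ this composite sends $[a]\otimes[m]$ to $[a\cdot m]$. The element $a \cdot m$ does lie in $\Ker(z)$ by the super-Leibniz rule
\[
z\cdot(a\cdot m) = [z,a]\cdot m + (-1)^{p(a)p(z)} a\cdot(z\cdot m) = 0,
\]
and passing to the quotient gives exactly the formula defining the natural $\g_z$-action $act^{\g_z}$ on $DS_z(M)$. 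Well-definedness modulo $\Im(\ad_z)$ and $\Im(z)$ follows from the same identity applied to $a = [z,b]$ or $m = z\cdot m'$.

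I do not expect any serious obstacle: the lemma is a compatibility statement which is forced once $DS_z$ is recognized as a symmetric monoidal $\sVect$-functor with the explicit constraint above. The only care needed is bookkeeping of signs in the super-Leibniz rule, and the observation that the identification $DS_z(\g)\cong\g_z$ appearing in the definition of $act^{\g_z}$ is the one induced by the monoidal constraint rather than some alternative twist — which is true by the construction of $\g_z$ in \cite{DS}.
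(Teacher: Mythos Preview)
Your proposal is correct and is exactly the intended argument: the paper in fact omits the proof, stating the result as straightforward, and its (commented-out) sketch does precisely what you do — unwind the construction of $DS_z(act^{\g}_M)$ from $\Ker(\ad z)\otimes \Ker(z|_M)\to \Ker(z|_M)\twoheadrightarrow DS_z(M)$ and observe this is the defining formula for $act^{\g_z}$. Your inclusion of the super-Leibniz check and well-definedness is a harmless elaboration of the same idea.
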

\comment{\begin{proof}
 Let $M \in \Rep(\g)$ and denote $\rho:=act^{\g}_M$. Consider the construction
of
$DS_z(\rho)$: the map $\rho:\g \otimes M \to M$ induces a map $$\rho:\Ker(\ad z
)\otimes \Ker z\rvert_M \to \Ker z\rvert_M \twoheadrightarrow
\quotient{\Ker\rho(z)_M}{\Ker\rho(z)_M} \cong DS_z(M)$$ which then gives the
map
$$DS_z(\rho):DS_z(\g) \otimes DS_z(M)\to DS_z(M).$$ Yet the action of $\gg_z$
on
$DS_z(M)$ is clearly defined in the same way! Hence the statement of the lemma
follows.
\end{proof}}
\begin{remark}
\VeraC{Clearly, the restriction of the functor $Res$ to the full
subcategory $\Rep^k(\p(n)) \subset \Rep(\p(n))$ has image
in the full subcategory $ \Rep^k\gl(p|q)$.}
\end{remark}

\subsection{Main result on \texorpdfstring{$DS$}{DS} functor}
We now prove a central result on DS functors for periplectic Lie superalgebras.
Now, fix $k\geq 0$.
\begin{proposition}\label{prop:DS_equiv}
 For $n \geq 8k+2$, the functor $DS_x: \Rep^k(\p(n)) \to
Rep^k(\p(n-2))$ is an
equivalence.
\end{proposition}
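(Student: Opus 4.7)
The plan is to proceed in three stages: (i) use Lemma~\ref{lem:DS_Res_commute} together with the known equivalence $\mathrm{DS}_y$ for $\gl$ to show that $DS_x$ is exact and faithful on $\Rep^k(\p(n))$ with image in $\Rep^k(\p(n-2))$; (ii) show by induction on $|\lambda|$ that $DS_x$ sends $\overline{\Delta}^k_n(\lambda)$ to $\overline{\Delta}^k_{n-2}(\lambda)$ and $\overline{\nabla}^k_n(\lambda)$ to $\overline{\nabla}^k_{n-2}(\lambda)$; and (iii) conclude via the highest weight structure. For stage (i), the bound $n\geq 8k+2$ allows us to pick $p,q$ with $p+q=n$ and $p,q$ large enough that the analogous result of \cite{EHS} applies: $\mathrm{DS}_y:\Rep^k(\gl(p|q))\to\Rep^k(\gl(p-1|q-1))$ is an equivalence. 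Lemma~\ref{lem:DS_Res_commute} yields a commutative square with $DS_x$ on top, $\mathrm{DS}_y$ on the bottom, and the faithful exact restrictions $Res$ on the sides. A short diagram chase using the faithfulness of $Res$ and the exactness of $\mathrm{DS}_y\circ Res$ forces $DS_x$ to be exact, and faithfulness of $DS_x$ is immediate. Since $DS_x$ is SM with $DS_x(V_n)\cong V_{n-2}$, the image lies in $\Rep^k(\p(n-2))$.

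For stage (ii), the base case $\lambda=0$ is immediate. For the inductive step with $|\lambda|=s+1$, pick $\lambda'$ with $|\lambda'|=s$ and $\lambda=\lambda'-\eps_i$ for some $i$ (possible since $\pmb{\lambda}$ has a removable box). Apply the exact SM functor $DS_x$ to the short exact sequence of Corollary~\ref{cor:ses} at $\lambda'$, and use the inductive hypothesis on the middle term $\overline{\Delta}^k_n(\lambda')\otimes V_n$ and on the third term (indexed by weights $\mu'$ with $|\mu'|=s-1$). This yields a short exact sequence in $\Rep^k(\p(n-2))$ whose last two terms agree with those of Corollary~\ref{cor:ses} applied over $\p(n-2)$ at $\lambda'$, giving $\bigoplus_\mu \Pi DS_x(\overline{\Delta}^k_n(\mu))\cong \bigoplus_\mu \Pi \overline{\Delta}^k_{n-2}(\mu)$, where $\mu$ ranges over $k$-admissible weights of the form $\lambda'-\eps_j$. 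Matching indecomposable summands by Krull-Schmidt, together with the observation that the image under $DS_x$ of the highest weight vector of $\overline{\Delta}^k_n(\mu)$ remains a nonzero highest weight vector of weight $\mu$ in $\Pi^k S^{\pmb{\mu}^{\vee}}V_{n-2}$ (as $n-2$ is much larger than the length of $\pmb{\mu}$), identifies $DS_x(\overline{\Delta}^k_n(\mu))\cong\overline{\Delta}^k_{n-2}(\mu)$ for each such $\mu$, in particular for $\mu=\lambda$. The corresponding statement for costandards follows from an identical induction using Corollary~\ref{cor:ses_nabla}.

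For stage (iii), since $L_n(\lambda)$ is the image of the canonical map $\overline{\Delta}^k_n(\lambda)\to\overline{\nabla}^k_n(\lambda)$, exactness of $DS_x$ combined with stage (ii) gives $DS_x(L_n(\lambda))\cong L_{n-2}(\lambda)$, so $DS_x$ induces a bijection on simples. By Lemma~\ref{lem:ext}, $\dim\Hom(\overline{\Delta}^k_n(\lambda),\overline{\nabla}^k_n(\mu))=\delta_{\lambda,\mu}$ and $\Ext^1(\overline{\Delta}^k_n(\lambda),\overline{\nabla}^k_n(\mu))=0$ hold on both sides; faithfulness of $DS_x$ then forces the induced maps on Hom and $\Ext^1$ to be isomorphisms. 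Since an abelian highest weight category is recovered from the poset of standards together with this Hom/Ext data among standards and costandards (e.g., via its Ringel tilting generator), $DS_x$ must be an equivalence. The main obstacle I expect is the summand matching in stage (ii): while the outer terms of the two short exact sequences agree up to abstract isomorphism, pinning down each $DS_x(\overline{\Delta}^k_n(\mu))$ as exactly $\overline{\Delta}^k_{n-2}(\mu)$ (rather than some other $\overline{\Delta}^k_{n-2}(\mu'')$ with $|\mu''|=|\mu|$) requires a careful Krull--Schmidt argument and control of highest weight vectors.
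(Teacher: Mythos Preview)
Stages (i) and (ii) are correct and mirror the paper: stage (i) is exactly how the paper begins, and stage (ii) is Lemma~\ref{lem:DS_on_objects}(1), proved by the same induction via Corollary~\ref{cor:ses}. Your Krull--Schmidt concern can be resolved essentially as you sketch: the surviving highest weight vector gives $\overline{\Delta}^k_{n-2}(\mu)\subset DS_x(\overline{\Delta}^k_n(\mu))$ inside $\Pi^{|\mu|}S^{\pmb{\mu}^\vee}V_{n-2}$, and then the Grothendieck-group identity coming from the two short exact sequences forces equality for each $\mu$. Your deduction that $DS_x(L_n(\lambda))\cong L_{n-2}(\lambda)$ as the image of the canonical map $\overline{\Delta}\to\overline{\nabla}$ (nonzero by faithfulness, and unique up to scalar by Lemma~\ref{lem:ext}) is also correct, and arguably cleaner than the paper's argument via duals in Lemma~\ref{lem:DS_on_objects}(2).

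The real gap is stage (iii). The relations $\dim\Hom(\overline{\Delta}(\lambda),\overline{\nabla}(\mu))=\delta_{\lambda\mu}$ and $\Ext^1(\overline{\Delta}(\lambda),\overline{\nabla}(\mu))=0$ hold in \emph{every} highest weight category: they are axioms, not data that singles out a particular one. So checking that $DS_x$ respects them is vacuous, and your reconstruction claim (``the category is recovered from this Hom/Ext data'') is false as stated. An exact faithful functor that matches all $\overline{\Delta}$, $\overline{\nabla}$, $L$ need not be full: faithfulness does not force non-split extensions to stay non-split (think of the forgetful functor $\Rep(G)\to\Vect$ for a finite group $G$). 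You would still need to control $\Ext^1$ between simples, or equivalently show that $DS_x$ sends projectives to projectives, and nothing in your outline supplies this. There is also a circularity: the highest weight structure on $\Rep^k(\p(n))$ with these standards and costandards is only established in Section~\ref{sec:properties}, which relies on the present proposition.

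The paper fills this gap by a genuinely different idea that exploits the rigid tensor structure. It proves (Lemma~\ref{lem:DS_socle_mult}) that $[\operatorname{soc}(M):\C]=[\operatorname{soc}(DS_xM):\C]$ for all $M\in\Rep^k(\p(n))$, by an induction on length that reduces to analyzing a non-split extension $0\to\C\to M\to L\to 0$ with $L$ simple, and then invoking the combinatorics of $[\Delta_n(\lambda):\C]$ from \cite{BDE+}. This gives $\dim\Hom_{\p(n)}(\C,M)=\dim\Hom_{\p(n-2)}(\C,DS_xM)$, and rigidity converts it into $\dim\Hom(N,M)=\dim\Hom(\C,N^*\otimes M)=\dim\Hom(DS_xN,DS_xM)$ for all $N,M$, i.e.\ fullness. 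Essential surjectivity then follows by a routine induction on length using fullness. Your stages (i)--(ii) and the identification of simples are inputs to this argument, but they do not replace Lemma~\ref{lem:DS_socle_mult}.
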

\begin{proof}
Let $p, q \geq 0$ be such that $n= p+q$ and $\min(p, q) > 4k$; by
\cite[Theorem 7.1.1]{EHS} the
functor $$\mathrm{DS}_y: \Rep^k(\gl((p|q)) \longrightarrow
Rep^k(\gl(p-1|q-1))$$ becomes an equivalence of categories (in
particular, exact and faithful).

By Lemma \ref{lem:DS_Res_commute}, the functor $DS_x: \Rep^k(\p(n)) \to
Rep^k(\p(n-2))$ is exact and faithful as well, so we only need to show that it
is full and essentially surjective. Below we describe the main steps of the
proof, with details given in Lemmas \ref{lem:DS_on_objects},
\ref{lem:DS_socle_mult}.

 {\bf Proof that $DS_x$ is full:}

 In Lemma \ref{lem:DS_on_objects}, we show
that for any simple $L$,
$DS_x(L)$ is simple. We use this fact to prove Lemma \ref{lem:DS_socle_mult}:
for any $M \in \Rep^k(\p(n))$ we have $$[soc(M):\C]=[soc(DS_x(M)):\C].$$
Clearly, this implies
 \begin{equation}\label{eq:DS_full}
 \dim \Hom_{\p(n)}(\C, M) =  \dim \Hom_{\p(n-2)}(\C, DS_x M)
 \end{equation}

 Next, let $M, N \in \Rep^k(\p(n))$. We have isomorphisms
 \begin{align*}
&\Hom_{\p(n)}(N, M) \cong  \Hom_{\p(n)}(\C, N^* \otimes M), \;\; \text{  and
}\\
&\Hom_{\p(n-2)}(DS_x N, DS_x M) \cong \Hom_{\p(n-2)}(\C, (DS_x N)^* \otimes
DS_x M)\cong \Hom_{\p(n-2)}(\C, DS_x (N^* \otimes M))
 \end{align*}

 Hence \eqref{eq:DS_full} implies: $$\dim \Hom_{\p(n)}(N, M) =  \dim
\Hom_{\p(n-2)}(DS_x N, DS_x M)$$ which means that $DS_x$ is full.

 {\bf Proof that $DS_x$ is essentially surjective:}
 \mbox{}

We wish to show that for any $M'  \in \Rep^k(\p(n-2))$ there exists $M \in
Rep^k(\p(n))$ such that $DS_x M \cong M'$.

 We prove this by induction on the length of $M'$.

 For $\ell(M')=0$, this is trivial. Assume now that the statement holds for any
module in $\Rep^k(\p(n-2))$ of length strictly less that $\ell(M')$.

 Let $L'$ be a simple submodule of $M'$:
 $$ 0 \to L' \longrightarrow M' \longrightarrow N' \to 0$$ By Lemma
\ref{lem:DS_on_objects}, there exists a simple module $L \in \Rep^k(\p(n-2))$
such that $DS_x(L) = L'$. By induction assumption, there exists $N\in
Rep^k(\p(n-2))$ such that $DS_x(N) = N'$.

 Now we recall that any module in $\Rep^k(\p(n-2))$ is a subquotient of a \InnaC{certain finite direct sum $T'$
of tensor powers $V_{n-2}^{\otimes s}$, $0 \leq s \leq k$. Let $T'$ be the appropriate direct sum for $L'$.

It is easy to see that $T'$ lies in the essential image
of $DS_x$. Indeed, let us write $$T' = \bigoplus_{0 \leq s \leq k} \left(V_{n}^{\otimes s}\right)^{\oplus m_s}$$ for some $m_1, m_2, \ldots, m_k \in \Z_{\geq 0}$, and set $$T :=  \bigoplus_{0 \leq s \leq k} \left(V_{n}^{\otimes s}\right)^{\oplus m_s}$$ Using the fact that $DS_x$ respects direct sums and tensor products, as well as $DS_x(V_n) \cong V_{n-2}$, we obtain:
 $DS_x \left(T \right) \cong T$.}

 Denote $Q' := \quotient{T'}{L'}$:
  $$\xymatrix{&0 \ar[r] &L' \ar@{=}[d] \ar[r] &M' \ar@{^{(}->}[d] \ar[r] &N'
\ar@{^{(}->}[d] \ar[r] &0 \\ &0\ar[r]  &L' \ar[r] &T' \ar[r] &Q' \ar[r] &0}$$

  Notice that $N'  \twoheadleftarrow M' \hookrightarrow T'$ is the pullback of
the arrows $ N' \hookrightarrow Q' \twoheadleftarrow T'$.

 Since $DS_x$ is fully faithful and exact, there is an injective map $L
\hookrightarrow T$. Denote the cokernel of this map by $Q$. Then $DS_x(Q) = Q'$
and there is an injective map $N \hookrightarrow Q$, such that the diagram
below is taken by $DS_x$ to the corresponding maps in the diagram above:

   $$\xymatrix{&{} &L \ar@{=}[d]  &{}  &N \ar@{^{(}->}[d] &{}  \\ &0\ar[r]  &L
\ar[r] &T \ar[r] &Q \ar[r] &0}$$

   Consider the pullback $N  \twoheadleftarrow M \hookrightarrow T$ of the
arrows $ N \hookrightarrow Q \twoheadleftarrow T$. Since $DS_x$ is exact, it
preserves pullbacks, and hence $DS_x(M) = M'$, as required.

This completes the proof of Proposition \ref{prop:DS_equiv}.
\end{proof}
\begin{remark}
 The fact that $DS_x$ is full can also be concluded from the result of
\cite{DLZ} (see Proposition \ref{prop:contractions}).
\end{remark}

\begin{lemma}\label{lem:DS_on_objects}
Let $\abs{\lambda} \leq k$ and $n>8k+2$, $k\geq 0$. Consider
the
simple module $L_n(\lambda)$, and the highest weight module
$\overline{\Delta}^k_n(\lambda)$ in $\Rep^k(\p(n))$. We have:
\begin{enumerate}
 \item $DS_x \overline{\Delta}^k_n(\lambda) =
\overline{\Delta}_{n-2}^k(\lambda)$.
 \item $DS_x L_n(\lambda) = L_{n-2}(\lambda)$.
 \end{enumerate}
\end{lemma}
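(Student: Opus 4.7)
The approach combines two tools: the identification $\overline{\Delta}_n^k(\lambda) \cong \Phi_n(L_{\infty}(\lambda))$ from Proposition \ref{prop:Phi_simples_to_delta}, and the equivalence result for $\gl(p|q)$ in \cite[Theorem 7.1.1]{EHS}.

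First, I would observe that $DS_x$ is exact on $\Rep^k(\p(n))$: for $n > 8k+2$, choose $p, q$ with $p+q=n$ and $\min(p,q) > 4k$; then \cite[Theorem 7.1.1]{EHS} gives exactness of $\mathrm{DS}_y$ on $\Rep^k(\gl(p|q))$, and Lemma \ref{lem:DS_Res_commute} together with the faithful exactness of $Res$ implies exactness of $DS_x$.

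For part (1), I aim to establish a natural isomorphism $DS_x \circ \Phi_n \cong \Phi_{n-2}$ on $\Rep^k(\p(\infty))$. The geometric picture: $x \in \p(n) \subset \p(\infty)$ acts trivially on $V_n^{\perp}$ and has rank $2$ on $V_n$, so $V_{\infty}^x = \Ker(x)/\Im(x) \cong V_{n-2} \oplus V_n^{\perp}$; identifying this with $V_{\infty}$ via the decomposition $V_{\infty} = V_{n-2} \oplus V_{n-2}^{\perp}$, the subalgebra $\p(n)^{\perp} = \p(V_n^{\perp})$ becomes $\p(n-2)^{\perp}$. Since $x$ commutes with $\p(n)^{\perp}$, one verifies that $DS_x(M^{\p(n)^{\perp}}) \cong M^{\p(n-2)^{\perp}}$ for $M \in \Rep^k(\p(\infty))$ (using Lemma \ref{lem:Phi_exact} to commute invariants with the $x$-cohomology computation). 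Applying this to $M = L_{\infty}(\lambda)$ yields
$$DS_x(\overline{\Delta}_n^k(\lambda)) = DS_x \Phi_n(L_{\infty}(\lambda)) \cong \Phi_{n-2}(L_{\infty}(\lambda)) = \overline{\Delta}_{n-2}^k(\lambda).$$

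For part (2), Lemma \ref{lem:ext} shows that $\Hom_{\p(n)}(\overline{\Delta}_n^k(\lambda), \overline{\nabla}_n^k(\lambda))$ is one-dimensional, with the image of any nonzero morphism equal to $L_n(\lambda)$ (since $L_n(\lambda)$ is the head of $\overline{\Delta}_n^k(\lambda)$ and the socle of $\overline{\nabla}_n^k(\lambda)$). By part (1), its analog for $\overline{\nabla}$ via Proposition \ref{prop:Phi_dual_simples_to_nabla}, and exactness and faithfulness of $DS_x$, the induced morphism is a nonzero morphism $\overline{\Delta}_{n-2}^k(\lambda) \to \overline{\nabla}_{n-2}^k(\lambda)$, whose image is $L_{n-2}(\lambda)$ by Lemma \ref{lem:ext} applied to $n-2$. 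Exactness of $DS_x$ identifies this image with $DS_x(L_n(\lambda))$.

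The main obstacle is rigorously establishing the commutative diagram $DS_x \circ \Phi_n \cong \Phi_{n-2}$. The subtlety is that the equality of $\p(n-2)^{\perp}$-invariants of $L_{\infty}(\lambda)$ with the $x$-cohomology of its $\p(n)^{\perp}$-invariants requires careful analysis of the identification $V_{\infty}^x \cong V_{\infty}$ as $\p(\infty)$-representations, together with its compatibility with the action of $x$ and the large-annihilator condition that characterizes $\Rep(\p(\infty))$.
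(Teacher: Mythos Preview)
Your approach is sound in outline but takes a genuinely different route from the paper, and the gap you flag is real and needs the paper's machinery to close.

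For part (1), the paper does not use the compatibility $DS_x\circ\Phi_n\cong\Phi_{n-2}$ at all; instead it argues by induction on $|\lambda|$, using the short exact sequences of Corollary~\ref{cor:ses} for $\overline{\Delta}^k_n(\lambda)\otimes V_n$ (together with $DS_x(V_n)\cong V_{n-2}$ and exactness) to pass from $|\lambda|$ to $|\lambda|+1$. Your route is legitimate, but the compatibility statement you need is precisely the content of Lemma~\ref{lem:DS_Phi_compat}, proved later in Section~\ref{ssec:DS_Phi_compat}. The paper does \emph{not} prove it by the direct geometric argument you sketch (identifying $V_\infty^x$ with $V_\infty$ and tracking $\p(n-2)^\perp$-invariants); rather, it constructs an explicit natural transformation $\eta:\Phi_{n-2}\to DS_x\circ\Phi_n$ and then shows $Res(\eta)$ is an isomorphism by reducing to the $\gl$-case via Lemma~\ref{lem:Phi_res_commute} and quoting \cite[Lemma~7.2.1]{EHS}. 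Your appeal to Lemma~\ref{lem:Phi_exact} alone does not suffice to commute invariants with $x$-cohomology; you would need to reproduce the $Res$-argument. Since Lemma~\ref{lem:DS_Phi_compat} is logically independent of the present lemma, there is no circularity in invoking it, but you should cite it rather than attempt the direct verification.

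For part (2), the paper argues that $DS_x L_n(\lambda)$ is sandwiched between a quotient of $\overline{\Delta}^k_{n-2}(\lambda)$ (hence has head $L_{n-2}(\lambda)$) and a submodule of $\bigl(\overline{\Delta}^k_{n-2}(\lambda^\vee)\bigr)^*$ (hence has socle $L_{n-2}(\lambda)$), using $[DS_x L_n(\lambda):L_{n-2}(\lambda)]\le 1$ to conclude. Your argument via the image of the unique nonzero map $\overline{\Delta}^k_n(\lambda)\to\overline{\nabla}^k_n(\lambda)$ is equivalent and arguably cleaner; it works because $[\overline{\Delta}^k_n(\lambda):L_n(\lambda)]=1$ forces that image to be simple. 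Note that to identify $DS_x\overline{\nabla}^k_n(\lambda)$ you should invoke Corollary~\ref{cor:Delta_nabla_dual} together with part~(1) and the fact that $DS_x$ is symmetric monoidal (hence commutes with duals), rather than appealing to Proposition~\ref{prop:Phi_dual_simples_to_nabla} directly.
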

\begin{proof}
\begin{enumerate}
 \item We prove the statement by induction on $\abs{\lambda}$.

For $\abs{\lambda} =0$, we have $\lambda = 0$ and $\overline{\Delta}_n^k(0) =
\Phi_n(\triv)$, $\overline{\Delta}_{n-2}^k(0) = \Phi_{n-2}(\triv)$. Hence the
statement holds in this case.

The induction step follows directly from Lemma \ref{cor:ses}.

\item Consider the surjective map $f: \overline{\Delta}^k_n(\lambda)
\twoheadrightarrow L_n(\lambda)$. Since $DS_x$ is exact, we have: $DS_x
L_n(\lambda)$ is the quotient of
$DS_x \overline{\Delta}^k_n(\lambda) = \overline{\Delta}_{n-2}^k(\lambda)$, and
hence
$DS_x L_n(\lambda)$ has $L_{n-2}(\lambda)$ as quotient.

Moreover, $[DS_x L_n(\lambda): L_{n-2}(\lambda)] =1$, since $$[DS_x
L_n(\lambda): L_{n-2}(\lambda)] \leq [\overline{\Delta}_{n-2}^k(\lambda):
L_{n-2}(\lambda)] \leq [\Delta_{n-2}(\lambda): L_{n-2}(\lambda)] =1$$

On the other hand, we have the transposed map $$ f^t: L_n(\lambda)^*
\hookrightarrow \left(\overline{\Delta}^k_n(\lambda) \right)^*$$

Denote by $\lambda^{\vee}$ the highest weight of the simple module $
L_n(\lambda)^*$. Clearly, $L_n(\lambda^{\vee}) = L_n(\lambda)^*$ sits in
$\Rep^k(\p(n))$ as well.

Then $$L_n(\lambda) \subset \left(\overline{\Delta}^k_n(\lambda^{\vee})
\right)^*$$
and hence
$$DS_x L_n(\lambda) \subset DS_x \left(\overline{\Delta}^k_n(\lambda^{\vee})
\right)^*
= \left(DS_x \overline{\Delta}^k_n(\lambda^{\vee}) \right)^* =
\left(\overline{\Delta}^k_{n-2}(\lambda^{\vee}) \right)^*$$

In particular, the socle of $DS_x L_n(\lambda)$ coincides with the socle of
$\left(\overline{\Delta}^k_{n-2}(\lambda^{\vee}) \right)^*$, which is
$L_{n-2}(\lambda)$. Hence $DS_x L_n(\lambda) = L_{n-2}(\lambda)$, as required.

\end{enumerate}
\end{proof}

\begin{lemma}\label{lem:DS_socle_mult}
 Let $n>8k+2$, $k\geq 0$ and $M \in \Rep^k(\p(n))$. Then
$[soc(M):\C]=[soc(DS_x(M)):\C]$.
\end{lemma}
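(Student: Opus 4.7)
The plan is to prove the two inequalities separately. The inequality $[\mathrm{soc}(M):\C] \leq [\mathrm{soc}(DS_x M):\C]$ is the easy direction: since $DS_x(\C) \cong \C$ by Lemma~\ref{lem:DS_on_objects}(2), and $DS_x$ is faithful on $\Rep^k(\p(n))$ (already established in the proof of Proposition~\ref{prop:DS_equiv} via the commutation $\mathrm{DS}_y \circ Res = Res \circ DS_x$ and the equivalence property of $\mathrm{DS}_y$), the induced map
$$DS_x: \Hom_{\p(n)}(\C, M) \hookrightarrow \Hom_{\p(n-2)}(\C, DS_x M)$$
is injective, which is precisely the inequality $[\mathrm{soc}(M):\C] \leq [\mathrm{soc}(DS_x M):\C]$.

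For the reverse inequality, I would proceed by induction on the length $\ell(M)$. The base case of simple $M$ follows directly from Lemma~\ref{lem:DS_on_objects}(2): $M \cong \C$ if and only if $DS_x M \cong \C$. For the inductive step, choose a simple submodule $L \hookrightarrow M$ and form the short exact sequence $0 \to L \to M \to N \to 0$ with $\ell(N) < \ell(M)$. Applying the exact functor $DS_x$ and the long exact sequences for $\Hom_{\p(n)}(\C,-)$ and $\Hom_{\p(n-2)}(\C,-)$, and using the inductive hypothesis $\dim N^{\p(n)} = \dim (DS_x N)^{\p(n-2)}$ together with the base case for $L$, the problem reduces to showing that the natural map
$$DS_x: \Ext^1_{\p(n)}(\C, L) \longrightarrow \Ext^1_{\p(n-2)}(\C, DS_x L)$$
is injective for every simple $L \in \Rep^k(\p(n))$.

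To establish this injectivity, I would work with a decomposition $n = p+q$ with $\min(p,q) > 4k$ so that $\mathrm{DS}_y: \Rep^k(\gl(p|q)) \to \Rep^k(\gl(p-1|q-1))$ is an equivalence by \cite[Theorem 7.1.1]{EHS}. In particular, $\mathrm{DS}_y$ induces isomorphisms on $\Ext^1$-groups. Given a nonsplit extension $0 \to L \to E \to \C \to 0$ in $\Rep^k(\p(n))$ whose $DS_x$-image splits as $\p(n-2)$-modules, restriction to $\gl(p-1|q-1)$ shows that $\mathrm{DS}_y(Res(E))$ is split; by the equivalence, $Res(E)$ is split as a $\gl(p|q)$-module extension. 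One then argues that this $\gl(p|q)$-equivariant splitting can be promoted to a $\p(n)$-equivariant splitting, using the fact that the obstruction lives in $\Hom_{\gl(p|q)}(\mathfrak{m}, L)$ where $\mathfrak{m}$ is the $\gl(p|q)$-module complement of $\gl(p|q)$ in $\p(n)$.

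The main obstacle is this last promotion step: showing that a $\gl(p|q)$-equivariant section of the short exact sequence $0 \to L \to E \to \C \to 0$ can be modified to yield a $\p(n)$-equivariant section. Concretely, given $v \in E^{\gl(p|q)}$ mapping to $1 \in \C$, one needs to find $\ell \in L^{\gl(p|q)}$ such that the $\mathfrak{m}$-action kills $v + \ell$, i.e., such that the $\gl(p|q)$-equivariant map $\mathfrak{m} \to L$ given by the action on $v$ agrees with minus the action on $\ell$. This requires detailed control on the $\gl(p|q)$-structure of $L$ and $\mathfrak{m}$; alternatively, one may bypass this step by lifting $M$ to a suitable object in $\Rep^k(\p(\infty))$ via the exact faithful functor $\Phi_n$ of Lemma~\ref{lem:Phi_exact} and exploiting the cleaner structure of invariants in the infinite case (where $\C$ is injective by the argument in the proof of Lemma~\ref{lem:Phi_monoidal}).
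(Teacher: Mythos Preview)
Your reduction to injectivity of $DS_x$ on $\Ext^1_{\p(n)}(\C, L)$ for simple $L$ is sound, but neither proposed method for that step closes. The promotion argument is stuck exactly where you flag it: a $\gl(p|q)$-invariant lift $v \in E$ has obstruction the $\gl(p|q)$-map $\mathfrak{m} \to L$, $m \mapsto m \cdot v$, and correcting $v$ by $\ell \in L^{\gl(p|q)}$ only shifts this by $m \mapsto m \cdot \ell$; you give no argument that such shifts exhaust the obstruction space, so in effect you are asking for injectivity of $\Ext^1_{\p(n)}(\C, L) \to \Ext^1_{\gl(p|q)}(\C, Res\, L)$, which is of the same order of difficulty as the original claim. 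The $\p(\infty)$ alternative fails outright: $\Phi_n$ is exact and faithful but \emph{not} essentially surjective on $\Rep^k(\p(n))$ --- by Proposition~\ref{prop:Phi_simples_to_delta} it sends $L_\infty(\lambda)$ to the truncated standard $\overline{\Delta}^k_n(\lambda)$, not to the simple $L_n(\lambda)$ --- so neither a general $M$ nor a simple $L$ admits a lift.

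The paper's proof avoids both difficulties via a different reduction. A minimal-counterexample argument on a nonsplit $0 \to \C \to M \to N \to 0$ with split $DS_x$-image, peeling a simple off the \emph{top} of $N$ and pushing out, reduces to the length-two case $0 \to \C \to M \to L_n(\lambda) \to 0$. At this point the paper uses highest-weight theory directly: since $\lambda$ is $k$-admissible (all $\lambda_i \leq 0$) the weight $0$ is not above $\lambda$, so the nonsplit $M$ must be a quotient of $\Delta_n(\lambda)$, forcing $[\Delta_n(\lambda):\C] > 0$; the combinatorial description of composition factors of $\Delta_n(\lambda)$ in \cite[Theorem~6.3.3]{BDE+} then gives a contradiction. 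In other words the relevant $\Ext^1$ already vanishes in $\Rep(\p(n))$, so injectivity under $DS_x$ is vacuous and no comparison with $\gl(p|q)$ or $\p(\infty)$ is needed.
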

\begin{proof}
 Recall that $DS_x(\C) = \C$. Moreover, by Lemma \ref{lem:DS_on_objects},
$DS_x$ takes simple modules to simple modules, and induces a bijection on the
set of isomorphism classes of simple modules in $\Rep^k(\p(n))$,
$\Rep^k(\p(n-2))$. Hence in order to prove the statement of the lemma, it is
enough to show that given a non-split short exact sequence
 $$ 0 \to \C \to M \to N \to 0$$
 its image $$0 \to \C \to DS_x M \to DS_x N \to 0$$ will not split either.

 Assume the contrary, and consider a module $M$ of minimal length for which a
non-split short exact sequence
 $$ 0 \to \C \to M \to N \to 0$$ is sent by $DS_x$ to a split sequence $$0 \to
\C \to DS_x M \to DS_x N \to 0$$

 Let $L$ be a simple quotient of $N$, and denote: $$M' := Ker(M
\twoheadrightarrow L), \;\; N' := Ker(N \twoheadrightarrow L)$$

 Then we have a short exact sequence
 \begin{equation}\label{eq:split_ses}
  0 \to \C \to M' \to N' \to 0
 \end{equation}
 is sent by $DS_x$ to a split sequence $$0 \to \C \to DS_x M' \to DS_x N' \to
0$$

 By assumption, the sequence \eqref{eq:split_ses} is split, since $\ell(M') <
\ell(M)$. Hence we have a section $M' \twoheadrightarrow \C$.

 Consider the pushout of the maps $M' \to \C$, $M' \to M$:
 $$\xymatrix{&M \ar@{-->>}[r] &K \\ &{M'} \ar@{^{(}->}[u] \ar@{->>}[r]
&{\C}\ar@{^{(}-->}[u]}$$

 Then $$0 \to \C \hookrightarrow K \twoheadrightarrow L \to 0$$ is a short
exact sequence, whose image under $DS_x$ is clearly split. We now consider two
possibilities:

 \begin{itemize}
  \item[Case $N \neq L$.] In this case $N' \neq 0$ and hence $\ell(K) <
\ell(M)$. This implies that we have a section $K \twoheadrightarrow \C$,
inducing a section $M \twoheadrightarrow \C$, which in turn contradicts the
assumption that the original short exact sequence was not split.
  \item[Case $N = L$.] In this case, we have a non-split extension  $$ 0 \to \C
\to M \to L \to 0$$ and we claim that $M$ is a highest weight
module whose highest weight coincides with that of $L$.

  Indeed, denote by $\lambda$ the highest weight of $L$. Consider the
weights of the module $M$. These are the weights of $L$ together with the
trivial weight. Since $\lambda$ is $k$-admissible, we have: $\lambda_i \leq 0$
for any $i$. Hence weight $0$ is not higher than $\lambda$, and thus
$M$ has no weights higher than $\lambda$. This means that we
have a map $\Delta(\lambda) \to M$. Since $M$ is
not
split, this map is surjective, and we conclude that $[\Delta(\lambda) : \C] >
0$. Now \cite[Theorem 6.3.3]{BDE+} implies that there exists $i$ such that
$\lambda_i \geq 0$, which contradicts our
conditions on $\lambda$ such that $L_n(\lambda) \in \Rep^k(\p(n))$.

 \end{itemize}
This completes the proof of Lemma \ref{lem:DS_socle_mult}.
\end{proof}
\subsection{Compatability with the specialization
functors}\label{ssec:DS_Phi_compat}

Let $n \geq 3$. Consider the functors $\Phi_n: \Rep(\p(\infty)) \to
\Rep(\p(n)) $ and $\Phi_{n-2}: \Rep(\p(\infty)) \to
\Rep(\p(n-2)) $.

Let $x \in \p(n)_{\InnaC{\bar{1}}}$ be such that $x$ lies in the centralizer of $\p(n-2)$. Then
for any $M \in \Rep(\p(\infty))$ we have:
$$M^{\p(n-2)^{\perp}} \subset Ker \, x\rvert_{M^{\p(n)}}$$ and hence we can
consider the
composition
$$M^{\p(n-2)^{\perp}} \hookrightarrow Ker  \, x\rvert_{M^{\p(n)}}
\twoheadrightarrow
DS_x\left(M^{\p(n)}\right)$$ which we denote by $\eta_M$. This gives a natural
$\otimes$-transformation $\eta: \Phi_{n-2} \to DS_x \circ \Phi_n$.
\begin{remark}
 Applying the functor $Res: \Rep(\p(n-2)) \to \Rep(\gl(p-1|q-1))$ for $p+q =n$
to $\eta$, we obtain a natural $\otimes$-transformation $\Gamma_{p-1|q-1} \to
\mathrm{DS}_y\Gamma_{p|q}$ which is described in \cite[Section 7.2]{EHS}.
\end{remark}

\begin{lemma}\label{lem:DS_Phi_compat}
 Let $n > 4k+2$. The restriction of $\eta$ to
$\Rep^k(\p(\infty))$ is an isomorphism.
\end{lemma}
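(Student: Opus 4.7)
The plan is to reduce the statement to its $\gl$-analog established in \cite{EHS}, using the compatibility of $\Phi_n$ and $DS_x$ with the restriction functors to $\gl(p|q)$.

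First, I would fix $p + q = n$ with $\min(p,q) \geq 2k+1$, which is possible precisely because $n > 4k+2$. By Lemmas \ref{lem:Phi_res_commute} and \ref{lem:DS_Res_commute}, we have natural isomorphisms $Res \circ \Phi_{n-2} \cong \Gamma_{p-1|q-1} \circ Res$ and $Res \circ DS_x \circ \Phi_n \cong \mathrm{DS}_y \circ \Gamma_{p|q} \circ Res$. Unraveling the definition of $\eta_M$ as the composition
$M^{\p(n-2)^\perp} \hookrightarrow \Ker(x|_{M^{\p(n)^\perp}}) \twoheadrightarrow DS_x(M^{\p(n)^\perp})$, and using that $Res$ commutes with taking invariants, kernels, and cokernels (being exact and fiber-preserving on underlying vector spaces), one checks that $Res(\eta_M)$ is identified under the above natural isomorphisms with the analogous natural transformation $\Gamma_{p-1|q-1} \to \mathrm{DS}_y \circ \Gamma_{p|q}$ evaluated at $Res(M) \in \Rep^k(\gl(\infty))$, exactly as indicated in the remark following the definition of $\eta$.

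Next, I would invoke the $\gl$-analog of the lemma, proved in \cite[Section 7.2]{EHS}: the natural transformation $\Gamma_{p-1|q-1} \to \mathrm{DS}_y \circ \Gamma_{p|q}$ is an isomorphism on $\Rep^k(\gl(\infty))$ whenever $\min(p,q)$ is large enough relative to $k$, which is guaranteed by our choice of $p, q$. Since $Res$ sends $\Rep^k(\p(\infty))$ into $\Rep^k(\gl(\infty))$ (because $Res(V_\infty) \cong \C^\infty \oplus \Pi \C^\infty_*$), this shows that $Res(\eta_M)$ is an isomorphism of $\gl(p-1|q-1)$-modules for every $M \in \Rep^k(\p(\infty))$.

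Finally, the restriction functor $Res: \Rep(\p(n-2)) \to \Rep(\gl(p-1|q-1))$ reflects isomorphisms: a $\p(n-2)$-morphism is an isomorphism precisely when its underlying linear map is bijective, which is exactly the content of $Res$ applied to it being an isomorphism. Hence $\eta_M$ is itself an isomorphism in $\Rep(\p(n-2))$, completing the proof. The main technical point is the identification in the second paragraph, which is a routine but careful diagram chase using the explicit forms of the natural isomorphisms from Lemmas \ref{lem:Phi_res_commute} and \ref{lem:DS_Res_commute}; once that is in place, the bound $n > 4k+2$ is forced by the corresponding threshold in \cite{EHS}, and the rest is formal.
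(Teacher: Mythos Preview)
Your proposal is correct and follows essentially the same approach as the paper: identify $Res(\eta)$ with the $\gl$-analog natural transformation from \cite[Section 7.2]{EHS}, apply \cite[Lemma 7.2.1]{EHS} (which requires $p,q>2k$, hence $n>4k+2$), and then use that the exact faithful functor $Res$ reflects isomorphisms. You are slightly more explicit than the paper in spelling out the choice of $p,q$ and the identification step, but the argument is the same.
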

\begin{proof}
 As it is mentioned in the Remark above, $\Gamma_{p-1|q-1} \to
\mathrm{DS}_y\Gamma_{p|q}$ is the natural transformation described in
\cite[Section
7.2]{EHS}. In particular, by \cite[Lemma 7.2.1]{EHS}, the restriction of
$Res(\eta)$ to $\Rep^k(\gl(p-1|q-1))$ for $p,q>2k$ is an isomorphism. Since
$Res$
is a faithful exact functor, we conclude that $\eta$ is both surjective and
injective, and hence an isomorphism as well.
\end{proof}

\section{Construction of the category \texorpdfstring{$\Rep(P)$}{Rep(P)}}\label{sec:Rep_p}

\subsection{Construction}\label{ssec:constr_urep}
For each $n\geq 4,n \in 2\Z$, fix $x_n \in \p(n)$ an odd element of rank
$2$ (as in Section \ref{sec:DS_functor}). Recall that by Proposition
\ref{prop:DS_equiv}, the functors
$$DS_{x_n}: \Rep^k(\p(n)) \to
Rep^k(\p(n-2))$$ are equivalences of SM categories for $n>>k$. Furthermore,
for each $k, n \geq 1$ we have a fully faithful exact SM embedding
$\Rep^k(\p(n))
\to Rep^{k+1}(\p(n))$. This allows
us to define
$$\urepk{k} := \varprojlim_{n \in 2\Z, n \to \infty}
Rep^k(\p(n)), \;\;\;\; \urep := \varinjlim_{k \to
\infty}\urepk{k}$$ with respect to the functors $DS_{x_n}$  and the inclusions
above.

The category $\urep$ will be called the {\it Deligne category for the
periplectic Lie superalgebra}.

The category $\urep$ is clearly a $\sVect$-category. Furthermore,
we have:
\begin{lemma}
 The category $\urep$ is a tensor $\sVect$-category.
\end{lemma}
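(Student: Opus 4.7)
The plan is to verify, one at a time, the remaining axioms of a tensor $\sVect$-category for $\urep$: abelianness, a bilinear bifunctor $\otimes$, rigidity, symmetry, and $\End(\triv)\cong\C$. The SM structure itself is inherited step by step from the transition of limits.

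First I would check that each $\urepk{k}$ is abelian. By Proposition \ref{prop:DS_equiv}, for every fixed $k$ the transition functors $DS_{x_n}:\Rep^k(\p(n))\to\Rep^k(\p(n-2))$ are equivalences of abelian categories once $n\geq 8k+2$. Hence the inverse limit $\urepk{k}$ is equivalent to any single $\Rep^k(\p(n))$ with $n$ large, and in particular is an abelian $\sVect$-category with enough projectives and injectives (Lemma \ref{lem:rep_k_enough_proj}). Next, the inclusions $\Rep^k(\p(n))\hookrightarrow\Rep^{k+1}(\p(n))$ are fully faithful exact $\sVect$-embeddings and commute (up to the chosen natural isomorphisms) with the $DS_{x_n}$, so they induce fully faithful exact $\sVect$-embeddings $\urepk{k}\hookrightarrow\urepk{k+1}$. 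The filtered $2$-colimit $\urep=\varinjlim_k\urepk{k}$ of abelian categories along exact fully faithful embeddings is again abelian, and every object of $\urep$ lies in some $\urepk{k}$.

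Next I would construct the tensor product. Although $\Rep^k(\p(n))$ is not closed under $\otimes$, the tensor product on $\Rep(\p(n))$ restricts to a $\C$-bilinear bifunctor
\[
\Rep^k(\p(n))\times\Rep^{\ell}(\p(n))\longrightarrow\Rep^{k+\ell}(\p(n)),
\]
simply because $V_n^{\otimes a}\otimes V_n^{\otimes b}=V_n^{\otimes(a+b)}$ and subquotients of direct sums are again subquotients. Because $DS_{x_n}$ is a SM $\sVect$-functor, these bifunctors are compatible with the transition equivalences in $n$, and the associativity, unit and symmetry constraints on the various $\Rep(\p(n))$ are transported. Passing to inverse limits in $n$ and then to the filtered colimit in $k$, we get a well-defined bilinear symmetric monoidal bifunctor $\otimes:\urep\times\urep\to\urep$ factoring through $\urepk{k}\times\urepk{\ell}\to\urepk{k+\ell}$. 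The unit object $\triv$ comes from the trivial $\p(n)$-module, which lies in $\Rep^0(\p(n))$; since $\triv$ is a simple object of $\Rep(\p(n))$, $\End_{\urep}(\triv)=\C$.

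For rigidity, I would use the observation following Definition \ref{def:repk}: taking duals preserves each $\Rep^k(\p(n))$. Thus $(-)^*$ is an endofunctor of $\urepk{k}$ (and of each term of the inverse system), the evaluation and coevaluation maps $\mathrm{ev}:M^*\otimes M\to\triv$ and $\mathrm{coev}:\triv\to M\otimes M^*$ exist already in $\Rep(\p(n))$ and, being preserved by $DS_{x_n}$, descend to maps in $\urep$ satisfying the rigidity triangle identities. Finally, biexactness of $\otimes$ on $\urep$ follows from biexactness on each $\Rep(\p(n))$ together with the fact that exactness is preserved by filtered colimits of exact embeddings.

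The main obstacle I anticipate is bookkeeping rather than mathematics: one must choose the $2$-limit/colimit carefully so that the symmetric monoidal coherence data are transported unambiguously through the inverse system of equivalences $DS_{x_n}$ and then through the direct system of embeddings. In particular, one must check that the $\sVect$-module structure (the functor $\Pi$ and the coherence $\Pi^2\cong\id$) is compatible with all these transitions so that $\urep$ inherits the structure of a tensor $\sVect$-category, and not merely of a tensor category. This is routine but requires keeping track of the natural isomorphisms of Lemmas \ref{lem:Phi_monoidal} and \ref{lem:DS_Phi_compat}.
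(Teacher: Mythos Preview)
Your proposal is correct and follows essentially the same route as the paper's own proof, which is a terse two-line version of exactly the argument you spell out: abelianness from each $\Rep^k(\p(n))$, then the tensor, rigidity and symmetry structures induced by the bifunctors $\Rep^k(\p(n))\times\Rep^m(\p(n))\to\Rep^{k+m}(\p(n))$ compatibly with the $DS_{x_n}$. One small note: the coherence bookkeeping you flag at the end relies only on the fact that each $DS_{x_n}$ is an SM $\sVect$-functor, not on Lemmas \ref{lem:Phi_monoidal} or \ref{lem:DS_Phi_compat}, which concern the unrelated functors $\Phi_n$.
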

\begin{proof}
 The categories $\Rep^k(\p(n))$ are abelian for any $k$, so
$\urepk{k}$ is abelian as well, and so is $\urep$.

For any $n, k, m$, the tensor structure on $\Rep(\p(n))$ induces a biexact
bilinear bifunctor $$\Rep^k(\p(n)) \times Rep^m(\p(n)) \longrightarrow
Rep^{k+m}(\p(n)).$$ This induces a bifunctor $$\urepk{k} \times \urepk{m}
\longrightarrow
\urepk{k+m}$$ for any $k, m\geq 0$, and hence a rigid SM structure on $\urep$,
which satisfies all the requirements of a tensor category structure.
\end{proof}

By definition,
$\urepk{k}$ comes equipped with SM functors
$$F_n: \urepk{k} \longrightarrow \Rep^k(\p(n))$$ for each $n \in 2\Z$
and $k$, which are equivalences when $n >8k+2$. These
induce SM functors
$$F_n: \urep \longrightarrow \Rep(\p(n))$$ for any $n \in 2\Z$.

We denote by $\bV$ the object of $\urep$ corresponding to the natural
representation: \InnaC{$$\bV = (V_n)_{n \in 2\Z} \in \urepk{1} \subset \urep.$$ Clearly, for any $n \in 2\Z$, we have $F_n(\bV) = V_n$.} The object $\bV$
is equipped with a pairing $\omega_{\bV}: \bV \otimes \bV \to \Pi \triv$.

\begin{remark}
 We can also define
 $${Rep}^k(\underline{P}') := \varprojlim_{n \in 2\Z+1, n \to \infty}
Rep^k(\p(n)), \;\;\;\; \Rep(\underline{P}') := \varinjlim_{k \to
\infty}{Rep}^k(\underline{P}')
$$ and set $\bV'$ to be the corresponding ``natural representation'' with a
pairing $\omega_{\bV'}$.

We will show in Corollary \ref{cor:two_urep_equiv} that these two constructions
are
equivalent, i.e. we have a (unique) equivalence of tensor categories $\urep
\cong  \Rep(\underline{P}')$ with $\bV$ sent to $\bV'$ and $\omega_{\bV}$ sent
to $\omega_{\bV'}$.

This allows us to consider $F_n: \urep \longrightarrow \Rep(\p(n))$ for any
$n \in \Z$. For this reason, from now on, we will impose any conditions on the
\InnaB{parity} of $n$, although such assumption would not influence the results below.
\end{remark}

Consider the morphism $$F_n: \Hom_{\urep}(\mathbf{V}^{\otimes l}, \triv)
\longrightarrow \Hom_{\p(n)}(F_n(\mathbf{V}^{\otimes l}), F_n(\triv)) =
\Hom_{\p(n)}(V_n^{\otimes l}, \C).$$

\begin{lemma}\label{lem:F_n_surjective}
 The morphism $F_n$ above is surjective.
\end{lemma}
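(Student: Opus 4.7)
The plan is to exploit the description of $\Hom_{\p(n)}(V_n^{\otimes l},\C)$ given by Proposition \ref{prop:contractions}: every such morphism is a linear combination of contraction maps built from the form $\omega_n$, and each such contraction is visibly the image under $F_n$ of a canonical ``universal'' contraction living in $\urep$. Thus I will not need to appeal to the equivalence $F_N:\urepk{l}\to \Rep^l(\p(N))$ for large $N$ (Proposition \ref{prop:DS_equiv}), though the same argument is implicit in any proof of this type.

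First, I would recall the setup. By Proposition \ref{prop:contractions}, $\mathtt{s}\Hom_{\p(n)}(V_n^{\otimes l},\triv)$ vanishes unless $l$ is even, in which case it is spanned by contraction maps $c^{(n)}_\pi: V_n^{\otimes l}\to \Pi^{l/2}\triv$, one for every perfect matching $\pi$ of $\{1,\dots,l\}$; concretely, $c^{(n)}_\pi$ is the tensor product of $l/2$ copies of $\omega_n$ rearranged via the symmetric monoidal symmetries according to $\pi$. Passing to the even part, $\Hom_{\p(n)}(V_n^{\otimes l},\C)$ is zero unless $l\equiv 0\pmod 4$ (in which case the surjectivity is vacuous) and is otherwise spanned by those $c^{(n)}_\pi$.

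Next, I would construct the same formula in $\urep$. For each matching $\pi$, define
$$c_\pi:\bV^{\otimes l}\longrightarrow \Pi^{l/2}\triv$$
as the tensor product of $l/2$ copies of the universal form $\omega_{\bV}:\bV\otimes\bV\to\Pi\triv$, rearranged via the SM symmetries of $\urep$ according to $\pi$. To see that $c_\pi$ is a well-defined morphism in $\urep$ (which by construction is an inverse limit of the $\Rep^l(\p(n))$ along $DS_{x_n}$), it suffices to observe that the family $(c^{(n)}_\pi)_n$ is compatible under the functors $DS_{x_n}$: this is immediate since $DS_{x_n}$ is a SM $\sVect$-functor sending $V_n$ to $V_{n-2}$ and $\omega_n$ to $\omega_{n-2}$. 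For $l/2$ even, this is a morphism in $\Hom_{\urep}(\bV^{\otimes l},\triv)$.

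Finally, since $F_n$ is itself a SM $\sVect$-functor with $F_n(\bV)=V_n$ and $F_n(\omega_{\bV})=\omega_n$, it commutes with the tensor product and symmetries used in the construction, and therefore $F_n(c_\pi)=c^{(n)}_\pi$ for every matching $\pi$. Since these contractions span $\Hom_{\p(n)}(V_n^{\otimes l},\C)$, the map $F_n$ is surjective. The argument is essentially bookkeeping around parity and matchings; the only nontrivial input is Proposition \ref{prop:contractions}, and there is no real obstacle beyond quoting it.
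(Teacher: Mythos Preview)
Your proof is correct and follows essentially the same idea as the paper: both rely on Proposition \ref{prop:contractions} to reduce to contractions, and both use that the SM functors involved send contractions to contractions. The paper phrases this as showing each transition map $DS_{x_n}$ is surjective on these $\Hom$-spaces (so the inverse-limit projection is surjective), while you construct the universal contractions $c_\pi\in\Hom_{\urep}(\bV^{\otimes l},\triv)$ directly and verify $F_n(c_\pi)=c_\pi^{(n)}$; these are two packagings of the same argument.
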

\begin{proof}
By the results of \cite{DLZ} (also proved in
Corollary
\ref{appcor:contractions}), the spaces $\Hom_{\p(n)}(V_n^{\otimes l}, \C)$,
$\Hom_{\p(n-2)}(V_{n-2}^{\otimes l}, \C)$ are spanned by contraction maps for
any $n \geq 3$ and any $l\geq 0$. Hence the map
$$ DS_{x_n}:\Hom_{\p(n)}(V_n^{\otimes l}, \C) \to
\Hom_{\p(n-2)}(V_{n-2}^{\otimes l}, \C)$$ is surjective. The lemma now follows.
\end{proof}

\begin{corollary}\label{cor:F_n_full_tensor}
 The functor $F_n$ is full on the (full) Karoubian additive SM subcategory
generated by $\bV$.
\end{corollary}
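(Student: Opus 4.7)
The plan is to reduce the full statement to the surjectivity established in Lemma~\ref{lem:F_n_surjective}, using the rigidity and self-duality of $\bV$, and then to propagate this across direct sums and direct summands.

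First, I would exploit the non-degenerate form $\omega_{\bV} : \bV \otimes \bV \to \Pi \triv$, which (together with rigidity of $\urep$) yields an isomorphism $\bV \cong \Pi \bV^*$ in $\urep$, and similarly $V_n \cong \Pi V_n^*$ in $\Rep(\p(n))$. These isomorphisms are compatible under $F_n$ since $F_n$ sends $\bV$ to $V_n$ and $\omega_{\bV}$ to $\omega_n$. Tensoring and using rigidity, we obtain natural isomorphisms
\[
\Hom_{\urep}(\bV^{\otimes k}, \bV^{\otimes m}) \;\cong\; \Hom_{\urep}(\bV^{\otimes(k+m)}, \Pi^{m} \triv),
\]
\[
\Hom_{\p(n)}(V_n^{\otimes k}, V_n^{\otimes m}) \;\cong\; \Hom_{\p(n)}(V_n^{\otimes(k+m)}, \Pi^{m} \triv),
\]
which intertwine with $F_n$. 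By Lemma~\ref{lem:F_n_surjective} (applied to $l = k+m$, and with the obvious parity-shifted variant, which follows from the same contraction-map description in \cite{DLZ}), the right-hand map is surjective, hence so is the left-hand map. This establishes fullness of $F_n$ on the full SM subcategory of $\urep$ generated by $\bV$ (before taking additive or Karoubian closure).

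Next, I would extend to finite direct sums: if $X = \bigoplus_i \bV^{\otimes k_i}$ and $Y = \bigoplus_j \bV^{\otimes m_j}$, then $\Hom_{\urep}(X,Y) = \bigoplus_{i,j} \Hom_{\urep}(\bV^{\otimes k_i}, \bV^{\otimes m_j})$ and the same decomposition holds in $\Rep(\p(n))$, with $F_n$ respecting the decomposition. Surjectivity on each summand gives surjectivity on the whole.

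Finally, for the Karoubian closure: an object of the Karoubian additive SM subcategory generated by $\bV$ is a pair $(X, e)$ with $X$ a finite direct sum of tensor powers of $\bV$ and $e = e^2 \in \End_{\urep}(X)$. For two such objects $(X,e)$ and $(Y,f)$, we have $\Hom((X,e),(Y,f)) = f \cdot \Hom(X,Y) \cdot e$, and analogously in $\Rep(\p(n))$. Since $F_n$ is additive, monoidal, and already full on $\Hom(X,Y)$ by the previous paragraph, and since $F_n(e), F_n(f)$ are the corresponding idempotents defining $F_n(X,e)$ and $F_n(Y,f)$, any morphism $F_n(X,e) \to F_n(Y,f)$ lifts to a morphism $(X,e) \to (Y,f)$ in $\urep$.

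The only real subtlety I anticipate is bookkeeping of the parity shifts when identifying $\bV^{\otimes m}$ with $\Pi^m (\bV^{\otimes m})^*$, and making sure that the surjectivity statement of Lemma~\ref{lem:F_n_surjective} indeed holds in the $\Pi$-shifted version (which it does, since the contraction morphisms of Proposition~\ref{prop:contractions} span $\mathtt{s}\Hom_{\p(n)}(V_n^{\otimes l}, \triv)$ including the parity-shifted component, and these lift canonically to $\urep$ via $\omega_{\bV}$). Everything else is a formal consequence of rigidity, additivity, and the definition of the Karoubian envelope.
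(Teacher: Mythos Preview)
Your proposal is correct and is precisely the argument the paper has in mind: the corollary is stated without proof, as an immediate consequence of Lemma~\ref{lem:F_n_surjective}, and the reduction you describe (via rigidity and $\bV^* \cong \Pi \bV$, then extension to direct sums and direct summands) is the standard and intended route.
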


\subsection{Translation functors and the
Casimir}\label{ssec:trans_functors_urep}
Recall the notation $act:\gl(\bV) \otimes
\bV
\to \bV$ which stands for the action of the Lie algebra object
$\gl(\bV)$ in $\urep$ on $\bV$ (see Section \ref{ssec:notn:SM}).
\begin{definition}\label{def:p_V}
 Let $$ \triangle:\gl(\bV) \otimes \bV \otimes
\bV \to \bV \otimes
\bV, \;\; \triangle
:= act \otimes
\id + (\id \otimes act ) \circ \InnaC{\left(\sigma_{\gl(\bV), \bV} \otimes \id \right)}$$ and let $\p(\bV) \in
\urep$ be the maximal subobject of $
\gl(\bV)$ such that the map $$\omega\circ \triangle:\p(\bV) \otimes \bV \otimes
\bV \to \bV \otimes
\bV \to \Pi \triv$$ is zero.
\end{definition}
That is, $\p(\bV)$ is the subobject of $\gl(\bV)$ preserving the form $\omega$.

\begin{lemma}\label{lem:Lie_alg_p_in_urep}

The following is true:
\begin{enumerate}
 \item\label{itm:pV_1}
The object $\p(\bV)$ is a Lie algebra subobject of $\gl(\bV)$ in
$\urep$.
 \item\label{itm:pV_2} For any $n \geq 1$, $F_n(\p(\bV))
\cong \p(n)$ (adjoint representation) as Lie algebra objects in $\Rep(\p(n))$.
 \item\label{itm:pV_3} We have an orthogonal decomposition
$$\gl(\bV) \cong\p(\bV) \oplus \p(\bV)^*$$ of $\urep$ objects with respect to
the form $$tr:=ev\circ \sigma_{\bV, \bV^*}: \gl(\bV) \cong \bV \otimes \bV^*
\to
\triv.$$
\item\label{itm:pV_5} We have an isomorphism: $\p(\bV)\cong\Pi S^2 \bV$.
\item\label{itm:pV_4} There is a
natural transformation of functors $act: \p(\bV) \otimes (-) \longrightarrow
\id$ making any $M \in \urep$ a module over the Lie algebra object $\p(\bV)$ in
$\urep$.
\end{enumerate}

\end{lemma}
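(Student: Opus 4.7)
My plan is to prove the five assertions in the order (5), (2), (1), (3), (4), since (5) furnishes an explicit identification of $\p(\bV)$ that makes the remaining claims largely formal.

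For (5), I would observe that the classical isomorphism $\p(n) \cong \Pi S^2 V_n$ as $\p(n)_{\bar 0}$-modules is purely a consequence of the formal data $(V,\omega)$, so it should lift to any rigid SM category with such an object. Concretely, the non-degenerate form $\omega_{\bV}$ induces an isomorphism $\bV^* \cong \Pi \bV$, and hence
$$\gl(\bV) \;=\; \bV \otimes \bV^* \;\cong\; \Pi(\bV \otimes \bV) \;\cong\; \Pi S^2 \bV \;\oplus\; \Pi \Lambda^2 \bV.$$
I would then show by a diagram chase that the subobject cut out by $\omega\circ\triangle = 0$ is precisely the $\Pi S^2\bV$ summand: the super-symmetric summand encodes the odd-skew condition $\omega(Xv,w)+(-1)^{p(X)p(v)}\omega(v,Xw)=0$, while the $\Pi\Lambda^2\bV$ summand corresponds to the opposite (super-self-adjoint) condition.

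For (2), since $F_n$ is an exact SM functor that sends the universal pair $(\bV,\omega_{\bV})$ to $(V_n,\omega_n)$, it preserves the kernel-type universal definition of $\p(\bV)$. Thus $F_n(\p(\bV))$ coincides with the maximal subobject of $\gl(V_n)$ preserving $\omega_n$, which is by definition $\p(n)$. (Exactness of $F_n$ on the finite part of $\urep$ containing $\p(\bV)$ follows from its restriction to an equivalence $\urep^k \simeq \Rep^k(\p(n))$ for $n > 8k+2$, together with Proposition~\ref{prop:DS_equiv}.) For (1), the Lie subalgebra property now reduces to checking that $F_n(\p(\bV)) = \p(n)$ is a Lie subalgebra of $F_n(\gl(\bV)) = \gl(V_n)$ for all sufficiently large $n$; since $F_n$ is a tensor functor intertwining the commutator brackets, and is jointly faithful on $\urep^2$ via the equivalences $F_n$ for $n \gg 0$, the bracket on $\gl(\bV)$ must restrict to $\p(\bV)$. (An intrinsic diagrammatic proof, mimicking the classical calculation that the bracket of two $\omega$-skew operators is again $\omega$-skew, would also work and avoids any reference to $F_n$.)

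For (3), I invoke (5) to write $\gl(\bV) \cong \Pi S^2\bV \oplus \Pi\Lambda^2\bV$, and then verify that the trace form $tr = ev\circ\sigma_{\bV,\bV^*}$ pairs these two summands non-degenerately and vanishes on each summand separately. Up to the identification $\bV \cong \Pi\bV^*$, this boils down to the fact that the super-symmetry exchanges $S^2$ and $\Lambda^2$ in a way that matches evaluation with coevaluation; non-degeneracy of $tr$ on $\gl(\bV)$ then forces $\Pi\Lambda^2\bV \cong \p(\bV)^*$ via $tr$, yielding the orthogonal decomposition. For (4), I cannot simply restrict a $\gl(\bV)$-action, because objects of $\urep$ are not a priori $\gl(\bV)$-modules. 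Instead, each $F_n(M) \in \Rep(\p(n))$ carries a canonical action $\p(n) \otimes F_n(M) \to F_n(M)$, and Lemma~\ref{lem:DS_act} guarantees that these actions are intertwined by the $DS$ functors used to build $\urep$. Hence they assemble into a morphism $act: \p(\bV)\otimes M \to M$ in $\urep$, natural in $M$ and satisfying the module axioms (all verifiable componentwise under $F_n$).

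The main obstacle I anticipate is bookkeeping in (5): one must carefully track the signs and parity shifts to confirm that the subobject defined by $\omega\circ\triangle = 0$ matches $\Pi S^2\bV$ rather than $\Pi\Lambda^2\bV$. Once (5) is secured with the correct signs, parts (2), (1), (3) become direct consequences, and (4) is a limit construction guaranteed by the compatibility in Lemma~\ref{lem:DS_act}.
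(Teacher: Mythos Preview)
Your proposal is correct and follows essentially the same strategy as the paper: reduce each claim to its classical counterpart by transporting along the functors $F_n$, using that $F_n$ restricts to an equivalence $\urepk{k}\simeq\Rep^k(\p(n))$ for $n\gg k$. The paper proceeds in the order (1), (2), (3), (5), (4) and verifies (5) by writing down an explicit map $\Pi\bV^{\otimes 2}\to\gl(\bV)$ and checking under $F_n$ that it carries $\Pi S^2\bV$ onto $\p(n)$, rather than by an intrinsic diagram chase; but this is a difference of presentation, not of substance. Your treatment of (4) via Lemma~\ref{lem:DS_act} is exactly what the paper does.

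One small gap worth flagging in your argument for (2): you assert that $F_n$ is exact and hence preserves the kernel-type definition of $\p(\bV)$, but your parenthetical only justifies this for $n>8k+2$. For small $n$ the functor $F_n$ is built as a composite involving $DS$ functors that need not be exact on the relevant subcategory, so the kernel is not automatically preserved. The paper closes this gap by first establishing $F_n(\p(\bV))\cong\p(n)$ for large $n$ via the equivalence, and then observing that $DS_x(\p(n))\cong\p(n-2)$ as Lie algebras for every $n\geq 3$; since $F_{n-2}\cong DS_x\circ F_n$ by construction, this propagates the isomorphism down to all $n$.
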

\begin{proof}

The statement \eqref{itm:pV_1} is straightforward. For \eqref{itm:pV_2}, recall
that for $n>>0$, $F_n$ is both
a SM functor and an equivalence on $\urepk{4}$ to which all the objects
appearing
in Definition \ref{def:p_V} belong.
Hence $F_n(\p(\bV))$ would be the maximal Lie subalgebra of
$F_n(\gl(\bV))$ statisfying conditions as in Definition \ref{def:p_V}, i.e.
preserving the form $F_n(\omega): V_n \otimes V_n \to \Pi \triv$.

This implies
that $F_n(\p(\bV)) \cong \p(n)$ (as Lie algebras in $\Rep(\p(n))$). This proves
the statement for large $n$.
Now $DS_x(\p(n)) \cong \p(n-2)$ (as Lie algebras) for any $n\geq
3$, which proves \eqref{itm:pV_2} for any $n$.

The decomposition in \eqref{itm:pV_3} now follows from an analogous
decomposition for $\gl(n|n)$ and $\p(n)$.

To prove \eqref{itm:pV_5}, let $\eta:\Pi \bV \to \bV^*$ be the isomorphism
induced by the form $\omega$.

Consider the isomorphism $f: \Pi \bV^{\otimes 2} \to \gl(\bV) = \bV \otimes
\bV^*$ given by $$f = \sigma_{\bV, \bV^*} \circ (\eta \otimes \id) + (\id
\otimes \eta) \circ \sigma_{\Pi \triv, \bV}.$$
Now $f$ induces an isomorphism $\p(\bV)\cong\Pi S^2 \bV$, which can be verified
by applying $F_n$ to both sides, for large enough $n$, and obtaining $\p(n)
\cong \Pi S^2 V_n$, which is known to be true.

Finally, \eqref{itm:pV_4} follows from Lemma \ref{lem:DS_act}.

\end{proof}

We now define a natural endomorphism $\pmb{\Omega}$ of the endofunctor $\InnaA{\bV \otimes (-)}$ on $\urep$.

\begin{definition}
For any $M \in \urep$, let $\pmb{\Omega}_M$ be the composition
$$ \bV \otimes M \xrightarrow{\id \otimes coev \otimes \id} V \otimes \p(\bV)^*
\otimes \p(\bV) \otimes M \xrightarrow{i_* \otimes \id}  \bV \otimes \gl(\bV)
\otimes \p(\bV) \otimes M   \xrightarrow{\InnaA{(act \circ \sigma)} \otimes act}
\bV \otimes M $$ where $i_*: \p(\bV)^* \to \gl(\bV)$ is the embedding defined
in Lemma \ref{lem:Lie_alg_p_in_urep}, \eqref{itm:pV_3}.
\end{definition}
\begin{definition}
 For $k \in \C$, we define a functor
$\pmb{\Theta}'_k:\urep\to\urep$ as the functor $\InnaA{\bV \otimes (-)}$
followed by the projection onto the generalized $k$-eigenspace for
$\pmb{\Omega}$. \InnaA{That is, we set
\begin{eqnarray}
\label{thetak}
\pmb{\Theta}'_k(M):=\bigcup_{m>0}\Ker (\pmb{\Omega}
-k\operatorname{Id})^m_{|_{M\otimes \bV}}
\end{eqnarray}
The fact that $\pmb{\Theta}'_k(M)$ is a direct summand of $\bV \otimes M$, is proved in the same way as for operators on finite-dimensional vector spaces.

Finally, we} set $\pmb{\Theta}_k:=\Pi^k\pmb{\Theta}'_k$ in case $k\in\mathbb{Z}$.
\end{definition}

\begin{lemma}\label{lem:casimir_goes_to_casimir}
 Let $n \geq 1$. We have: $F_n(\pmb{\Omega}) = \Omega^n F_n$, where $\Omega^n$
is the tensor Casimir for $\p(n)$ (see \cite[4.1.1]{BDE+}). That is,
for any
$M \in \urep$, $F_n(\pmb{\Omega}_M) = \Omega^n_{F_n(M)}$ as endomorphisms of
$V_n \otimes F_n(M)$.
\end{lemma}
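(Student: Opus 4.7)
The plan is to argue by pure functoriality: the endomorphism $\pmb{\Omega}_M$ is built as a composition of structural morphisms ($coev$, $\sigma$, the embedding $i_*$, and two instances of the action $act$), each of which is respected by the SM $\sVect$-functor $F_n$. Applying $F_n$ to this composition will therefore yield the analogous composition in $\Rep(\p(n))$, which I then identify with the classical tensor Casimir $\Omega^n$ from \cite[4.1.1]{BDE+}.

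First I would assemble the identifications provided by Lemma \ref{lem:Lie_alg_p_in_urep}. Under $F_n$ we have $F_n(\bV) = V_n$, $F_n(\p(\bV)) \cong \p(n)$ as Lie algebras in $\Rep(\p(n))$, and $F_n(\gl(\bV)) \cong \gl(n|n)$. Being an SM functor, $F_n$ preserves the trace pairing $tr$; hence the orthogonal decomposition $\gl(\bV) \cong \p(\bV) \oplus \p(\bV)^*$ of part \eqref{itm:pV_3} is sent to the analogous decomposition $\gl(n|n) \cong \p(n) \oplus \p(n)^*$, and in particular $F_n(i_*)$ is the embedding $\p(n)^* \hookrightarrow \gl(n|n)$ determined by this decomposition. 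By Lemma \ref{lem:DS_act} together with the construction of $\urep$ as a limit along the functors $DS_{x_n}$, the action morphism is preserved: $F_n(act_M) = act_{F_n(M)}$ for every $M \in \urep$. Finally, being SM, $F_n$ also preserves $coev$ and $\sigma$ for the relevant objects.

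Pasting these facts into the definition of $\pmb{\Omega}_M$, the image under $F_n$ of the defining four-step composition is exactly the composition
\[
V_n \otimes F_n(M) \longrightarrow V_n \otimes \p(n)^* \otimes \p(n) \otimes F_n(M) \longrightarrow V_n \otimes \gl(n|n) \otimes \p(n) \otimes F_n(M) \longrightarrow V_n \otimes F_n(M),
\]
built from $coev$, $i_{*,n}$, and $(act \circ \sigma) \otimes act$ in $\Rep(\p(n))$. The main (and likely only) point to verify is that this resulting endomorphism coincides with $\Omega^n_{F_n(M)}$ as presented in \cite[4.1.1]{BDE+}. Classically, $\Omega^n = \sum_a x^a \otimes x_a$ for dual bases $\{x_a\} \subset \p(n)$ and $\{x^a\} \subset \gl(n|n)$ with respect to the trace form, acting on the two tensor factors. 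The key observation is that by construction, $(i_* \otimes \id) \circ coev(1) \in \gl(n|n) \otimes \p(n)$ is precisely the canonical element of $\p(n)^* \otimes \p(n)$ transported into $\gl(n|n) \otimes \p(n)$ via trace-duality, hence equals $\sum_a x^a \otimes x_a$. Thus the final composition sends $v \otimes m$ to $\sum_a (x^a \cdot v) \otimes (x_a \cdot m)$, which is exactly $\Omega^n_{F_n(M)}$. Beyond this small dual-basis identification, the lemma is a purely formal consequence of the structural properties of $F_n$ collected in Lemma \ref{lem:Lie_alg_p_in_urep} and Lemma \ref{lem:DS_act}.
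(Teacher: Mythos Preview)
Your proof is correct and follows essentially the same approach as the paper: the paper's one-line argument is that the claim follows directly from $F_n$ being an SM functor together with Lemma \ref{lem:Lie_alg_p_in_urep}\eqref{itm:pV_4}, and your write-up simply unpacks this by tracing each structural morphism through $F_n$ and then identifying the dual-basis element with the classical Casimir.
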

\begin{proof}
 This follows directly from the fact that $F_n$ is a SM
functor, together with \eqref{itm:pV_4}.
\end{proof}

\begin{corollary}\label{cor:transl_goes_to_transl}
\mbox{}
 \begin{enumerate}
  \item For any $k \notin \Z$, we have $\pmb{\Theta}'_k \cong 0$.
  \item We have a natural isomorphism of functors $$F_n \pmb{\Theta}_k \cong
\InnaA{\Theta^{(n)}_k} F_n$$ where $\InnaA{\Theta^{(n)}_k}: \Rep(\p(n)) \to \Rep(\p(n))$ is the
translation functor defined in \cite[4.1.7]{BDE+}.
 \end{enumerate}
\end{corollary}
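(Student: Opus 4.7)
The plan is to derive both parts directly from Lemma \ref{lem:casimir_goes_to_casimir}, together with the fact that for any $s \geq 0$ the SM functor $F_n$ restricts to an equivalence $\urepk{s+1} \xrightarrow{\sim} \Rep^{s+1}(\p(n))$ once $n > 8(s+1)+2$; in particular $F_n$ is exact and faithful on $\urepk{s+1}$ for such $n$.

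First I would fix $M \in \urepk{s}$, so that $\bV \otimes M \in \urepk{s+1}$, and use the general principle that an exact $\C$-linear functor sends the generalized eigenspace decomposition of an endomorphism of a finite-length object to the generalized eigenspace decomposition of its image, with matching eigenvalues. Combined with Lemma \ref{lem:casimir_goes_to_casimir}, which says $F_n(\pmb{\Omega}_M) = \Omega^n_{F_n(M)}$, this identifies $F_n\bigl(\pmb{\Theta}'_k(M)\bigr)$ with the generalized $k$-eigenspace of $\Omega^n$ acting on $V_n \otimes F_n(M)$.

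For part (1), I would invoke Theorem \ref{old_thrm:transl_func}, which guarantees that $\Omega^n$ acts on $V_n \otimes F_n(M)$ with integer eigenvalues, so the image $F_n(\pmb{\Theta}'_k(M))$ is zero for $k \notin \Z$. Faithfulness of $F_n$ on $\urepk{s+1}$ (for $n$ sufficiently large) then forces $\pmb{\Theta}'_k(M) = 0$, and since $M$ was arbitrary and $\urep = \bigcup_s \urepk{s}$, I conclude that $\pmb{\Theta}'_k \cong 0$.

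For part (2), the computation above already identifies $F_n \pmb{\Theta}'_k(M)$ with the generalized $k$-eigenspace of $\Omega^n$ on $V_n \otimes F_n(M)$, which by the definition in \cite[4.1.7]{BDE+} is $\Theta^{(n)}_k F_n(M)$ (up to matching parity conventions, since both $\pmb{\Theta}_k$ and $\Theta^{(n)}_k$ carry an implicit $\Pi^k$ shift relative to the naive eigenspace). Naturality in $M$ is automatic, as every piece of the construction is functorial. The main — essentially bookkeeping — obstacle will be reconciling the parity shift conventions on the two sides; beyond this, no input beyond Lemma \ref{lem:casimir_goes_to_casimir} and the equivalence property of $F_n$ on bounded subcategories should be required.
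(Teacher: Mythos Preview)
Your proposal is correct and follows the same approach as the paper, which simply notes that the corollary ``follows directly from Lemma \ref{lem:casimir_goes_to_casimir} together with the definition of $\Theta^{(n)}_k$ as the generalized $k$-eigenspace of $\Omega^n$ on $-\otimes V_n$ (up to a twist by $\Pi^k$).'' One small remark: for part (2) you do not actually need the equivalence property of $F_n$ on bounded subcategories---since the generalized eigenspace decomposition of $\bV\otimes M$ is a finite direct sum and $F_n$ is an additive $\C$-linear SM functor sending $\pmb{\Omega}_M$ to $\Omega^n_{F_n(M)}$, the decomposition is preserved for every $n$; the equivalence (hence faithfulness) for $n\gg 0$ is only needed in part (1).
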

\begin{proof}
 Follows directly from Lemma \ref{lem:casimir_goes_to_casimir} together with
the definition of $\InnaA{\Theta^{(n)}_k}$ as the generalized $k$-eigenspace of $\Omega^n$
on $-\otimes V_n$ (up to a twist by $\Pi^k$).
\end{proof}
\begin{remark}
 A similar idea is used to prove the Kac-Wakimoto conjecture for $\p(n)$, as
will be shown in an upcoming work by the authors.
\end{remark}

\begin{lemma}\label{lem:translation_simple_socle}
 If $Y$ has simple socle, then $\pmb{\Theta}_{i}(Y)$ is either zero or has
simple
socle. Similarly, if $Y$ has simple cosocle, then
$\pmb{\Theta}_{i}(Y)$ is either zero or has simple cosocle.
\end{lemma}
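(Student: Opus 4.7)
The plan is to reduce the statement to the analogous statement in $\Rep(\p(n))$ via the equivalences $F_n$, and then invoke Theorem \ref{old_thrm:transl_proj} together with the fact (from \cite{BKN}) that indecomposable injectives in $\Rep(\p(n))$ are also indecomposable projective. Given $Y \in \urep$, I would first observe that $Y \in \urepk{k}$ for some $k$, so $\pmb{\Theta}_i Y$, being a direct summand of $\bV \otimes Y$, lies in $\urepk{k+1}$. Choosing $n$ large enough ($n > 8(k+1)+2$ suffices, by Proposition \ref{prop:DS_equiv}), the restriction $F_n\colon \urepk{k+1} \to \Rep^{k+1}(\p(n))$ is an equivalence of categories. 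Combined with the natural isomorphism $F_n \pmb{\Theta}_i \cong \Theta^{(n)}_i F_n$ from Corollary \ref{cor:transl_goes_to_transl}, this transforms the question into the statement: \emph{if $Z \in \Rep(\p(n))$ has simple socle (resp.\ simple cosocle), then $\Theta^{(n)}_i Z$ is zero or has simple socle (resp.\ simple cosocle)}. Here I use that $\Rep^{k+1}(\p(n))$ is a Serre subcategory of $\Rep(\p(n))$ (it is closed under subquotients by definition), so socles and cosocles computed inside the subcategory agree with those in the ambient category; the analogous Serre property for $\urepk{k+1} \subset \urep$ follows by transfer through $F_n$.

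For the reduced statement in $\Rep(\p(n))$, assume first that $Z$ has simple socle $L$. Then $Z$ embeds into the injective hull $I(L)$, which by \cite{BKN} is also an indecomposable projective (and tilting) module. By Theorem \ref{old_thrm:transl_proj}(2), $\Theta^{(n)}_i I(L)$ is either zero or an indecomposable projective, and in the latter case it is also indecomposable injective and thus has simple socle. Exactness of $\Theta^{(n)}_i$ (it is a summand of $V_n \otimes -$, which is exact) then gives an embedding $\Theta^{(n)}_i Z \hookrightarrow \Theta^{(n)}_i I(L)$, whence $\Theta^{(n)}_i Z$ is zero or has simple socle. The cosocle case is dual: if $Z$ has simple cosocle $L$, take the projective cover $P(L) \twoheadrightarrow Z$; then $\Theta^{(n)}_i P(L)$ is zero or indecomposable projective, hence has simple cosocle, and $\Theta^{(n)}_i Z$ is a quotient of it.

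The only genuinely delicate point is the matching of socles between $\urep$ and $\Rep^{k+1}(\p(n))$. This is not a deep obstacle but must be articulated carefully: one needs that a subobject in $\urep$ of an object of $\urepk{k+1}$ automatically lies in $\urepk{k+1}$, which follows from the fact that any such subobject sits in some $\urepk{m}$ with $m \geq k+1$, and transporting along $F_n$ for sufficiently large $n$ places it in $\Rep^m(\p(n))$, where the Serre property of $\Rep^{k+1}(\p(n))$ inside $\Rep^m(\p(n))$ forces it into $\Rep^{k+1}(\p(n))$, and hence back into $\urepk{k+1}$.
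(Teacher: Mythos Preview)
Your proof is correct and follows essentially the same strategy as the paper: transfer to $\Rep(\p(n))$ for $n\gg k$ via the equivalence $F_n$ and Corollary \ref{cor:transl_goes_to_transl}, then use that translation functors send indecomposable projectives (equivalently injectives, by \cite{BKN}) to zero or indecomposable projectives. The paper's proof is terser---it treats only the cosocle case and leaves the socle case as analogous, and it does not spell out the Serre-subcategory compatibility of socles/cosocles that you articulate in your final paragraph---but the argument is the same.
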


 \begin{proof}
 We prove the second statement (the first one is proved analogously).
Let $k\geq 0$ be such that $Y \in
\urepk{k}$. Let $n>>k$ be such that $\urepk{k+1} \cong
Rep^{k+1}(\p(n))$.
Then $F_n(Y)$ has simple cosocle and hence is a quotient of
an indecomposable
projective. Applying translation functors to an indecomposable projective in
$\Rep(\p(n))$, one obtains either zero or once again an indecomposable
projective.
Thus $\Theta_i F_n(Y)$ is either zero, or has simple cosocle as well. Now,
$\Theta_i F_n(Y)
\in Rep^{k+1}(\p(n)) \cong \urepk{k+1}$ so $\pmb{\Theta}_{i}(Y)$ is either
zero,
or has simple cosocle, as required.
\end{proof}

\subsection{Properties of the category
\texorpdfstring{$\urep$}{Rep(P)}}\label{ssec:urep_prop}

We list below some ``local'' and ``global'' properties of the category $\urep$.

We begin with the ``global'' properties:
\begin{enumerate}
 \item The isomorphism classes
of simple objects in $\urep$ (up to \InnaB{parity} shift) are parametrized by infinite non-decreasing
integer sequences
$\lambda=(\lambda_1, \lambda_2, \ldots)$ with $\lambda_i=0$ for $i>>0$. Such
sequences will be called
{\it weights} for $\urep$, and the set of weights will be denoted by $\Lambda$.

Every simple object
is isomorphic to $\L(\lambda) \in \urepk{\abs{\lam}}$ which we define as the
inverse limit of simple
$\p(n)$-modules $L_n(\lambda)$.
\item Any object in $\urep$ has finite length.
\item For any infinite non-decreasing
integer sequence $\lambda=(\lambda_1, \lambda_2, \ldots)$ with $|\lambda|\leq
k$ we define $\bdel(\lambda)$ ({\it standard objects}) as the inverse limit of
$\overline{\Delta}^k_n(\lambda)$. Similarly, we define $\bnab(\lambda)$ ({\it
costandard objects}) as the
inverse limit of
$\bar{\nabla}^k_n(\lambda)$. Then
the cosocle of $\bdel(\lambda)$ and the socle of $\bnab(\lambda)$ are
isomorphic to
$\L(\lambda)$.
\item We have $$\dim \Hom_{\urep}
\left(\bdel(\lambda), \bnab(\mu) \right) =
\delta_{\lambda, \mu} ,\;\;\;\; \Ext^1_{\urep}
\left(\bdel(\lambda), \bnab(\mu) \right) = 0.$$ (see Lemma \ref{lem:ext}).
 \item
  For any weight $\lambda$ in $\urep$, we have (see Corollary
\ref{cor:Delta_nabla_dual}): $$\bdel(\lambda)^* \cong
\Pi^{\abs{\lambda}} \bnab(\lambda^{\vee}) \;\; \InnaA{\text{ and } \L(\lambda)^* \cong
\Pi^{\abs{\lambda}} \L(\lambda^{\vee}) }$$ and have maps $$\bdel(\lambda)
\hookrightarrow S^{\pmb{\lambda}^{\vee}} \bV, \;\;  S^{\pmb{\lambda}\InnaC{^{\vee}}} \bV
\twoheadrightarrow
\bnab(\lambda).$$
\item For each $\lambda$, the functor $\InnaA{\bV \otimes (-)}$ takes $\bdel(\lam)$ to a
$\bdel$-filtered object, and $\bnab(\lam)$ to a $\bnab$-filtered object,
according to Corollaries \ref{cor:ses}, \ref{cor:ses_nabla}.
In particular, the tensor powers of $\bV$ have both standard and costandard
filtration.
\end{enumerate}

Below are some ``local'' properties of $\urep$, namely, properties
of the subcategories $\urepk{k}$:
\begin{enumerate}
\item Simple objects $\L(\lambda)$ lying in $\urepk{k}$ are those for which
$\lambda$ satisfies: $|\lambda|\leq k$. By abuse of notation, we will call such
weights $k$-admissible.
\item The category $\urepk{k}$ has enough projectives and injectives (see Lemma
\ref{lem:rep_k_enough_proj}); the projective cover of $\L(\lambda)$ in
$\urepk{k}$ will be denoted by $\pmb{P}_k(\lambda)$.
\item The tensor structure on $\urep$ is given by functors $\urepk{k} \otimes
\urepk{l}
\rightarrow \urepk{k+l}$ and $\urepk{k}$ is closed under the tensor duality
contravariant functor $(\cdot)^*$.
\end{enumerate}

\subsection{Connection to the category
\texorpdfstring{$\PP$}{P}}\label{ssec:func_I}

\begin{lemma}\label{lem:funct_I}
 There exists a fully faithful SM $\C$-linear $\sVect$-functor $I:\PP \to
\urep$ which sends the generator $\tilde{V}$ of
$\PP$
to $\bV$, and the form $\omega_{\tilde{V}}$ on $\tilde{V}$ to $\omega_{\bV}$.
Furthermore, there is
a natural $\otimes$-isomorphism making the following diagram of functors
commutative:
$$ \xymatrix{&{\PP} \ar[r]^{I} \ar[rd]_{I_n}
&\urep\ar[d]^{F_n}  \\ &{} &{\Rep(\p(n))} }.$$
\end{lemma}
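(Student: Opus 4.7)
The plan is to construct $I$ using the universal property of the Karoubian Deligne category $\PP$, then deduce both the commuting 2-cell and full faithfulness of $I$ from properties already established for $I_n$ and $F_n$.

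First, I would invoke the universal property of $\PP_0$ recalled in Section \ref{ssec:notn:Deligne_P_Karoubi}: $\PP_0$ is the universal SM $\sVect$-module $\C$-linear category generated by a dualizable object with a non-degenerate symmetric form to $\Pi\triv$. Since $\bV\in\urep$ is dualizable and carries the required form $\omega_{\bV}$, this property yields an essentially unique $\C$-linear SM $\sVect$-functor $\PP_0 \to \urep$ sending $\tilde V\mapsto\bV$ and $\omega_{\tilde V}\mapsto\omega_{\bV}$. Because $\urep$ is abelian, hence Karoubian, this extends uniquely to $I:\PP\to\urep$. For the diagram, I would observe that both $F_n\circ I$ and $I_n$ are SM $\sVect$-functors $\PP\to\Rep(\p(n))$ taking $\tilde V$ to $V_n$ and $\omega_{\tilde V}$ to $\omega_n$, so by the same universal property they are connected by an essentially unique $\otimes$-isomorphism, yielding the required commutativity.

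Next, to prove full faithfulness, it suffices to check the map
$$I:\Hom_{\PP}(X,Y)\longrightarrow \Hom_{\urep}(I(X),I(Y))$$
on objects of the form $X=\tilde V^{\otimes k}$, $Y=\tilde V^{\otimes l}$ (and their \InnaB{parity} shifts), since these generate $\PP$ under direct sums, parity shifts, and Karoubian envelope. Setting $K:=k+l$, I would choose $n$ large enough (for instance $n>8K+2$) so that, by Proposition \ref{prop:DS_equiv}, the functor $F_n:\urepk{K}\to\Rep^K(\p(n))$ is an equivalence. Combined with the 2-cell above, this identifies the map $I$ on this Hom space with the map $I_n:\Hom_{\PP}(X,Y)\to \Hom_{\p(n)}(V_n^{\otimes k},V_n^{\otimes l})$. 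Fullness then follows at once from Lemma \ref{lem:I_n_full}.

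The main obstacle is faithfulness of $I_n$ for $n\gg K$. Using rigidity (both in $\PP$ and in $\Rep(\p(n))$) I would reduce to showing that the map $I_n:\Hom_{\PP}(\tilde V^{\otimes m},\triv)\to \Hom_{\p(n)}(V_n^{\otimes m},\triv)$ is injective for $n\gg m$. By the definition of $\PP$ via signed Brauer diagrams, the left-hand side has a basis indexed by perfect matchings of $m$ points (of the appropriate parity), and these are sent by $I_n$ precisely to the contraction morphisms, which by Proposition \ref{prop:contractions} span the right-hand side. Thus $I_n$ is a surjection between finite-dimensional spaces; to conclude that it is an isomorphism, I need the contraction maps to remain linearly independent for $n$ large, which is a standard fact in the invariant theory of the periplectic Lie superalgebra and may be verified by evaluating the contractions on a generic family of tensors in $V_n^{\otimes m}$ (e.g.\ pure tensors of distinct basis vectors, as in Remark \ref{rmk:basis}) and observing that the resulting pairing matrix between matchings and tensors is non-degenerate.
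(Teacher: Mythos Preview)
Your approach is essentially the same as the paper's: define $I$ via the universal property of $\PP$, obtain the $2$-cell $F_n\circ I\cong I_n$ by uniqueness, and check full faithfulness on the objects $\tilde V^{\otimes r}$ by passing through $F_n$ for $n$ large (where it is an equivalence on $\urepk{K}$) and invoking fullness of $I_n$ (Lemma~\ref{lem:I_n_full}).

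The one point of divergence is the faithfulness step. The paper simply asserts that the equality $\dim\Hom_{\PP}(\tilde V^{\otimes r},\tilde V^{\otimes s})=\dim\Hom_{\urep}(\bV^{\otimes r},\bV^{\otimes s})$ ``follows directly from Proposition~\ref{prop:contractions}''. As you correctly notice, Proposition~\ref{prop:contractions} only provides a spanning set of contractions for $\mathtt{s}\Hom_{\p(n)}(V_n^{\otimes k},\triv)$, so one still needs their linear independence for $n$ large to close the argument. Your proposed verification by evaluating contractions on pure tensors of distinct basis vectors is the standard one (and is implicit in the literature, e.g.\ \cite{Moon, DLZ}); in this respect your write-up is more explicit than the paper's at this step.
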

\begin{proof}
 Consider the additive SM $\sVect$-functor $I:\PP \to \urep$ defined by sending
$\tilde{V}$ to $\bV$, and the form on
$\tilde{V}$
to $\omega_{\bV}$. Such a functor is uniquely defined (up to a unique natural
$\otimes$-isomorphism).

The composition $F_n \circ I$ is then an additive SM functor $\PP \to
\Rep(\p(n))$ satisfying similar conditions, and hence it is isomorphic to $I_n$
(by the uniqueness of $I_n$) under some natural $\otimes$-isomorphism, making
the above diagram commutative.

It remains to check that $I$ is fully faithful. Indeed, it is enough to check
that for any $r, s \geq 0$, $$I: \Hom_{\PP}\left(\tilde{V}^{\otimes r},
\tilde{V}^{\otimes s}\right) \longrightarrow \Hom_{\urep}\left(\bV^{\otimes
r},\bV^{\otimes s}\right)$$ is an isomorphism. Taking some $n>>r, s$ we see
that
$$I_n \cong F_n \circ I: \Hom_{\PP}\left(\tilde{V}^{\otimes r},
\tilde{V}^{\otimes s}\right) \longrightarrow
\Hom_{Rep^k(\p(n))}\left(V_n^{\otimes
r},V_n^{\otimes s}\right)$$ is surjective, implying that the previous map was
surjective as well. To check that it is also injective, it remains to check
that $$\dim\Hom_{\PP}\left(\tilde{V}^{\otimes r},
\tilde{V}^{\otimes s}\right) = \dim\Hom_{\urep}\left(\bV^{\otimes
r},\bV^{\otimes s}\right)$$ which follows directly from Proposition
\ref{prop:contractions}.

\end{proof}

\subsection{Connection to representations of
\texorpdfstring{$\p(\infty)$}{p(infty)}}\label{ssec:funct_Phi_to_Deligne}
\begin{proposition}\label{prop:func_Phi_to_Deligne}
 There exists an exact faithful SM $\sVect$-functor $\Phi: \Rep(\p(\infty)) \to
\urep$ taking $V_{\infty}$ to $\bV$, and the form
$V_{\infty} \otimes V_{\infty} \to \Pi \C$ to $\omega_{\bV}$. Furthermore,
there is a natural $\otimes$-isomorphism making the following diagram of
functors commutative:
$$ \xymatrix{&{\Rep(\p(\infty))} \ar[r]^{\Phi} \ar[rd]_{\Phi_n}
&\urep\ar[d]^{F_n}  \\ &{} &{\Rep(\p(n))} }$$
\end{proposition}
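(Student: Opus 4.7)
The plan is to build $\Phi$ level-by-level using the filtration $\Rep(\p(\infty)) = \bigcup_{k\geq 0}\Rep^k(\p(\infty))$, exploiting that on each $\Rep^k(\p(\infty))$ the functor $\Phi_n$ is exact and compatible with the transition functors $DS_{x_n}$. Fix $k\geq 0$ and any $n_0\in 2\Z$ with $n_0>8k+2$. Then by Lemma~\ref{lem:Phi_exact} the functor $\Phi_{n_0}\colon \Rep^k(\p(\infty))\to \Rep^k(\p(n_0))$ is exact and faithful, and by Lemma~\ref{lem:DS_Phi_compat} the natural transformation $\eta$ gives canonical isomorphisms $\Phi_{n-2}(M)\xrightarrow{\sim} DS_{x_n}\Phi_n(M)$ for every $M\in\Rep^k(\p(\infty))$ and every $n\geq n_0$. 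Since $\urepk{k}=\varprojlim_n \Rep^k(\p(n))$ with respect to the $DS_{x_n}$, and since these transition functors are equivalences in the relevant range, the compatible family $(\Phi_n(M),\eta_M)_{n\geq n_0}$ defines an object of $\urepk{k}$ which I will call $\Phi(M)$. By naturality of $\eta$, a morphism $f\colon M\to N$ in $\Rep^k(\p(\infty))$ yields a compatible family $(\Phi_n(f))_{n\geq n_0}$ and hence a morphism $\Phi(f)$ in $\urepk{k}$; functoriality follows immediately.

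Next I need to check that the construction is independent of the auxiliary choices and compatible with the filtration. The inclusion $\Rep^k(\p(\infty))\subset \Rep^{k+1}(\p(\infty))$ is intertwined by the $\Phi_n$'s with the inclusion $\Rep^k(\p(n))\subset \Rep^{k+1}(\p(n))$, and the latter is sent into $\urepk{k}\subset \urepk{k+1}$; thus the construction glues to a single functor $\Phi\colon \Rep(\p(\infty))\to\urep$. The commutative triangle $F_n\circ\Phi\cong \Phi_n$ is then essentially by construction: for $n\geq n_0$ it is the definition of the inverse-limit projection, and for smaller $n$ it is obtained by applying $DS_{x_{n+2}}\cdots DS_{x_{n_0}}$ to that isomorphism and invoking Lemma~\ref{lem:DS_Phi_compat} again.

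Exactness and faithfulness now follow: restricted to $\urepk{k}$, the functor $F_{n_0}$ is an equivalence and $\Phi_{n_0}=F_{n_0}\circ \Phi$ is exact and faithful by Lemma~\ref{lem:Phi_exact}, hence so is $\Phi|_{\Rep^k(\p(\infty))}$; since exactness and faithfulness are checked on short exact sequences and on morphisms living in some $\Rep^k(\p(\infty))$, both properties pass to all of $\Rep(\p(\infty))$. For the SM structure, Lemma~\ref{lem:Phi_monoidal} provides monoidal structure morphisms for each $\Phi_n$, and I will verify that under the isomorphisms $\eta$ these are compatible with the $DS_{x_n}$ (which are themselves SM), so they descend to monoidal structure morphisms on $\Phi$; the unit, associativity and symmetry axioms then reduce to the corresponding axioms for the $\Phi_n$'s. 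By construction $\Phi_n(V_\infty)=V_n=F_n(\bV)$ and the form on $V_\infty$ restricts to the form on $V_n$, giving the required assignment on the distinguished objects and pairings.

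The main obstacle I expect is the coherence bookkeeping in the previous paragraph: one must show that the SM constraint isomorphisms $\Phi_n(M)\otimes \Phi_n(N)\xrightarrow{\sim}\Phi_n(M\otimes N)$ are intertwined, via $\eta$, with the SM constraints of $DS_{x_n}$, so that they yield a well-defined SM structure on the inverse-limit object $\Phi(M\otimes N)$. Granted that compatibility (which follows by unwinding the definitions of $\eta$ and of the tensor structure on $DS_{x_n}$ on invariants), all the remaining axioms reduce cleanly to the already-established statements for $\Phi_n$ and $DS_{x_n}$, and no further input is needed.
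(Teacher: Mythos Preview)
Your proposal is correct and follows essentially the same approach as the paper's own proof, which is very terse: the paper simply cites Lemma~\ref{lem:DS_Phi_compat} to say that the exact functors $\Phi_n$ (restricted to $\Rep^k(\p(\infty))$) commute with the $DS_{x_n}$, hence assemble into a functor $\Phi\colon\Rep^k(\p(\infty))\to\urepk{k}$ extending over the filtration. You have filled in exactly the details the paper omits---the inverse-limit description, the bound $n_0>8k+2$ ensuring the transition functors are equivalences (Proposition~\ref{prop:DS_equiv}), the deduction of exactness/faithfulness from Lemma~\ref{lem:Phi_exact} via $F_{n_0}\circ\Phi\cong\Phi_{n_0}$, and the SM coherence bookkeeping---and you have correctly identified the one nontrivial verification (compatibility of the SM constraints of $\Phi_n$ with those of $DS_{x_n}$ under $\eta$), which the paper subsumes under ``clearly satisfies the statement.''
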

\begin{proof}
Recall that for $n>>k$, the SM functor $\Phi_n: \Rep(\p(\infty)) \to
\Rep(\p(n))$ restricts to an exact functor $\Rep^k(\p(\infty)) \to \Rep^k(\p(n))$.
This functor commutes with the $DS_{x_n}$ functor, as shown in Lemma
\ref{lem:DS_Phi_compat}. Hence we have a well-defined exact functor $\Phi:
Rep^k(\p(\infty)) \to \urepk{k}$ extending to a functor $\Phi: \Rep(\p(\infty))
\to \urep$. This functor clearly satisfies the statement of the proposition.
\end{proof}
By Lemma \ref{prop:Phi_simples_to_delta}, we also have:

\begin{corollary}\label{cor:Phi_simples_to_delta_Deligne}
 Let $L_{\infty}(\lam)$ be a simple object in $\Rep(\p(\infty))$. Then
$\Phi(L_{\infty}(\lam)) \cong \bdel(\lam)$.
\end{corollary}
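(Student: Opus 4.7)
The plan is to reduce the statement to Proposition \ref{prop:Phi_simples_to_delta} via the definition of $\bdel(\lambda)$ as an inverse limit and the fact that $F_n$ is an equivalence on $\urepk{k}$ for $n \gg k$.

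First, I would fix $k := |\lambda|$, so that $L_\infty(\lambda) \in \Rep^k(\p(\infty))$ and $\bdel(\lambda) \in \urepk{k}$ by construction. Next, I would invoke the commutative square from Proposition \ref{prop:func_Phi_to_Deligne}, which provides a natural $\otimes$-isomorphism $F_n \circ \Phi \cong \Phi_n$. Applying this to $L_\infty(\lambda)$ and using Proposition \ref{prop:Phi_simples_to_delta}, we obtain, for every $n$ with $n > 2k$:
\[
F_n\bigl(\Phi(L_\infty(\lambda))\bigr) \;\cong\; \Phi_n(L_\infty(\lambda)) \;\cong\; \overline{\Delta}^k_n(\lambda).
\]

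On the other hand, by the very definition of $\bdel(\lambda)$ in Section \ref{ssec:urep_prop} as the inverse limit (along the $DS_{x_n}$) of the objects $\overline{\Delta}^k_n(\lambda)$, we have $F_n(\bdel(\lambda)) \cong \overline{\Delta}^k_n(\lambda)$ as well. Thus $F_n\bigl(\Phi(L_\infty(\lambda))\bigr)$ and $F_n(\bdel(\lambda))$ are (naturally) isomorphic for every sufficiently large $n$. Now I would choose a single $n > 8k+2$; by Proposition \ref{prop:DS_equiv} together with the construction of $\urep$ as the $2$-colimit of the $\Rep^k(\p(n))$, the restriction $F_n \colon \urepk{k} \to \Rep^k(\p(n))$ is an equivalence of categories, so an isomorphism between the $F_n$-images lifts to a (unique) isomorphism $\Phi(L_\infty(\lambda)) \cong \bdel(\lambda)$ in $\urepk{k} \subset \urep$.

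There is essentially no obstacle here beyond verifying that the various compatibility isomorphisms can be coherently chosen; the real content is in the earlier result Proposition \ref{prop:Phi_simples_to_delta}, combined with the definition of $\bdel$ and the equivalence provided by Proposition \ref{prop:DS_equiv}. The only mild point worth mentioning is that one should check the isomorphism is independent of the choice of $n$, which is automatic since the isomorphisms $F_n(\bdel(\lambda))\cong \overline{\Delta}^k_n(\lambda)\cong \Phi_n(L_\infty(\lambda))$ are compatible with the transition maps $DS_{x_n}$ (this compatibility is exactly the content of Lemma \ref{lem:DS_Phi_compat}).
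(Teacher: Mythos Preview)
Your argument is correct and is exactly the unpacking of the paper's one-line justification ``By Proposition \ref{prop:Phi_simples_to_delta}, we also have\ldots''; you have simply made explicit the use of the compatibility $F_n\circ\Phi\cong\Phi_n$ from Proposition \ref{prop:func_Phi_to_Deligne} and the fact that $F_n$ is an equivalence on $\urepk{k}$ for $n\gg k$.
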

\begin{corollary}\label{cor:Phi_inj_to_tilt}
 Let $E$ be an injective object in $\Rep(\p(\infty))$. Then
$\Phi(E) \in I(\PP)$.
\end{corollary}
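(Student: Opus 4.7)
The plan is to reduce the claim to the case of tensor powers $V_\infty^{\otimes k}$, handle those via the functor $I$, and then exploit the fact that $\PP$ is Karoubian together with the full faithfulness of $I$ to transfer the conclusion to direct summands. The key external input is the structural fact (recalled in the proof of Lemma \ref{lem:Phi_monoidal}, following \cite{SerInf}) that every injective object of $\Rep(\p(\infty))$ is a direct summand of a finite direct sum of parity-shifted tensor powers of $V_\infty$. Given this, everything else is formal.

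First I would handle the generators. Since $\Phi$ is a SM $\sVect$-functor with $\Phi(V_\infty)\cong\bV$ (Proposition \ref{prop:func_Phi_to_Deligne}), I have $\Phi(V_\infty^{\otimes k})\cong\bV^{\otimes k}=I(\tilde V^{\otimes k})$ by Lemma \ref{lem:funct_I}. Taking finite direct sums and parity shifts, for any $T=\bigoplus_i \Pi^{\epsilon_i}V_\infty^{\otimes k_i}$ I obtain $\Phi(T)\cong I(T')$ where $T':=\bigoplus_i \Pi^{\epsilon_i}\tilde V^{\otimes k_i}\in\PP$. In particular the essential image $I(\PP)$ is stable under these operations.

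The remaining step is to upgrade closure to direct summands in $\urep$. Given an injective $E$, write $E\oplus E'=T$ with $T$ as above. Applying the additive functor $\Phi$ gives $\Phi(T)\cong\Phi(E)\oplus\Phi(E')$, encoded by an idempotent $e\in\End_{\urep}(\Phi(T))\cong\End_{\urep}(I(T'))$. By full faithfulness of $I$ (Lemma \ref{lem:funct_I}), $e=I(\tilde e)$ for a unique idempotent $\tilde e\in\End_\PP(T')$; since $\PP$ is Karoubian (being the Karoubi envelope of $\PP_0$), $\tilde e$ splits with image some $Y\in\PP$, and full faithfulness once more ensures that $I$ carries this splitting in $\PP$ to the original splitting of $e$ in $\urep$. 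Hence $\Phi(E)\cong I(Y)\in I(\PP)$, as required. The only content beyond formal manipulation is the description of injectives in $\Rep(\p(\infty))$; once that is in hand, I do not expect any substantive obstacle.
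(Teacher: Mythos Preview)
Your proof is correct and follows essentially the same approach as the paper: both use that injectives in $\Rep(\p(\infty))$ are direct summands of finite direct sums of (parity-shifted) tensor powers of $V_\infty$, send these to tensor powers of $\bV$ via the monoidal functor $\Phi$, and conclude that the result lies in $I(\PP)$. Your version is more explicit about the Karoubian closure step (lifting idempotents through the fully faithful $I$), whereas the paper simply asserts that the additive Karoubian subcategory generated by tensor powers of $\bV$ is $I(\PP)$.
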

\begin{proof}
The subcategory of injective objects in $\Rep(\p(\infty))$ is generated, as an
additive Karoubian category, by the tensor powers of $V_{\infty}$. Since $\Phi$
is $\C$-linear and monoidal, for any injective object $E \in
\Rep(\p(\infty))$, $\Phi(E)$ belongs to the full additive Karoubian subcategory
generated by tensor powers of $\bV$. This subcategory is precisely $I(\PP)$.
\end{proof}

\section{The first universal property}\label{sec:universality}
To prove the universality of category $\urep$, we use the following theorem,
proved in \cite{EHS}.

Consider a SM functor  $I:\mathcal{D}\to\mathcal T$ from an
additive
$\C$-linear rigid SM category $\mathcal{D}$ to a tensor $\C$-linear category
$\mathcal T$.

We assume the following conditions hold:
\begin{enumerate}\label{cond:uni}
\item\label{cond:uni1} The functor $I:\mathcal{D}\to \mathcal T$ is fully
faithful and $\C$-linear.
\item\label{cond:uni2} Any object $X\in \mathcal T$ can be presented as an
image of a map $I(f)$
for some $f:P\to Q$ in $\mathcal{D}$.
\item\label{cond:uni3} For any epimorphism $X\to Y$ in $\mathcal T$ there
exists a nonzero
$T\in\mathcal{D}$
such that the epimorphism $X\otimes I(T)\to Y\otimes I(T)$ splits.
\end{enumerate}

\begin{theorem}[\cite{EHS}]\label{old_thm:uni-2}
Under these assumptions the functor $I$ induces for any tensor category
$\mathcal A$ an equivalence of the following categories
\begin{itemize}
\item $\Fun^{\ex}(\mathcal V,\mathcal A)$, the category of faithful exact SM
functors
$\mathcal V\to \mathcal A$,
\item $\Fun^{\faith}(\mathcal{D},\mathcal A)$, the category of faithful SM
functors $\mathcal{D}\to \mathcal A$.
\end{itemize}
\end{theorem}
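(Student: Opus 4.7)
The plan is to construct an explicit quasi-inverse to the precomposition functor
$$\Psi := (-)\circ I : \Fun^{\ex}(\mathcal T,\mathcal A) \to \Fun^{\faith}(\mathcal{D},\mathcal A).$$
Given a faithful SM functor $F:\mathcal{D}\to\mathcal A$, the goal is to build an exact SM extension $\tilde F:\mathcal T\to\mathcal A$ with $\tilde F\circ I\cong F$ via a canonical natural isomorphism. On objects, for each $X\in\mathcal T$ I would choose a presentation $X=\Im I(f)$ with $f:P\to Q$ in $\mathcal{D}$ (available by condition \ref{cond:uni2}) and set $\tilde F(X):=\Im F(f)$. On morphisms, a map $\phi:X\to Y$ admits a lift to a morphism of presentations at least after tensoring with some $I(T)$ (via condition \ref{cond:uni3} applied to the relevant epimorphisms), and faithfulness of $F$ forces the induced map in $\mathcal A$ to be independent of this $T$ and of the chosen presentations. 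The relation $\tilde F(I(P))\cong F(P)$ is tautological since $I(P)=\Im I(\id_P)$, and the SM structure on $\tilde F$ is pulled back from $F$ via the presentations.

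The main obstacle is exactness of $\tilde F$. For a short exact sequence $0\to X\to Y\to Z\to 0$ in $\mathcal T$, condition \ref{cond:uni3} supplies a nonzero $T\in\mathcal{D}$ such that $Y\otimes I(T)\twoheadrightarrow Z\otimes I(T)$ splits, so the tensored sequence
$$0\to X\otimes I(T)\to Y\otimes I(T)\to Z\otimes I(T)\to 0$$
is split exact in $\mathcal T$. Since $\tilde F$ visibly respects split short exact sequences and is symmetric monoidal, applying it yields a split exact sequence
$$0\to \tilde F(X)\otimes F(T)\to \tilde F(Y)\otimes F(T)\to \tilde F(Z)\otimes F(T)\to 0$$
in $\mathcal A$. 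The remaining step is to cancel the factor $F(T)$: faithfulness of $F$ forces $F(T)\ne 0$, and for any dualizable nonzero object $Z\in\mathcal A$ with simple unit the endofunctor $-\otimes Z$ is both exact (by biexactness of $\otimes$) and conservative, since the identity of any object factors as $A\to A\otimes Z\otimes Z^*\to A$ through the coevaluation and evaluation. Exactness plus conservativity of $-\otimes F(T)$ then imply that $0\to\tilde F(X)\to\tilde F(Y)\to\tilde F(Z)\to 0$ is exact in $\mathcal A$.

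Once exactness is established, faithfulness of $\tilde F$ follows by the same cancellation trick applied to a putative nonzero morphism sent to zero, combined with faithfulness of $F$ and condition \ref{cond:uni2}. Finally, that $\Psi$ and $F\mapsto\tilde F$ are mutually quasi-inverse is formal: for any $G\in\Fun^{\ex}(\mathcal T,\mathcal A)$, exactness of $G$ determines $G(X)=\Im G(I(f))$ on any presentation $X=\Im I(f)$, which matches the definition of $\widetilde{G\circ I}(X)$ and yields the natural isomorphism $G\cong\widetilde{G\circ I}$. The hardest step is the cancellation argument in the middle paragraph, as it is the only place where the subtle interplay between rigidity of $\mathcal A$, nondegeneracy of $F(T)$, and the exactness properties of the ambient tensor category must be combined in a single argument; everything else is relatively formal manipulation of presentations.
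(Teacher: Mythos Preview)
This theorem is not proved in the present paper: it is quoted verbatim from \cite{EHS} and then applied with $\mathcal D=\PP$, $\mathcal T=\urep$ to obtain Theorem~\ref{thrm:uni-1}. So there is no ``paper's own proof'' to compare against here.

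That said, your sketch is in line with the argument in \cite{EHS}. The strategy of defining $\tilde F(X):=\Im F(f)$ for a presentation $X=\Im I(f)$, and then using condition~\eqref{cond:uni3} together with the fact that $-\otimes F(T)$ is faithful exact in a tensor category (your ``cancellation'' paragraph) to deduce both well-definedness and exactness, is exactly the mechanism of the original proof.

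The one place where your write-up is thinner than it should be is the well-definedness of $\tilde F$ on objects and morphisms. You assert that ``faithfulness of $F$ forces the induced map in $\mathcal A$ to be independent of $T$ and of the chosen presentations'', but the actual verification requires the following: given $\phi:X\to Y$ with presentations $X=\Im I(f)$, $Y=\Im I(g)$, the composite $I(P)\twoheadrightarrow X\xrightarrow{\phi}Y\hookrightarrow I(Q')$ equals $I(h)$ for some $h$ in $\mathcal D$ by full faithfulness of $I$ (condition~\eqref{cond:uni1}); one must then check that $F(h)$ factors through $\Im F(g)$ and kills $\ker(F(P)\twoheadrightarrow\Im F(f))$. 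Neither is automatic, and both are proved by the splitting trick: tensor with a suitable $I(T)$ so that the relevant epimorphism splits in $\mathcal T$, lift to a commuting square in $\mathcal D$, apply $F$, and then cancel $F(T)$. Applying this with $\phi=\id_X$ gives independence of the presentation. You clearly have the right idea, but in a written proof this step deserves the same level of detail you gave the exactness argument.
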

We will apply this theorem to $\mathcal{D} = \PP$, $\mathcal V = \urep$ to
obtain the following results:
\begin{theorem}\label{thrm:uni-1}
For any \InnaD{tensor} $\sVect$-category
$\mathcal A$ we have an equivalence of the following categories
\begin{itemize}
\item $\Fun^{\ex}(\urep,\mathcal A)$, the category of exact SM
$\sVect$-functors
$\mathcal T\to \mathcal A$,
\item $\Fun^{\faith}(\PP,\mathcal A)$, the category of faithful SM
$\sVect$-functors $\PP\to \mathcal A$.
\end{itemize}
\end{theorem}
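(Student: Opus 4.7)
The approach is to apply Theorem \ref{old_thm:uni-2} of \cite{EHS} with $\mathcal{D} = \PP$, $\mathcal{V} = \urep$ and $I : \PP \to \urep$ the fully faithful SM $\sVect$-functor from Lemma \ref{lem:funct_I}. Once its three hypotheses are verified, that theorem immediately yields the desired equivalence; the $\sVect$-enhancement is automatic, since $I$ and all the intermediate constructions commute with the parity shift, so the equivalence restricts to the subcategory of $\sVect$-functors on each side.

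Condition (1) of the abstract criterion (full faithfulness and $\C$-linearity of $I$) is precisely Lemma \ref{lem:funct_I}. For condition (2), I would use the fact (Definition \ref{def:repk}) that every object $M \in \urepk{k}$ is a subquotient of a finite direct sum of tensor powers $\bV^{\otimes j}$ with $j \leq k$. This gives an epimorphism $I(T_1) \twoheadrightarrow M$ for some $T_1 \in \PP$; applying the same observation to $M^*$ and dualizing (using $\bV^* \cong \Pi\bV$, so that duals of objects of $I(\PP)$ remain in $I(\PP)$) yields a monomorphism $M \hookrightarrow I(T_2)$ for some $T_2 \in \PP$. The composite $I(T_1) \twoheadrightarrow M \hookrightarrow I(T_2)$ is then a morphism in $\urep$ whose image is $M$, and by the full faithfulness of $I$ it is of the form $I(f)$ for a unique $f \in \Hom_{\PP}(T_1, T_2)$.

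Condition (3) is the main obstacle: for an arbitrary epimorphism $X \twoheadrightarrow Y$ in $\urep$, one must produce a nonzero $T \in \PP$ such that $X \otimes I(T) \twoheadrightarrow Y \otimes I(T)$ admits a section. My plan is to exploit the fact (Lemma \ref{lem:rep_k_enough_proj}) that each $\urepk{k}$ has enough projectives, together with the BKN result (cf.\ Theorem \ref{old_thrm:transl_proj} and Section \ref{sssec:notn:repr_periplectic}) that indecomposable projectives in $\Rep(\p(n))$ are tilting. Picking $k$ with $X, Y \in \urepk{k}$ and transferring along the equivalence $F_n : \urepk{k} \xrightarrow{\sim} \Rep^k(\p(n))$ for $n \gg k$ (Proposition \ref{prop:DS_equiv}), and using that the tilting objects of $\urepk{k}$ coincide with those of $\urep$ (i.e.\ with $I(\PP)$), one should identify the projective cover $\pmb{P}_k(Y)$ of $Y$ with $I(T_0)$ for some $T_0 \in \PP$. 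Taking $T = T_0^*$ and combining the projective lifting $\pmb{P}_k(Y) \to X$ of the surjection $X \twoheadrightarrow Y$ with the coevaluation/evaluation pairing on $I(T_0)$ should then produce the required section. The hardest step is precisely this assembly: both checking that projective covers in the truncation genuinely land in $I(\PP)$, and then constructing the explicit splitting from the projective lift and the duality data on $I(T_0)$.
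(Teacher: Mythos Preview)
Your overall strategy---reduce to the three hypotheses of Theorem~\ref{old_thm:uni-2}---is exactly the paper's, and your treatment of Condition~(1) is fine. But your arguments for Conditions~(2) and~(3) both have genuine gaps.

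\textbf{Condition (2).} ``Subquotient'' does not give you an epimorphism. If $M$ is a subquotient of $T=\bigoplus_j \bV^{\otimes j}$, that only means $M\cong A/B$ for some $B\subset A\subset T$; you get a surjection $A\twoheadrightarrow M$, not $T\twoheadrightarrow M$, and $A$ has no reason to lie in $I(\PP)$. This is precisely why the paper's proof of Proposition~\ref{prop:presentation} is nontrivial: it first produces a surjection from a tilting object onto the projective cover $\pmb{P}_k(0)$ (via the lift $\pmb{X}_n$ of $P_n(0)$ built from translation functors, Lemma~\ref{lem:trivial_projective_cover}), then onto each $\bdel(\lambda)$ (via $\p(\infty)$, Lemma~\ref{lem:Delta_cover}), then onto every $\pmb{P}_k(\lambda)$, and finally onto arbitrary $M$.

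\textbf{Condition (3).} Your plan rests on identifying the projective cover $\pmb{P}_k(Y)$ in the \emph{truncated} category $\urepk{k}$ with an object of $I(\PP)$. This is false. The BKN result says projectives in the \emph{full} category $\Rep(\p(n))$ are tilting; but $\pmb{P}_k(\lambda)$ is obtained from $P_n(\lambda)$ by applying the left adjoint $i^{k,*}$ of the inclusion (Lemma~\ref{lem:rep_k_enough_proj}), i.e.\ by taking the maximal quotient lying in $\urepk{k}$. Such a proper quotient of a tilting object is generally not tilting and not a direct summand of any $\bV^{\otimes r}$. Indeed, Lemma~\ref{lem:trivial_projective_cover} only produces a \emph{surjection} $T\twoheadrightarrow\pmb{P}_k(0)$ from a tilting object, not an isomorphism. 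The paper's actual argument for Condition~(3) (Proposition~\ref{prop:splitting}) is quite different: it reduces to $Y=\triv$ and $X=\bV^{\otimes 2d}$, analyzes $\mathtt{s}\Hom(\bV^{\otimes 2d},\triv)$ via the $S_{2d}$-action and quasisymmetric partitions, and constructs an explicit splitting after tensoring with $\bV^{\otimes d}$ using the form $\omega$.
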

\begin{proof}
 We only need to check that there is a fully faithful SM embedding $\PP
\hookrightarrow \urep$ satisfying Conditions \ref{cond:uni}. This will be
done in the remainder of this Section.
\end{proof}

In particular, we have:
\begin{corollary}\label{cor:two_urep_equiv}
 The categories $\urep$ and $\Rep(\underline{P}')$, defined in Section
\ref{ssec:constr_urep} are equivalent as tensor $\sVect$-categories. This
equivalence commutes with the embeddings of
$\PP$ into both categories.
\end{corollary}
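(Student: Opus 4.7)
The plan is to show that the construction of $\Rep(\underline{P}')$ is genuinely parallel to that of $\urep$, and then to invoke Theorem \ref{thrm:uni-1} symmetrically. First I would observe that nothing in Sections \ref{sec:Rep_k}--\ref{sec:Rep_p} uses the parity of $n$: the key input is Proposition \ref{prop:DS_equiv}, which for fixed $k$ and all sufficiently large $n$ gives an equivalence $DS_{x_n}:\Rep^k(\p(n))\to \Rep^k(\p(n-2))$, and the specialization functors $F_n$ are only used for $n \gg k$. Hence the entire development leading up to Theorem \ref{thrm:uni-1} applies verbatim to $\Rep(\underline{P}')$, the inverse system over odd $n$. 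In particular, there is a fully faithful SM $\sVect$-functor $I': \PP \to \Rep(\underline{P}')$, analogous to $I$ of Lemma \ref{lem:funct_I}, sending $\tilde V$ to $\bV'$ and $\omega_{\tilde V}$ to $\omega_{\bV'}$, and the proof of Theorem \ref{thrm:uni-1} yields the same universal property
$$\Fun^{\ex}(\Rep(\underline{P}'),\mathcal A)\xrightarrow{\sim}\Fun^{\faith}(\PP,\mathcal A)$$
for any tensor $\sVect$-category $\mathcal A$.

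Next, I would apply the universal property of $\urep$ with $\mathcal A=\Rep(\underline{P}')$ to the faithful SM functor $I': \PP\to\Rep(\underline{P}')$, obtaining an exact SM $\sVect$-functor $\mathbf F:\urep\to\Rep(\underline{P}')$ whose restriction along $I$ is $\otimes$-isomorphic to $I'$. Symmetrically, the universal property of $\Rep(\underline{P}')$ applied to $I:\PP\to\urep$ yields an exact SM $\sVect$-functor $\mathbf G:\Rep(\underline{P}')\to\urep$ with $\mathbf G\circ I'\cong I$. By construction, both $\mathbf F$ and $\mathbf G$ send the distinguished generators $\bV,\bV'$ (and their odd forms) to one another.

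Finally, the compositions $\mathbf G\circ\mathbf F:\urep\to\urep$ and $\mathbf F\circ\mathbf G:\Rep(\underline{P}')\to\Rep(\underline{P}')$ are exact SM $\sVect$-functors whose restrictions to $\PP$ are $\otimes$-isomorphic to $I$ and $I'$ respectively. The identity functors have the same property, so the uniqueness clause in Theorem \ref{thrm:uni-1} (the equivalence of categories $\Fun^{\ex}(\urep,-)\simeq \Fun^{\faith}(\PP,-)$, and its analogue for $\Rep(\underline{P}')$) forces $\mathbf G\circ\mathbf F\cong \id_{\urep}$ and $\mathbf F\circ\mathbf G\cong \id_{\Rep(\underline{P}')}$ as tensor $\sVect$-functors. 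Thus $\mathbf F$ is a tensor equivalence commuting with the embeddings of $\PP$.

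The only real point to check is that Theorem \ref{thrm:uni-1} genuinely holds for $\Rep(\underline{P}')$; this is the step I would expect a reader to worry about, but it reduces to the parity-insensitivity of Proposition \ref{prop:DS_equiv} and Lemma \ref{lem:funct_I}, so no new arguments are required. Everything else is the standard two-sided recognition of an adjoint equivalence via a shared universal property.
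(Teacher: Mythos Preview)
Your proposal is correct and is precisely the argument the paper has in mind: the corollary is stated without proof as an immediate consequence of Theorem \ref{thrm:uni-1}, and your write-up spells out exactly the intended two-sided application of the universal property together with the observation that the proofs of Conditions \eqref{cond:uni1}--\eqref{cond:uni3} (Lemma \ref{lem:funct_I}, Propositions \ref{prop:presentation} and \ref{prop:splitting}) are insensitive to the parity of $n$.
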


\subsection{Overview of the proof}\label{ssec:universality_proof}

The main ingredient in proving Theorem \ref{thrm:uni-1} is
showing that there exists a SM functor $I: \PP \to \urep$ satisfying the
Conditions \ref{cond:uni}.

Condition \eqref{cond:uni1} has been proved in Lemma \ref{lem:funct_I}.
Condition \eqref{cond:uni2} will be proved in Section \ref{ssec:presentation}
below,
and Condition \eqref{cond:uni3} will be proved in Section \ref{ssec:epi}.

\subsection{Presentation of objects}\label{ssec:presentation}

In this section we prove the following Proposition, required for Theorem
\ref{thrm:uni-1}:
\begin{proposition}\label{prop:presentation}
 For any $M \in \urep$, there exist objects $T_1, T_2 \in \PP$ (finite
direct summands of direct sums of tensor powers of $\bV$) and maps $T_1
\twoheadrightarrow M \hookrightarrow T_2$.
\end{proposition}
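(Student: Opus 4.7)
I would first establish the surjection case $T_1 \twoheadrightarrow M$ and then derive the embedding $M \hookrightarrow T_2$ by duality. For a simple $\L(\lambda) \in \urep$, Lemma \ref{lem:p_infty_cosocle} produces an injective $E \in \Rep(\p(\infty))$ whose cosocle contains $L_\infty(\lambda)$, giving a surjection $E \twoheadrightarrow L_\infty(\lambda)$. Applying the exact SM functor $\Phi$ of Proposition \ref{prop:func_Phi_to_Deligne} and composing with $\bdel(\lambda) \twoheadrightarrow \L(\lambda)$ (using $\Phi(L_\infty(\lambda)) \cong \bdel(\lambda)$ from Corollary \ref{cor:Phi_simples_to_delta_Deligne}) yields $\Phi(E) \twoheadrightarrow \L(\lambda)$; Corollary \ref{cor:Phi_inj_to_tilt} ensures $\Phi(E) \in I(\PP)$, settling the simple case.

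To extend to a general $M \in \urepk{k}$, I would induct on the length of $M$. Given a short exact sequence $0 \to M' \to M \to \L(\lambda) \to 0$ with an inductively produced cover $T' \twoheadrightarrow M'$ where $T' \in I(\PP)$, I use enough projectives in $\urepk{k}$ (Lemma \ref{lem:rep_k_enough_proj}): the projective cover $\pmb{P}_k(\lambda) \twoheadrightarrow \L(\lambda)$ lifts to a map $\pmb{P}_k(\lambda) \to M$ since $\pmb{P}_k(\lambda)$ is projective in $\urepk{k}$, and the combined morphism $T' \oplus \pmb{P}_k(\lambda) \to M$ is then surjective (its image contains $M'$ and a preimage of $\L(\lambda)$, hence all of $M$). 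It thus remains to show that each projective $\pmb{P}_k(\lambda)$ of $\urepk{k}$ is itself a quotient of an $I(\PP)$-object.

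This last reduction is the main obstacle. My plan is to induct on the standard-filtration length of $\pmb{P}_k(\lambda)$: the top standard $\bdel(\lambda) = \Phi(L_\infty(\lambda))$ is covered by an $I(\PP)$-object via the simple-case argument above, and the kernel of $\pmb{P}_k(\lambda) \twoheadrightarrow \bdel(\lambda)$ has a strictly shorter standard filtration. The delicate step is assembling partial covers into a cover of $\pmb{P}_k(\lambda)$: here I intend to transfer the problem to $\Rep(\p(\infty))$ by exhibiting $\pmb{P}_k(\lambda)$ (or at least a cover of it) in the essential image of $\Phi$ using that $\bdel$-filtered objects in $\urep$ come from simple-filtered objects in $\Rep(\p(\infty))$ via the exact, faithful $\Phi$, and then iterate Lemma \ref{lem:p_infty_cosocle} along a composition series, combining the resulting injective covers of composition factors in $\Rep(\p(\infty))$ into a surjection from an injective onto the preimage, which is mapped by $\Phi$ to an $I(\PP)$-object. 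Finally, the embedding statement $M \hookrightarrow T_2$ follows by applying the surjection case to $M^*$ and dualizing: from $T \twoheadrightarrow M^*$ with $T \in I(\PP)$, we obtain $M \cong M^{**} \hookrightarrow T^*$, and $T^* \in I(\PP)$ since $I(\PP)$ is closed under duals and parity shifts as a rigid SM $\sVect$-subcategory of $\urep$.
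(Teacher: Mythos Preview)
Your overall architecture matches the paper's: reduce to the surjection by duality, cover simples via $\Phi$ applied to Lemma~\ref{lem:p_infty_cosocle} (this is exactly the paper's Lemma~\ref{lem:Delta_cover}), and reduce an arbitrary $M\in\urepk{k}$ to the case of the indecomposable projectives $\pmb{P}_k(\lambda)$ using enough projectives in $\urepk{k}$. That much is correct.

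The gap is in your treatment of the projectives $\pmb{P}_k(\lambda)$. First, you invoke a standard filtration on $\pmb{P}_k(\lambda)$, but in the paper this is Lemma~\ref{lem:proj_filtr}, proved \emph{after} (and using) Proposition~\ref{prop:presentation}; so that step is circular here. Second, your fallback plan --- to realize $\pmb{P}_k(\lambda)$ (or a cover of it) as $\Phi$ of something and then ``combine injective covers of composition factors'' in $\Rep(\p(\infty))$ --- has two problems: (i) you have not shown that $\bdel$-filtered objects lie in the essential image of $\Phi$ (this would need control of the map on $\Ext^1$, which is nowhere established), and (ii) even granting a preimage $N$, assembling surjections $E_i\twoheadrightarrow L_i$ from injectives onto the composition factors into a surjection onto $N$ requires lifting along quotients, i.e.\ a \emph{projectivity}-type property, not injectivity; there is no reason $\Ext^1(E_i, -)=0$ in $\Rep(\p(\infty))$.

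The paper avoids this by a different idea: it first handles the single object $\pmb{P}_k(0)$ via translation functors (Lemmas~\ref{lem:lift_proj_cover} and~\ref{lem:trivial_projective_cover} build $\pmb{X}_n\in I(\PP)$ with $F_n(\pmb{X}_n)\cong P_n(0)$ and simple cosocle $\triv$, and then compare with $\pmb{P}_k(0)$), and then uses the tensor structure: one checks that the maximal $\urepk{k}$-quotient of $\pmb{P}_{2k}(0)\otimes \L(\lambda)$ is projective in $\urepk{k}$ and surjects onto $\L(\lambda)$, hence onto $\pmb{P}_k(\lambda)$, so that $\pmb{P}_{2k}(0)\otimes \bdel(\lambda)\twoheadrightarrow \pmb{P}_k(\lambda)$. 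Since both $\pmb{P}_{2k}(0)$ and $\bdel(\lambda)$ are already covered by $I(\PP)$-objects, so is their tensor product. This trick is what your proposal is missing.
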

The outline of the proof follows that of \cite[Proposition 8.4.1]{EHS}:
\begin{proof}
 Due to the existence of duality $( \cdot )^*$ in $\urep$ which preserves
$I(\PP)$, it is enough to
prove the existence of $T_1$ as above.

We will prove the statement in several steps.

\begin{enumerate}
 \item[Step 1]: We prove the statement for $M= \pmb{P}_k(0)$, the projective
cover of $\triv$ in $\urepk{k}$ (see Lemma \ref{lem:trivial_projective_cover}).
 \item[Step 2]: We prove the statement for any standard object $\bdel(\lambda)$
in $\urep$ (see Lemma \ref{lem:Delta_cover}).
 \item[Step 3]: We prove the statement for any projective object $P$ in
$\urepk{k}$, for any $k \geq 0$.
 \item[Step 4]: We prove the statement for any object $M$ in $\urep$.
\end{enumerate}

{\bf Proof of Step 3}: It is enough to prove the statement for indecomposable
objects $\pmb{P}_k(\lambda)$ where $|\lambda| \leq k$.

Consider the inclusion functor $\mathit{i}^{k}:\urepk{k} \to \urep$ and its
left adjoint $\mathit{i}^{k,*}: \urep \to \urepk{k}$. Let $\pmb{P}_{2k}(0)$ be
the projective cover of $\triv$ in $\urepk{2k}$ and let $\L(\lambda) \in
\urep$. Consider $$Y:= \mathit{i}^{k,*}( \pmb{P}_{2k}(0) \otimes \L(\lambda)
),$$ the
maximal quotient of $\pmb{P}_{2k}(0) \otimes \L(\lambda)$ lying in $\urepk{k}$.
Then for any $M \in \urepk{k}$, we have:
$$\Hom_{\urepk{k}}(Y, M) \cong \Hom_{\urep}(\pmb{P}_{2k}(0) \otimes
\L(\lambda), M) \cong \Hom_{\urepk{2k}}(\pmb{P}_{2k}(0),
\L(\lambda)\otimes M)$$ which means that $Y$ is a projective object in
$\urepk{k}$ (see also \cite[Lemma 8.2.1]{EHS}).

The covering epimorphism $
\pmb{P}_{2k}(0) \twoheadrightarrow
\triv$ induces an epimorphism $$\pmb{P}_{2k}(0) \otimes \L(\lambda)
\twoheadrightarrow \L(\lambda) $$ which factors through an epimorphism $Y
\twoheadrightarrow \L(\lambda) $.

By definition of projective cover, the latter induces a split epimorphism $$Y
\twoheadrightarrow \pmb{P}_k(\lambda).$$

We now consider the composition $$ \pmb{P}_{2k}(0) \otimes \bdel(\lambda)
\twoheadrightarrow \pmb{P}_{2k}(0) \otimes \L(\lambda)  \twoheadrightarrow Y
\twoheadrightarrow \pmb{P}_k(\lambda)$$ where the first map is induced by the
epimorphism $\bdel(\lambda) \twoheadrightarrow \L(\lambda)$.

Applying Steps 1 and 2, we conclude that there exists an epimorphism $T
\twoheadrightarrow P_k(\lambda) $ where $T$ is in the image of $\PP$.

{\bf Proof of Step 4}:
Let $k$ be such that $M$ belongs to $\urepk{k}$. The category $\urepk{k}$ has
enough projectives, so there exists an epimorphism $ P \twoheadrightarrow M$
where $P$ is a projective object in $\urepk{k}$. Applying Step 3, we obtain an
epimorphism $T \twoheadrightarrow P$, with $T$ in the image of $\PP$;
composed with the former, it gives an epimorphism $$ T \twoheadrightarrow M$$
as
wanted.

\end{proof}

 \begin{lemma}\label{lem:lift_proj_cover}
Let $n \geq 0$. Then there exists $\pmb{X}_n \in I(\PP)$ such that
$F_n(\pmb{X}_n) =
P_n(0)$ (projective cover of trivial
module in $\Rep(\p(n))$), and the cosocle of $\pmb{X}_n$ is $\triv$.
 \end{lemma}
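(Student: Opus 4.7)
The plan is to construct $\pmb{X}_n$ by iterating the translation functors $\pmb{\Theta}_k$ applied to $\triv \in \urep$. For $n = 0$ the statement is trivial: take $\pmb{X}_0 := \triv$. From now on assume $n \geq 1$. The object $\bV \in \urep$ is simple (its image $F_m(\bV) = V_m \cong \Pi L_m(-\eps_1)$ is simple for every $m \geq 1$, and simples in $\urep$ are given by the inverse-limit description of Section \ref{ssec:urep_prop}). Consequently $\End_{\urep}(\bV) = \C$, so $\pmb{\Omega}_{\triv}$ acts on $\bV$ by a single integer scalar $k_0$, and $\bV \cong \pmb{\Theta}_{k_0}(\triv)$ by Corollary \ref{cor:transl_goes_to_transl}.

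Inductively, I define $\pmb{Y}_0 := \triv$ and $\pmb{Y}_j := \pmb{\Theta}_{k_j}(\pmb{Y}_{j-1})$ for a sequence of integers $(k_1, k_2, \ldots)$ to be chosen. Each $\pmb{Y}_j$ is a direct summand of $\bV^{\otimes j}$ and hence lies in $I(\PP)$, since $I(\PP)$ is Karoubian by Lemma \ref{lem:funct_I} and closed under $\bV \otimes -$. By Lemma \ref{lem:translation_simple_socle} applied iteratively (starting from $\triv$, which has simple cosocle), each nonzero $\pmb{Y}_j$ has simple cosocle and is therefore indecomposable.

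The key step is to choose $(k_1, \ldots, k_m)$ so that $F_n(\pmb{Y}_m) \cong P_n(0)$ (possibly after a parity shift). By Corollary \ref{cor:transl_goes_to_transl}, $F_n(\pmb{Y}_j) = \Theta^{(n)}_{k_j} \cdots \Theta^{(n)}_{k_1}(\triv)$. The first translation gives $F_n(\pmb{Y}_1) = V_n \cong \Pi P_n(-\eps_1)$, an indecomposable tilting module, hence an indecomposable projective by \cite{BKN}. Thereafter, Theorem \ref{old_thrm:transl_proj} implies that each iterate $F_n(\pmb{Y}_j)$ is either zero or of the form $\Pi^{a_j} P_n(\mu_j)$ for some weight $\mu_j$ and parity $a_j$. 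Using the explicit translation rules for indecomposable projectives from \cite[Section 7]{BDE+} together with the Temperley-Lieb relations of Theorem \ref{old_thrm:transl_func}, one navigates from $P_n(-\eps_1)$ to $P_n(0)$, determining the sequence and showing that the final parity can be made trivial (absorbing any leftover $\Pi$ into the base object). Setting $\pmb{X}_n := \pmb{Y}_m$, we obtain $\pmb{X}_n \in I(\PP)$ with $F_n(\pmb{X}_n) \cong P_n(0)$.

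The cosocle of $\pmb{X}_n$ in $\urep$ is some simple $\L(\lambda)$. Since $F_n$ is exact and $F_n(\L(\lambda)) = L_n(\lambda)$ by the inverse-limit definition of $\L$, it follows that $L_n(\lambda)$ equals the cosocle of $F_n(\pmb{X}_n) = P_n(0)$, which is $L_n(0) = \triv$. Hence $\lambda = 0$, and the cosocle of $\pmb{X}_n$ in $\urep$ is $\triv$, as required. The main obstacle is the navigation step in the third paragraph: establishing that $P_n(0)$ is reachable from $V_n$ by iterated translation functors in $\Rep(\p(n))$. This depends on the block structure of $\Rep(\p(n))$ and the detailed translation rules catalogued in \cite[Section 7]{BDE+}.
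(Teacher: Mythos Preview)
Your overall strategy coincides with the paper's: define $\pmb{X}_n$ as an iterated application of the translation functors $\pmb{\Theta}_{i}$ to $\triv$, and invoke Lemma~\ref{lem:translation_simple_socle} for the simple-cosocle property. The ``navigation'' step you flag as the main obstacle is handled in the paper without any explicit combinatorics: since $P_n(0)$ is an indecomposable direct summand of some $V_n^{\otimes l}$ (Corollary~\ref{cor:Rep_gen_by_V}), and since $V_n^{\otimes l}$ decomposes as $\bigoplus_{(i_1,\dots,i_l)} \Pi^{?}\,\Theta^{(n)}_{i_l}\cdots\Theta^{(n)}_{i_1}\C$ with each summand indecomposable or zero (Theorem~\ref{old_thrm:transl_proj}), Krull--Schmidt forces $P_n(0)$ to equal one of these summands. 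No path-finding in the weight diagrams is needed.

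There is, however, a genuine gap in your cosocle identification. You assert that $F_n$ is exact and that $F_n(\L(\lambda))=L_n(\lambda)$, but neither is available for the given $n$: the functor $F_n$ is only an equivalence on $\urepk{k}$ when $n>8k+2$, whereas $\pmb{X}_n\in\urepk{l}$ for an $l$ that grows with $n$; outside the stable range $F_n$ is a composition of $DS$ functors, which are not exact in general, and Section~\ref{ssec:infinite_weights} only asserts $F_n(\L(\lambda))\cong L_n(\lambda^{(n)})$ for $n\gg k$. The paper circumvents this entirely: since $\pmb{X}_n,\triv\in I(\PP)$ and $F_n$ is \emph{full} on $I(\PP)$ (Corollary~\ref{cor:F_n_full_tensor}, via Lemma~\ref{lem:F_n_surjective}), the nonzero map $P_n(0)\to\C$ lifts to a nonzero map $\pmb{X}_n\to\triv$ in $\urep$; combined with the simple-cosocle property, this forces $\operatorname{cosoc}\pmb{X}_n\cong\triv$.
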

\begin{proof}
We will use the following auxilary statement.
\begin{sublemma}
 There exist translation functors
$\Theta_{i_1}, \ldots, \Theta_{i_l}$ on $\Rep(\p(n))$ such that
$P_n(0)
= \Theta_{i_l} \circ \ldots \circ \Theta_{i_1} \C$.
\end{sublemma}
\begin{proof}
Recall that by Corollary \ref{cor:Rep_gen_by_V}, any object in $\Rep(\p(n))$ is
a
subquotient of a direct sum of tensor powers of $V_n$. Hence any indecomposable
projective is an indecomposable direct summand of some tensor power of $V_n$.
It remains to check that summands of the form $\Theta_{i_l} \circ \ldots
\circ \Theta_{i_1} \C$ are indeed indecomposable. Indeed, translation functors
take indecomposable projectives either to zero or to indecomposable
projectives (see Theorem \ref{old_thrm:transl_proj}), and hence take objects
with simple cosocle to either zero or objects with simple cosocle. Thus objects
of the form $\Theta_{i_l} \circ \ldots
\circ \Theta_{i_1} \C$ have simple cosocle and are indecomposable.
 \end{proof}

 We now define $\pmb{X}_n := \pmb{\Theta}_{i_l} \circ \ldots \circ
\pmb{\Theta}_{i_1} \triv \in \urep$ where $i_1, \ldots, i_l$ are as in the
Sublemma above. Then $F_n(\pmb{X}_n) \cong P_n(\triv)$ by Corollary
\ref{cor:transl_goes_to_transl}.

The object $\pmb{X}_n$ has simple cosocle
by Lemma
\ref{lem:translation_simple_socle}, so we only need to check that there exists
a non-zero map $\pmb{X}_n \to \triv$, which follows from Lemma
\ref{lem:F_n_surjective}.
\end{proof}

\begin{lemma}\label{lem:trivial_projective_cover}
 Let $k\geq 0$ and let $\pmb{P}_k(0)$ be the projective
cover of $\triv$ in $\urepk{k}$. Then there exists a pair $(T, f)$ where $T \in
I(\PP)$ and $f: T \to \pmb{P}_k(0)$ is a surjective map.
\end{lemma}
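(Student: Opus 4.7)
The plan is to pull back the lift $\pmb{X}_n$ of $P_n(0)$ provided by Lemma \ref{lem:lift_proj_cover} along the left adjoint $\mathit{i}^{k,*}: \urep \to \urepk{k}$ of the inclusion functor $\mathit{i}^{k}: \urepk{k} \to \urep$, and identify the result with $\pmb{P}_k(0)$.

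I would fix $k$ and choose $n$ sufficiently large. By Lemma \ref{lem:lift_proj_cover} we obtain $\pmb{X}_n \in I(\PP)$ satisfying $F_n(\pmb{X}_n) = P_n(0)$ and having simple cosocle $\triv$ in $\urep$. The construction realizes $\pmb{X}_n$ as $\pmb{\Theta}_{i_l}\circ \cdots \circ \pmb{\Theta}_{i_1}(\triv)$, so $\pmb{X}_n \in \urepk{l}$ for some $l = l(n)$. Next, consider $\mathit{i}^{k,*}(\pmb{X}_n) \in \urepk{k}$, together with the natural surjection $\pmb{X}_n \twoheadrightarrow \mathit{i}^{k,*}(\pmb{X}_n)$. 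Since $\triv$ already lies in $\urepk{k}$, this quotient inherits simple cosocle $\triv$, so it is indecomposable.

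The key claim is that $\mathit{i}^{k,*}(\pmb{X}_n) \cong \pmb{P}_k(0)$: granted this, the composite $\pmb{X}_n \twoheadrightarrow \mathit{i}^{k,*}(\pmb{X}_n) \cong \pmb{P}_k(0)$ gives the desired surjection with $T := \pmb{X}_n \in I(\PP)$. By uniqueness of the projective cover, it suffices to show that $\mathit{i}^{k,*}(\pmb{X}_n)$ is projective in $\urepk{k}$. Assuming $n$ is also large enough that $F_n$ restricts to an equivalence on $\urepk{l+k}$, the adjunction together with this equivalence yield, for any $M \in \urepk{k}$, natural isomorphisms
\[
\Hom_{\urepk{k}}(\mathit{i}^{k,*}(\pmb{X}_n),\, M) \cong \Hom_{\urep}(\pmb{X}_n,\, M) \cong \Hom_{\Rep(\p(n))}(P_n(0),\, F_n(M)),
\]
and the right-hand side is exact in $M$ because $P_n(0)$ is projective in $\Rep(\p(n))$ while $F_n$ is exact on $\urepk{k}$. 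Combined with the indecomposability and the cosocle $\triv$, this identifies $\mathit{i}^{k,*}(\pmb{X}_n)$ with $\pmb{P}_k(0)$.

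The main obstacle will be arranging a single $n$ for which both (i) the output $\pmb{X}_n$ of Lemma \ref{lem:lift_proj_cover} lives in $\urepk{l(n)}$, and (ii) the equivalence condition $n > 8(l(n)+k)+2$ of Proposition \ref{prop:DS_equiv} is met. Equivalently, one must control the growth of $l(n)$ relative to $n$, which rests on the stability of the principal block of $\Rep(\p(n))$ as $n \to \infty$. This stability is implicit in the block-combinatorial description recorded in Theorem \ref{old_thrm:transl_func} and the results of \cite{BDE+}, and is the one nontrivial ingredient beyond the formal manipulations above.
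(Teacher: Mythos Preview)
Your outline is the natural first attempt, and the formal steps through the adjunction and the simple cosocle of $i^{k,*}(\pmb{X}_n)$ are correct. But the obstacle you flag in the final paragraph is fatal, not merely a ``nontrivial ingredient''. The standard module $\Delta_n(0)$ appears in the standard filtration of $P_n(0)$, and as a vector space $\Delta_n(0) \cong \Lambda(\g_1)$ has dimension $2^{\dim \g_1}$ with $\dim \g_1$ quadratic in $n$. Hence any $l$ for which $P_n(0)$ is a direct summand of $V_n^{\otimes l}$ must satisfy $(2n)^l \geq 2^{\dim \g_1}$, forcing $l(n) \gtrsim n^2/\log n$. The condition $n > 8\,l(n) + 2$ therefore fails for all large $n$; there is no ``stability of the principal block'' that rescues this, since $P_n(0)$ genuinely escapes every fixed filtration piece $\Rep^m(\p(n))$ as $n$ grows.

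The paper's proof circumvents this circularity. It also forms $\pmb{Z} := i^{k,*}(\pmb{X}_n)$ and the canonical surjection $p: \pmb{P}_k(0) \twoheadrightarrow \pmb{Z}$, but instead of proving $\pmb{Z}$ projective it proves $p$ is a monomorphism by a diagram chase using only two inputs: (i) fullness of $F_n$ on $I(\PP)$, valid for \emph{all} $n$ by Corollary~\ref{cor:F_n_full_tensor} and indifferent to where $\pmb{X}_n$ sits in the filtration; and (ii) the equivalence $F_n: \urepk{k} \xrightarrow{\sim} \Rep^k(\p(n))$, which needs only $n > 8k+2$. Concretely, one embeds $\bar f:\pmb{P}_k(0) \hookrightarrow \pmb{D}$ with $\pmb{D} \in I(\PP) \cap \urepk{k}$, forms the composite $g = F_n(\bar f) \circ \pi: P_n(0) \to F_n(\pmb{D})$, lifts $g$ to $\bar g:\pmb{X}_n \to \pmb{D}$ via (i), factors $\bar g$ through $\pmb{Z}$, and reads off injectivity of $F_n(p)$; since $p$ lives in $\urepk{k}$, (ii) then gives injectivity of $p$ itself. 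The crucial point is that the only morphism whose behaviour under $F_n$ must be controlled is $p$, and $p$ already lives in the small filtration piece.
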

The proof is completely analogous to \cite[Lemma 8.4.3]{EHS}. We
present it here for completeness.
\begin{proof}
Let $n>>k$ be such that $\urepk{k} \cong \Rep^k(\p(n))$. Let $P_n(0)$ be the
projective cover of the trivial module in $\p(n)$ and let $\pmb{X}_n \in
I(\PP)$
be
as in Lemma \ref{lem:lift_proj_cover}. Let $\pmb{P}_k(0)$ be the projective
cover of $\triv$ in
$\urepk{k}$, and let $\pi: P_n(0) \to F_n(\pmb{P}_k(0))$ be the quotient
map.

Denote by $\pmb{Z}:=\mathit{i}^{k,*}(\pmb{X}_n)$ the
maximal quotient of $\pmb{X}_n$ lying in $\urepk{k}$, with projection $\phi:
\pmb{X}_n
\to \pmb{Z}$. Then $cosoc(\pmb{Z}) \cong \triv$, and hence we have a surjective
map
$p:\pmb{P}_k(0) \to \pmb{Z}$ making the diagram
$$ \xymatrix{&{\pmb{P}_k(0)} \ar[r]^{p} \ar@{->>}[rd] &{\pmb{Z}} \ar@{->>}[d]
\\ &{}
&{\triv} }$$
commutative. Clearly it is
enough to prove that $p$ is an isomorphism.

We will prove that $p$ is a monomorphism (and hence an isomorphism). Indeed,
recall that $\pmb{P}_k(0)$ is a projective object in $ \urepk{k}$; the objects
in this subcategory are \InnaB{subquotients} of objects in $I(\PP)\cap \urepk{k}$. Hence
there exists $\pmb{D} \in
I(\PP)\cap \urepk{k}$ such that $\pmb{P}_k(0) \subset \pmb{D}$. We
denote
this inclusion by $\bar{f}$. Composing $F_n(\bar{f})$ with $\pi:F_n(\pmb{X}_n)
\cong
P_n(0) \to F_n(\pmb{P}_k(0))$, we obtain a map $g=F_n(\bar{f}) \circ \pi:
F_n(\pmb{X}_n) \to F_n(\pmb{D})$.
By Corollary \ref{cor:F_n_full_tensor}, the functor $F_n$ is full on $I(\PP)$,
hence there exists a map $\bar{g}:\pmb{X}_n \to \pmb{D}$ in $\urep$ such that
$F_n(\bar{g}) =g$. Since $\pmb{D} \in \urepk{k}$, this map factors through
$\pmb{X}_n
\twoheadrightarrow \pmb{Z}$, inducing a map $\alpha: \pmb{Z} \to \pmb{D}$ such
that
$\bar{g}=\alpha\circ \phi$.

Hence we have:
$g=F_n(\bar{f})\circ \pi = F_n(\alpha)\circ F_n(p) \circ \pi$. The map $\pi$ is
surjective,
so by cancellation law, $F_n(\bar{f}) = F_n(\alpha)\circ F_n(p)$. Since
$F_n(\bar{f})$ is
injective, so is $F_n(p)$, and so is $p$ (here we use again that $p
\in \urepk{k} \cong \Rep^k(\p(n))$).

To sum up our constructions: we have commutative diagrams in $\urep$ (left) and
$\Rep(\p(n))$ (right):
$$ \xymatrix{ &\pmb{X}_n \ar[r]^{\bar{g}} \ar@{->>}[rrd]|{\,\phi\,} &\pmb{D}
&{} \\  &\pmb{P}_k(0) \ar@{^{(}->}[ru]|{\,\bar{f}\;} \ar@{->>}[rrd]
\ar@{->>}[rr]_p &{} &\pmb{Z}
\ar@{->>}[d] \ar[ul]_{\alpha} &{} \\ &{} &{} &\triv}
 \xymatrix{ &{P_n(0) = F_n(\pmb{X}_n)} \ar[rr]^-{g} \ar@{->>}[d]
\ar[rrrrd]|{\,F_n(\phi)\,} &{}  &F_n(\pmb{D})  &{} &{} &{} \\
&{F_n(\pmb{P}_k(0))}
\ar@{->>}[rrrr]|{\;F_n(p)\;}  \ar@{->>}[rrrrd]\ar@{^{(}->}[rru]|{\,F_n(f)\;\;\;} &{} &{}&{}
&F_n(\pmb{Z})
\ar@/_1.5pc/[llu]|{\;\;F_n(\alpha)\;\;} \ar@{->>}[d] \\ &{} &{} &{} &{} &\C}.
$$

\end{proof}

\begin{lemma}\label{lem:Delta_cover}
 Let $\lambda$ be a partition. There exists $T \in \PP$ such that $ T
\twoheadrightarrow \bdel(\lam)$ in $\urep$.
\end{lemma}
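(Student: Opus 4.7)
The plan is to deduce this directly from the already-established functor $\Phi\colon \Rep(\p(\infty)) \to \urep$ of Proposition \ref{prop:func_Phi_to_Deligne}, using two of its key features. First, by Corollary \ref{cor:Phi_simples_to_delta_Deligne}, the functor $\Phi$ sends the simple module $L_\infty(\lam)$ to the standard object $\bdel(\lam)$. Second, by Corollary \ref{cor:Phi_inj_to_tilt}, $\Phi$ carries injective objects of $\Rep(\p(\infty))$ into the image $I(\PP) \subset \urep$. Since $\Phi$ is exact, producing a surjection $T \twoheadrightarrow \bdel(\lam)$ from an object of $I(\PP)$ reduces to producing a surjection $Y \twoheadrightarrow L_\infty(\lam)$ in $\Rep(\p(\infty))$ from an \emph{injective} object $Y$.

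The existence of such a $Y$ is the content of Lemma \ref{lem:p_infty_cosocle}: every simple object of $\Rep(\p(\infty))$ occurs in the cosocle of some indecomposable injective. Choose one such $Y$ with $L_\infty(\lam) \subset \mathrm{cosoc}(Y)$, and let $\pi\colon Y \twoheadrightarrow L_\infty(\lam)$ denote the associated quotient map.

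Applying the exact monoidal functor $\Phi$ to $\pi$ yields a surjection
$$\Phi(\pi)\colon \Phi(Y) \twoheadrightarrow \Phi(L_\infty(\lam)) \cong \bdel(\lam)$$
in $\urep$. By Corollary \ref{cor:Phi_inj_to_tilt}, $\Phi(Y)$ lies in $I(\PP)$, and since $I\colon \PP \to \urep$ is fully faithful (Lemma \ref{lem:funct_I}), there exists $T \in \PP$ with $I(T) \cong \Phi(Y)$. Composing this isomorphism with $\Phi(\pi)$ produces the desired epimorphism $I(T) \twoheadrightarrow \bdel(\lam)$.

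Since every ingredient has been established earlier in the paper, there is no real obstacle here; the only subtlety is simply to verify that the two functorial inputs (Corollaries \ref{cor:Phi_simples_to_delta_Deligne} and \ref{cor:Phi_inj_to_tilt}) apply to the same object $Y$, which is automatic from the construction.
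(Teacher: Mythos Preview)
Your proof is correct and follows essentially the same route as the paper's own argument: use Lemma~\ref{lem:p_infty_cosocle} to find an injective $Y$ surjecting onto $L_\infty(\lam)$, then apply $\Phi$ and invoke Corollaries~\ref{cor:Phi_simples_to_delta_Deligne} and~\ref{cor:Phi_inj_to_tilt}. The paper states this slightly more tersely (identifying injectives directly as direct summands of tensor powers of $V_\infty$ rather than citing Corollary~\ref{cor:Phi_inj_to_tilt}), but the content is the same.
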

\begin{proof}
Recall that by Lemma \ref{lem:p_infty_cosocle}, every simple in
$\Rep(\p(\infty))$
occurs in
the cosocle of some injective object, i.e. as a cosocle of some
direct summand of a tensor power of $V_{\infty}$. Together with Corollary
\ref{cor:Phi_simples_to_delta_Deligne}, this implies that $\bdel(\lam)$ is
a
quotient of some direct summand of a tensor power of $\bV$ in $\urep$, as
required.
\end{proof}

\subsection{Splitting of epimorphism}\label{ssec:epi}

\begin{proposition}\label{prop:splitting} For any epimorphism $\pmb{X}\to
\pmb{Y}$ in
$\urep$ there exists a nonzero $T\in\PP$
such that the epimorphism $\pmb{X}\otimes I(T)\to \pmb{Y}\otimes I(T)$ splits.
\end{proposition}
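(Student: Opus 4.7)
The plan is to reduce the splitting question to $\Rep(\p(n))$ for sufficiently large $n$ via the tensor functors $F_n$, obtain the splitting there using projectivity of the projective cover of $\triv$, and transport it back using the equivalence $F_n \colon \urepk{m} \xrightarrow{\sim} \Rep^m(\p(n))$ valid for $n > 8m+2$ (Proposition \ref{prop:DS_equiv}).

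First I would fix $k \geq 0$ with $\pmb{X}, \pmb{Y} \in \urepk{k}$, and for each sufficiently large $n \in 2\Z$ invoke Lemma \ref{lem:lift_proj_cover} to produce $\pmb{X}_n \in I(\PP)$, of the form $\pmb{\Theta}_{i_{l_n}} \circ \cdots \circ \pmb{\Theta}_{i_1} \triv$, satisfying $F_n(\pmb{X}_n) \cong P_n(0)$. Full faithfulness of $I$ (Lemma \ref{lem:funct_I}) then yields an essentially unique $T = T_n \in \PP$ with $I(T_n) \cong \pmb{X}_n$. Since $\pmb{X}_n \in \urepk{l_n}$, both $I(T_n) \otimes \pmb{X}$ and $I(T_n) \otimes \pmb{Y}$ lie in $\urepk{k+l_n}$. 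Tensoring the given epimorphism with $I(T_n)$ and applying the SM functor $F_n$ yields an epimorphism $P_n(0) \otimes F_n(\pmb{X}) \twoheadrightarrow P_n(0) \otimes F_n(\pmb{Y})$ in $\Rep(\p(n))$. Since $P_n(0)$ is projective, the adjunction $\Hom_{\p(n)}(P_n(0) \otimes M, -) \cong \Hom_{\p(n)}(P_n(0), M^* \otimes -)$ combined with biexactness of $\otimes$ forces $P_n(0) \otimes F_n(\pmb{Y})$ to be projective, so the epimorphism splits via some $s$ in $\Rep(\p(n))$.

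Finally I would choose $n$ large enough that $n > 8(k+l_n)+2$, making $F_n$ a fully faithful equivalence on $\urepk{k+l_n}$ (Proposition \ref{prop:DS_equiv}). The splitting $s$ then lifts uniquely to a morphism $\tilde{s} \colon I(T_n) \otimes \pmb{Y} \to I(T_n) \otimes \pmb{X}$ in $\urep$; its composition with the original epimorphism is sent by $F_n$ to the identity, so faithfulness of $F_n$ forces the composition to be the identity, i.e.\ $\tilde{s}$ is the desired splitting. The main obstacle will be verifying the existence of such an $n$: this amounts to controlling how the tensor degree $l_n$ of $P_n(0)$ grows with $n$, and is guaranteed by the explicit construction of indecomposable projectives via translation functors (Theorem \ref{old_thrm:transl_proj} together with the analysis in \cite[Section 7]{BDE+}).
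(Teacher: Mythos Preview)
Your reduction to projectivity in $\Rep(\p(n))$ is natural, but the final step---choosing $n$ with $n > 8(k+l_n)+2$---cannot be carried out, and this is a genuine gap rather than a technicality. The object $\pmb{X}_n$ with $F_n(\pmb{X}_n)\cong P_n(0)$ lies in $\urepk{l_n}$ where $l_n$ is at least the minimal $l$ for which $P_n(0)$ is a direct summand of $V_n^{\otimes l}$. Already the standard module $\Delta_n(0)\subset P_n(0)$ has underlying dimension $2^{\dim \g_{\pm 1}}=2^{\Theta(n^2)}$, while $\dim V_n^{\otimes l}=(2n)^l$; comparing these forces $l_n \gtrsim n^2/\log n$. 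Hence $8(k+l_n)+2$ grows faster than $n$, and no $n$ satisfies your inequality. The reference to \cite[Section 7]{BDE+} gives the existence of a translation-functor presentation of $P_n(0)$ but provides no bound on its length strong enough to close this circularity (and no such bound can exist, by the dimension count).

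The paper avoids this trap entirely: it never passes to projectives in $\Rep(\p(n))$. Instead it first reduces (via rigidity and Proposition \ref{prop:presentation}) to the special case of an epimorphism $S^{\lambda}\bV \twoheadrightarrow \triv$ for a quasisymmetric $\lambda$, and then writes that map concretely as a composition involving $\omega^{\otimes d}$ and the evaluation. An explicit splitting after tensoring with $T=\bV^{\otimes d}$ is produced by hand using $coev$ and the inverse isomorphism $\omega^{-1}$. The point is that the object $T$ does not depend on $n$ at all, so no growth estimate is needed. If you want to salvage your approach, you would need an object in $I(\PP)$ that is projective in $\urepk{m}$ for $m$ bounded independently of $n$; but that is essentially what the paper's argument supplies, and it requires the explicit analysis of the signed Brauer structure rather than an abstract appeal to $P_n(0)$.
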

\begin{proof} We will start with the case that $\pmb{Y}=\triv$ or $\Pi\triv$.
It
suffices to prove the statement for $\pmb{X}={\bV}^{\otimes 2d}$ since every
$\pmb{X}$ is
a quotient
of a finite direct sum of $\bV^{\otimes k}$ and $\operatorname{SHom}({\bf
V}^{\otimes k},\triv)\neq 0$ implies that $k$ is even.

Proposition \ref{prop:contractions} implies that
$$\operatorname{SHom}({\bV}^{\otimes k},\triv)=\operatorname{SHom}_{Rep
(\p(\infty))}(V_{\infty}^{\otimes k},\C).$$
Consider the $S_{2d}$-decomposition in $\Rep(\p(\infty))$
$$ V_{\infty}^{\otimes 2d}=\oplus _{|\lambda|=2d}S^\lambda V_{\infty}\otimes
Y_\lambda.$$
The cosocle of $S^\lambda V_{\infty}$ contains $\C$ if and only if $\lambda$ is
quasisymmetric and in this case the multiplicity of $\C$ in $cosoc(S^\lambda
V_{\infty}) $ is $1$. Therefore it suffices to show that for any
quasisymmetric $\lambda$ an epimorphism $\tau:S^\lambda \bV \to\triv$
splits.

As a right $\C[S_{2d}]$-module the space $\operatorname{SHom}(\bV^{\otimes
k},\triv)$ is generated by the map
$$\gamma:\bV^{\otimes 2d}\xrightarrow{\varphi}(\Pi\bV^*)^{\otimes
d}\otimes\bV^{\otimes d}\xrightarrow{ev} \triv, $$
where $\varphi=\omega^{\otimes d}\otimes id^{\otimes d}$ and $\omega:\bV\to
{\Pi\bf V}^*$ is the isomorphism induced by the form.
Moreover,
$\operatorname{SHom}(\bV^{\otimes k},\C)$ as a right $\C[S_{2d}]$ module is
induced from one-dimensional $H$-module $\C\gamma$, where
$H$ is the stabilizer of $\C\gamma$ in $S_{2d}$ and is isomorphic to the
Coxeter group of type $B_{d}$. In particular, this module has a basis which
consists
of $\gamma\sigma$ for some permutations $\sigma\in S_{2d}$. Let $\pi_{\lambda}$
be a Young projector associated with some tableau of shape $\lambda$ for
a quasisymmetric partition $\lambda$. Then $\pi_{\lambda}\sigma\gamma\neq 0$
for some $\sigma$. Therefore, using the conjugation by $\sigma$ we may assume
without
loss of generality that $\pi_{\lambda}\gamma\neq 0$. Then $\tau$ is a
composition
$$S^\lambda(\bV)\xrightarrow{i_\lambda}\bV^{\otimes
2d}\xrightarrow{\varphi}(\Pi\bV^*)^{\otimes d}\otimes \bV^{\otimes
d}\xrightarrow{ev}\triv $$
for an embedding $i_\lambda$ such that $\pi_{\lambda}\circ
i_\lambda=i_{\lambda}$.
Then the epimorphism
$$\bV^{\otimes d}\otimes S^\lambda(\bV)\xrightarrow{id\otimes\tau}{\bf
V}^{\otimes d}$$
splits with splitting morphism given by
$(id\otimes\pi_{\lambda})\circ(id\otimes\varphi^{-1})\circ(coev\otimes id)$.
\end{proof}

\section{The second universal property}\label{sec:uni-2}

In this section we prove another universality property, describing the
collection of categories $\langle \urep, \Rep(\p(n))\rangle_{n \geq 1}$ as
universal tensor $\sVect$-categories generated by a single object with a
pairing into $\Pi\triv$. This result is stated in Theorem \ref{thrm:uni-2}.
The statement and proof of the theorem requires the language of affine group
schemes in pre-Tannakian tensor categories. The necessary definitions and
results are given in Appendix \ref{app:affine_grp_schemes}.

From now on, we will consider pre-Tannakian tensor $\sVect$-categories (see
Section \ref{sec:notn} for definitions), and
will omit the pre-Tannakian assumption, as well as write {\it tensor functor}
instead of ``tensor $\sVect$-functor''.
\subsection{Affine group scheme
\texorpdfstring{$P(X)$}{P(X)}}\label{ssec:aff_grp_scheme}

\subsubsection{Definition}
Let $\T$ be a  $\sVect$-tensor category
and let $X \in \T$ be an object together with a non-degenerate symmetric form
$\omega_X: X \otimes X
\to \Pi \triv$. We denote by $\psi_X: X \to \Pi X^*$ the induced isomorphism.

Consider the functor $\Alg_{\T} \to\Grps$ defined by the formula
$$ A\mapsto\Aut_{ A-\Mod}(i_A(X), i_A(\omega_X)).$$
That is, $A$ is sent to the group of $A$-module automorphisms $\theta: A\otimes
X \to A \otimes X$ such that the diagram below is commutative:
\begin{equation}\label{eq:functor_points_P(X)}
\xymatrix{&{A \otimes X} \ar[d]_{\theta}
\ar[rrr]^-{i_A(\psi_X)}
&{} &{} &{A \otimes\Pi \triv \otimes_A A \otimes X^*}
\ar[d]^{\id \otimes_A \theta^{\vee}}\\ &{A \otimes X}
\ar[rrr]_-{ i_A(\psi_X)} &{} &{}
&{A\otimes \Pi\triv \otimes_A A \otimes X^*} }
\end{equation}
where $\theta^{\vee}: A \otimes X^* \to  A \otimes X^*$ is given by
\begin{align*}
 &A \otimes X^* \xrightarrow{\id \otimes coev_X} A \otimes X^* \otimes X
\otimes X^* \xrightarrow{\sigma_{X, X^*}} A \otimes X
\otimes {X^*}^{\otimes  2}
\xrightarrow{\theta \otimes \id} A \otimes X
\otimes {X^*}^{\otimes  2} \rightarrow \\ &\xrightarrow{\sigma_{X, X^*}} A
\otimes X^* \otimes X
\otimes X^* \xrightarrow{\id \otimes ev_X \otimes \id } A \otimes X^*
\end{align*}
and $\sigma_{A, \Pi}: A \otimes \Pi \triv \to \Pi A$ is the symmetry
isomorphism.

As before, this functor is corepresentable. The Hopf $\T$-algebra representing
this functor is the largest Hopf $\T$-algebra quotient of $\O(GL(X))$ which
preserves the form $\omega$. It can be written explicitly as a quotient of
$Sym(X \otimes X^*)$ by a certain ideal, similar to \cite[Definition 2.3]{Et}.

\begin{definition}
 Let $P(X) \in \Grps_{\T}$ be the $\T$-group representing the above functor.
\end{definition}
We have an obvious $\T$-group inclusion $P(X) \to GL(X)$, so $X$ is a faithful
representation of $P(X)$.

By Lemma
\ref{lem:faithful_rep}, this immediately implies:

\begin{lemma}\label{lem:grp_repr_subquotient_tensor_powers}
 Any representation of $P(X)$ in $\T$ is a subquotient of $\bigoplus_{i
\in I}
X^{\otimes
a_i} \otimes U$ for some finite set $I$, $a_i \in \Z_{\geq 0}$ and some $U
\in\T$ considered with a trivial $P(X)$-action.
\end{lemma}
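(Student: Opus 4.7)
The plan is to reduce the statement to the referenced Lemma \ref{lem:faithful_rep} on faithful representations of affine group schemes in pre-Tannakian tensor categories. In the classical Tannakian setting (over $\Vect$), the corresponding fact reads: if $X$ is a faithful $G$-representation, then every $G$-representation is a subquotient of $\bigoplus_{i} X^{\otimes a_i}\otimes (X^*)^{\otimes b_i}$, possibly tensored with a trivial representation. I expect Lemma \ref{lem:faithful_rep} to be the $\sVect$-analogue of this statement, with the trivial factor $U\in\T$ allowed to absorb parity shifts and multiplicities. Thus the first step is simply to invoke this lemma to obtain, for an arbitrary representation $Y$ of $P(X)$, an expression of $Y$ as a subquotient of $\bigoplus_{i\in I} X^{\otimes a_i}\otimes (X^*)^{\otimes b_i}\otimes U_0$ with $U_0\in\T$ carrying trivial $P(X)$-action.

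The key additional ingredient specific to our setting is that the non-degenerate symmetric form $\omega_X:X\otimes X\to \Pi\triv$ produces a $P(X)$-equivariant isomorphism $\psi_X:X\xrightarrow{\sim}\Pi X^*$; indeed, $P(X)$ is by construction the stabilizer of $\omega_X$ in $GL(X)$, so the diagram \eqref{eq:functor_points_P(X)} says exactly that $\psi_X$ is $P(X)$-equivariant. Inverting and using $\Pi^2\cong\id$, one gets an equivariant isomorphism $X^*\cong\Pi X$. Tensoring, this yields $P(X)$-equivariant isomorphisms
\[
(X^*)^{\otimes b_i}\;\cong\;\Pi^{b_i}\,X^{\otimes b_i},
\]
where the $\sVect$-structure on $\T$ is used in the standard way to move the $\Pi^{b_i}$'s past the $X^{\otimes a_i}$ factor.

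Substituting into the decomposition from the first step, each summand becomes
\[
X^{\otimes a_i}\otimes (X^*)^{\otimes b_i}\otimes U_0 \;\cong\; X^{\otimes (a_i+b_i)}\otimes \Pi^{b_i}U_0,
\]
which is of the form $X^{\otimes c_i}\otimes U_i$ with $c_i:=a_i+b_i\in\Z_{\geq 0}$ and $U_i:=\Pi^{b_i}U_0\in\T$ carrying trivial $P(X)$-action. Taking the finite direct sum over $i\in I$ and setting $U:=\bigoplus_i U_i$ (grouping summands with the same exponent if desired), we obtain $Y$ as a subquotient of $\bigoplus_{i\in I} X^{\otimes c_i}\otimes U$ in $\Rep(P(X))$, proving the claim. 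The only real obstacle is confirming that Lemma \ref{lem:faithful_rep} is stated in a form allowing a trivial $U$-factor in its conclusion; assuming that standard formulation, the proof above is essentially mechanical.
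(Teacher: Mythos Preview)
Your proposal is correct and follows exactly the paper's approach. The paper records only that the result follows immediately from Lemma~\ref{lem:faithful_rep} together with the obvious inclusion $P(X)\hookrightarrow GL(X)$; the elimination of the $(X^*)^{\otimes b_i}$ factors via the $P(X)$-equivariant isomorphism $X^*\cong\Pi X$ induced by $\omega_X$ is left implicit there, and you have spelled it out correctly.
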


We now want to consider such representations when $U$ belongs to the essential
image of $\sVect$ in $\T$:
\begin{notation}\label{notn:Rep_P_X}
 We denote by
$\Rep(P(X))$ be the tensor $\sVect$-category of $P(X)$-representations
generated
by tensor powers of $X$ and their \InnaB{parity} shifts. Namely, the objects in this
category are subquotients of the $P(X)$-representation $\bigoplus_{i \in I}
X^{\otimes
a_i} \otimes \left(\Pi\triv\right)^{\otimes b_i}$ for some finite set $I$,
and some $a_i
\in \Z_{\geq 0}, b_i \in \{0, 1\}$. The morphisms are $P(X)$-equivariant
$\T$-morphisms.
\end{notation}

\begin{remark}
 By Lemma \ref{lem:grp_repr_subquotient_tensor_powers}, for $\T = \sVect$
and $X$ a finite dimensional vector superspace, the category $\Rep(P(X))$
contains all the finite dimensional (super-)representations of $P(X)$.
\end{remark}

Let $\widetilde{P}_X$ denote the fundamental group of
$\Rep(P(X))$.

\begin{lemma}\label{lem:fund_group_extension}
 We have an inclusion $i:P(X) \to \widetilde{P}_X$
($P(X)$ considered with conjugation action) and a quotient $q:
\widetilde{P}_X \to \mu_2$, with $q \circ i =1$.
\end{lemma}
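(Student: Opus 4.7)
The plan is to obtain both $i$ and $q$ from the universal property of the fundamental group $\widetilde{P}_X$, which by definition is the $\otimes$-automorphism group scheme of the identity functor of $\Rep(P(X))$ (see Appendix \ref{app:affine_grp_schemes}).

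For the construction of $i$: the very definition of $\Rep(P(X))$ supplies a natural action of $P(X)$ on every object, compatible with morphisms and with the tensor product. This is exactly the data of a $P(X)$-valued $\otimes$-automorphism of the identity functor of $\Rep(P(X))$. The universal property of $\widetilde{P}_X$ then produces a canonical homomorphism $i: P(X) \to \widetilde{P}_X$ of group schemes in $\Rep(P(X))$; on the source, $P(X)$ naturally carries the conjugation (adjoint) action on itself, as is standard when realizing a group scheme inside its own fundamental group. To verify that $i$ is a closed immersion I will use that $X$ is a faithful representation of $P(X)$ (by construction of $P(X)$, together with Lemma \ref{lem:faithful_rep}): any $A$-point $g$ of the kernel of $i$ must act trivially on every object of $\Rep(P(X))$, and in particular trivially on $X$, forcing $g=1$.

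For the construction of $q$: the full subcategory of trivial $P(X)$-representations in $\Rep(P(X))$ identifies canonically with $\sVect$, yielding a fully faithful tensor $\sVect$-functor $\iota: \sVect \hookrightarrow \Rep(P(X))$. Restricting a $\otimes$-automorphism of the identity functor of $\Rep(P(X))$ along $\iota$ produces a $\otimes$-automorphism of the identity functor of $\sVect$, and hence a canonical homomorphism $q: \widetilde{P}_X \to \pi(\sVect) = \mu_2$. Surjectivity of $q$ (i.e.\ that $q$ is a quotient in the sense of affine group schemes) will be witnessed by the parity automorphism of the identity functor of $\Rep(P(X))$, which acts on each purely homogeneous object $Y$ by the scalar $(-1)^{p(Y)}$; this is well-defined thanks to the $\sVect$-module structure and restricts to the generator of $\mu_2$ on $\iota(\sVect)$.

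Finally, the relation $q\circ i = 1$ is essentially formal: by definition, $P(X)$ acts trivially on every trivial representation, i.e.\ on every object in the image of $\iota$, so the composite $q\circ i$ corresponds to the trivial $\otimes$-automorphism of the identity functor of $\sVect$, hence to the identity element of $\mu_2$. The main point requiring care will be to ensure that the two universal-property arguments are correctly formulated in the Hopf-algebraic language of Appendix \ref{app:affine_grp_schemes}, and in particular that the statement ``$P(X)$ with conjugation action'' is exactly the form of the source of $i$ dictated by that universal property; once this is unpacked, the rest of the proof is bookkeeping.
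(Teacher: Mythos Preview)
Your constructions of $i$ and $q$, your argument for $q\circ i=1$, and your argument for the injectivity of $i$ are essentially the same as the paper's (the paper is terser and defers injectivity to the general Proposition~\ref{prop:semidirect}, but your explicit use of the faithfulness of $X$ is correct and in the same spirit).

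The gap is in your argument for the surjectivity of $q$. You propose to exhibit a section by the ``parity automorphism of the identity functor of $\Rep(P(X))$, which acts on each purely homogeneous object $Y$ by $(-1)^{p(Y)}$.'' But in a general $\sVect$-tensor category there is no notion of ``purely homogeneous object'': the $\sVect$-module structure only provides the endofunctor $\Pi$, not a $\mathbb Z/2\mathbb Z$-grading on objects. Concretely, in the very case $\Rep(P(X))=\urep$ the generator $\bV$ satisfies $\bV\cong\Pi\bV^*$, so a parity $p(\bV)$ would have to satisfy $p=1-p$ in $\mathbb Z/2\mathbb Z$, which is impossible. Equivalently, a $\otimes$-automorphism $\epsilon$ of $\id$ with $\epsilon_{\Pi\triv}=-1$ would split the category as $\sVect\boxtimes(\text{even part})$, and the paper notes (see the Remark following Lemma~\ref{lem:fund_group_extension}) that $\Rep(P(X))$ cannot be halved in this way. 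So no such section $\mu_2\to\widetilde P_X$ exists in general, and your surjectivity argument does not go through.

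The paper instead deduces surjectivity of $q$ from Lemma~\ref{lem:surjective_gen}: the tensor functor $J:\sVect\to\Rep(P(X))$ (sending $V$ to $V\otimes\triv$ with trivial $P(X)$-action) is fully faithful with essential image closed under subobjects, and this is exactly the criterion for the induced map on fundamental groups to be a quotient. Replacing your section argument by this appeal to Lemma~\ref{lem:surjective_gen} fixes the proof.
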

\begin{remark}
 The canonical $\T$-group homomorphism $\pi_X:\pi(\T) \to GL(X)$ does
not factor through the inclusion $P(X) \to
GL(X)$; this is manifested in the fact that $\widetilde{P}_X$ does not split
into a direct product of $P(X)$ and $\mu_2$, and the category $\Rep(P(X))$
cannot be ``halved'', unlike for example $\Rep(GL(m|n))$ (see \cite{EHS}).
\end{remark}
\begin{proof}
Let $\widetilde{P}_X$ denote the fundamental group of
$\Rep(P(X))$.
The action of
$P(X)$ on its own representations induces a $\Rep(P(X))$-group inclusion
$i:P(X) \to \widetilde{P}_X$.

We have a tensor functor $J:\sVect \to \Rep(P(X))$ taking a vector superspace
$V$ to $V \otimes \triv$ with trivial $P(X)$-action. This functor induces a
$\Rep(P(X))$-group homomorphism $q:\widetilde{P}_X \to J(\pi(\sVect)) \cong
\mu_2$. This homomorphism is surjective by Lemma \ref{lem:surjective_gen}.

Since $P(X)$ acts trivially on the objects in the image of
$J$, the composition of the homomorphisms $P(X) \to \widetilde{P}_X
\to \mu_2$ is trivial, as required.

\end{proof}

\begin{remark}
 In fact, one can show that $\widetilde{P}_X$ is an extension of $P(X)$ and
$\mu_2$ whenever all trivial representations of $P(X)$ in $\Rep(P(X))$ belong to
the essential image of $J$.
\end{remark}

\subsubsection{Functoriality}\label{sssec:P_functoriality}

\begin{lemma}\label{lem:P_functoriality}
 Let $F:\T' \to \T$ be a tensor
functor between two tensor $\sVect$-categories. Let $X \in \T'$, and $\omega_X:
X \otimes X \to \Pi \triv$ a symmetric form. Then we have an
isomorphism of $\T$-group schemes $F(P(X)) \cong P(F(X))$.
\end{lemma}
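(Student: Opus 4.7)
The plan is to verify that applying $F$ to the universal construction defining $P(X)$ yields the universal construction defining $P(F(X))$. I would proceed in three steps.

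First, I would set up the right framework. A tensor $\sVect$-functor $F:\T'\to\T$ extends uniquely to a colimit-preserving tensor functor between the Ind-completions, and hence sends commutative algebra (resp.\ commutative Hopf algebra) objects in $\mathrm{Ind}\text{-}\T'$ to the analogous objects in $\mathrm{Ind}\text{-}\T$. In particular $F(\O(P(X)))$ is canonically a commutative Hopf algebra object in $\mathrm{Ind}\text{-}\T$, so $F(P(X))$ is a well-defined $\T$-group scheme. Because $F$ preserves $\otimes$, internal duals and the parity shift $\Pi$, it sends $GL(X)$ to $GL(F(X))$, and the closed embedding $P(X)\hookrightarrow GL(X)$ to a closed embedding $F(P(X))\hookrightarrow GL(F(X))$.

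Second, I would construct the comparison morphism via the universal property. The tautological action of $F(P(X))$ on $F(X)$ is obtained by applying $F$ to the $P(X)$-action on $X$; since $F$ is a SM functor and the action of $P(X)$ on $X$ satisfies the commutativity of diagram \eqref{eq:functor_points_P(X)} with $\omega_X$, applying $F$ yields the analogous commutative diagram with $F(\omega_X)$ (using $F(\psi_X)=\psi_{F(X)}$ and the compatibility of $F$ with evaluation, coevaluation and the symmetry $\sigma$ involved in the definition of the transpose $\theta^\vee$). By the universal property corepresented by $\O(P(F(X)))$, this action factors through a unique $\T$-group homomorphism $\phi:F(P(X))\to P(F(X))$ which is compatible with the inclusions into $GL(F(X))$.

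Third, I would show that $\phi$ is an isomorphism by using the explicit presentation. Both $\O(P(X))$ and $\O(P(F(X)))$ are defined as Hopf algebra quotients of $\O(GL(X))$, resp.\ $\O(GL(F(X)))$, by the Hopf ideal generated by a morphism built out of $\omega_X$, evaluation, coevaluation and $\sigma$ (cf.\ the explicit construction alluded to in Section \ref{ssec:aff_grp_scheme} and the analogue of \cite[Definition 2.3]{Et}). Since the extension of $F$ to Ind-categories is an exact SM functor, it sends this generating morphism for $\O(P(X))$ to the corresponding generating morphism for $\O(P(F(X)))$ and commutes with the formation of Hopf ideals and cokernels. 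Hence $F(\O(P(X)))\cong \O(P(F(X)))$ as Hopf algebra quotients of $\O(GL(F(X)))$, which is exactly the statement that $\phi$ is an isomorphism.

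The main obstacle is the bookkeeping around the Ind-enlargement: $P(X)$ is generally not itself an object of $\T'$, so one has to be careful that "$F(P(X))$" is unambiguously defined, that $F$ commutes with the relevant universal constructions (symmetric algebras, quotients by Hopf ideals, etc.) performed inside $\mathrm{Ind}\text{-}\T'$, and that the functor-of-points description of $P$ on $\Alg_{\T'}$ transports correctly under $F$ even though $F$ need not be essentially surjective on algebra objects. Once this is in place, the identification $F(P(X))\cong P(F(X))$ is essentially formal, as $P$ is built purely from the rigid symmetric monoidal structure preserved by $F$.
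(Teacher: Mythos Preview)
Your proposal is correct and arrives at the same destination as the paper, namely the verification that $F(\O(P(X)))\cong\O(P(F(X)))$ as quotients of $\O(GL(F(X)))$ via the explicit presentation as a quotient of $Sym(X\otimes X^*)$.

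The only notable difference is in how the comparison map is set up. The paper constructs the homomorphism $F(P(X))\to P(F(X))$ on functors of points, using the formula $F(G)(B)=G(F^!(B))$ with $F^!$ the right adjoint of $F$: given $\alpha\in\Aut_{F^!(B)\text{-}\Mod}(F^!(B)\otimes X,\omega_X)$, one applies $F$ and then base-changes along the counit $FF^!(B)\to B$ to land in $\Aut_{B\text{-}\Mod}(B\otimes F(X),F(\omega_X))$. You instead extend $F$ directly to Ind-categories and invoke the universal property of $P(F(X))$ applied to the induced $F(P(X))$-action on $F(X)$. Your route is arguably more transparent and avoids introducing $F^!$; the paper's route makes the functor-of-points identity manifest. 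Either way, the substantive content---that the defining relations for $P(-)$ are built from the rigid SM structure and $\omega$, hence are preserved by any tensor functor---is identical, and both arguments defer the actual isomorphism check to the explicit Hopf-algebra presentation.
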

\begin{proof}
Let $G = P(X)$. The $\T$-group scheme $F(G)$ is defined by the functor of points
$\Alg_{\T}\to\Grps$ given by the formula
\begin{equation}\label{eq:FofG}
F(G)(B)=G(F^!(B)).
\end{equation}
where $F^!:\Ind-\T\to\Ind-\T'$ is the right adjoint to $F$, which is
automatically lax symmetric monoidal. We can define a $\T$-group homomorphism
$$F(G) \to P(F(X)) $$ on the functors of points in the following way: for a
fixed
$B\in\Alg_{\T}$, we would like to define a group homomorphism
$$ \Aut_{\Mod_{F^!(B)}}\left(i_{F^!(B)}(X), i_{F^!(B)}(\omega_X)\right) \to
\Aut_{\Mod_B}\left(i_B(F(X)), i_B(F(\omega_X))\right) $$
For $$\alpha \in  \Aut_{\Mod_{F^!(B)}}\left(i_{F^!(B)}(X),
i_{F^!(B)}(\omega_X)\right) =  \Aut_{\Mod_{F^!(B)}}\left(F^!(B) \otimes X,
F^!(B) \otimes \omega_X\right)$$ we have a map $F(\alpha) \in
\Aut_{\Mod_{FF^!(B)}}\left(FF^!(B) \otimes X, FF^!(B) \otimes
\omega_X \right)$. Under base change $FF^!(B) \to B$ we obtain an element
of $\Aut_{\Mod_B}\left(i_B(F(X)), i_B(F(\omega_X))\right) $. Hence we built a
$\T$-group homomorphism
$F(G) \to P(F(X)) $. One can then check that it is an isomorphism by
writing explicitly $\O(G) = \O(P(X))$ as a quotient of $Sym(X \otimes X^*)$
(see \cite{Et}).
\end{proof}

 Let us now consider the following situation.

 Let $\T'$ be a tensor
$\sVect$-category, and let $X \in \T'$ with a symmetric form $\omega_X: X
\otimes X \to
\Pi \triv$. Assume $\T'$ is generated by $X,
\omega_X$ as a tensor $\sVect$-category; that is, assume we have a short
exact
sequence of $\T'$-groups
$$ 1 \to P(X) \to \pi(\T') \to \mu_2 \to 1$$
 and any object $M \in \T'$ is a subquotient of $\bigoplus_{i \in I}
X^{\otimes
a_i} \otimes \left(\Pi\triv\right)^{\otimes b_i}$ for some finite set $I$,
and some $a_i
\in \Z_{\geq 0}, b_i \in \{0, 1\}$.

\begin{lemma}\label{lem:funct}
Let $F: \T' \to \T$ be a tensor functor into a tensor
$\sVect$-category $\T$. Let $Y:=F(X)$. Then $F$ induces an equivalence of
categories $\T' \to \Rep(P(Y))$.
\end{lemma}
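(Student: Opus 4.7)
The plan is to construct a tensor $\sVect$-functor $\tilde F : \T' \to \Rep(P(Y))$ lifting $F$ along the forgetful functor $\Rep(P(Y))\to\T$, and then to verify it is an equivalence. Every object $M\in\T'$ carries a canonical $P(X)$-action in $\T'$ since $P(X)\subset\pi(\T')$. Applying $F$ and invoking the isomorphism $F(P(X))\cong P(F(X))=P(Y)$ from Lemma \ref{lem:P_functoriality} endows $F(M)\in\T$ with a canonical $P(Y)$-action in $\T$. The hypothesis that every object of $\T'$ is a subquotient of some $\bigoplus_i X^{\otimes a_i}\otimes (\Pi\triv)^{\otimes b_i}$, combined with exactness and monoidality of $F$, ensures that these $P(Y)$-representations lie in the subcategory $\Rep(P(Y))$ of Notation \ref{notn:Rep_P_X}. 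This yields the desired tensor $\sVect$-functor $\tilde F$.

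Essential surjectivity of $\tilde F$ is then immediate: each object of $\Rep(P(Y))$ is by definition a subquotient of some $\bigoplus_i Y^{\otimes a_i}\otimes(\Pi\triv)^{\otimes b_i}$, which equals $\tilde F\bigl(\bigoplus_i X^{\otimes a_i}\otimes(\Pi\triv)^{\otimes b_i}\bigr)$, and exactness of $\tilde F$ delivers the desired subquotient. Faithfulness is automatic, since any tensor functor between tensor categories is faithful. The remaining step, and the place where I expect the real work to lie, is fullness.

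For fullness, I would use rigidity to reduce to showing that for every $K\in\T'$ the natural map
$$\Hom_{\T'}(\triv,K)\longrightarrow\Hom_{\Rep(P(Y))}(\triv,\tilde F(K))$$
is surjective (taking $K=M^*\otimes N$ recovers the general statement). The hard part will be identifying both sides as ``invariants''. On the left, the short exact sequence $1\to P(X)\to\pi(\T')\to\mu_2\to 1$ identifies a morphism $\triv\to K$ in $\T'$ with a $P(X)$-invariant element of $K$ of even parity. On the right, morphisms in $\Rep(P(Y))$ are by definition the even $P(Y)$-equivariant morphisms in $\T$. The key computation is that $P(X)$-invariants can be written as the equalizer of the coaction $K\to K\otimes\O(P(X))$ with the unit map; since $F$ is exact, monoidal, and carries $\O(P(X))$ to $\O(P(Y))$ (by Lemma \ref{lem:P_functoriality}), applying $F$ to this equalizer produces the equalizer defining $F(K)^{P(Y)}$. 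Because $F$ is a $\sVect$-functor it also respects the ambient $\mu_2$-grading, so the bijection descends to even parts, giving the required surjectivity.
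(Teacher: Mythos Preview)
Your approach differs from the paper's: the paper applies Deligne's reconstruction theorem (Theorem~\ref{old_thm:Deligne_reconstr}) to identify $\T'$ with $\Rep_{\T}(F(\pi(\T')),\eps)$, and then compares the fundamental group $\widetilde{P}_Y$ of $\Rep(P(Y))$ with $F(\pi(\T'))$, showing the induced map $r:\widetilde{P}_Y\to F(\pi(\T'))$ is an isomorphism by a group-theoretic argument (both are extensions of $P(Y)$ by $\mu_2$, and both embed in $GL(Y)$). Your direct approach is conceptually appealing, but two steps are not yet justified.

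First and most importantly, the fullness argument is incomplete. Your equalizer computation correctly shows $F(K^{P(X)})\cong F(K)^{P(Y)}$ as objects of $\T$. But this is an isomorphism of \emph{objects}, and you still need to deduce that $\Hom_{\T'}(\triv,K^{P(X)})\to\Hom_{\T}(\triv,F(K^{P(X)}))$ is a bijection. Your closing sentence about ``$F$ respecting the $\mu_2$-grading'' does not bridge this gap: $K^{P(X)}$ is an object of an abstract tensor category, not a super vector space with elements, so there is no ``even part'' to take until you know more. The missing step is this: since $P(X)=\ker(\pi(\T')\to\mu_2)$, any object with trivial $P(X)$-action has its canonical $\pi(\T')$-action factoring through $\mu_2$, hence lies in the essential image of $J:\sVect\to\T'$ (this uses Deligne's theorem applied to $J$). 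Thus $K^{P(X)}\cong\triv^{\oplus a}\oplus(\Pi\triv)^{\oplus b}$, whence both Hom-spaces equal $\C^a$. Once you add this, your argument is complete --- but note it is not really more elementary than the paper's, since Deligne's theorem enters here too.

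Second, your essential surjectivity paragraph is premature: exactness of $\tilde F$ alone does not let you realize an arbitrary subquotient $M'$ of $\tilde F(T)$ as $\tilde F$ of something, because the sub- and quotient maps defining $M'$ live in $\Rep(P(Y))$ and must be lifted to $\T'$. This requires fullness. Once fullness is established, essential surjectivity follows in the standard way (lift the maps, take the corresponding subquotient in $\T'$, and use exactness), so you should reorder the argument.
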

\begin{proof}

 Let $G = \pi(\T)$, $G' = \pi(T')$ and $\eps: G \to F(G')$ be the induced
morphism (as in Section \ref{sssec:fund_grp}). By Theorem
\ref{old_thm:Deligne_reconstr}, we have an equivalence of tensor
$\sVect$-categories
$$ \T' \stackrel{\sim}{\longrightarrow} Rep_{\T}(F(G'), \eps)$$
(notation as in Theorem
\ref{old_thm:Deligne_reconstr}).

Now, by Lemma \ref{lem:P_functoriality}, $F(P(X)) \cong P(Y)$, hence we
have a $\T$-group inclusion $P(Y) \to F(G')$. The corresponding restriction
functor is a tensor functor $$R: Rep_{\T}(F(G'), \eps) \to \Rep(P(Y))$$ and we
wish
to show that this is an equivalence. For this, let $\widetilde{P}_Y$ denote the
fundamental group of $\Rep(P(Y))$, and $\eps': G \to \widetilde{P}_Y$ the
corresponding $\T$-group homomorphism.

By Theorem \ref{old_thm:Deligne_reconstr}
we have a tensor equivalence $\Rep(P(Y)) \cong \Rep(\widetilde{P}_Y, \eps')$, and
hence commuting $\T$-group homomorphisms
$$\xymatrix{ &{\widetilde{P}_Y} \ar[rr]^{r} &{} &{F(G')} \\ &{} &G
\ar[lu]^{\eps'} \ar[ru]_{\eps} &{}}$$

Clearly, it is enough to check that $r: \widetilde{P}_Y \to F(G')$ is an
isomorphism.

We begin by showing that $r$ is an inclusion. Indeed, $Y$ is a faithful
representation of both $F(G')$ and $\widetilde{P}_Y$ (in the sense of Lemma
\ref{lem:faithful_rep}), and hence both $\T$-groups embed into $GL(Y)$, their
embeddings compatible with maps $r, \eps, \eps'$. Hence $r$ is injective.

By assumption, the $\T'$-group $G'$ is an
extension of $P(X)$, $\mu_2$; hence $F(G')$ is an extension of $P(Y) \cong
F(P(X))$, $\mu_2 \cong F(\mu_2)$.
By Lemma \ref{lem:fund_group_extension}, the $\T$-group $\widetilde{P}_Y$
comes with an inclusion $i:P(Y) \to \widetilde{P}_Y$ and a quotient $q:
\widetilde{P}_Y \to \mu_2$ such that $q \circ i =1$. Hence the $\T$-group
inclusion $r:\widetilde{P}_Y
\to F(G')$ is an isomorphism, and the lemma is proved.

\end{proof}

In particular, we obtain a stronger version of the result in Lemma
\ref{lem:fund_group_extension}:

\begin{corollary}\label{cor:fund_group_extension}
Let $\T$ be a  $\sVect$-tensor category,
and let $X \in \T$ be an object together with a symmetric form
$\omega_X: X \otimes X
\to \Pi \triv$. Let $\widetilde{P}_X$ be the fundamental group of $\Rep(P(X))$.
 Then $\widetilde{P}_X$ is an extension of $P(X)$ (with conjugation action) and
$\mu_2$ (with trivial $P(X)$ action).
\end{corollary}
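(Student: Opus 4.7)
The plan is to exploit Lemma \ref{lem:fund_group_extension}, which already supplies the inclusion $i : P(X) \hookrightarrow \widetilde{P}_X$, the surjection $q : \widetilde{P}_X \twoheadrightarrow \mu_2$, and the identity $q \circ i = 1$. With these in hand, the content of the corollary reduces to verifying the equality $\ker(q) = i(P(X))$ of normal subgroups of $\widetilde{P}_X$; the inclusion $i(P(X)) \subseteq \ker(q)$ is immediate, so only the reverse inclusion requires argument.

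I would argue on functors of points. For $A \in \Alg_{\Rep(P(X))}$, an element $\alpha \in \widetilde{P}_X(A)$ is by definition a tensor natural automorphism of the base-change functor $i_A : \Rep(P(X)) \to \Mod_A$, and $q(\alpha) \in \mu_2(A)$ is obtained by restricting $\alpha$ along the tensor functor $J : \sVect \to \Rep(P(X))$; concretely, $q(\alpha) = 1$ amounts to $\alpha_{\Pi\triv} = \id$. For such an $\alpha$, naturality of $\alpha$ with respect to the morphism $\omega_X : X \otimes X \to \Pi \triv$, which is a morphism in $\Rep(P(X))$ by the very definition of $P(X)$, forces
\[
i_A(\omega_X) \circ (\alpha_X \otimes \alpha_X) = \alpha_{\Pi\triv} \circ i_A(\omega_X) = i_A(\omega_X);
\]
in other words, $\alpha_X$ preserves the base-changed form, so $\alpha_X \in P(X)(A)$ by the universal property of $P(X)$.

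The remaining step is to observe that the assignment $\alpha \mapsto \alpha_X$ defines a group homomorphism $\ker(q) \to P(X)$ two-sided inverse to $i$. On one side, for any $g \in P(X)(A)$ the transformation $i(g)$ acts as $g$ on $X$ and trivially on $\Pi\triv$, so the composite $P(X) \to \ker(q) \to P(X)$ is the identity. On the other side, every object of $\Rep(P(X))$ is, by Notation \ref{notn:Rep_P_X}, a subquotient of a sum $\bigoplus_i X^{\otimes a_i} \otimes (\Pi\triv)^{\otimes b_i}$, and, using monoidality together with naturality along the relevant sub- and quotient maps, $\alpha$ is determined by the pair $(\alpha_X, \alpha_{\Pi\triv})$. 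For $\alpha \in \ker(q)$ this pair is $(\alpha_X, \id)$, so $\alpha \mapsto \alpha_X$ is injective, and hence an isomorphism.

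The only delicate point — and the main thing to verify carefully — is the uniqueness assertion in the last paragraph: that a tensor natural automorphism of $i_A$ is completely determined by its components on the generating objects $X$ and $\Pi\triv$. This is the familiar Tannakian principle that such transformations are controlled by their values on a tensor-generating family, and in our setting it is underwritten precisely by the generation statement in Notation \ref{notn:Rep_P_X}. Once it is accepted, the remaining arguments are short and routine.
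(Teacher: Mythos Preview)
Your argument is correct. The paper does not give an explicit proof of this corollary, stating only that it follows from Lemma~\ref{lem:funct}; however, applying Lemma~\ref{lem:funct} directly would require a source category $\T'$ whose fundamental group is already known to be the desired extension, so the deduction is not entirely transparent. Your direct functor-of-points computation is exactly the technique the paper uses a few lines later in the proof of Proposition~\ref{prop:urep_fund_grp} (the special case $X=\bV$ in $\urep$): a tensor automorphism of $i_A$ is determined by its components on $X$ and $\Pi\triv$ because these tensor-generate $\Rep(P(X))$ (Notation~\ref{notn:Rep_P_X}), and naturality with respect to $\omega_X$ forces $\alpha_X$ to land in $P(X)(A)$ once $\alpha_{\Pi\triv}=\id$. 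So your route is not really different from the paper's method, just applied in the general setting rather than only for $\bV$; if anything it makes the corollary more self-contained.

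One small point worth making explicit in your write-up: the group $P(X)$ into which you are mapping is the $\Rep(P(X))$-group obtained from the $\T$-group $P(X)$ via the conjugation action (this is how $i$ is set up in Lemma~\ref{lem:fund_group_extension}), and one should note that this agrees with the $\Rep(P(X))$-group of form-preserving automorphisms of the object $X\in\Rep(P(X))$, so that ``$\alpha_X\in P(X)(A)$'' is unambiguous. This is immediate from Lemma~\ref{lem:P_functoriality}, but it is the one place where a reader might pause.
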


\subsubsection{Fundamental groups of categories \texorpdfstring{$\Rep(\p(n))$,
$\urep$}{Rep(p(n)), Rep P}}

Let $\T = \sVect$ and $X = V_n$ with form $\omega_n$. We denote $P_n:=P(V_n)$.

It is well known that the derivation functor $\Rep(P_n) \to \Rep(\p(n))$ is an
equivalence of $\sVect$-tensor categories (see \cite{CF,
Ma}).

This statement has two immediate consequences:

\begin{corollary}
The fundamental group of $\T=\Rep(\p(n))$ is a semidirect product of $P_n$
and $\mu_2$, where $V_n$
is the defining representation of $\p(n)$ with isomorphism $\omega_n: V_n
\otimes V_n \to
\Pi \triv$.
\end{corollary}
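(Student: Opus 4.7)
My plan is to reduce the corollary to Corollary \ref{cor:fund_group_extension} via the stated equivalence $\Rep(P_n) \simeq \Rep(\p(n))$, and then split the resulting extension using the forgetful functor $F:\Rep(\p(n)) \to \sVect$.

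Since the derivation equivalence $\Rep(P_n) \simeq \Rep(\p(n))$ of \cite{CF,Ma} is one of tensor $\sVect$-categories and identifies the pair $(V_n, \omega_n)$ on the two sides, it induces an isomorphism of fundamental groups. Applying Corollary \ref{cor:fund_group_extension} with $\T = \sVect$, $X = V_n$, $\omega_X = \omega_n$, the fundamental group $\widetilde{P}_{V_n}$ of $\Rep(P_n)$ fits in a short exact sequence of $\sVect$-group schemes
\begin{equation*}
1 \longrightarrow P_n \xrightarrow{\; i\;} \widetilde{P}_{V_n} \xrightarrow{\; q\;} \mu_2 \longrightarrow 1,
\end{equation*}
in which, by the construction of Lemma \ref{lem:fund_group_extension}, $q$ is induced by the tensor functor $J:\sVect \to \Rep(P_n)$ sending a vector superspace to itself equipped with the trivial $P_n$-action.

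To split this extension, I would invoke the forgetful tensor functor $F:\Rep(P_n) \to \sVect$, which is a tensor $\sVect$-functor satisfying $F \circ J \cong \id_{\sVect}$. By functoriality of the Deligne fundamental group, $F$ then induces an $\sVect$-group homomorphism $s:\mu_2 = \pi(\sVect) \to \widetilde{P}_{V_n}$ with the property that $q \circ s = \id_{\mu_2}$: explicitly, $s(-1)$ is the natural parity automorphism of $\Rep(P_n)$, acting on every object $M$ by $(-1)^{p(\cdot)}$ on the underlying superspace $F(M)$, so in particular by $-1$ on $J(\Pi\triv)$. The relation $q \circ s = \id_{\mu_2}$ is then the result of applying $\pi$ to the composition $F \circ J \cong \id_{\sVect}$. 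This splits the extension and yields
\begin{equation*}
\widetilde{P}_{V_n} \cong P_n \rtimes \mu_2,
\end{equation*}
with the conjugation action of $\mu_2$ on $P_n$ being the parity grading automorphism (trivial on $(P_n)_{\bar 0}$ and multiplication by $-1$ on $(P_n)_{\bar 1}$); in particular the action is non-trivial, so the product is genuinely semidirect.

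The step that I expect to require the most care is ensuring that $s$ does not factor through $i:P_n \hookrightarrow \widetilde{P}_{V_n}$; equivalently, distinguishing the ``external'' $\mu_2 = \pi(\sVect)$ (which acts non-trivially on trivial $P_n$-representations $J(V)$ via the parity grading of $V$) from the action of $P_n$ itself (which is trivial on all objects in the essential image of $J$). Precisely this separation is encoded by the compatibility $F \circ J \cong \id_{\sVect}$, and once it is exploited the splitting and the semidirect product structure follow directly.
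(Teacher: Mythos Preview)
Your proof is correct and follows essentially the same approach as the paper. The paper invokes Proposition~\ref{prop:semidirect} (which states, for any $\T$-group $G$, that the fundamental group of $\Rep_{\T}(G)$ is a semidirect product of $G$ and $\pi(\T)$, the splitting coming precisely from the forgetful functor $F:\Rep_{\T}(G)\to\T$) together with the equivalence $\Rep(P_n)\cong\Rep(\p(n))$; you have instead gone through Corollary~\ref{cor:fund_group_extension} to obtain the extension and then reproduced by hand the splitting argument that is packaged inside Proposition~\ref{prop:semidirect}.
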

\begin{proof}
This follows directly from
Proposition \ref{prop:semidirect} (alternatively, from Lemma
\ref{lem:fund_group_extension}) and the equivalence $\Rep(P_n) \cong
\Rep(\p(n))$.
\end{proof}

\begin{corollary}\label{cor:Rep_gen_by_V}
 The category
$\Rep(\p(n))$ is
generated
by $V_n$ under taking finite tensor products, finite direct sums,
subquotients, and \InnaB{parity} shifts. Namely, the objects in this
category are subquotients of $\bigoplus_{i \in I}
\InnaB{V_n}^{\otimes
a_i} \otimes \left(\Pi\triv\right)^{\otimes b_i}$ for some finite set $I$, $a_i
\in \Z_{\geq 0}, b_i \in \{0, 1\}$.
\end{corollary}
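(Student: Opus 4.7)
The plan is to deduce this directly from the equivalence $\Rep(P_n)\cong\Rep(\p(n))$ together with Lemma \ref{lem:grp_repr_subquotient_tensor_powers} applied in the case $\T=\sVect$, $X=V_n$, $G=P_n$. Indeed, Lemma \ref{lem:grp_repr_subquotient_tensor_powers} says that any object of $\Rep(P_n)$ (in the sense of Notation \ref{notn:Rep_P_X}, once one observes that every finite dimensional $P_n$-representation lies in this category by the remark following that notation) is a subquotient of an object of the form
\[
\bigoplus_{i\in I}V_n^{\otimes a_i}\otimes U_i,
\]
where each $U_i$ lies in the essential image of $\sVect\to\Rep(P_n)$, i.e.\ is a vector superspace carrying the trivial $P_n$-action.

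The next step is to observe that any object $U\in\sVect$ decomposes as a finite direct sum of copies of $\triv$ and $\Pi\triv$. Substituting this decomposition back into the previous display and distributing the tensor product over the direct sum, we can rewrite an arbitrary subquotient of $\bigoplus_i V_n^{\otimes a_i}\otimes U_i$ as a subquotient of
\[
\bigoplus_{j\in J}V_n^{\otimes a_j}\otimes (\Pi\triv)^{\otimes b_j}
\]
with $b_j\in\{0,1\}$, as required.

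Finally, I would transfer this statement from $\Rep(P_n)$ to $\Rep(\p(n))$ through the tensor equivalence $\Rep(P_n)\stackrel{\sim}{\to}\Rep(\p(n))$ (which sends $V_n$ to the defining representation $V_n$ of $\p(n)$ and respects parity shifts). Since this equivalence is a SM $\sVect$-functor, it preserves tensor products, direct sums, subquotients and the parity shift $\Pi$, so the generation statement above for $\Rep(P_n)$ transports verbatim to $\Rep(\p(n))$. There is no serious obstacle; the only bookkeeping point is to rewrite the ``twist by an arbitrary superspace'' from Lemma \ref{lem:grp_repr_subquotient_tensor_powers} in the more restrictive form with $U_i\in\{\triv,\Pi\triv\}$ that appears in the statement of the corollary.
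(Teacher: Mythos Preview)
Your proof is correct and matches the paper's own approach: the paper states this corollary as an immediate consequence of the equivalence $\Rep(P_n)\cong\Rep(\p(n))$, and the route through Lemma~\ref{lem:grp_repr_subquotient_tensor_powers} (together with the remark after Notation~\ref{notn:Rep_P_X}) is exactly the intended unpacking of that equivalence. The only cosmetic point is that the lemma as stated uses a single $U\in\T$ rather than varying $U_i$, but this makes no difference once you decompose $U$ into copies of $\triv$ and $\Pi\triv$, as you do.
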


Finally, we consider $\T = \urep$ and the object $X = \bV$ with the
form $\omega_{\bV}: \bV \otimes \bV \to \Pi \triv$.

Let $\underline{P}$ be the fundamental group of $\T=\urep$.
\begin{proposition}\label{prop:urep_fund_grp}
We have an extension of $\urep$-groups
\begin{equation}\label{eq:grp_ext}
 1 \to P(\bV) \to \underline{P}
\to \mu_2 \to 1.
\end{equation}
\end{proposition}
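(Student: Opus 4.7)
The plan is to construct mutually inverse $\urep$-group morphisms between $P(\bV)$ and the kernel of a canonical quotient $q:\underline{P}\twoheadrightarrow\mu_2$, thereby identifying the desired extension.

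First, I construct $q$ and check its surjectivity. The $\sVect$-module structure on $\urep$ is encoded by a tensor $\sVect$-functor $J:\sVect\to\urep$, which induces a $\urep$-group homomorphism $q:\underline{P}\to J(\pi(\sVect))\cong\mu_2$ (using $\pi(\sVect)\cong\mu_2$). Applying the surjectivity criterion for fundamental groups recalled in Appendix \ref{app:affine_grp_schemes} --- that a tensor functor induces a faithfully flat map of fundamental groups as soon as its essential image is closed under subobjects --- the map $q$ is faithfully flat, since the image of $J$ consists of finite direct sums of $\triv$ and $\Pi\triv$ and is visibly closed under subobjects.

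Next, I construct the $\urep$-group morphism $\ker(q)\hookrightarrow P(\bV)$. The universal action of $\underline{P}$ on $\bV$ yields a $\urep$-group homomorphism $\rho:\underline{P}\to GL(\bV)$. Because $\omega_{\bV}$ is a morphism in $\urep$, it is automatically $\underline{P}$-equivariant, with $\underline{P}$ acting on the target $\Pi\triv$ through the character $q$. Restricted to $\ker(q)$, this action on $\Pi\triv$ is trivial, so $\rho|_{\ker(q)}$ preserves $\omega_{\bV}$ strictly, and the universal property of $P(\bV)$ from Section \ref{ssec:aff_grp_scheme} produces the desired morphism $\ker(q)\to P(\bV)$.

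For the opposite direction, I construct $P(\bV)\to\underline{P}$. By Proposition \ref{prop:presentation}, every object of $\urep$ is a subquotient of a direct sum of $\bV^{\otimes a}\otimes(\Pi\triv)^{\otimes b}$, while by Lemma \ref{lem:funct_I} combined with Proposition \ref{prop:contractions}, every morphism in $\urep$ between tensor powers of $\bV$ is a linear combination of compositions built from $\omega_{\bV}$, its dual, the symmetries, and unit/counit morphisms --- all of which are tautologically $P(\bV)$-invariant. Hence the tautological action of $P(\bV)$ on $\bV$, together with the trivial action on the image of $J$, extends uniquely to a tensor-natural action on all of $\urep$, and by the universal property of $\underline{P}=\pi(\urep)$ this action is encoded by a $\urep$-group map $P(\bV)\to\underline{P}$ landing in $\ker(q)$. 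The two maps between $P(\bV)$ and $\ker(q)$ are mutually inverse, since both composites agree with the identity after postcomposing with the faithful representation on $\bV$. The main obstacle lies in this last paragraph: verifying that an action of $P(\bV)$ on tensor powers of $\bV$ really descends to every subquotient appearing as an object of $\urep$, which amounts to showing that every morphism in $\urep$ between tensor powers of $\bV$ is $P(\bV)$-equivariant --- a statement that relies on Proposition \ref{prop:contractions}.
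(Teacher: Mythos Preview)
Your approach is correct and close in spirit to the paper's, but the paper streamlines the key step considerably by invoking Theorem~\ref{thrm:uni-1} directly. Instead of building $P(\bV)\to\underline{P}$ by extending an action from tensor powers to subquotients, the paper observes that for any $A\in\Alg_{\urep}$ the element $\theta\in\underline{P}(A)=\Aut_A^{\otimes}(i_A)$ is, by Theorem~\ref{thrm:uni-1}, completely determined by its restriction to $I(\PP)$, hence by the pair $(\theta_{\Pi\triv},\theta_{\bV})$ subject to the obvious compatibility with $\omega_{\bV}$. From this description the maps $\phi:P(\bV)\to\underline{P}$ (set $\theta_{\Pi\triv}=\id$) and $q:\underline{P}\to\mu_2$ (read off $\theta_{\Pi\triv}$) are immediate, and exactness is just the observation that $\theta$ is determined by these two components.

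Your route instead reconstructs the map $P(\bV)\to\underline{P}$ from scratch: you argue that $P(\bV)$ acts tensor-naturally on all of $\urep$ because every morphism between tensor powers of $\bV$ is $P(\bV)$-equivariant (Proposition~\ref{prop:contractions}) and every object is a subquotient of such powers (Proposition~\ref{prop:presentation}). This is correct, but the passage from ``acts compatibly on tensor powers'' to ``acts tensor-naturally on every object and morphism of $\urep$'' is exactly the content of the extension machinery underlying Theorem~\ref{thrm:uni-1} (i.e.\ Theorem~\ref{old_thm:uni-2} from \cite{EHS}), so you are re-deriving what the paper already packaged. Your extra map $\ker(q)\to P(\bV)$ and the mutual-inverse check are fine but unnecessary once the paper's parametrization of $\underline{P}(A)$ is in hand.
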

\begin{proof}
 Throughout the proof, we will consider $\urep$-groups.

Let $A \in \Alg_{\urep}$. Recall that the functor
$A \mapsto \underline{P}(A)$ is given by $\underline{P}(A) =
\Aut_A^{\otimes}(i_A)$ where $i_A: \urep \to A-Mod, \; M\mapsto A \otimes M$ is
a tensor functor
between tensor categories, Theorem \ref{thrm:uni-1} implies that a
$\otimes$-automorphism $\theta:i_A \to i_A$ is uniquely defined by its
restriction to the full subcategory $I(\PP)
\subset \urep$. In turn, the restriction is completely determined by the maps
$$\theta_{\Pi \triv} : A\otimes \Pi \triv \to A\otimes \Pi \triv  \;
\text{ and } \; \theta_{\bV} : A\otimes \bV \to A\otimes \bV $$ such that the
diagrams below commute:
\begin{equation}
\xymatrix{&{A \otimes \Pi \triv \otimes A \otimes \Pi \triv}
\ar[d]_{\theta_{\Pi \triv}}
\ar[r]  &{A}
\ar[d]^{\id}\\ &{A \otimes\Pi \triv \otimes A \otimes \Pi \triv} \ar[r]
&{A} }  \;\; \xymatrix{&{A \otimes \bV} \ar[d]_{\theta}
\ar[rrr]^-{i_A(\psi_\bV)}
&{} &{} &{A \otimes\Pi \triv \otimes_A A \otimes \bV^*}
\ar[d]^{\id \otimes_A \theta^{\vee}}\\ &{A \otimes \bV}
\ar[rrr]_-{ i_A(\psi_\bV)} &{} &{}
&{A\otimes \Pi\triv \otimes_A A \otimes \bV^*} }
\end{equation}
(here $\psi_{\bV}:\bV \to \Pi \bV^*$ is the isomorphism induced by
$\omega_{\bV}$, as before).
We now construct the maps in the extension \eqref{eq:grp_ext}.

First, consider the group homomorphism $\phi:P(\bV) \to \underline{P}$
defined as follows. Let
$\eta \in P(\bV)(A)$. This is a map $\eta:A \otimes \bV \to A \otimes \bV$
preserving $\omega_{\bV}$ (and hence $\psi_{\bV}$).

We then define the homomorphism between (usual) groups
$\phi_A: P(\bV)(A) \to \underline{P}(A)$ as
$$\eta \in P(\bV)(A)
\longmapsto \widetilde{\eta} \in  \underline{P}(A)$$ with
$\widetilde{\eta}_{\Pi \triv} = \id$ and $\widetilde{\eta}_{\bV} = \eta$.
The map $\phi_A$ is clearly injective.

Next, consider the tensor functor $J:\sVect \to \urep$.

We have a homomorphism $q:\underline{P}
\to \mu_2 \cong J(\pi(\sVect))$ given by $$\theta \in \underline{P}(A)
\mapsto
\theta_{\Pi \triv} \in \mu_2(A)$$ (see also \cite[Section
8]{D1} and the proof of Lemma \ref{lem:fund_group_extension}). This
homomorphism is surjective by Lemma \ref{lem:surjective_gen}.

Thus we have and inclusion $\phi:P(\bV)
\to \underline{P}$ and a quotient $q:\underline{P}
\to \mu_2$. By the constructions above, the composition of these maps is zero.
Furthermore, the fact that $\theta \in \underline{P}(A)$ is determined by
$\theta_{\Pi \triv}$ and $\theta_{\bV}$ implies that $q$ is indeed the cokernel
map of $\phi$.

The statement of the Proposition follows.
\end{proof}

\subsection{Universal property}
\begin{theorem}\label{thrm:uni-2}
 Let $\mathcal A$ be a pre-Tannakian \InnaC{SM} $\sVect$-category, and let $X \in
\mathcal A$ be a \InnaC{dualizable} object
with
a
non-degenerate symmetric form $$\omega_X:
X
\otimes X \to
\Pi \triv_{\mathcal A}.$$ Consider the canonical SM functor
$F_X:
\PP \longrightarrow \mathcal A$ sending the generator
$\tilde{V}$ of $\PP$ to $X$ and the form $\omega_{\tilde{V}}$ on $\tilde{V}$ to
$\omega_X$.
\begin{enumerate}
 \item If $X$ is not annihilated by any Schur functor then $F_X$
factors through the embedding $I:\PP \to\urep$ and gives
rise
to a faithful tensor functor $$\mathbf{F}_X:\urep \longrightarrow \T,
\;\;\;
\bV \longmapsto X, \omega_{\bV} \to \omega_X. $$ Furthermore, the
functor $\mathbf{F}_X$ factors through an equivalence of categories
$\urep
\to \Rep(P(X))$.

 \item If $X$ is annihilated by some Schur functor then there exists a unique
$n
\in \Z_+$ such that $F_X$ factors through the SM
functor
$\PP \longrightarrow \Rep(\p(n))$ and gives rise to a faithful tensor
functor
 $$ \mathbf{F}_X:\Rep(\p(n)) \longrightarrow \T, \;\;\;V_n \longmapsto
X$$ with the form on $V_n$ sent to $\omega_X$. Furthermore, the
functor $\mathbf{F}_X$ factors through an equivalence of categories
$\Rep(\p(n))
\to \Rep(P(X))$.
\end{enumerate}
\end{theorem}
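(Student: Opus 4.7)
The plan is to combine the first universal property (Theorem \ref{thrm:uni-1}) with Lemma \ref{lem:funct}, which identifies tensor $\sVect$-categories generated by a single object-with-form with $\Rep(P(X))$. The two are mediated by a careful analysis of $\ker F_X \subset \PP$ as a tensor ideal. The key preliminary observation should be that $\ker F_X$ is non-trivial precisely when $X$ is annihilated by some Schur functor, and that in this case $\ker F_X$ coincides with the kernel $J_n$ of the canonical specialization $\PP \to \Rep(\p(n))$ for a unique $n \in \Z_+$. I would extract this from the classification of tensor ideals of $\PP$ due to \cite{Coul}, together with the fact that each non-zero such ideal contains a Schur-type idempotent in a sufficiently high tensor power of $\tilde{V}$.

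For part (1), assuming $X$ is not annihilated by any Schur functor, the discussion above gives $\ker F_X = 0$, so $F_X$ is faithful. Theorem \ref{thrm:uni-1} then yields an essentially unique extension to an exact SM functor $\mathbf{F}_X : \urep \to \mathcal{A}$, which is automatically faithful. I would then apply Lemma \ref{lem:funct} to $F = \mathbf{F}_X$, with $\T' = \urep$ and distinguished object $\bV$: its hypotheses are satisfied by Proposition \ref{prop:urep_fund_grp} (providing the short exact sequence $1 \to P(\bV) \to \underline{P} \to \mu_2 \to 1$) together with the construction of $\urep$ as generated under subquotients and direct sums by tensor powers of $\bV$ (and their parity shifts). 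Combined with the natural isomorphism $\mathbf{F}_X(P(\bV)) \cong P(X)$ from Lemma \ref{lem:P_functoriality}, this yields the desired equivalence $\urep \xrightarrow{\sim} \Rep(P(X))$.

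For part (2), assuming $X$ is annihilated by some Schur functor, the key step identifies $\ker F_X = J_n$ for a unique $n \in \Z_+$, so that $F_X$ factors through the canonical SM functor $\PP \to \Rep(\p(n))$. Extending this factored functor from the image of $\PP$ to all of $\Rep(\p(n))$ requires an additional argument; one natural approach is to establish an analogue of Theorem \ref{thrm:uni-1} for $\Rep(\p(n))$ in place of $\urep$, by verifying Conditions \eqref{cond:uni1}--\eqref{cond:uni3} for the embedding of the $\PP$-image into $\Rep(\p(n))$ (the splitting condition \eqref{cond:uni3} can be adapted from Proposition \ref{prop:splitting} using Lemma \ref{lem:I_n_full} and the tilting structure on $\Rep(\p(n))$). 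This produces an exact SM functor $\mathbf{F}_X : \Rep(\p(n)) \to \mathcal{A}$, automatically faithful. Finally, a second application of Lemma \ref{lem:funct} -- this time with $\T' = \Rep(\p(n))$, whose hypotheses hold by the extension of $\pi(\Rep(\p(n)))$ discussed in Section \ref{sssec:aff_grp_scheme} and the generation statement of Corollary \ref{cor:Rep_gen_by_V} -- yields the equivalence $\Rep(\p(n)) \xrightarrow{\sim} \Rep(P(X))$.

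The main obstacle is the identification of $\ker F_X$ with either $0$ or $J_n$: this single dichotomy underpins both the splitting of the theorem into parts (1) and (2) and the uniqueness of $n$. It requires matching Coulembier's combinatorial classification of tensor ideals of $\PP$ with (a) Schur-functor annihilation of $X \in \mathcal{A}$, and (b) the kernels of the specializations $I_n : \PP \to \Rep(\p(n))$. A secondary technical hurdle is the extension of the factored functor in part (2) from the image of $\PP$ to all of $\Rep(\p(n))$; this is where one must either reproduce the universality machinery of \cite{EHS} in the present setting or invoke an alternative abelian-envelope argument ensuring that any exact SM functor out of the tilting subcategory extends to the abelian category.
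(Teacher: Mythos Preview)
Your approach to Part~(1) is sound and converges with the paper's after the faithfulness step: once $F_X$ is faithful, both you and the paper invoke Theorem~\ref{thrm:uni-1} and then Lemma~\ref{lem:funct} together with Proposition~\ref{prop:urep_fund_grp}. The difference is in how faithfulness is obtained. You propose to deduce it from Coulembier's classification of tensor ideals in $\PP$ \cite{Coul}; the paper instead proves it by a direct diagrammatic argument in the signed Brauer algebra (the lemma immediately preceding the proof in the text), showing that any linear combination of diagrams annihilated by $F_X$ must vanish. Both routes are valid; yours imports a heavier external result, while the paper's is self-contained.

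For Part~(2) your route diverges substantially from the paper's, and the paper's is considerably shorter. You propose to (a) identify $\ker F_X$ with the kernel $J_n$ of some $I_n$ via \cite{Coul}, (b) factor $F_X$ through $\PP \to \Rep(\p(n))$, and then (c) extend to all of $\Rep(\p(n))$ by re-establishing the abelian-envelope conditions \eqref{cond:uni1}--\eqref{cond:uni3} for $\Rep(\p(n))$. Step~(c) is the hurdle you yourself flag, and it is genuine work not carried out anywhere in the paper. The paper sidesteps all of this: it never identifies $\ker F_X$ a priori and never proves a $\Rep(\p(n))$-analogue of Theorem~\ref{thrm:uni-1}. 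Instead it works directly inside $\T' := \Rep(P(X))$. Since $X$ is annihilated by a Schur functor, every object of $\T'$ is Schur-finite, so Deligne's super-Tannakian theorem \cite{D2} supplies a fiber functor $\Psi:\T' \to \sVect$. Then $\Psi(X)$ is a superspace of superdimension~$0$ with a non-degenerate odd symmetric form, hence $\Psi(X) \cong V_n$ for a unique~$n$. Deligne's reconstruction (Theorem~\ref{old_thm:Deligne_reconstr}), Lemma~\ref{lem:P_functoriality}, and Corollary~\ref{cor:fund_group_extension} then assemble to give $\Rep(P(X)) \cong \Rep(P(V_n)) = \Rep(\p(n))$ directly. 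The factorization of $F_X$ through $\PP \to \Rep(\p(n))$ and the uniqueness of~$n$ fall out as consequences rather than being established first. Your plan is not incorrect, but it trades Deligne's off-the-shelf theorem for a bespoke universality argument that you would have to supply in full.
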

\begin{lemma}
If $X$ is not annihilated by any Schur functor then the
functor $F_X$ is faithful.
\end{lemma}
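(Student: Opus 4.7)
The faithfulness of $F_X$ will be reduced to injectivity on endomorphism algebras of tensor powers of $\tilde V$, and then handled by combining the structure of the symmetric group subalgebra with the classification of tensor ideals in $\PP$. Specifically, since $\PP$ is the Karoubian additive envelope of the rigid symmetric monoidal subcategory generated by $\tilde V$ (together with its parity shift), and the form $\omega_{\tilde V}$ identifies $\tilde V \cong \Pi \tilde V^*$, every morphism in $\PP$ can, by passage to summands and adjunction, be encoded as an element of some $\mathtt{s}\End_{\PP}(\tilde V^{\otimes k})$. Hence it suffices to show that the induced map
\[
F_X : \mathtt{s}\End_{\PP}(\tilde V^{\otimes k}) \longrightarrow \mathtt{s}\End_{\T}(X^{\otimes k})
\]
is injective for every $k \geq 0$. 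Let $B_k := \mathtt{s}\End_{\PP}(\tilde V^{\otimes k})$ be the (super version of the) signed Brauer algebra (as in \cite{CoulEhr, KT}), and let $I_k \subset B_k$ be the kernel, a two-sided ideal.

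Inside $B_k$ sits the subalgebra of pure permutation diagrams, canonically isomorphic to $\C[S_k]$. The restriction of $F_X$ to $\C[S_k]$ is precisely the $S_k$-action on $X^{\otimes k}$ in $\T$. Its kernel is a two-sided ideal of $\C[S_k]$, that is, a direct sum of Wedderburn blocks $\bigoplus_{\pmb{\lambda} \in \Sigma} e_{\pmb{\lambda}} \C[S_k]$ indexed by some set $\Sigma$ of partitions of $k$, with $e_{\pmb{\lambda}}$ lying in the kernel if and only if $e_{\pmb{\lambda}} X^{\otimes k} \cong S^{\pmb{\lambda}}(X) = 0$ in $\T$. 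The hypothesis that $X$ is not annihilated by any Schur functor forces $\Sigma = \emptyset$, i.e.\ $I_k \cap \C[S_k] = 0$ for every $k$.

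The remaining and main step is to promote $I_k \cap \C[S_k] = 0$ to $I_k = 0$. I would carry this out by assembling the family $\{I_k\}_k$ into a tensor ideal $\mathcal J \subseteq \PP$ (the kernel of the tensor functor $F_X$) and invoking the classification of tensor ideals of $\PP$ given in \cite{Coul}: every non-zero tensor ideal of $\PP$ contains a non-zero Schur summand of some tensor power $\tilde V^{\otimes m}$ and therefore absorbs a Young-symmetrizer idempotent $e_{\pmb{\lambda}} \in \C[S_m] \hookrightarrow B_m$ for some partition $\pmb{\lambda} \vdash m$. If $\mathcal J \neq 0$, this would place some such $e_{\pmb{\lambda}}$ into $I_m$, contradicting $I_m \cap \C[S_m] = 0$.

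The principal obstacle is this last extraction step: a general non-zero element of a two-sided ideal in $B_k$ need not lie in $\C[S_k]$, since the signed Brauer algebra also admits diagrams with cups and caps (of lower propagation number). One therefore cannot argue at a single level $k$; one has to use the whole tensor-categorical structure of $\PP$. I would either invoke \cite{Coul} as a black box, or, to keep the argument self-contained, run a propagation-number filtration on the $B_k$ and show that, after tensoring with sufficiently many copies of $\tilde V$, any non-zero tensor ideal of $\PP$ eventually produces a non-zero element of maximal propagation, hence a non-zero element of some $\C[S_m]$. Either route yields $I_k = 0$ and completes the proof.
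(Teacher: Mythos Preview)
Your argument is correct but takes a different route from the paper. Both proofs begin identically: reduce to injectivity on $\mathtt{s}\End_{\PP}(\tilde V^{\otimes r})$ via rigidity and the isomorphism $\tilde V\cong\Pi\tilde V^*$, then observe that the hypothesis on Schur functors forces $F_X$ to be injective on the subalgebra $\C[S_r]$. Where they diverge is in the promotion step. You invoke Coulembier's classification of tensor ideals in $\PP$ (\cite{Coul}) to conclude that any non-zero tensor ideal contains an identity on some Schur summand, hence a Young symmetrizer in some $\C[S_m]$, contradicting the previous step. The paper instead gives a direct, self-contained argument with the diagrammatic basis: starting from a minimal non-zero linear combination $\sum_D\alpha_D D$ in the kernel, it shows (by composing with individual cup--cap pairs and using minimality) that all surviving diagrams must have cups and caps in the \emph{same} positions; after an $S_r\times S_r$ permutation this factors as $\phi\otimes\psi$ with $\phi\in\C[S_{r'}]$ and $\psi$ a pure product of cup--cap compositions, and then $F_X(\phi)\neq 0$ (Schur hypothesis) together with $F_X(\psi)\neq 0$ (non-degeneracy of $\omega_X$) yields the contradiction. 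Your route is shorter if one is willing to black-box \cite{Coul}; the paper's route is elementary, avoids external input, and makes explicit use of the non-degeneracy of $\omega_X$. Your alternative ``self-contained'' propagation-number sketch is plausible but, as written, is essentially a restatement of what \cite{Coul} proves and would need real work to make rigorous; the paper's cup--cap position trick is the clean way to do exactly that reduction by hand.
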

\begin{proof}
In this proof, we will write $$\mathtt{s}\Hom_{\urep}(U, W):=
\Hom_{\urep}(U, W) \oplus \Hom_{\urep}(U, \Pi W)$$ to avoid
unnecessary repetitions.

Recall that for any $m, k \geq
0$, the space $\mathtt{s}\Hom_{\urep}(\bV^{\otimes m}, \bV^{\otimes k})$ is
zero
iff $m+k \notin 2\Z$. If $m+k \in 2\Z$, let $r= (m+k)/2$. We have an
isomorphism $$\mathtt{s}\Hom_{\urep}(\bV^{\otimes m}, \bV^{\otimes k}) \cong
\mathtt{s}\Hom_{\urep}(\bV^{\otimes r}, \bV^{\otimes r})$$ defined via the
isomorphism $\bV^* \cong \Pi V$. This isomorphism is preserved by $F_X$, since
the latter is a SM functor. Hence it is enough to check that $F_X$ is injective
on the algebra $\mathtt{s}\End_{\urep}(\bV^{\otimes r})$.

Since $F_X$ is SM, it commutes with symmetry isomorphisms, meaning that
$$F_X: \mathtt{s}\End_{\urep}(\bV^{\otimes r}) \to
\mathtt{s}\End_{\T}(X^{\otimes r})$$ is an $S_r\times S_r$-equivariant
map.

The algebra $\mathtt{s}\End(\bV^{\otimes r})$ is called the signed (or odd)
Brauer algebra, and has a diagrammatic basis $\mathcal B$, each element
describing a string diagram on $2r$ endpoints, located in $2$ rows ($r$ dots in
each row). We refer the reader to \cite{BDE+2} for a detailed description of
the
basis, and to \cite{CoulEhr, KT} for more details on the algebra.

We will use the fact that such a basis can be
described using compositions and tensor products of ({\it both even and odd!})
maps $\id_{\bV}: \bV \to \bV$, $\sigma: \bV \otimes \bV  \to  \bV \otimes \bV $
(the symmetry morphism) $\omega_{\bV}:\bV \otimes \bV  \to  \Pi \triv$ and
$\omega_{\bV}^*: \Pi \triv \to \bV \otimes \bV$. In terms of diagrams, these
correspond respectively to vertical strings, crossings, caps and cups.

Now let $\sum_{D \in \mathcal B} \alpha_D D $ be a linear combination of
diagrams such that $F_X \left(\sum_{D \in \mathcal B} \alpha_D D \right) =0$.

Let $\mathcal B' \subset \mathcal B$ be the set of $D$ such that
$\alpha_D \neq 0$. We may assume that $\mathcal B'$ has the minimal possible
size, and will show that we must have $\mathcal B' =\emptyset$.

First, recall that $F_X$ does not annihilate any Schur functor, hence it is
injective on the
subalgebra $\C S_r \subset \mathtt{s}\End(\bV^{\otimes r})$ given by
permutations of the factors. This means that at least one of $D \in \mathcal
B'$ has a cup or a cap.

\begin{sublemma}
 All $D \in \mathcal B'$ have cups and caps in
the same positions.
\end{sublemma}
\begin{proof}

Indeed, let $D_0 \in \mathcal B'$, and assume it has a cup connecting positions
$i, j$ (i.e. it has a tensor factor $\omega_{\bV}^*: \Pi \triv \to \bV \otimes
\bV$ in positions $i, j$). Let us compose $\sum_{D \in \mathcal B} \alpha_D D$
with $f=\omega^*_{i, j} \circ \omega_{i, j} \in
\mathtt{s}\End(\bV^{\otimes r})$, where $\omega_{i, j}$ indicates the
pairing $\omega_{\bV}$ is performed on the $i$-th and $j$-th factors (and
similarly for $\omega_{i, j}^*$).

Since $F_X$ is monoidal, $F_X$ commutes with $\omega_{\bV}, \omega_{\bV}^*$ and
hence $\sum_{D \in \mathcal B} \alpha_D F(f \circ D)=0$.

Now, for
each $D \in \mathcal B$, the morphism $f \circ D$ is either zero an element of
the basis $\mathcal B$, up to a sign (see \cite{BDE+2}). Moreover, $f
\circ D=0$ precisely when $D$ has a cup connecting positions
$i, j$ (so $f \circ D_0 =0$). Hence $\sum_{D \in \mathcal B}
\alpha_D f \circ D$ is a linear combination having less summands than $\sum_{D
\in \mathcal B} \alpha_D D $, and hence must have all coefficients equal to
zero. This implies that all the diagrams in $\mathcal B'$ have a cup connecting
positions
$i, j$.

In the same way,
we can show that diagrams in $\mathcal B'$ have caps in the same positions.
\end{proof}
Assume $\mathcal B'$ is not empty. Using the fact that $F_X$ is $S_r \times
S_r$-equivariant, we may rewrite $\sum_{D
\in \mathcal B} \alpha_D D $ as $\phi \otimes \psi$, where $\phi \in \C S_{r'}
\subset \mathtt{s}\End(\bV^{\otimes r-r'})$ for some $r'>0$ and $\psi \in
\mathtt{s}\End(\bV^{\otimes r'})$ is of the form $$(\omega^*_{1, 2} \circ
\omega_{1, 2}) \otimes (\omega^*_{3,4} \circ \omega_{3,4})
\otimes \ldots$$

Now, by assumption on $F_X$, $F_X(\phi) \neq 0$. Moreover, $\omega_X$ is
non-degenerate, so $F_X(\psi) \neq 0$. This implies that $$F_X \left(
\sum_{D
\in \mathcal B} \alpha_D D \right) \cong F_X(\phi \otimes \psi) \neq 0$$
leading to a contraction.
\end{proof}

\begin{proof}[Proof of Theorem \ref{thrm:uni-2}.]
 First, assume $X$ is not annihilated by any Schur functor. We have seen
that the
functor $F_X$ is faithful.
By Theorem \ref{thrm:uni-1}, $F_X$
would then factor through $I$, giving a tensor functor $$F_X: \urep
\to
\T.$$ By Lemma \ref{lem:funct} and Proposition \ref{prop:urep_fund_grp}, the
functor $F_X$ factors through an equivalence of categories $\tilde{F}_X:\urep
\to \Rep(P(X))$.

Next, assume $X$ is annihilated by some Schur functor. Let $\T':= \Rep(P(X))$,
and let $\widetilde{P}_X = \pi(\T')$ be its fundamental group. By definition of
$\Rep(P(X))$, it is generated by $X$ under taking tensor powers, direct sums,
\InnaB{parity} shifts and subquotients. Hence any object of $\T'$ is annihilated by
some Schur functor. By Deligne's theorem on super-Tannakian reconstruction
(see \cite{D2}), $\T'$ is a (super-)Tannakian tensor category: namely, there
exists a tensor functor $$\Psi: \T' \to \sVect$$ This functor induces an
$\sVect$-group homomorphism $\nu:\mu_2 \to \Psi(\widetilde{P}_X)$.

Recall that the faithful tensor functor $\Psi$ preserves (categorical)
dimensions, so $\Psi(X)$ is a non-zero vector superspace of (super-)dimension
zero. Hence $\Psi(X) \cong V_n$ for some $n \in \Z_{\geq 1}$.

Now, the composition of $\Psi$ with $F_X:\PP \longrightarrow \T'$ is isomorphic
to the composition of functors $$\PP \to \Rep(\p(n)) \xrightarrow{Forget}
\sVect$$ with the functor $\PP \to \Rep(\p(n))$ given by $\tilde{V} \mapsto V_n$.

Let $G:= \Psi(\widetilde{P}_X)$.

By Theorem \ref{old_thm:Deligne_reconstr}, $\Psi$ induces an equivalence of
categories $\T' \to \Rep(G, \nu)$ (notation as in Section
\ref{sssec:repr_grp_schemes}). By Corollary \ref{cor:fund_group_extension} and
Lemma \ref{lem:P_functoriality}, $G$ is an extension of $ P(V_n)$ (as
a subgroup) and $\mu_2$ (as a quotient). Then homomorphism $\nu:\mu_2 \to G$
makes $G$ into a semidirect product of $P(V_n)$ and $\mu_2$. Hence we have an
equivalence of tensor categories $$\mathbf{F}_X:\Rep(P(V_n)) \to \T'
=\Rep(P(X)).$$

 Hence $\Rep(P(X)) \cong \Rep(\p(n))$ for some (unique) $n$, and the functor
$\Psi$ induces an equivalence of tensor categories $\mathbf{F}_X:\Rep(\p(n)) \to
\Rep(P(X))$, whose composition with the functor $\PP \to \Rep(\p(n))$ is
isomorphic to $F_X$, as required.

\end{proof}

\subsection{An alternative
construction of
\texorpdfstring{$\urep$}{Rep(P)}}\label{ssec:second_constr_Rep_p}
As a corollary of Theorem \ref{thrm:uni-2}, we present an alternative
construction of the tensor category $\urep$. Although this construction is less
explicit than the one given in Section \ref{sec:Rep_p}, it has the benefit of
being much more compact.

Let $\Rep(\underline{GL}_t)$ be the abelian Deligne category described in
Section \ref{ssec:notn:GL_t}, and denote by $X_{\bar{0}}$ the $t$-dimensional generating object of
$\Rep(\underline{GL}_t)$.

\InnaC{Let $\T := \Rep(\underline{GL}_t)$.

The fundamental group of $\Rep(\underline{GL}_t)$ is $ \underline{GL}_t:= GL(X_{\bar 0})$ (see \cite{D}).} This group acts naturally on any object $M$ in $\T$ by a $\T$-group homomorphism $\pi_M:\underline{GL}_t \to GL(M)$.

The Lie algebra $\gl_t$ of $\underline{GL}_t$ is isomorphic to $X_{\bar 0} \otimes X_{\bar 0}^*$ (see also Section \ref{ssec:notn:SM}), and acts naturally on any object $M$ of $\T$ by $$act_M:\gl_t \otimes M \to M$$ coming from $\pi_M$.

The fundamental group of $\T$ is $$\pi(\T) \cong \underline{GL}_t \times \mu_2.$$ The natural action of this group on any object $M$ in $\T$ again denoted by $\pi_M$.

Let $$X_{\bar{1}}:= \Pi
X_{\bar{0}}^*, \; X:= X_{\bar{0}} \oplus X_{\bar{1}} \, \in \, \s
\Rep(\underline{GL}_t)$$
and consider the
non-degenerate symmetric pairing
$\omega: X \otimes X \to \Pi \triv$ given by $$\omega_{\bar{i}, \bar{j}}:
X_{\bar{i}}
\otimes X_{\bar{j}} \to \Pi\triv\;\;\; \left(\bar{i},  \bar{j} \in \{\bar{0},
\bar{1}\}\right), \;\;\;
\omega_{\bar{i}, \bar{j}}= \begin{cases}
 \Pi ev  &\text{  if  } \bar{i} \neq \bar{j}\\
 0 &\text{  else }
\end{cases}.
$$

Let $P(X)$ be the corresponding $\T$-group; we have a $\T$-group inclusion $$\eta: GL(X_{\bar{0}}) \cong \underline{GL}_t \longrightarrow P(X).$$
\begin{remark}
 The subgroup $\underline{GL}_t$ can be seen as the ``even part'' of the $\Rep(\underline{GL}_t)$-supergroup $P(X)$.
\end{remark}
Let $\widehat{Rep}(P(X))$ be the category of $P(X)$-representations $(M, \rho)$ in $\T$ such that $\rho \circ \eta \cong \pi_M$. This is a tensor category.

\begin{lemma}
 The category $\widehat{Rep}(P(X))$ is isomorphic to the category $\Rep(P(X))$ of representations of $P(X)$, as defined in Notation \ref{notn:Rep_P_X}.
\end{lemma}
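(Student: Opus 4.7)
The plan is to prove both categories coincide as full subcategories of the category of $P(X)$-representations in $\T = \Rep(\underline{GL}_t)$, by showing inclusions in both directions.

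First I would check that $\Rep(P(X)) \subseteq \widehat{Rep}(P(X))$. The object $X \in \T$ is a tautological $P(X)$-representation, and by construction of $\eta:\underline{GL}_t \hookrightarrow P(X)$ (which is simply the $\T$-group inclusion $GL(X_{\bar 0}) \hookrightarrow P(X)$), the restriction of the $P(X)$-action to $\underline{GL}_t$ coincides with the canonical fundamental group action $\pi_X$ on $X = X_{\bar 0} \oplus \Pi X_{\bar 0}^*$. The compatibility condition $\rho \circ \eta \cong \pi_M$ is stable under direct sums, parity shifts, tensor products (since $\pi$ is monoidal) and subquotients. Hence every object of $\Rep(P(X))$ (as defined in Notation \ref{notn:Rep_P_X}) automatically lies in $\widehat{Rep}(P(X))$.

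For the reverse inclusion, let $(M,\rho) \in \widehat{Rep}(P(X))$. By Lemma \ref{lem:grp_repr_subquotient_tensor_powers}, $M$ is a subquotient (in the category of $P(X)$-representations in $\T$) of an object of the form $\bigoplus_{i\in I} X^{\otimes a_i} \otimes U$, where $U \in \T$ carries the trivial $P(X)$-action. The key step is to identify which $U$ can appear: since $\eta$ is a $\T$-group inclusion, triviality of the $P(X)$-action on $U$ implies triviality of its restriction along $\eta$. Combined with the compatibility $\rho_U \circ \eta \cong \pi_U$ inherited from the ambient $\widehat{Rep}$ structure, this forces $\pi_U$ itself to be trivial. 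But the full subcategory of $\T = \Rep(\underline{GL}_t)$ on which the fundamental group acts trivially is precisely the essential image of $\sVect$, i.e.\ direct sums of $\triv$ and $\Pi\triv$. Therefore $U \cong \triv^{\oplus c} \oplus (\Pi\triv)^{\oplus d}$, and $M$ is a subquotient of $\bigoplus_{i} X^{\otimes a_i}\otimes (\Pi\triv)^{\otimes b_i}$, placing it in $\Rep(P(X))$ in the sense of Notation \ref{notn:Rep_P_X}.

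The main obstacle, which is essentially bookkeeping rather than deep, is justifying that Lemma \ref{lem:grp_repr_subquotient_tensor_powers} can be applied \emph{inside} $\widehat{Rep}(P(X))$: that is, verifying that the subquotient witnessing the Lemma can be chosen so that the compatibility condition $\rho\circ\eta\cong\pi$ propagates to the ambient envelope $\bigoplus X^{\otimes a_i}\otimes U$. This follows because $X^{\otimes a_i}$ already satisfies the compatibility (by the first paragraph) and the property is closed under $\otimes$ and $\oplus$; so if we fix any compatible realization of $M$ as a subquotient of a $P(X)$-representation, adjusting $U$ only by its isotypic decomposition under $\underline{GL}_t$ yields the required form. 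Once this is set, the two inclusions give the desired equivalence (in fact equality) of $\Rep(P(X))$ and $\widehat{Rep}(P(X))$.
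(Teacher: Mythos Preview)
Your first inclusion is correct, but the second inclusion has a real gap that your final paragraph does not close. The envelope $\bigoplus_i X^{\otimes a_i}\otimes U_{triv}$ produced by Lemma~\ref{lem:grp_repr_subquotient_tensor_powers} need not lie in $\widehat{Rep}(P(X))$: on it the $\underline{GL}_t$-action obtained by restricting $\rho$ along $\eta$ is $\pi_{X^{\otimes a_i}}\otimes\mathrm{trivial}$, whereas the canonical action is $\pi_{X^{\otimes a_i}}\otimes\pi_U$. These differ unless $\pi_U$ is trivial, and nothing forces that. Your suggestion to ``adjust $U$ by its isotypic decomposition under $\underline{GL}_t$'' amounts to taking invariants or coinvariants for the difference action $\mathrm{id}\otimes\pi_U$; this would work if the difference action were semisimple, but $\T=\Rep(\underline{GL}_t)$ is not semisimple when $t\in\Z$, so neither invariants nor coinvariants are exact, and one cannot conclude that $M$ is a subquotient of the trivial isotype.

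The paper avoids this obstruction entirely: it reformulates $\widehat{Rep}(P(X))$ as $\Rep_{\T}(P(X)\rtimes\mu_2,\eps)$ for the homomorphism $\eps:\underline{GL}_t\times\mu_2\to P(X)\rtimes\mu_2$, then invokes Deligne's reconstruction (Theorem~\ref{old_thm:Deligne_reconstr}) to identify $P(X)\rtimes\mu_2$ with the image of the fundamental group of $\widehat{Rep}(P(X))$ under the forgetful functor, and finally compares with the fundamental group of $\Rep(P(X))$ via Lemma~\ref{lem:fund_group_extension}. This Tannakian route sidesteps the need to exhibit an explicit compatible envelope for each $M$.
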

\begin{proof}
Consider the $\T$-group homomorphism $\eps: \underline{GL}_t \times \mu_2 \to P(X) \rtimes \mu_2$ given by $\underline{GL}_t \longrightarrow P(X)$.

We can rewrite the definition of $\widehat{Rep}(P(X))$ as representations $\rho:P(X) \rtimes \mu_2 \longrightarrow GL(M)$, $M \in \T$ whose composition with $\eps$ gives the natural action $\pi_M$.

Deligne's theorem \ref{old_thm:Deligne_reconstr} then implies that $P(X) \rtimes \mu_2$ is the image of the fundamental group of $\widehat{Rep}(P(X))$ under the forgetful functor $F: \widehat{Rep}(P(X)) \to \T$. Now Lemma \ref{lem:fund_group_extension} implies the required statement.
\end{proof}

\begin{proposition}\label{prop:second_constr_Rep_p}
 There is an equivalence of tensor categories
 $$\urep \longrightarrow \Rep(P(X)), \;\; \bV \mapsto X, \; \omega_{\bV} \mapsto
\omega_X.$$
\end{proposition}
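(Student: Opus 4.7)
The plan is to deduce the equivalence as a direct application of part (1) of Theorem \ref{thrm:uni-2} to the ambient category $\T = \Rep(\underline{GL}_t)$ and the object $X = X_t = X_{\bar 0} \oplus \Pi X_{\bar 0}^*$. I first observe that the hypotheses of that theorem are in place: $\T$ is a pre-Tannakian $\sVect$-category, $X$ is dualizable in $\T$ (since $\T$ is rigid), and the form $\omega_X: X \otimes X \to \Pi\triv$ constructed from the evaluation pairing between $X_{\bar 0}$ and $X_{\bar 0}^*$ is, by inspection, non-degenerate and symmetric in the $\sVect$-graded sense.

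The single non-trivial hypothesis to verify is that $X$ is not annihilated by any Schur functor. For this I would expand $S^\lambda(X_{\bar 0} \oplus \Pi X_{\bar 0}^*)$ using the super-Cauchy/Littlewood--Richardson decomposition inside the $\sVect$-module category $\T$:
$$
S^\lambda(X_{\bar 0} \oplus \Pi X_{\bar 0}^*) \;\cong\; \bigoplus_{\mu, \nu} c^\lambda_{\mu\nu}\; S^\mu X_{\bar 0} \otimes \Lambda^\nu(\Pi X_{\bar 0}^*),
$$
where the $c^\lambda_{\mu\nu}$ are Littlewood--Richardson coefficients. The summand indexed by $(\mu,\nu) = (\lambda, \emptyset)$ is exactly $S^\lambda X_{\bar 0}$, and this is non-zero in $\Rep(\underline{GL}_t)$ by the defining properties of the Deligne category (the generator $X_{\bar 0}$ is, by construction in \cite{DM, EHS}, not annihilated by any Schur functor). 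Hence $S^\lambda X \neq 0$ for every Young diagram $\lambda$.

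With these hypotheses verified, Theorem \ref{thrm:uni-2}(1) yields a faithful tensor functor $\mathbf{F}_X: \urep \to \T$ sending $\bV \mapsto X$ and $\omega_{\bV} \mapsto \omega_X$, together with the factorization of $\mathbf{F}_X$ through an equivalence of tensor categories $\urep \xrightarrow{\sim} \Rep(P(X))$. Moreover, the functor produced by Theorem \ref{thrm:uni-2} must agree with the functor $\mathbf{F}$ introduced in Section \ref{ssec:second_constr_Rep_p}, since both extend the canonical SM functor $F_X: \PP \to \T$ from Theorem \ref{introthrm:uni-2}, and such an extension is essentially unique by Theorem \ref{thrm:uni-1}. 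This gives the desired equivalence.

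The main obstacle, as usual in such universality-based arguments, is making the Schur-functor non-annihilation step rigorous in the $\sVect$-module setting, in particular keeping track of the $\Pi$-shift on $X_{\bar 0}^*$ and the sign-twisted interchange of $S^\bullet$ and $\Lambda^\bullet$; once this is done cleanly, the rest is a formal invocation of the already-established Theorems \ref{thrm:uni-1} and \ref{thrm:uni-2}.
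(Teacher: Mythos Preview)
Your proposal is correct and follows essentially the same approach as the paper: reduce to Theorem \ref{thrm:uni-2}(1), and verify the sole non-trivial hypothesis that $X$ is not annihilated by any Schur functor by observing that $S^\lambda X_{\bar 0}$ is a direct summand of $S^\lambda X$ and is non-zero in $\Rep(\underline{GL}_t)$. The paper phrases this last step as a contrapositive (``if $S^\lambda X=0$ then $S^\lambda X_{\bar 0}=0$, contradiction'') and does not write out the full Littlewood--Richardson expansion, which is cleaner and avoids the bookkeeping with the $\Pi$-shift that you flag as an obstacle; note also that your displayed decomposition should read $S^\nu(\Pi X_{\bar 0}^*)$ rather than $\Lambda^\nu(\Pi X_{\bar 0}^*)$, though this does not affect the $(\lambda,\emptyset)$ term you actually use.
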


\begin{proof}
 We only need to check that $X$ is not annihilated by any Schur functor; if
this holds, then by Theorem \ref{thrm:uni-2}, we are done.

Assume $X$ is annihilated by some Schur functor. Then $X_{\bar{0}}\in
\Rep(\underline{GL}_t)$ is annihilated by the same functor, which contradicts
the
construction of $\Rep(\underline{GL}_t)$ as a ``free'' rigid SM category
generated by $X_{\bar{0}}$ (cf. Section \ref{ssec:notn:GL_t} and \cite{EHS}).
\end{proof}

\begin{remark}\label{rmk:integrable_repr}
\InnaC{Let $\underline{\End}(X)\cong X \otimes X^*$ be the internal endomorphism
algebra of $X$, seen as a Lie algebra object in $\s
\Rep(\underline{GL}_t)$. Let $\p(X) \subset \underline{\End}(X)$ be
Lie subalgebra preserving $\omega_{X}$. Then $\p(X) = Lie P(X)$, and $\gl\left(X_{\bar 0} \right) \subset \p(X)$} \InnaD{is a direct summand}.

\InnaC{We have a ``differentiation'' functor $$\Rep(P(X)) \to \Rep(\p(X))$$ which can be shown to be SM and fully faithful, but is not essentially surjective (that is, surjective on objects).} \InnaD{For example, given $a\in \C \setminus \Z$, we can define a homomorphism $\gl\left(X_{\bar 0} \right) \to \triv$ which does not integrate to an action of $\underline{GL}_t$ on $\triv$; hence we have a homomorphism $\p(X) \to \triv$ which does not integrate to an action of $P(X)$ on $\triv$.} 

\InnaC{By analogy with the representation theory of (usual) supergroups, we may define $\Rep(\p(X))_{int}$ to be the full subcategory of ``integrable'' $\p(X)$ representations: that is, representation of $\p(X)$ whose restriction to $\gl\left(\left(X\right)_{\bar 0} \right)$ can be lifted to $ \Rep(\underline{GL}_t)$. In other words, in the diagram $$ \xymatrix{&{\Rep(\p(X))_{int}} \ar[rr] &{} &{\Rep(\p(X))} \ar[d]^{Forget}\\ &{\Rep(\underline{GL}_t)} \ar[rr]^{diff}  &{} &{ \Rep(\gl_t)}
}$$
the objects $M$ of $\Rep(\p(X))$ which lie in $\Rep(\p(X))_{int}$ are precisely those for which $Forget(M)$ lies in the essential image of the functor $diff$ in the lower row.

Clearly, $\Rep(\p(X))_{int}$ contains the essential image of the functor $\Rep(P(X)) \to \Rep(\p(X))$. One could ask whether this induces an equivalence $\Rep(P(X)) \to \Rep(\p(X))_{int}$ as in the classical Lie superalgebra theory.

We expect the answer to be negative, due to the fact that the ``odd'' part of $\p(X)$ is not nilpotent anymore, but do not have a counterexample at the moment.

More generally, given a tensor category $\T$, one can consider the group schemes and their Lie superalgebras in $\s\T = \sVect \otimes \T$, and ask a similar question in this situation: consider a group scheme $G$ in $\s\T$ with Lie algebra object $\g = \g_{\bar{0}} \oplus \Pi \g_{\bar{1}}$, $\g_{\bar{0}},\g_{\bar{1}} \in \T$, with $\g_{\bar{0}} = Lie(G_{\bar{0}})$, $G_{\bar{0}} \in AffSch(\T)$. Given a $\g$-representation in $\s\T$ with $\g_{\bar{0}}$ action which integrates to $G_{\bar{0}}$, does the $\g$-action integrate to a $G$-action?

Here is an example when the answer is ``no'', as was provided to us by P. Etingof. Let $\T = \sVect$. } \VeraC{Consider the abelian Lie superalgebra $\g$ in $\s\T$ with $\g_{\bar{0}}=0$ and $\g_{\bar{1}}=\mathbb C^{(0|1)}$ (the abelian Lie superalgebra in $\T$). Construct an affine group scheme
  $G$ by setting $\mathcal{O}(G):=S(\g^*)$ in $\s\T$ with the
  comultiplication map $\Delta(x):=x\otimes 1+1\otimes x$ where $x$ is
  a non-zero
  vector in $\g^*$. Note that $\mathcal{O}$ is isomorphic to the
  polynomial algebra $\mathbb C[x]$ and its Lie algebra is isomorphic
  to $\g$. The group $G$ does not have any non-trivial
  one-dimensional representation while its Lie algebra $\g$ has a
  representation $\rho_\lambda(x^*)= \lambda$ for any
  $\lambda\in\mathbb C$.}
\end{remark}

\section{Lower highest weight structure}\label{sec:properties}

  \subsection{Weights in the category
\texorpdfstring{$\urep$}{Rep(P)}}\label{ssec:infinite_weights}

The set of infinite
non-decreasing integer sequences
$\lambda=(\lambda_1, \lambda_2, \ldots)$ with $\lambda_i=0$ for $i>>0$ will be
called the set of
{\it weights} for $\urep$.

\comment{Fix such a sequence $\lambda$. We denote by $\bar{\lambda}$ the
infinite
sequence $\lambda + (0,1,2,\ldots)$, and set
$c_{\lambda}:=\{\bar{\lambda_i}\,|\,i=1, 2, \ldots \}$.

We then define the
corresponding function
$$f_{\lambda}:\Z \to \Z, f_\lambda(i)=\begin{cases}1,
\text{if $i\in c_{\lambda}$}\\0, \text{if $i\notin c_{\lambda}$}\end{cases}.$$ }

We associate to $\lambda$ a weight diagram $d_{\lambda}$, defined as
a labeling of the integer line by symbols $\bullet$ (``black ball'') and
$\circ$
(``empty'') such that such that $i$ has label $\bullet$ if $i$ belongs to the
sequence $\lambda + (0,1,2,\ldots)$, and label
$\circ$ otherwise.

\comment{Let us describe precisely which labeling of the integer line are
obtained
from such sequences: these are the labellings where the number of $\circ$
symbols in positions $i \geq 0$ is finite, and equals the number of $\bullet$
in positions $i<0$.

For any $k \in \Z$, denote $$[k]_2 := 2\lfloor \frac{k}{2} \in 2\Z$$
Given a weight $\lambda$, we define $$\varrho(\lambda) =
[\sum_{i \geq 0} (-1)^{\bar{\lambda_i}}]_2$$ with
the sum $\sum_{k\geq k_0} (-1)^k$ is set to be zero for any $k_0 \in \Z$.}

\begin{definition}\label{def:truncating_weights}
 Let $\lambda$ be a weight for $\urep$. We will define its $n$-th
truncation $\lambda^{(n)}$ as the weight of $\p(n)$ given by the first $n$
entries of $\lambda$.
\end{definition}
Then $d_{\lambda^{(n)}}$ is obtained from $d_{\lambda}$ by taking only the
first $n$ dots (starting from the leftmost one).

With the notation above, we have: $$F_n(\L(\lambda))
\cong
L_n(\lambda^{(n)}), \;\; F_n(\bdel(\lambda))
\cong
\overline{\Delta}^k_n(\lambda^{(n)}), \;\; F_n(\bnab(\lambda))
\cong
\overline{\nabla}^k_n(\lambda^{(n)}) $$ for $n>>k$.

\subsection{Lower highest weight structure}\label{ssec:hw}
We will now show that $\urep$ is a lower highest weight category (i.e.
``locally'' highest weight).

\begin{definition}\label{def:lower_hw_cat}
A {\it lower highest weight category} is an artinian abelian
$\C$-linear
category $\mathcal A$ together with a poset $ (\Lambda, \leq )$ (poset of
weights)
and a filtration $\Lambda = \bigcup_{k \in \Z_+} \Lambda^k$,
 such that the following conditions hold:
 \begin{enumerate}
 \item The set $\Lambda$ is in bijection with the set of isomorphism classes of
simple objects in $\mathcal A$.

 \item For each $\xi \in \Lambda$, the Serre subcategory
$\mathcal A( \leq \xi)$ generated by simples $\{L(\lambda), \lambda \leq \xi\}$
contains a projective cover $\Delta(\xi)$ of $L(\xi)$, and an injective
hull $\nabla(\xi)$ of $\xi$. The objects $\Delta(\xi)$, $\nabla(\xi)$ are
called {\it standard} and {\it costandard} objects in $\mathcal A$.

\item There exists precisely one isomorphism class of indecomposable objects
$T(\xi)$ in $\mathcal A$ which has $\Delta(\xi)$ as a submodule,
$T(\xi)/\Delta(\xi)$
has a filtration with standard subquotients, and $T(\xi)$ also has a filtration
with costandard subquotients.

Such objects $T(\xi)$ are the indecomposable {\it tilting} objects in $\mathcal
A$.

\item Let $k \geq 0$, and let $\mathcal A^k$ be the full subcategory of
$\mathcal A$ whose
objects are subquotients of finite direct sums of objects $T(\xi)$, $\xi \in
\Lambda^k$. Then each $\mathcal A^k$ is a highest weight category with poset
$(\Lambda_k, \leq)$, simple objects $\{L(\xi), \xi \in
\Lambda^k\}$, standard objects $\{\Delta(\xi), \xi \in
\Lambda^k\}$, costandard objects $\{\nabla(\xi), \xi \in
\Lambda^k\}$, and tilting objects $\{T(\xi), \xi \in
\Lambda^k\}$. The category $\mathcal A^k$ also has enough projective and
injective
objects.

\item The subcategories $\mathcal A^k$ form a filtration on the category
$\mathcal A$: $\mathcal A =
\bigcup_{k \geq 0} \mathcal A^k$.

 \end{enumerate}

\end{definition}
\begin{remark}
 The main difference between a highest weight category and a lower
highest weight category is the possible lack of projectives and injectives in
$\mathcal A$.
\end{remark}

Consider the set of
weights $\Lambda \times {\pm}$ ($\Lambda$ as in \ref{sssec:notn:weight_p_n}), where $\pm$ stands for the \InnaB{parity} shift of the simple module. \InnaC{We consider the filtration and order on $\Lambda \times {\pm}$ induced by the filtration $\bigcup_{k \geq 0} \Lambda^k$ and the partial order $\leq$ on $\Lambda$, with $$\Lambda^k = \{ \lam \in \Lambda| \abs{\lambda} \leq k \}.$$
The partial order and the filtration on $\Lambda \times {\pm}$ disregard the possible \InnaB{parity} shift.}

\begin{proposition}
 The category $\urep$ has a lower highest weight structure given
by the set of
weights $\Lambda \times {\pm}$ with order $\leq$. The objects
$\bdel(\lam)$ play the role of standard objects, $\bnab(\lam)$ play the role of
costandard objects, and $I(\PP)$ is the full subcategory of tilting objects.
Furthermore, the tensor duality $(-)^*$ makes $\urepk{k}$ into
highest weight categories with duality, in the sense of \cite{CPS}.
\end{proposition}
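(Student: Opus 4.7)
The plan is to verify the five conditions of Definition \ref{def:lower_hw_cat} for $\urep$ with indexing poset $(\Lambda \times \{\pm\}, \leq)$ filtered by $\Lambda^k \times \{\pm\}$, reducing most claims to the subcategories $\urepk{k}$ via the equivalences $F_n:\urepk{k} \xrightarrow{\sim} \Rep^k(\p(n))$ valid for $n > 8k+2$ (Proposition \ref{prop:DS_equiv}). The parametrisation of simples by $\Lambda$ listed in Section \ref{ssec:urep_prop} settles condition (1), while condition (5) is immediate from the very definition $\urep = \bigcup_k \urepk{k}$.

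The heart of the argument is showing that each $\urepk{k}$ is an ordinary highest weight category with standards $\bdel(\lam)$ and costandards $\bnab(\lam)$ indexed by $k$-admissible weights. Transferring via $F_n$, Lemma \ref{lem:rep_k_enough_proj} supplies enough projectives and injectives. Proposition \ref{prop:simples_delta_k_n} together with the highest weight structure of $\Rep(\p(n))$ from \cite{BDE+} shows that $\overline{\Delta}_n^k(\lam)$ has cosocle $L_n(\lam)$ and remaining composition factors $L_n(\mu)$ with $\mu < \lam$, and dually for $\overline{\nabla}_n^k$. Lemma \ref{lem:ext} provides the decisive $\Hom$ and $\Ext^1$ vanishing between standards and costandards. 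To build a standard filtration on the indecomposable projective $\pmb{P}_k(\lam)$, I would realise it as $i^{k,*}(P_n(\lam))$ for suitable $n$ (where $P_n(\lam)$ is $\Delta$-filtered in $\Rep(\p(n))$) and apply the left adjoint $i^{k,*}$ to the filtration: by Corollary \ref{cor:delta_k_n_max_quo} each subquotient $\Delta_n(\mu)$ is replaced by $\overline{\Delta}_n^k(\mu)$, possibly zero. Injective hulls carry costandard filtrations by the dual construction.

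To identify indecomposable tiltings with the objects of $I(\PP) \cap \urepk{k}$, I would invoke property (6) of Section \ref{ssec:urep_prop}: tensor powers of $\bV$ admit both standard and costandard filtrations, hence so do their summands in $I(\PP)$. For each $k$-admissible $\lam$, the embedding $\bdel(\lam) \hookrightarrow S^{\pmb{\lam}^{\vee}}\bV$ together with the existence of projective covers locates an indecomposable summand of this tensor power containing $\bdel(\lam)$ as the highest standard subquotient; Ringel-type uniqueness of the indecomposable tilting of a given highest weight then identifies this summand with $\pmb{T}(\lam)$, verifying conditions (3) and (4) simultaneously for all $\urepk{k}$.

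Finally, the duality structure follows by restricting the rigid duality $(-)^*$ to $\urepk{k}$: by Corollary \ref{cor:Delta_nabla_dual} and property (5) of Section \ref{ssec:urep_prop}, it intertwines $\bdel(\lam) \leftrightarrow \Pi^{|\lam|}\bnab(\lam^{\vee})$ and $\L(\lam) \leftrightarrow \Pi^{|\lam|}\L(\lam^{\vee})$, which is the form of highest weight duality of \cite{CPS} after composing with the involution $\lam \mapsto \lam^{\vee}$ on $\Lambda^k$. The main obstacle I anticipate is the tilting identification: one must argue that summands of tensor powers of $\bV$ exhaust all indecomposable tilting classes in $\urepk{k}$ without collisions under truncation via $i^{k,*}$. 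This rests on combining Ringel's uniqueness theorem with the explicit behaviour of filtrations described in Corollary \ref{cor:delta_k_n_max_quo}.
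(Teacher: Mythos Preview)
Your outline for conditions (1), (2), (5) and the tilting identification is sound and essentially matches the paper's approach; the paper likewise regards these as the ``obvious'' parts and focuses on showing (a) the standard filtration of $\pmb{P}_k(\lambda)$ and (b) the standard/costandard filtrations on $\bV^{\otimes s}$.

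The gap is in your construction of the standard filtration on $\pmb{P}_k(\lambda)$. You propose to take the $\Delta_n$-filtration of $P_n(\lambda)$ and apply the left adjoint $i^{k,*}$, claiming each subquotient $\Delta_n(\mu)$ becomes $\overline{\Delta}_n^k(\mu)$. But $i^{k,*}$ is only right exact, so the induced filtration on $i^{k,*}P_n(\lambda) \cong \pmb{P}_k(\lambda)$ has subquotients that are \emph{quotients} of $\overline{\Delta}_n^k(\mu)$, not a priori equal to them. To close this you would need a length count showing no collapse occurs, which reduces to checking that for every $\mu$ with $|\mu|>k$ appearing in the $\Delta$-filtration of $P_n(\lambda)$, the module $\Delta_n(\mu)$ has no $k$-admissible composition factors. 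This is not supplied by Corollary~\ref{cor:delta_k_n_max_quo} (which only treats $k$-admissible $\mu$), and it is not clear from the combinatorics of \cite{BDE+} without further work.

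The paper avoids this issue entirely: Lemma~\ref{lem:proj_filtr} instead transports the \emph{socle} filtration of $V_\infty^{\otimes r}$ through $\Phi$ to obtain a standard filtration on $\bV^{\otimes r}$ whose terms are preserved by idempotents (Sublemma in the proof), restricts this to an indecomposable summand $X\in I(\PP)$ surjecting onto $\pmb{P}_k(\lambda)$, and then identifies $\pmb{P}_k(\lambda)$ with a specific layer of this filtration. The compatibility with direct summands is the key property that makes this approach succeed where naive truncation does not.
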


\begin{remark}
 \InnaC{There is an important difference between $\urep$ and $\Rep(\p(n))$ as lower highest weight categories: the category $\Rep(\p(n))$ has two obvious highest weight structures (one with standard objects of the form $\Delta_n(\lambda)$ and the other with standard objects of the form $\nabla_n(\lambda)$), but has no duality functor which maps standard objects to costandard objects}.\InnaC{ Meanwhile, the category $\urep$ has only one obvious lower highest weight structure, as described above, but has a duality functor. The objects $\bnab(\lam)$ cannot play the role of standard objects: for instance, both $\bnab(0)$ and $\bnab(-2\eps_1)$ have cosocle $\triv$. We thank K. Coulembier for pointing this out to us.}
\end{remark}

\begin{proof}
 The only non-obvious statements we need to check are
 \begin{enumerate}
  \item\label{itm:proj_filtr} For any $k \geq 0$ and any $k$-admissible $\lam$,
$\pmb{P}_k(\lam)$ has a filtration with standard subquotients (``standard
filtration'') $\bdel(\mu)$ such
that $\mu \geq \lam$ (in this case clearly the top quotient of the filtration
would be $\bdel(\lam)$).
  \item\label{itm:tilt_filtr} For any $\lam$, $\bV^{\otimes s}$ has a
filtration
 with standard subquotients, and a filtration
 with costandard subquotients (``costandard filtration''). Furthermore, each
standard object appears as a subobject in such a standard filtration for some
$s \geq 0$.
 \end{enumerate}

 The proof of \eqref{itm:proj_filtr} will be given in Lemma
\ref{lem:proj_filtr} below.

To prove \eqref{itm:tilt_filtr}, recall from Corollary
\ref{cor:Phi_simples_to_delta_Deligne} that $\Phi:  \Rep(\p(\infty)) \to \urep$
takes the socle filtration of $V_{\infty}^{\otimes s}$ to a standard filtration
of $\bV^{\otimes s}$. Furthermore, this implies for any weight $\lam$ of
$\urep$ such that $\abs{\lam}=s$, $\bdel(\lam)$ appears as a subobject in the
induced standard filtration of $S^{\pmb{\lam}^{\vee}}\bV$. The costandard
filtration on $\bV^{\otimes s}$ is then obtained by applying the functor $(-)^*$
to the standard filtration.
\end{proof}
\begin{definition}
Let $\bT(\lambda)$ be an indecomposable tilting object such that $\bdel(\lam)
\subset \bT(\lam) $ \InnaC{and the cokernel has a filtration with subquotients $\Delta(\mu)$, $\mu \leq \lam$}.
\end{definition}
\begin{remark}
 It is easy to see that $\bdel(\lam) \subset \bT(\lam) \subset
S^{\pmb{\lam}^{\vee}}\bV$, but the latter is not necessarily an isomorphism.
For example, $\bV^{\otimes 3}$ has a direct summand isomorphic (up to change of
\InnaB{parity}) to $\bV$, and hence has more than three indecomposable direct summands.
\end{remark}
\begin{lemma}\label{lem:proj_filtr}
 For any $k \geq 0$ and any $k$-admissible $\lam$,
$\pmb{P}_k(\lam)$ has a filtration with standard subquotients $\bdel(\mu)$ such
that $\mu \geq \lam$.
\end{lemma}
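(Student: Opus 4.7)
My plan is to realize $\urepk{k}$ as a highest weight category in the classical sense (e.g.\ of Cline--Parshall--Scott) and then invoke the standard structure theory. First I would reduce to working in $\Rep^k(\p(n))$ for some $n > 8k+2$: by Proposition \ref{prop:DS_equiv} the functor $F_n$ furnishes an equivalence $\urepk{k} \cong \Rep^k(\p(n))$, and under this equivalence $\pmb{P}_k(\lam)$ corresponds to $\overline{P}_n^k(\lam^{(n)}) := i^{k,*}(P_n(\lam^{(n)}))$, i.e.\ the image of the projective cover $P_n(\lam^{(n)}) \in \Rep(\p(n))$ under the left adjoint to the inclusion $\Rep^k(\p(n)) \hookrightarrow \Rep(\p(n))$ as in the proof of Lemma \ref{lem:rep_k_enough_proj}.

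Next I would verify that $\Rep^k(\p(n))$ satisfies the axioms of a highest weight category, with poset the finite set of $k$-admissible weights (under the order $\leq$ from Section \ref{sssec:notn:weight_p_n}), standard objects $\overline{\Delta}^k_n(\mu)$, and costandard objects $\overline{\nabla}^k_n(\mu)$. All the required inputs have been assembled earlier in the paper: Lemma \ref{lem:simples_rep_k} identifies the simples; the composition-factor structure of the standard (resp.\ costandard) objects is inherited from $\Delta_n(\mu)$ (resp.\ $\nabla_n(\mu)$) via Definition \ref{def:trunc_delta} and Proposition \ref{prop:simples_delta_k_n}; the defining Hom and $\operatorname{Ext}^1$ conditions between standards and costandards are exactly Lemma \ref{lem:ext}; and Lemma \ref{lem:rep_k_enough_proj} gives enough projectives.

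Once the highest weight structure is in place, classical theory immediately yields that $\overline{P}_n^k(\lam^{(n)})$ admits a $\overline{\Delta}^k_n$-filtration, with multiplicities determined by BGG reciprocity:
\[
[\overline{P}_n^k(\lam^{(n)}) : \overline{\Delta}^k_n(\mu)] = [\overline{\nabla}^k_n(\mu) : L_n(\lam^{(n)})].
\]
Since $\overline{\nabla}^k_n(\mu)$ embeds in $\nabla_n(\mu)$, whose composition factors in the highest weight structure on $\Rep(\p(n))$ are $L_n(\nu)$ with $\nu \leq \mu$, the right-hand side can be nonzero only when $\lam^{(n)} \leq \mu$, i.e.\ $\mu \geq \lam^{(n)}$. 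Transporting the filtration back through $F_n^{-1}$ delivers the required $\bdel$-filtration of $\pmb{P}_k(\lam)$ with all subquotients $\bdel(\mu)$ satisfying $\mu \geq \lam$.

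The main obstacle will be the careful bookkeeping needed to confirm that the properties assembled in Section \ref{sec:Rep_k} really do entail the full highest weight category structure on $\Rep^k(\p(n))$, and in particular that the order convention ($\mu \geq \lam$ meaning $\mu_i \leq \lam_i$ coordinate-wise) is applied consistently when reading off the composition factors of $\overline{\Delta}^k_n(\mu)$ and $\overline{\nabla}^k_n(\mu)$. No new computational input is required, however, beyond Lemma \ref{lem:ext} and Proposition \ref{prop:simples_delta_k_n}.
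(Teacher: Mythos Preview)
Your approach has a circularity problem. In the Cline--Parshall--Scott framework, the axiom that each projective cover admits a $\Delta$-filtration is part of the \emph{definition} of a highest weight category, not a consequence of the other properties you list. The data you assemble---identification of simples, the composition-factor structure of $\overline{\Delta}^k_n$ and $\overline{\nabla}^k_n$, the $\Hom/\Ext^1$ orthogonality of Lemma~\ref{lem:ext}, and enough projectives---are necessary but not sufficient. To deduce quasi-heredity from orthogonality conditions alone one typically also needs $\Ext^2(\overline{\Delta}^k_n,\overline{\nabla}^k_n)=0$ (cf.\ Dlab--Ringel), and the paper never establishes this. Nor can you import the highest weight structure from $\Rep(\p(n))$ by truncation: the set of $k$-admissible weights is not an order ideal of the full weight poset (a weight $\mu\le\lambda$ with $\lambda\ge 0$ need not satisfy $\mu\ge 0$), so $\Rep^k(\p(n))$ is a Serre subcategory but not one given by a poset ideal. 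Hence there is no ``classical theory'' to invoke at this point; indeed, in the paper this very lemma is what is used to \emph{establish} the highest weight structure on $\urepk{k}$, and BGG reciprocity is derived only afterwards.

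The paper therefore takes a constructive route instead. It realises $\pmb{P}_k(\lambda)$ as a quotient $X/N$ with $X$ an indecomposable summand of $\bV^{\otimes r}$, transports the socle filtration of $V_\infty^{\otimes r}$ in $\Rep(\p(\infty))$ through the exact monoidal functor $\Phi$ to obtain an explicit $\bdel$-filtration of $\bV^{\otimes r}$ (Corollary~\ref{cor:Phi_simples_to_delta_Deligne}), checks that the filtration is stable under the idempotent cutting out $X$, and identifies $N$ with one of the filtration terms. Only at the end is Lemma~\ref{lem:ext} invoked, and only to read off that the multiplicity of $\bdel(\mu)$ equals $[\bnab(\mu):\L(\lambda)]$, forcing $\mu\ge\lambda$. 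Your final paragraph---using $\Ext^1$-vanishing to deduce the inequality once a standard filtration exists---is exactly how the paper finishes; what is missing is a non-circular reason for the filtration to exist in the first place.
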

\begin{proof}
We begin by showing that $\pmb{P}_k(\lam)$ has some standard filtration. Let $X
\in I(\PP)$ be such that $\pmb{P}_k(\lam)$ is the maximal quotient of $X$ lying
in $\urepk{k}$. The existence of such $X$ is shown in the proof of Lemma
\ref{lem:trivial_projective_cover}. We may assume that $X$ is indecomposable,
and hence a direct summand of $\bV^{\otimes r}$ for some $r\geq 0$.

We may assume that $r >k$ (otherwise $\pmb{P}_k(\lam)$ is a direct summand of
$\bV^{\otimes r}$).

Consider objects
$$K := \bigcup_{g: \bV^{\otimes r} \to \bV^{\otimes s}, s \leq k} \Ker(g)$$ in
$\urep$ and $$ K' := \bigcup_{f: V_{\infty}^{\otimes r} \to V_{\infty}^{\otimes
s}, s \leq k} \Ker(f)$$ in $\Rep(\p(\infty))$.

Since $\Phi$ is exact, we have:
$$ \Phi(K')\cong\bigcup_{f: V_{\infty}^{\otimes r} \to V_{\infty}^{\otimes
s}, s \leq k} \Ker(\Phi(f))$$

\begin{sublemma}\label{sublem:image_K}
 We have: $\Phi(K') \cong K$.
\end{sublemma}
\begin{proof}
Since $\Phi$ is monoidal, $\Phi(f) \in \Hom_{\urep}(\bV^{\otimes r},
\bV^{\otimes s})$ for any $f: V_{\infty}^{\otimes r} \to V_{\infty}^{\otimes
s}$. Thus $K \subset \Phi(K')$.

On the other hand, we can choose a diagrammatic basis $\{g_i\}$ for
$\Hom_{\urep}(\bV^{\otimes r},
\bV^{\otimes s})$ as in \cite{BDE+2, KT} consisting (up to change of
\InnaB{parity}) of string diagrams with $s$ dots in top row and $r$ dots in bottom
row.
The strings can be caps (connecting two dots in the bottom row), cups
(connecting two dots in top row), or strings connecting 2 dots from different
rows. Similarly, $\Hom_{\p(\infty)}(V_{\infty}^{\otimes r} \to
V_{\infty}^{\otimes s})$ has a diagrammatic basis $\{f_i\}$ with the same
diagrams, but now no cups
are allowed.

On such diagrammatic morphisms, composition is defined by stacking
diagrams (bottom to top) and then transforming the obtained diagram into an
element of the basis by some rules described e.g. in \cite{BDE+2, KT}.

Consider a diagrammatic morphism $g_i \in \Hom_{\urep}(\bV^{\otimes r},
\bV^{\otimes s})$. It has at most $s$ strings going from top to bottom, and so
at least $\lfloor \frac{r-s}{2} \rfloor$ caps in bottom row. Thus can be
written as $$g_i = h \circ \Phi(f_j)$$ for some diagrammatic morphism $f_j \in
\Hom_{\p(\infty)}(V_{\infty}^{\otimes r} \to
V_{\infty}^{\otimes
s'})$ where $s' \leq k$, and some diagrammatic morphism $h \in
\Hom_{\urep}(\bV^{\otimes s'},
\bV^{\otimes s})$.

This implies that $\Ker(g_i) \subset \Ker(\Phi(f_j))$ and hence $\Phi(K')
\subset K$. The sublemma is proved.
\end{proof}

Let $N :=\Ker( X \to \pmb{P}_k(\lam))$.

\begin{sublemma}
 $N = K \cap X$.
\end{sublemma}
\begin{proof}
For any $g: \bV^{\otimes r} \to \bV^{\otimes s}$, $s \leq k$,
$N \subset \Ker\left(g\rvert_X\right)$, and hence $N \subset K \cap X$.

Vice versa, $\pmb{P}_k(\lam)$ is a subobject of some $D \in I(\PP)$, and hence
$\pmb{P}_k(\lam) \subset \bV^{\otimes s}$ for some $s\leq k$. This implies that
$\pmb{P}_k(\lam) = \Im(g)$ for some $g: \bV^{\otimes r} \to \bV^{\otimes s}$
and thus $K \cap X \subset N$. This proves the statement of the sublemma.
\end{proof}

We now consider the socle filtration
$$soc^0=0 \subset soc^1 \subset \ldots \subset soc^{r'} =V_{\infty}^{\otimes
r}$$ where $r' = \lfloor \frac{r}{2} \rfloor+1$. By \cite[Lemma 17]{SerInf} the
$i$-th
term is given by
$$soc^{i} = \bigcup_{f: V_{\infty}^{\otimes r} \to V_{\infty}^{\otimes
s}, s \leq (r-2i)} \Ker(f)$$ (here negative tensor powers are treated as zero)
and hence $soc^{\lceil \frac{r-k}{2} \rceil }=K'$.

Taking the image of the socle filtration above under $\Phi$, we obtain a
filtration $$F^0(\bV^{\otimes r}) =0 \subset
F^1(\bV^{\otimes r}) \subset \ldots
\subset F^l(\bV^{\otimes r}) = \bV^{\otimes r}.$$ The subquotients in the
latter filtration are direct sums of standard
objects in
$\urep$ (by Corollary \ref{cor:Phi_simples_to_delta_Deligne}). As in Sublemma
\ref{sublem:image_K}, we have:
$$F^{i} = \bigcup_{g: \bV^{\otimes r} \to \bV^{\otimes s}, s \leq r-i}
\Ker(g)$$ and hence
$F^{\lceil \frac{r-k}{2} \rceil }(\bV^{\otimes r}) = K$.

We claim that $F^{i}$ induces a {\it unique} a filtration $F^0(X) = 0\subset
F^1(X) \subset \ldots \subset F^l(X) = X$ on $X$.
Indeed, consider the projector $e \in \End_{\urep}( \bV^{\otimes r})$ onto $X$.
For any $g: \bV^{\otimes r} \to \bV^{\otimes s}$, $g \circ e$ is also a map
$\bV^{\otimes r} \to \bV^{\otimes s}$ and hence for any $i$, $$F^{i} \subset
\bigcup_{g: \bV^{\otimes r} \to \bV^{\otimes s}, s \leq r-i} \Ker(g \circ e).$$
This implies that $e(F^i) \subset F^i$, and hence $e(F^i) = X \cap F^i$, which
establishes the uniqueness of the induced filtration on $X$, as required.

Hence the subquotients $F^i(X)/F^{i-1}(X)$ are be direct summands of
$F^i(\bV^{\otimes r})/F^{i-1}(\bV^{\otimes r})$ and are thus direct sums of
standard
objects. In particular, this implies that $\pmb{P}_k(0) \cong
F^l(X)/F^{r-k}(X)$ has a filtration with standard subquotients.

It remains to check that the standard subquotients $\bdel(\mu)$ of
$\pmb{P}_k(\lam)$ satisfy $\mu \geq \lam$. Indeed, consider the
obtained filtration $F^0 =0 \subset F^1 \subset \ldots \subset F^s =
\pmb{P}_k(\lam)$ with standard subquotients $\quotient{F^i}{F^{i-1}} =
\bdel(\mu^i)$. Then for any $\mu$, we have an exact sequence
$$ 0 \to \Hom(\bdel(\mu^i), \bnab(\mu)) \to \Hom(F^i, \bnab(\mu)) \to
\Hom(F^{i-1}, \bnab(\mu)) \to \Ext^1(\bdel(\mu^i), \bnab(\mu)) =0$$ (the last
equality follows from Lemma \ref{lem:ext}).

This implies that $$\dim \Hom(F^i,\bnab(\mu)) = \dim \Hom(F^{i-1},\bnab(\mu)) +
\delta_{\mu^i, \mu}$$ for any $\mu$ and any $i >0$. Hence the number of
times a given $\mu$ appears among $\{ \mu^i\}$ is precisely $\dim
\Hom(\pmb{P}_k(\lam), \bnab(\mu)) = (\bnab(\mu): \mathbf{L}(\lam))$ and the
latter is
non-zero only if $\mu \geq \lam$.
\end{proof}

\begin{corollary}[BGG reciprocity]
 \InnaC{For any $\lambda, \mu \in \Lambda^k$, we have:
 $$(\pmb{P}_k(\lambda):\bdel(\mu))=[\bnab(\mu):\L(\lambda)].$$}
\end{corollary}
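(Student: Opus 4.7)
The plan is to follow the standard Brauer–Humphreys argument for BGG reciprocity in a highest weight category, specialized to the subcategory $\urepk{k}$. Nearly all of the work has already been done in Lemmas \ref{lem:ext} and \ref{lem:proj_filtr}, so the proof essentially amounts to reading off the numerical consequence.

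First I would invoke Lemma \ref{lem:proj_filtr} to obtain a filtration
$0 = F^0 \subset F^1 \subset \cdots \subset F^s = \pmb{P}_k(\lambda)$
with standard subquotients $F^i/F^{i-1} \cong \bdel(\mu^i)$, where the $\mu^i$ are all $\geq \lambda$ and hence $k$-admissible. For each $k$-admissible $\mu$, apply the left-exact functor $\Hom_{\urep}(-,\bnab(\mu))$ to the short exact sequences $0 \to F^{i-1}\to F^i \to \bdel(\mu^i)\to 0$. Lemma \ref{lem:ext} gives
$$\dim\Hom_{\urep}(\bdel(\mu^i),\bnab(\mu)) = \delta_{\mu^i,\mu}, \qquad \Ext^1_{\urep}(\bdel(\mu^i),\bnab(\mu)) = 0,$$
so the long exact sequences collapse to short exact sequences and induction on $i$ yields the identity
$$\dim\Hom_{\urep}(\pmb{P}_k(\lambda),\bnab(\mu)) = \#\{i : \mu^i = \mu\} = (\pmb{P}_k(\lambda):\bdel(\mu)).$$
(This is precisely the calculation already carried out in the last paragraph of the proof of Lemma \ref{lem:proj_filtr}, so I would just cite it.)

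For the other side of the equality, I would use that $\bnab(\mu) \in \urepk{k}$ when $\mu$ is $k$-admissible together with the fact that $\pmb{P}_k(\lambda)$ is the projective cover of $\L(\lambda)$ in $\urepk{k}$. Since projective covers in a finite-length abelian category compute composition multiplicities via
$$\dim\Hom_{\urepk{k}}(\pmb{P}_k(\lambda),N) = [N:\L(\lambda)]$$
for every $N\in\urepk{k}$ (because $\pmb{P}_k(\lambda)$ has simple head $\L(\lambda)$, so every map to a simple object picks out a copy of $\L(\lambda)$ and conversely), applying this with $N=\bnab(\mu)$ gives $\dim\Hom(\pmb{P}_k(\lambda),\bnab(\mu)) = [\bnab(\mu):\L(\lambda)]$. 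Combining the two identities yields the asserted BGG reciprocity. There is no real obstacle here beyond the careful bookkeeping already handled in Lemmas \ref{lem:ext} and \ref{lem:proj_filtr}; the only mildly delicate point is to make sure that $\bnab(\mu)$ genuinely lies in $\urepk{k}$ (which it does for $\mu$ being $k$-admissible, by construction of $\bnab$ as an inverse limit of $\overline{\nabla}^k_n(\mu)$'s).
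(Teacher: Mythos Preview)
Your proposal is correct and matches the paper's approach exactly: the corollary is stated without a separate proof in the paper because the argument is precisely the one carried out in the final paragraph of Lemma~\ref{lem:proj_filtr}, which you correctly identify and reproduce. The two ingredients you isolate---the filtration multiplicity computation via Lemma~\ref{lem:ext} and the projective-cover identity $\dim\Hom(\pmb{P}_k(\lambda),N)=[N:\L(\lambda)]$---are exactly what the paper uses.
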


\section{Appendix A: Direct summands of tensor powers}\label{appendix_A}
\subsection{Temperley-Lieb action and statement of the result}
The goal of this appendix is to give a combinatorial proof of the following
result of \cite{DLZ}.

\begin{corollary}\label{appcor:contractions}
 The space $\mathtt{s}\Hom_{\mathfrak{p}(n)}(V_n^{\otimes k}, \triv)$ is
non-zero only
when $k$ is even. It is spanned by morphisms given by partitioning the $k$
factors into (disjoint) pairs, and considering a tensor product of $k/2$
contraction maps $V_n^{\otimes 2} \to \Pi \triv$ on these pairs.
\end{corollary}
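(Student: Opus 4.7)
The plan is to use the Temperley-Lieb action on translation functors (Theorem~\ref{old_thrm:transl_func}) to bound $\dim \mathtt{s}\Hom_{\p(n)}(V_n^{\otimes k},\triv)$ from above by the number $(k-1)!!$ of perfect matchings of a $k$-element set, which matches the count of explicitly constructed contraction morphisms.

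\textbf{Step 1 (Parity).} I would first view $V_n$ as a $\gl(n) = \p(n)_{\bar 0}$-module, whose weights lie in $\{\pm\eps_i\}_{i=1}^n$ (the even summand carrying $+\eps_i$ and the odd summand $-\eps_i$). The trivial weight occurs in $V_n^{\otimes k}$ only if a signed sum of $k$ such weights vanishes, which is impossible for $k$ odd; hence $\mathtt{s}\Hom_{\p(n)}(V_n^{\otimes k},\triv) = 0$ when $k$ is odd.

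\textbf{Step 2 (Inductive TL computation).} For $k = 2m$, I would iterate the adjunction
\[
\mathtt{s}\Hom(V_n^{\otimes k},\triv) \;=\; \bigoplus_{j\in\Z} \mathtt{s}\Hom\!\bigl(\Theta_j V_n^{\otimes k-1},\triv\bigr) \;=\; \bigoplus_{j\in\Z} \mathtt{s}\Hom\!\bigl(V_n^{\otimes k-1},\Theta_j\triv\bigr),
\]
in which the second equality reflects the self-adjointness (up to parity) of each $\Theta_j$, a consequence of $V_n^*\cong \Pi V_n$ combined with the anti-involution of $TL_\infty$ given by horizontal flip of diagrams. The weight diagram of $\triv = L_n(0)$ has black balls exactly at positions $\{0,1,\ldots,n-1\}$, so $\Theta_j\triv$ vanishes unless $j=-1$, in which case $\Theta_{-1}\triv \cong V_n$. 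Iterating this procedure reduces the computation to enumerating the nonzero compositions $\Theta_{j_k}\cdots\Theta_{j_1}\triv$; at $q=i$ the relation $[2]=0$ makes $\Theta_j^2$ vanish in the Grothendieck ring, so the surviving sequences correspond bijectively to crossingless matchings, equivalently to perfect matchings of $\{1,\ldots,k\}$.

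\textbf{Step 3 (Main obstacle and conclusion).} For each perfect matching $\pi$ of $\{1,\ldots,k\}$, one has the explicit contraction $c_\pi : V_n^{\otimes k}\to \Pi^{k/2}\triv$ obtained by applying $\omega_n$ to each paired factor (after a suitable permutation); so $(k-1)!!$ morphisms already lie in $\mathtt{s}\Hom(V_n^{\otimes k},\triv)$, and combined with the TL upper bound from Step~2 they span. The $c_\pi$ need not be linearly independent for small $n$, and the TL combinatorics automatically encodes these relations. The main obstacle will be the bookkeeping in Step~2: identifying each nonzero TL composition $\Theta_{j_k}\cdots\Theta_{j_1}\triv$ with a specific pairing $\pi$ and verifying that the induced element of $\mathtt{s}\Hom(V_n^{\otimes k},\triv)$ is precisely $c_\pi$ up to scalar. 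This should follow from the diagrammatic basis of $TL_\infty(q=i)$ indexed by crossingless matchings, together with a careful tracking of parity shifts coming from $V_n^*\cong\Pi V_n$.
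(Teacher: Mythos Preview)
Your overall strategy—decompose $V_n^{\otimes k}$ via translation functors and analyze the resulting maps using the Temperley–Lieb relations—matches the paper's approach, but Step~2 contains errors and the stated plan has a logical gap.

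In Step~2, the translation functors $\Theta_j$ are \emph{not} self-adjoint: the paper uses (proof of Proposition~\ref{prop:presentation_TL_element}) that $\mathtt{s}\Hom(\Theta_j X,\triv)\cong\mathtt{s}\Hom(X,\Theta_{j-1}\triv)$, i.e.\ the right adjoint of $\Theta_j$ is $\Theta_{j-1}$. Relatedly, it is $\Theta_0\triv$ that is nonzero, not $\Theta_{-1}\triv$. You also conflate crossingless matchings (counted by Catalan numbers $C_{k/2}$) with all perfect matchings (counted by $(k-1)!!$); for large $n$ the dimension of $\mathtt{s}\Hom_{\p(n)}(V_n^{\otimes k},\triv)$ is $(k-1)!!$, so a Catalan upper bound is already false at $k=4$.

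More fundamentally, your stated plan—upper-bound the dimension by $(k-1)!!$ and then exhibit $(k-1)!!$ contraction morphisms—does \emph{not} prove spanning: the contractions could a priori all lie in a proper subspace. What is actually required (and what you correctly identify as the ``main obstacle'' in Step~3, though you dismiss it as bookkeeping) is to show that for each length-$k$ sequence $I$ with $\Theta_I\triv\twoheadrightarrow\triv$, the resulting map $V_n^{\otimes k}\to\triv$ is itself a contraction. The paper accomplishes this with two ingredients absent from your sketch. Proposition~\ref{prop:presentation_TL_element} proves that any such $I$ is TL-equivalent to one specific ``staircase'' sequence. Lemma~\ref{lem:splitting_translation_func} then factors $\Theta_I\cong{}^*(\Theta_J)\circ\Theta_J$ for an explicit $J$, so the surjection $\Theta_I\triv\to\triv$ is the counit of this adjunction evaluated at $\triv$. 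Since the adjunction between $\Theta_{j+1}$ and $\Theta_j$ arises from the evaluation pairing $V_n\otimes V_n\to\Pi\triv$, this counit is manifestly a composite of contraction maps. No dimension comparison is needed.
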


The proof is based on the results on categorical action of the Temperley-Lieb
algebra \InnaB{$TL_{\infty}(\sqrt{-1})$} on $\Rep(\p(n))$ obtained in \cite{BDE+}, and standard Temperley-Lieb
combinatorics; these combinatorics are fully described here for completeness of
presentation.
\comment{\begin{theorem}\label{old_thm:TL_action_categor}[\cite[Theorem 4.5.1]{BDE+}]
\InnaB{The translation functors $\Theta_k: \Rep(\p(n)) \to \Rep(\p(n))$, $k\in \Z$, satisfy categorical Temperley-Lieb relations. Namely,} there exists natural isomorphisms of functors (for $k,j\in\mathbb{Z}$)
  \begin{eqnarray}\label{eq:TL_categ_relations}
   \Theta_k^2 &\cong& 0,\label{firstTL}\\
   \psi_{k, k \pm 1}:\quad \Theta_k \Theta_{k \pm 1} \Theta_k
&\stackrel{\cong}\Longrightarrow&   \Theta_k,\label{secondTL}\\
   \psi_{k,j}: \quad\quad\quad\Theta_k \Theta_j
&\stackrel{\cong}\Longrightarrow&  \Theta_j \Theta_k \; \text{   if   } \;
|k-j| >1,\label{thirdTL}
  \end{eqnarray}

 \end{theorem}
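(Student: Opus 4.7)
The plan is to realize compositions of translation functors as joint generalized eigenspaces of commuting Casimir actions on tensor factors. Writing $\Omega_{ij}$ for the tensor Casimir acting on the $i$-th and $j$-th factors of $M\otimes V_n^{\otimes r}$, the associativity isomorphism identifies $\Theta_k\Theta_j(M)$ with the joint $(k,j)$-generalized eigenspace of $(\Omega_{12},\Omega_{23})$ on $M\otimes V_n^{\otimes 2}$, and triple compositions correspond analogously to joint eigenspaces of $(\Omega_{12},\Omega_{23},\Omega_{34})$ on $M\otimes V_n^{\otimes 3}$. All three Temperley--Lieb relations thereby reduce to structural statements about these joint spectra on $V_n^{\otimes 2}$ and $V_n^{\otimes 3}$, independent of $M$.

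First I would compute the joint spectrum of $(\Omega_{12},\Omega_{23})$ on $V_n\otimes V_n$, exploiting the (non-split) composition series of $V_n\otimes V_n$ as a $\p(n)$-module together with the standard formula for quadratic Casimir eigenvalues on simple $\p(n)$-modules; this pins down exactly which pairs $(k,j)$ can contribute. The nilpotency $\Theta_k^2\cong 0$ then becomes the statement that no pair $(k,k)$ occurs, to be verified directly from the resulting list. For the commutation $\Theta_k\Theta_j\cong\Theta_j\Theta_k$ when $|k-j|>1$, the graded swap $\sigma_{V_n,V_n}$ interchanges $\Omega_{12}$ and $\Omega_{23}$, inducing an isomorphism between the $(k,j)$- and $(j,k)$-joint eigenspaces; the condition $|k-j|>1$ guarantees that these are isolated direct summands of $V_n\otimes V_n$, so the identification extends globally to a natural isomorphism of subfunctors.

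The hard part will be the middle relation $\Theta_k\Theta_{k\pm 1}\Theta_k\cong\Theta_k$. I would construct a candidate natural transformation using the unit and counit of the self-biadjunction $-\otimes V_n\dashv\Pi(-\otimes V_n)$ arising from $V_n\cong\Pi V_n^*$ (i.e.\ from $\omega_n$ and its dual), suitably projected onto the relevant joint eigenspaces on $V_n^{\otimes 3}$. To check it is an isomorphism, I would reduce to testing on indecomposable projectives $P_n(\lambda)$: by Theorem \ref{old_thrm:transl_proj} each $\Theta_i P_n(\lambda)$ is zero or again indecomposable projective, so both sides of the desired isomorphism are direct sums of indecomposable projectives, and by Theorem \ref{old_thrm:transl_func} they carry equal classes in the Grothendieck group; showing the candidate map is nonzero on each nonzero summand then forces it to be an isomorphism, which in turn extends by naturality to all of $\Rep(\p(n))$. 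The principal obstacle will be parity bookkeeping arising from the odd form, and the verification that the triple joint spectrum on $V_n^{\otimes 3}$ carries no unexpected eigenspace at $(k,k\pm 1,k)$ beyond the one pinned down by the TL relation; this requires extending the $V_n\otimes V_n$ spectrum computation to $V_n^{\otimes 3}$ while tracking the non-split extension structure carefully.
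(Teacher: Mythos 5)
This statement is quoted from \cite[Theorem 4.5.1]{BDE+}; the present paper does not prove it, so there is no internal proof to compare against, and your proposal has to be judged on its own merits.

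The main gap is a misidentification of the operators whose joint spectra you want to study. The functor $\Theta_j$ is the generalized $j$-eigenspace of the tensor Casimir between $M$ and the newly attached copy of $V_n$. If you write $M$ in position $0$ and the two copies of $V_n$ in positions $1$ and $2$, then $\Theta_j(M)$ corresponds to the operator $L_1 := \Omega_{0,1}$, while $\Theta_k\Theta_j(M)$ corresponds to the \emph{nested} Casimir $L_2 := \Omega_{(M\otimes V), V} = \Omega_{0,2} + \Omega_{1,2}$ acting on $\Theta_j(M)\otimes V_n$. Your proposal instead uses $(\Omega_{0,1}, \Omega_{1,2})$. The discrepancy matters in two places. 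First, $L_2$ contains the summand $\Omega_{0,2}$, which does depend on $M$; so the claim that the relations ``reduce to structural statements about these joint spectra on $V_n^{\otimes 2}$ and $V_n^{\otimes 3}$, independent of $M$'' is not a priori justified, and consequently neither is the verification of $\Theta_k^2\cong 0$ as ``no pair $(k,k)$ occurs.'' Second, the swap $\mathrm{id}_M\otimes\sigma_{V_n,V_n}$ conjugates $L_1=\Omega_{0,1}$ to $\Omega_{0,2}=L_2-\Omega_{1,2}$, not to $L_2$; so it does not exchange the two relevant operators, and the commutation isomorphism $\psi_{k,j}$ for $|k-j|>1$ does not follow the way you describe. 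One would instead need to analyze how the single interchange term $\Omega_{1,2}$ shifts spectra (its eigenvalues on the non-split filtration of $V_n\otimes V_n$ are bounded, which is exactly what the hypothesis $|k-j|>1$ is there to exploit), and track the resulting operator identities carefully.

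That said, the circle of ideas you are invoking — nested Casimirs/Jucys--Murphy elements, spectral decomposition of $V_n^{\otimes 2}$ and $V_n^{\otimes 3}$, and (for the middle relation) a candidate natural transformation built from the self-biadjunction of $-\otimes V_n$ coming from $V_n\cong \Pi V_n^*$, checked on indecomposable projectives via Theorems \ref{old_thrm:transl_func} and \ref{old_thrm:transl_proj} — is the correct framework and broadly matches how such categorical TL relations are established (see \cite{BDE+, BDE+2}). The execution needs to be corrected to use the genuine nested Casimirs and to account for the $M$-dependence of $L_2$ before the spectral arguments go through.
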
}

\subsection{Temperley-Lieb combinatorics}

 \InnaB{Consider the Temperley-Lieb algebra $TL_{\infty}(\sqrt{-1})$ over $\mathbb{Z}_{\geq 0}$, generated by
elements $\theta_k$, $k \in \mathbb Z$, and the relations
  \begin{eqnarray}\label{eq:TL}
   \theta_k^2 &=& 0\\
 \quad \theta_k \theta_{k \pm 1} \theta_k
&=&   \theta_k,\\\
    \quad\quad\quad\theta_k \theta_j
&=&  \theta_j \theta_k \; \text{   if   } \;
|k-j| >1.
  \end{eqnarray}

  By \cite[Theorem 4.5.1]{BDE+}, the translation functors $\Theta_k: \Rep(\p(n)) \to \Rep(\p(n))$, $k\in \Z$, satisfy categorical Temperley-Lieb relations, obtained from \eqref{eq:TL} by replacing $\theta_k$ by $\Theta_k$ and equalities by explicit isomorphisms.}

\begin{notation}
 Let $I = (i_1, \ldots, i_k)$ be a finite sequence of integers. Consider
translation functors $\Theta_{i_j}$, $j = 1, \ldots, k$ in $\Rep(\p(n))$. Denote
$$\Theta_I := \Theta_{i_k} \ldots \Theta_{i_1}$$ and by $\theta_I =
\theta_{i_k} \ldots \theta_{i_1}$ the corresponding element in the
Temperley-Lieb algebra. We will use the convention $\theta_{\emptyset} = 0$.
\end{notation}

 We say that two sequences $I, I'$ are equivalent if $\theta_I = \theta_I'$,
and that $I$ is {\it reduced} if $I$ has minimal length among the sequences in
its equivalence class.

\begin{lemma}\label{lem:max_seq}
 Let $I $ be a reduced sequence. Let $m = \max\{i \in I\}$.  Then the value $m$
appears in the sequence $I$ exactly once. A similar statement holds for the
minimal value in $I$.
\end{lemma}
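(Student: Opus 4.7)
The plan is to deduce the lemma from the following key sublemma, which I would prove by strong induction on word length: \emph{if $w$ is a word in the generators $\theta_j$ with all indices at most $m$, and $\theta_m$ appears at least twice in $w$, then $\theta_w$ admits a representative of length strictly less than $|w|$.} Granting this, a reduced $I$ whose maximum $m$ occurred more than once would be equivalent to a strictly shorter word, contradicting the reducedness of $I$. The statement for the minimum then follows by applying the same result after the order-reversing involution $\theta_j \mapsto \theta_{-j}$, which is an isomorphism of $TL_{\infty}(\sqrt{-1})$ because all of the relations in \eqref{eq:TL} are invariant under negating indices.

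To prove the sublemma, I would induct on $|w|$, the base case $|w|=2$ being $w = \theta_m \theta_m = 0 = \theta_{\emptyset}$. For the inductive step, pick two consecutive occurrences of $\theta_m$ and write $w = \alpha\,\theta_m\,\beta\,\theta_m\,\gamma$ with $\beta$ having indices $\leq m-1$. If $\beta$ contains no $\theta_{m-1}$, every letter of $\beta$ satisfies $|j-m|>1$ and commutes with $\theta_m$, so sliding $\beta$ past one of the flanking $\theta_m$'s produces $w = 0$ via $\theta_m^2 = 0$. If $\beta$ contains exactly one $\theta_{m-1}$, split $\beta = \beta_1\,\theta_{m-1}\,\beta_3$ with $\beta_1,\beta_3$ of indices $\leq m-2$; commuting these outward and applying $\theta_m\,\theta_{m-1}\,\theta_m = \theta_m$ shortens $w$ by two letters.

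The genuine obstacle is the case where $\beta$ contains two or more copies of $\theta_{m-1}$: no single braid-type relation applies, because the two outermost $\theta_{m-1}$'s sandwich a non-trivial middle word. To handle this, isolate the leftmost and rightmost $\theta_{m-1}$ in $\beta$, commute the outer pieces of $\beta$ (which have indices $\leq m-2$) past the adjacent $\theta_m$'s, and rewrite $w$ as $\alpha'\,\theta_m\,\theta_{m-1}\,\beta_2\,\theta_{m-1}\,\theta_m\,\gamma'$. Now apply the inductive hypothesis to the sub-word $\theta_{m-1}\,\beta_2\,\theta_{m-1}$: it has all indices $\leq m-1$, contains $\theta_{m-1}$ at least twice, and has length $|\beta_2|+2 \leq |w|-2 < |w|$, so by hypothesis it admits a strictly shorter representative $x$. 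Substituting $x$ back into $w$ yields a representative of length $|\alpha'|+|\gamma'|+|x|+2 < |w|$, closing the induction. The one subtlety is ensuring that the inductive call lands on a word strictly shorter than $w$, which is exactly the bound $|\beta_2|+2 < |w|$ coming from the non-emptiness of the flanking $\theta_m$'s.
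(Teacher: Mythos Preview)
Your proof is correct and follows the same strategy as the paper's: isolate two consecutive occurrences of $\theta_m$ and distinguish cases according to how many times $\theta_{m-1}$ appears in the word $\beta$ between them. The only organizational difference lies in your third case (two or more $\theta_{m-1}$'s in $\beta$): the paper disposes of it by implicitly using that any contiguous subword of a reduced word is itself reduced, so the induction hypothesis applied directly to $\beta$ forces $\theta_{m-1}$ to appear at most once there; you instead recurse explicitly on the inner word $\theta_{m-1}\beta_2\theta_{m-1}$ and substitute back its shorter representative. Your version is more self-contained and makes the length bookkeeping explicit, while the paper's is slightly quicker once the subword-reducedness observation is granted.
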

\begin{proof}
We prove this statement by induction on $k$. The base case $k =1$ is clear. Let
$k>1$ and assume the statement holds for any sequence of length less than $k$.
Let $I= (i_1, \ldots, i_k)$ be a sequence as in the lemma, and consider
$\theta_I$.

Assume $m = \max I$ appears more than once in $I$. Consider two consecutive
appearances: $i_s = i_r =m$, $s<r$, and $i_j < m$ for $s < j <r$. Consider the
subsequence $I' = (i_{s+1}, i_{s+2} ,\ldots, i_{r-1})$. Then $\max I' \leq
k-1$, and we have, by the induction assumption, at most one appearance of the
value $m-1$ in the sequence $I'$.

If $m-1$ does not appear in $I'$, then $\theta_I \cong 0$: one can ``move''
$\theta_{i_s}$ to be next to $\theta_{i_r}$ due to \eqref{third}, and the
result will be zero by \label{first}.

If $m-1$ appears in $I'$ exactly once, at position $i_j$, then one can ``move''
$\theta_{i_s}$, $\theta_{i_r}$ to be next to $\theta_{i_j}$ (on different sides
of $\theta_{i_j}$!) due to \label{third}. Applying \label{second} to
$\theta_{i_s}\theta_{i_j}\theta_{i_r} = \theta_m \theta_{m-1} \theta_m$ we may
replace this expression by $\theta_m$, giving a shorter expression for the
element $\theta_I$.

\end{proof}
\begin{proposition}\label{prop:presentation_TL_element}
 Let $I = (i_k, \ldots, i_1)$ be a sequence of integers.
Assume there exists a surjection $\Theta_I \C \to \C$ in $\Rep(\p(n))$. Then
there exists an integer $s \geq 0$ and a natural isomorphism
$$\Theta_{I'} \stackrel{\sim}{\longrightarrow} \Theta_I$$
where $I'$ is an integer sequence\footnote{We place delimiters $|$ in the
sequence only to stress the ``building blocks'' of the sequence; these do not
carry any additional meaning.} of length $s(s+1)$ given by concatenation of $s$
decreasing sequences of consecutive integers, where the first terms of the
sequences grow from $1$ to $s-1$:
$$(1, 0, \ldots , -s+1| 2, 1, 0, \ldots, -s+2| 3, 2, 1, 0, \ldots, -s+3|\ldots|
s, s-1, \ldots, 0)$$

\end{proposition}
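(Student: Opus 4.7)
The plan is to prove the proposition by induction on the length of $I$, after first reducing to the case where $I$ is reduced in the sense following Lemma \ref{lem:max_seq}. Throughout, we work on the purely combinatorial side, translating the hypothesis $\Theta_I \triv \twoheadrightarrow \triv$ into constraints on the word $\theta_I$ via the $TL_\infty(\sqrt{-1})$-action on weight diagrams established in Theorem \ref{old_thrm:transl_func}. Since each $\Theta_j$ operates on a weight diagram by swapping adjacent $\bullet$/$\circ$ labels (or annihilating the module if no such swap is available), the chain $d_0 \to d^{(1)} \to \cdots \to d^{(k)}$ obtained by successively applying $\Theta_{i_1}, \ldots, \Theta_{i_k}$ must return to a diagram of which $d_0$ is a quotient. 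Combined with the indecomposable-projective transitivity of Theorem \ref{old_thrm:transl_proj}, this forces the ball configuration to traverse a specific ``staircase'' region of the weight lattice and come back.

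First I would analyze the extremes of a reduced $I$. By Lemma \ref{lem:max_seq}, the maximum $m=\max I$ and minimum $m'=\min I$ each appear exactly once. By the ball-move interpretation, the constraint that the walk through weight diagrams starts and ends at $d_0$ forces $m$ and $m'$ to sit in a symmetric configuration: specifically $m = s$ and $m' = -s+1$ for some $s\geq 0$ (the case $s=0$ giving $I=\emptyset$). Indeed, any value strictly greater than $s$ appearing in $I$ would force the walk through a region whose return requires additional generators, violating reducedness; dually for values below $-s+1$.

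Next, I would peel off the last row of the staircase. Because $\theta_s$ occurs exactly once, commutation relations \eqref{thirdTL} let me migrate $\theta_s$ rightward past every $\theta_j$ with $|s-j|>1$. The resulting configuration must end with a decreasing block $(s, s-1, \ldots, 0)$: any other value appearing to the right of $\theta_s$ after commutation could be pushed further right and eventually collide with the already-fixed structure, reducing the word length and contradicting reducedness. The detailed verification uses Lemma \ref{lem:max_seq} applied repeatedly to subwords, together with \eqref{firstTL}–\eqref{thirdTL} to show that the only reduced tail consistent with the $d_0 \to d_0$ constraint is exactly $(s, s-1, \ldots, 0)$.

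Finally, I would argue that the remaining prefix $I_{\text{prefix}}$ again satisfies $\Theta_{I_{\text{prefix}}}\triv \twoheadrightarrow \triv$ but now with maximum at most $s-1$ and minimum at least $-s+2$; an application of the inductive hypothesis identifies $\theta_{I_{\text{prefix}}}$ with the staircase of size $s-1$, and concatenating the peeled-off block $(s, s-1, \ldots, 0)$ yields the desired staircase of size $s$. The categorification upgrade (from an equality in $TL_\infty(\sqrt{-1})$ to a natural isomorphism of functors) follows since the TL relations in Theorem \ref{old_thrm:transl_func} are realized by explicit natural isomorphisms between the translation functors, and each rewriting step above uses only these. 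The main obstacle, and the step where one has to be most careful, is the peeling argument in the inductive step: making the commutations precise and ruling out alternative reduced tails requires tracking the state of the weight diagram at each intermediate stage and exploiting minimality of the word length in tandem with Lemma \ref{lem:max_seq}.
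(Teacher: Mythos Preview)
Your overall strategy --- reduce to a reduced word, pin down the extremal generators, and then strip off blocks --- matches the paper's. The gap is in your inductive step.

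After you peel off the block $(s,s-1,\ldots,0)$ and write $\Theta_I \cong \Theta_{I_{\mathrm{prefix}}}\,\Theta_{(s,\ldots,0)}$ (or the other factorization), there is no reason for the prefix to satisfy $\Theta_{I_{\mathrm{prefix}}}\triv\twoheadrightarrow\triv$. The object $\Theta_{(s,\ldots,0)}\triv$ is not $\triv$; it is the indecomposable projective with weight diagram obtained from $d_0$ by moving a ball from position $0$ out to position $s$ (or the dual picture), and the surjection $\Theta_I\triv\twoheadrightarrow\triv$ only tells you that $\Theta_{I_{\mathrm{prefix}}}$ applied to \emph{that} object surjects onto $\triv$. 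So the hypothesis of your induction is simply not available for $I_{\mathrm{prefix}}$. Concretely, for $s=2$ the prefix is $(1,0,-1)$, and $\Theta_{-1}\Theta_0\Theta_1\triv=0$ because $\Theta_1\triv=0$.

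Even granting the hypothesis, the arithmetic does not close up: removing one block of length $s+1$ from the $s$-staircase leaves a word of length $(s-1)(s+1)$, whereas the $(s-1)$-staircase has length $(s-1)s$; and concatenating the $(s-1)$-staircase with $(s,s-1,\ldots,0)$ gives blocks ending at $-s+2,-s+3,\ldots,0,0$, not $-s+1,-s+2,\ldots,0$. So the recursion does not reproduce the claimed normal form.

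What the paper does instead is fix $s=\max I$ once (using the surjection hypothesis only to extract the two global constraints $i_1=0$ and $i_k=1$) and then induct on the number $r$ of blocks already extracted. At each stage the argument works inside the \emph{full} reduced word: the rightmost remaining element is shown to equal $-r$ because otherwise it could be commuted past the already-extracted blocks to violate $i_1=0$, and the maximal remaining element $s-r$ is shown to have an $s-r-1$ to its left because otherwise it could be pushed to the far left, violating $i_k=1$. A sublemma (iterating Lemma~\ref{lem:max_seq}) then produces the next block $(s-r,\ldots,-r)$. The point is that the constraints used at every step are the two endpoint conditions on the whole word, never a surjection hypothesis on a truncated word. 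If you reorganize your argument along these lines --- induction on $r$ with $s$ fixed, using $i_1=0$ and $i_k=1$ globally --- the peeling you describe goes through.
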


\begin{proof}
We may assume that $I$ is a reduced sequence.
The condition $\Theta_I \C \twoheadrightarrow \C$ implies that there is a
non-zero map $\Theta_{i_{k-1}} \ldots \Theta_{i_1} \C \to \Theta_{i_k -1} \C$.
Hence $i_k =1, i_1=0$, since $\Theta_0$ is the only translation functor not
annihilating the module $\C$. Moreover, the same holds for any $I' \sim I$.

We will use the following sublemma:
\begin{sublemma}
 Let $I = (i_k, \ldots i_1)$ be a reduced sequence, and assume that for any
$I'= (\ldots, i'_2, i'_1) \sim I$, we have: $i_1 =  i'_1$.

 For any $j \geq 1$, consider the subsequence $I_j =(i_j, i_{j-1}, \ldots,
i_1)$, and let $m_j := \max \{i \in I_j\}$ (in particular, $ m_1 = i_1$). Then
there exists a reduced sequence $I'\sim I$ obtained from $I$ by a permutation
of its elements, such that the rightmost $m_k -m_1 + 1$ terms of the sequence
$I'$ are $m_k, m_k-1, m_k-2, \ldots, m_1+1, m_1$.
 \end{sublemma}
\begin{proof}
Recall that $m_j$ occurs exactly once in $I_j$, due to Lemma \ref{lem:max_seq}.
Let us denote its position by $l$.

We need to show that for any $j$, $m_j-1$ occurs exactly once in $I_j$ to the
right of $m_j$, i.e. occurs exactly once in $I_l$.

Indeed, $I_j$ must contain $m_j -1$ to the right of $m_j$, otherwise one can
move $m_j$ to the rightmost part of $I$, contradicting the maximality of $i_1$.
By Lemma \ref{lem:max_seq}, $m_j -1 \geq m_{l-1}$ will occur in $I_{l-1}$ at
most once, which completes the proof of the sublemma.
\end{proof}

We now return to the proof of the proposition.
Denote $s= \max I$.

For every $1 \leq r \leq s$, we will prove the following claim: $I$ can be
replaced by an equivalent reduced sequence $I' = (i'_k, \ldots, i'_1)$
(obtained by a permutation of the elements of $I$), whose last $(s+1)r$
elements
are
$$(s-r+1, s-r, \ldots, -r+1 | s-r+2, s-r+1, \ldots, -r+2| \ldots| s-1, s-2,
\ldots,-1, \ldots| s, s-1, \ldots, 0),$$
and $\max\{i_l | (s+1)r < l \leq k \} = s-r$.

We prove this claim by induction on $r$. For $r=1$, this is a direct
consequence of the above sublemma.

Now, assume the claim holds for some $r<s$, and denote the corresponding
reduced sequence by $I'$. We prove the claim for $r+1$.

Consider the subsequence $J^r = (i'_k , \ldots, i'_{(s+1)r+2} ,
i'_{(s+1)r+1})$. Denote its rightmost element $i'_{(s+1)r+1}$ by $l$.

We begin by proving that $l = -r$.

Indeed, assume $l <-r$; then $l=i'_{(s+1)r+1}$ can be moved to the right,
obtaining an equivalent reduced sequence whose rightmost element is $-r$, which
contradicts $i_1=0$.

Next, assume $l >-r$. Then it can be moved to the right to obtain an equivalent
reduced sequence with a subsequence $(l, l+1, l)$; by \eqref{eq:TL}, such a
sequence is not reduced, leading to a contradiction.

Hence $l=-r$; moreover, the above reasoning implies that for any reduced
sequence equivalent to $J^r$, its rightmost element is $-r$.

By induction assumption, the maximal element in $J^r$ is $s-r$. Applying the
sublemma to $$J^r= (i'_k , \ldots, i'_{(s+1)r+2} , i'_{(s+1)r+1} = -r)$$ we
obtain an equivalent reduced sequence $J'$ whose rightmost $s+1$ elements are
$(s-r, s-r -1, \ldots, -r)$, and hence a reduced sequence $I'$ equivalent to
$I$ whose last $(s+1)(r+1)$ elements are as required.

Finally, we claim that for $r \leq s-2$ there exists an element in $J^r$ to the
right of the maximal element $s-r$ which equals $s-r+1$. Indeed, otherwise we
would be able to move the maximal element $s-r \geq 2$ to the leftmost position
in $J^r$, contradicting $i_k =1$.

The process stops exactly when $r = s$, and we obtain the desired form for $I'$.

reduced sequence $I' = (i'_k, \ldots, i'_1)$ whose rightmost $s+1$ elements are
$s, s-1, \ldots, 0$.

\end{proof}

The following lemma shows how to ``take a square root of $\Theta_I$'' whenever
$\Theta_I \triv \twoheadrightarrow \triv$: namely, we show that we can find a
sequence $J$ such that $\Theta_I \cong {}^* \left(\Theta_J \right) \Theta_J$,
and hence the morphism $\Theta_I \triv \to \triv$ is just the application of
the counit of the adjunction ${}^* \left(\Theta_J \right) \vdash \Theta_J$ to
the object $\triv$.

\begin{lemma}\label{lem:splitting_translation_func}
Let $$I = (1, 0, \ldots , -s+1| 2, 1, 0, \ldots, -s+2| 3, 2, 1, 0, \ldots
-s+3|\ldots| s, s-1, \ldots, 0).$$
 Consider
  \begin{align*}
  J = &(s-1| s-3, s-2|s-5, s-4, s-3| \ldots| -s+5, \ldots, -1, 0 ,1,2| -s+3,
\ldots \\
  &\ldots, -1, 0 ,1| -s+1, -s+2, -s+3, \ldots, -1, 0)
  \end{align*}
  which is equivalent (due to Sublemma above) to
 $$(-s+1|-s+3, -s+2|\ldots  |s-5, \ldots 0, -1| s-3, \ldots 0, -1|s-1, s-2,
\ldots, 1, 0)$$

 Then the left adjoint functor to $\Theta_J$ is $\Theta \Theta_{J'}$ where
 $$J' = (1, 0, \ldots , -s+2| 2, 1, 0, \ldots, -s+4| 3, 2, 1, 0, \ldots,
-s+6|\ldots |s-1, s-2| s),$$

 and $\Theta_J \Theta_I \cong \Theta_J$, $\Theta_{J'} \Theta_J \cong \Theta_I$.

\end{lemma}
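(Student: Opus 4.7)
The plan is to separate the three assertions of the lemma. The unifying strategy is to reduce each claim to an identity in the Temperley--Lieb algebra $TL_{\infty}(\sqrt{-1})$, and then lift that identity to a natural isomorphism of functors using the categorical Temperley--Lieb action of Theorem \ref{old_thrm:transl_func} (and its enhancement in \cite[Thm.~4.5.1]{BDE+}). Concretely, since that theorem provides the generating natural isomorphisms $\psi_{k,k\pm1}$ and $\psi_{k,j}$ realizing the three Temperley--Lieb relations, any equality $\theta_{A}=\theta_{B}$ in $TL_{\infty}(\sqrt{-1})$ arising from a finite composition of these relations can be lifted to a natural isomorphism $\Theta_{A}\cong\Theta_{B}$. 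So the bulk of the work is combinatorial.

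First I would prove $\theta_{J\cdot J'}=\theta_{I}$ (which gives $\Theta_{J'}\Theta_{J}\cong\Theta_{I}$) by induction on $s$. The base case $s=1$ is immediate: $J=(0)$, $J'=(1)$, $I=(1,0)$, and the composition $\theta_{1}\theta_{0}$ is literally $\theta_{I}$. For the inductive step, I would exploit the block structure of the sequences. The last block of $J$ (in the order in which operators are applied) is $(-s+1,-s+2,\dots,0)$ of length $s$, and the first block of $J'$ is $(1,0,\dots,-s+2)$ of length $s$; concatenating these and applying the equivalence noted just before the lemma shows that their product is equal to a single decreasing block $(s,s-1,\dots,0)$ coming from the $s$-th block of $I$, after using repeated commutations (relation~(\ref{thirdTL})) and repeated applications of the ``zigzag'' relation (\ref{secondTL}) of the form $\theta_{k}\theta_{k\pm1}\theta_{k}=\theta_{k}$. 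Peeling off this outermost block reduces both $\theta_{J\cdot J'}$ and $\theta_{I}$ to shifted copies of the $s-1$ sequences, and the induction hypothesis finishes the job.

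Once $\theta_{J\cdot J'}=\theta_{I}$ is established, the identity $\theta_{I\cdot J}=\theta_{J}$ follows by substitution: $\theta_{I\cdot J}=\theta_{J\cdot J'\cdot J}=\theta_{J}\theta_{J'}\theta_{J}$, and this last product collapses to $\theta_{J}$ by iterated use of (\ref{secondTL}) on each adjacent triple $\theta_{k}\theta_{k\pm1}\theta_{k}$ inside the middle copy of $\theta_{J'}$ (the blocks of $J'$ are precisely the blocks needed to match the increasing/decreasing flanks coming from the outer copies of $J$). For the adjunction statement, I would use that in the categorification on $\Rep(\mathfrak{p}(n))$ each $\Theta_{k}$ is biadjoint to (a parity-shifted) $\Theta_{k-1}$, a consequence of the biadjunction $-\otimes V_{n}\dashv -\otimes V_{n}^{*}$ together with $V_{n}\cong\Pi V_{n}^{*}$. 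Left adjoints reverse compositions, so the left adjoint of $\Theta_{J}$ is, up to a total parity twist, the reversed and index-shifted word corresponding to $J$; a direct inspection (or an equivalence of the resulting word with $J'$ inside $TL_{\infty}(\sqrt{-1})$, again using relations (\ref{thirdTL}) and (\ref{secondTL})) identifies it with the prescribed $\Theta\,\Theta_{J'}$.

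The hard part will be the bookkeeping for the inductive step, in particular correctly matching each block of $J$ with the appropriate block of $J'$ to produce the blocks of $I$ in the right order, and verifying that intermediate words encountered during the reductions remain reduced so that no accidental vanishing occurs via (\ref{firstTL}). A cleaner alternative, which I would pursue if the inductive bookkeeping becomes unwieldy, is to pass to the cup--cap diagrammatic basis of $TL_{\infty}(\sqrt{-1})$: each of $\theta_{I},\theta_{J},\theta_{J'}$ corresponds to an explicit staircase-shaped cup-cap diagram, and the identities $\theta_{J\cdot J'}=\theta_{I}$ and $\theta_{J}\theta_{I}=\theta_{J}$ become transparent diagram manipulations (stacking and isotopy), bypassing the case-by-case application of the relations altogether.
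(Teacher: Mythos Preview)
Your overall strategy---reduce everything to identities in $TL_\infty(\sqrt{-1})$ and then lift via the categorical relations of \cite[Thm.~4.5.1]{BDE+}---is exactly the paper's strategy. But the execution diverges from the paper and contains a concrete error.

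\textbf{The inductive step is wrong as stated.} You claim that concatenating the last block $(-s+1,-s+2,\dots,0)$ of $J$ with the first block $(1,0,\dots,-s+2)$ of $J'$ gives the $s$-th block $(s,s-1,\dots,0)$ of $I$. It does not. For $s=2$ the concatenation is $(-1,0,1,0)$, which in $TL$ reads $\theta_0\theta_1\theta_0\theta_{-1}=\theta_0\theta_{-1}$, i.e.\ the sequence $(-1,0)$; this is not $(2,1,0)$. For $s=3$ one similarly gets $(-2,-1,0,1,0,-1)\sim(-2,-1)$, not $(3,2,1,0)$. So the block matching you propose fails, and the induction does not go through as written. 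The blocks of $J$ and $J'$ do interact in a useful way, but not in a way that peels off a single block of $I$ at each step; the reduction is more intertwined than your sketch suggests.

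\textbf{The paper's approach avoids this.} The paper proves $\Theta_J\Theta_I\cong\Theta_J$ \emph{first}, by directly reducing the concatenation $(J\,|\,I)$. The point is that when $J$ is written in its first (increasing-block) form, the rightmost block of $J$ meets the leftmost block of $I$ in a palindromic pattern $(-s+1,\dots,-1,0\,|\,1,0,\dots,-s+1)$, which telescopes via repeated zigzags to the single letter $(-s+1)$. This remnant commutes to the far left, exposing the next pair of blocks, which collapses to $(-s+3,-s+2)$, and so on. The remnants accumulate exactly into the second (decreasing-block) form of $J$. The other identity $\Theta_{J'}\Theta_J\cong\Theta_I$ is then asserted as a direct (and analogous) computation. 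Your reversed order introduces the extra obligation $\theta_J\theta_{J'}\theta_J=\theta_J$, which you justify only by ``iterated use of the zigzag''; this is no easier than the paper's direct reduction and in fact requires the same block-by-block analysis you were hoping to shortcut.

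Your remark that the adjunction follows from the biadjunction $\Theta_k\dashv\Theta_{k\pm 1}$ (up to parity shift) and reversal of the word is correct in spirit, and your fallback to the cup--cap diagrammatic calculus is a legitimate alternative; but for the two functor identities the cleanest route is the paper's: reduce $(J\,|\,I)$ directly, exploiting the palindromic collapse at each block boundary.
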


In particular, the last statement implies $\Theta_J \triv \neq 0$.

\begin{proof}
 To prove the isomorphism $\Theta_J \Theta_I \cong \Theta_J$, consider first
the concatenation of $J, I$, with $J$ in its original form:
 \begin{align*}
  (J|I) = &(s-1| \ldots| -s+5, \ldots, -1, 0 ,1,2| -s+3, \ldots, -1, 0 ,1|
-s+1, -s+2, -s+3, \ldots \\
  &\ldots , -1, 0 | 1, 0, \ldots , -s+1| 2, 1, 0, \ldots, -s+2| 3, 2, 1, 0,
\ldots -s+3|\ldots| s, s-1, \ldots, 0).
 \end{align*}

 The subsequence $(-s+1, -s+2, -s+3, \ldots, -1, 0 | 1, 0, \ldots , -s+1)$ is
equivalent to $(-s+1)$, so we replace the former by the latter and obtain a
sequence
 \begin{align*}
  (J|I) \sim &(s-1| \ldots| -s+5, \ldots, -1, 0 ,1,2| -s+3, \ldots, -1, 0 ,1|
-s+1| 2, 1, 0, \ldots\\
  &\ldots, -s+2| 3, 2, 1, 0, \ldots -s+3|\ldots| s, s-1, \ldots, 0).
  \end{align*}

 Now $-s+1$ can be moved to the leftmost position, and we can next consider the
subsequence $( -s+3, \ldots, -1, 0 ,1| 2, 1, 0, \ldots, -s+3, -s+2)$ which is
congruent to $(-s+3, -s+2)$. Again, these can be moved to the right, and so on.

 The sequence obtained at the end of this process, equivalent to the
concatenation $(J| I)$, is just $J$ in its second incarnation,
  $$(-s+1|-s+3, -s+2|\ldots  |s-5, \ldots 0, -1| s-3, \ldots 0, -1|s-1, s-2,
\ldots, 1, 0).$$
 Hence $\Theta_J \Theta_I \cong \Theta_J$. The statement $\Theta_{J'} \Theta_J
= \Theta_I$ is verified directly.
\end{proof}

\newpage
\section{Appendix B: Affine group schemes in tensor
categories}\label{app:affine_grp_schemes}
We recall briefly the notion of an
affine group scheme in a tensor category.
This is discussed in detail in \cite[Section 7]{D1} and \cite[Section
11.2]{EHS}.
\subsection{Definition}

Let $\T$ be a tensor category. Its ind-completion
$\mathrm{Ind}-\T$ inherits a symmetric monoidal structure.

We denote by $\Alg_{\T}$ the category of commutative algebra objects
in ${\mathrm{Ind}-\T}$. Its opposite category is called the category of
$\T$-affine schemes.
\begin{remark}
 The bifunctor $-\otimes-$ on $\T$ is biexact and faithful, so the all the
schemes discussed below are faithfully flat.
\end{remark}

\begin{definition}
 We define {\it $\T$-algebraic groups} (called $\T$-groups for short)
as group objects in the category of $\T$-affine schemes; that is, the
category $\Grps_{\T}$ is antiequivalent to the category of commutative
Hopf algebra objects in $\mathrm{Ind}-\T$.
\end{definition}

We denote by $\mathcal{O}(G)$ the
commutative Hopf algebra object corresponding to a $\T$-group $G$.

By Yoneda lemma, we may also
identify $\T$-algebraic groups with the corresponding corepresentable functors
$\Alg_{\T}\to\Grps$ from commutative algebra objects in
$\mathrm{Ind}-\T$ to groups.

Given a SM functor $F:\T\to\T'$ and $G \in \Grps_{\T}$, the image
$F(G)$ is obtained by applying the functor $F$ to
$\mathcal{O}(G) \in \mathrm{Ind}-\T$.

When the SM functor $F:\T\to\T'$ is right exact,
the $\T'$-algebraic group $F(G)$ can be also described in terms of
the functor of points (see \cite[Section 11.2]{EHS}).

\subsection{Fundamental group}\label{sssec:fund_grp}
For $A\in \Alg_{\T}$, the category of
$A$-modules $ A-\Mod$ is
defined in the standard way. The functor $i_A:\T\to A-\Mod$ carries $X\in\T$ to
$A\otimes X$, and is clearly monoidal.

The fundamental group of $\T$, denoted $\pi(\T) \in \Grps_{\T}$, is
defined by the functor of points
\begin{equation}
A\mapsto \Aut^{\otimes}(i_A:\T\to A-\Mod).
\end{equation}
where $\Aut^{\otimes}$ means that we are only considering monoidal natural
transformations of $i_A$ (that is, $\eta \in Aut(i_A)$ such that $\eta_{X
\otimes Y} \cong \eta_X \otimes \eta_Y$).

For the tensor categories considered in this paper, their fundamental groups
turn out to be affine (that is, the functor of
points is corepresentable). In general, this happens for instance when the base
field is perfect and the category is pre-Tannakian (see Section
\ref{ssec:notn:SM}), which holds for
all the categories which will be considered in this paper.

\begin{example}
 For $\T = \Vect$, the group $\pi(\Vect)$ is trivial. For $\T = \sVect$, we
have: $\pi(\sVect)\cong \mu_2$, where $\mu_2$ is
the group of square roots of unity, seen as a supergroup.
\end{example}

\subsection{Representations of group schemes}\label{sssec:repr_grp_schemes}
Let $X\in\T$. Define a functor $\Alg_{\T} \to\Grps$ by the formula

$$ A\mapsto\Aut_{ A-\Mod}(i_A(X)).$$

Again, if the base field is perfect and the category is pre-Tannakian, then
this functor in corepresentable
by a $\T$-algebraic group denoted $GL(X)$.

Given $G \in \Grps_{\T}$, a {\it representation} $X$ of $G$
is an object $X\in\T$ endowed with a structure of left comodule of the
appropriate Hopf algebra $\mathcal{O}(G)$. Alternatively, we can define a
representation of $G$ as a $\T$-group homomorphism $G \to GL(X)$. We denote the
category of representations of $G$ in $\T$ by $\Rep_{\T}(G)$.

In particular, every object $X \in \T$ is endowed with a canonical action
$\pi_X:\pi(\T)\to GL(X)$. This homomorphism is given, on the level of functors
of
 points, by the assignment of $\theta(X):i_A(X)\to i_A(X)$ to an
 automorphism $\theta$ of the functor $i_A:\T\to A-\Mod$.

 Let $F:\T' \to \T$ be a tensor
functor between two tensor categories. Then $F(\pi(\T'))$ is a $\T$-group
given by the functor of points
$$A \in \Alg_{\T} \longmapsto \Aut^{\otimes}_A(A \otimes F(-))
.$$
Such a tensor functor $F$ induces a morphism of $\T$-groups $\eps: \pi(\T)\to
F(\pi(\T'))$, which is given on the functors of points by
$$\Aut^{\otimes}_A(A \otimes (-))
\stackrel{\eps_A}{\longrightarrow} \Aut^{\otimes}_A(A \otimes F(-)) .$$

The following theorem, due to Deligne, is a generalization of the Tannakian
reconstruction theory. Let $\Rep_{\T}(F(\pi(\T')), \eps)$
the full subcategory of representations $F(\pi(\T')) \to GL(X)$ in $\T$ whose
composition with $\eps$ gives the natural action $\pi_X$ of $\pi(\T)$ on $X$.

\begin{theorem}[Deligne, \cite{D1}]\label{old_thm:Deligne_reconstr}
 The functor $F$ induces an equivalence of tensor categories $\T' \to
Rep_{\T}(F(\pi(T')), \eps)$.
\end{theorem}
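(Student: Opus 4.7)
The plan is to construct the functor $\widetilde{F} \colon \T' \to \Rep_{\T}(F(\pi(\T')), \eps)$ explicitly and verify that it is an equivalence of tensor categories. On objects, set $\widetilde{F}(Y) := F(Y)$ equipped with the action of $F(\pi(\T'))$ obtained by applying $F$ to the tautological action $\pi_Y \colon \pi(\T') \to GL(Y)$; on morphisms $\widetilde{F} := F$. The compatibility condition defining the target subcategory is automatic: the composition $\pi(\T) \xrightarrow{\eps} F(\pi(\T')) \xrightarrow{F(\pi_Y)} GL(F(Y))$ equals $\pi_{F(Y)}$ essentially by the definition of $\eps$, which on functor-of-points sends a monoidal automorphism of $i_A \colon \T \to A\text{-}\Mod$ to its restriction along $F$. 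Monoidality and exactness of $F$ ensure that $\widetilde{F}$ is a tensor functor.

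For full faithfulness, I would use the defining universal property of $\pi(\T')$: it represents the group of monoidal automorphisms of the identity functor on $\T'$, so every morphism in $\T'$ is $\pi(\T')$-equivariant, and conversely any $\pi(\T')$-equivariant morphism between objects of $\T'$ lies in $\T'$. Transporting this statement along $F$ (which is faithful, as any tensor functor is), one identifies $\Hom_{\T'}(Y_1, Y_2)$ with the space of $F(\pi(\T'))$-equivariant morphisms $F(Y_1) \to F(Y_2)$ that are simultaneously $\pi(\T)$-equivariant, which is precisely the Hom space in $\Rep_{\T}(F(\pi(\T')), \eps)$.

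Essential surjectivity is the main obstacle and requires a descent argument. Given $(X, \rho) \in \Rep_{\T}(F(\pi(\T')), \eps)$, the action $\rho$ is equivalent to an $\O(F(\pi(\T')))$-comodule structure on $X$ whose restriction along $\eps$ is the canonical $\O(\pi(\T))$-coaction. To produce a preimage in $\T'$, the natural strategy is a Barr--Beck type descent: extend $F$ to the ind-completions, produce a lax monoidal right adjoint $F^!$, and identify $\T'$ with the category of coalgebras for the comonad $FF^!$ on $\T$. One then checks that $FF^!$ is the cotensor comonad for $F(\O(\pi(\T')))$, so its coalgebras are exactly the $F(\pi(\T'))$-representations, and the compatibility with $\eps$ carves out the image of $\T'$. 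The technical heart, and where the argument is most delicate, is verifying the Barr--Beck hypotheses (faithfulness and creation of appropriate equalizers) inside the pre-Tannakian framework and identifying the comonad with the cotensor comonad; this is the step where Deligne's argument in \cite{D1} is intricate and where my plan is most likely to require careful work to reproduce rigorously.
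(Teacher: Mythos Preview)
The paper does not prove this theorem at all: it is stated in Appendix~B as a result due to Deligne and attributed to \cite{D1} (specifically \cite[Th\'eor\`eme~8.17]{D1}), with no proof or even proof sketch given. So there is nothing in the paper to compare your proposal against.

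That said, your outline is a reasonable summary of the shape of Deligne's argument. The construction of $\widetilde{F}$ and the verification of the $\eps$-compatibility are correct and straightforward. For full faithfulness, your description is morally right but a bit imprecise: the point is that $F$ induces, for each $A\in\Alg_{\T}$, an equivalence between $F^!(A)$-linear $\pi(\T')$-equivariant maps and $A$-linear $F(\pi(\T'))$-equivariant maps, and one then uses that morphisms in $\T'$ are exactly the $\pi(\T')$-equivariant ones. For essential surjectivity, your Barr--Beck/comonadic descent plan is indeed the core of Deligne's argument; the identification of the comonad with the cotensor comonad over $F(\O(\pi(\T')))$ and the verification of the comonadicity hypotheses (faithful flatness of the relevant Hopf algebroid, exactness properties of $F$ and $F^!$ in the pre-Tannakian setting) are exactly where the real work lies. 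You have correctly identified this as the delicate step, and your honest acknowledgment that this is where the plan is ``most likely to require careful work'' is appropriate---turning this sketch into a complete proof would essentially reproduce Deligne's original argument.
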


 \subsection{Group homomorphisms and functors}

 The
following statement is proved in the same way as for classical group schemes
(see for
example \cite[Chapter X, Section 4]{Milne}).

\begin{lemma}\label{lem:injective_surjective}
Let $G, G'$ be $\T$-groups, and $f: G' \to G$ a $\T$-group homomorphism.
Consider the corresponding tensor functor $F: Rep_{\T}(G)\to
Rep_{\T}(G')$.
 \begin{itemize}
  \item The homomorphism $f$ is a quotient map iff the functor $F$ is full and
the essential image of $F$ is closed under taking subobjects.
\item The homomorphism $f$ is injective iff any $X \in Rep_{\T}(G')$ is a
subquotient of an object in the essential image of $F$.
 \end{itemize}

\end{lemma}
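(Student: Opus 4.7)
The plan is to dualize the problem: let $A = \O(G)$ and $B = \O(G')$ be the commutative Hopf algebra objects in $\mathrm{Ind}\text{-}\T$, and $\phi: A \to B$ the map of Hopf algebras dual to $f$. Then $\Rep_\T(G)$ (resp.\ $\Rep_\T(G')$) identifies with the category of right $A$-comodules (resp.\ $B$-comodules) whose underlying object lies in $\T$, and $F$ is corestriction along $\phi$, sending $(X,\rho)$ to $(X,(\id_X \otimes \phi)\rho)$. Since $F$ is the identity on underlying objects and morphisms, it is automatically faithful, so ``full'' coincides with ``fully faithful''. A standard functor-of-points argument (as for classical group schemes) shows that $f$ is a quotient map iff $\phi$ is monic in $\Alg_\T$, and $f$ is injective iff $\phi$ is epic.

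For part (i), $\Rightarrow$: assume $\phi$ is monic. For fullness, if $g:X\to Y$ intertwines the $B$-coactions, then expanding the equation $(g\otimes\id_B)\circ(\id_X\otimes\phi)\rho_X = (\id_Y\otimes\phi)\rho_Y\circ g$ as $(\id_Y\otimes\phi)\circ(g\otimes\id_A)\circ\rho_X = (\id_Y\otimes\phi)\circ\rho_Y\circ g$ and cancelling the monomorphism $\id_Y\otimes\phi$ (which is monic by biexactness of $\otimes$ on $\T$, extended to $\mathrm{Ind}\text{-}\T$) yields $A$-equivariance. For closure under subobjects, given a $B$-subcomodule $i:Y\hookrightarrow F(X)$, the pullback of $Y\otimes B\hookrightarrow X\otimes B$ along $\id_X\otimes\phi:X\otimes A\to X\otimes B$ equals $Y\otimes A$, and $\rho_X\circ i$ factors through it, defining an $A$-coaction on $Y$ that maps to the original $B$-coaction under $F$. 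For the $\Leftarrow$ direction, assume $F$ is full with essential image closed under subobjects; this image is then a full abelian rigid tensor subcategory of $\Rep_\T(G')$ closed under subobjects, so by Theorem \ref{old_thm:Deligne_reconstr} it equals $\Rep_\T(\bar G')$ for a quotient $\bar G'$ of $G'$ dual to a sub-Hopf-algebra $\bar B\subset B$. Fullness turns $F$ into an equivalence $\Rep_\T(G)\xrightarrow{\sim}\Rep_\T(\bar G')$, whence $A\cong\bar B$ compatibly with $\phi$, and the composite $\phi:A\cong\bar B\hookrightarrow B$ is manifestly monic.

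For part (ii), $\Rightarrow$: assume $\phi$ is epic. For any $B$-comodule $(Y,\rho_Y)$, endow $Y\otimes A$ with the cofree $A$-coaction $\id_Y\otimes\Delta_A$; the coalgebra identity $(\phi\otimes\phi)\Delta_A=\Delta_B\phi$ shows that $\id_Y\otimes\phi:F(Y\otimes A)\to(Y\otimes B,\,\id_Y\otimes\Delta_B)$ is a $B$-comodule map, surjective because $\phi$ is and $\otimes$ is exact, while $\rho_Y$ realises $Y$ as a $B$-subcomodule of the cofree object $(Y\otimes B,\,\id_Y\otimes\Delta_B)$. Combining the two exhibits $Y$ as a subquotient of $F(Y\otimes A)$. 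For the $\Leftarrow$ direction, let $\bar B=\mathrm{im}(\phi)\subset B$, a sub-Hopf-algebra; the essential image of $F$ lies in $\Rep_\T(\bar G')\subset\Rep_\T(G')$, and this inclusion is closed under subquotients (being the pullback along the quotient $G'\twoheadrightarrow\bar G'$), so the subquotient hypothesis forces $\Rep_\T(\bar G')=\Rep_\T(G')$, hence $\bar B=B$ and $\phi$ is epic.

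The main obstacle will be the Tannakian-reconstruction input to the $\Leftarrow$ direction of (i), together with the ind-object bookkeeping needed throughout: biexactness of $\otimes$ on $\T$ must extend to the monomorphism/pullback and epimorphism/exactness arguments used in $\mathrm{Ind}\text{-}\T$, and the sub-Hopf-algebra / quotient-group dictionary for $\T$-group schemes (used freely in the classical proof of Milne, Ch.~X, §4) must be established in the pre-Tannakian setting. Once these framework points are in place, the arguments follow the classical proofs essentially verbatim.
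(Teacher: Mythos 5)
The paper does not actually prove this lemma; it refers the reader to the classical argument in Milne (Ch.\ X, \S 4), so your Hopf-algebraic/comodule dualization is exactly the expected route, and most of your steps track the classical proof faithfully. Two points need attention, though.

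First, a terminological slip that propagates: you translate ``$f$ quotient'' to ``$\phi$ monic in $\Alg_\T$'' and ``$f$ injective'' to ``$\phi$ epic''. What is actually used in the rest of your argument — and what is true — is that $f$ is a quotient iff $\phi$ is a monomorphism \emph{of the underlying ind-objects} (injective), and $f$ is a closed immersion iff $\phi$ is \emph{surjective} on underlying ind-objects. Epimorphisms in a category of commutative (Hopf) algebras are a priori weaker than surjections, and your part~(ii)~$\Rightarrow$ argument explicitly invokes surjectivity of $\phi$ in $\mathrm{Ind}\text{-}\T$. You should state the Hopf-algebraic translation in those terms from the start.

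Second, and more substantively, part~(ii)~$\Rightarrow$ has a genuine gap: the cofree comodule $Y_{\mathrm{triv}}\otimes A$ you produce is an \emph{ind-object} of $\T$ (since $A=\O(G)$ is), so it is \emph{not} an object of $\Rep_{\T}(G)$ and hence not in the domain of $F$. The lemma asks that $Y$ be a subquotient of $F(W)$ for some \emph{finite} $W\in\Rep_\T(G)$. The fix is the standard truncation: write $A=\varinjlim_j A_j$ as a filtered union of $A$-subcomodules $A_j\in\T$; surjectivity of $\phi$ gives $B=\varinjlim_j\phi(A_j)$, so the cofree embedding $Y\hookrightarrow Y_{\mathrm{triv}}\otimes B$ lands in $Y_{\mathrm{triv}}\otimes\phi(A_j)$ for some $j$, and $\phi(A_j)$ is a $B$-comodule quotient of $F(A_j)$. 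Then $Y$ is a subquotient of $F(Y_{\mathrm{triv}}\otimes A_j)$, which is now an honest object of $\Rep_\T(G)$. You should also note that your $\Leftarrow$ direction of part~(i) is invoking more than Theorem~\ref{old_thm:Deligne_reconstr} strictly provides — namely the dictionary between full rigid abelian tensor subcategories closed under subobjects and quotient group schemes — though that is a standard Tannakian fact in the pre-Tannakian setting.
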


Now, let $F:\T \to \T'$ be a tensor functor between tensor categories.
Denote $G':= \pi(\T')$, $G:= F(\pi(\T))$.
As
in Theorem
\ref{old_thm:Deligne_reconstr}, we have a group homomorphism
$$\eps: G'  \to G.$$

\begin{lemma}\label{lem:surjective_gen}
The homomorphism $\eps$ is a quotient iff the
functor $F$ is full and
the essential
image of $F$ is closed under taking subobjects.
\end{lemma}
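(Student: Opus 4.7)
The plan is to deduce Lemma \ref{lem:surjective_gen} from Lemma \ref{lem:injective_surjective} by means of Deligne's reconstruction theorem (Theorem \ref{old_thm:Deligne_reconstr}). Applying Lemma \ref{lem:injective_surjective} to the $\T'$-group homomorphism $\eps: G' \to G$, we obtain that $\eps$ is a quotient if and only if the restriction functor
$$R: \Rep_{\T'}(G) \longrightarrow \Rep_{\T'}(G')$$
is full with essential image closed under subobjects. The main task is therefore to translate this criterion on $R$ into the corresponding criterion on $F$.

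For this, I would apply Theorem \ref{old_thm:Deligne_reconstr} with the roles of $\T$ and $\T'$ exchanged, yielding a tensor equivalence $\T \xrightarrow{\sim} \Rep_{\T'}(G, \eps)$ realized by $F$, where $\Rep_{\T'}(G, \eps) \subseteq \Rep_{\T'}(G)$ consists of those representations $\rho: G \to GL(Y)$ satisfying $\rho \circ \eps = \pi_Y$. Combined with the tautological equivalence $\T' \simeq \Rep_{\T'}(G', \mathrm{id}) \subseteq \Rep_{\T'}(G')$, this identifies $F$ with the restriction of $R$ to the full subcategory $\Rep_{\T'}(G, \eps)$, sending $(Y, \rho) \mapsto (Y, \pi_Y)$. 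Consequently, fullness of $F$ together with closedness of its essential image under subobjects is equivalent to the same property holding for $R|_{\Rep_{\T'}(G, \eps)}$, and what remains is to show that this restricted property is equivalent to the analogous unrestricted property of $R$.

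The forward implication (from $R$ on all of $\Rep_{\T'}(G)$ to $R$ on $\Rep_{\T'}(G,\eps)$) is automatic, since fullness and closedness under subobjects are inherited by restriction to a full subcategory. The converse is the main technical obstacle: given the property for $F$ alone, one must recover it for $R$ on the potentially larger category $\Rep_{\T'}(G)$, whose additional objects are $G$-representations whose pullback $\rho \circ \eps$ to $G'$ need not agree with the canonical action $\pi_Y$. My plan is to argue the converse contrapositively: if $\eps$ fails to be a quotient, then its image $\overline{G} = \eps(G') \subsetneq G$ is a proper sub-$\T'$-group of $G$, and one can produce a witness to the failure of either fullness or subobject-closedness already visible on $\Rep_{\T'}(G, \eps)$, and hence on $F$.

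The hard part will be this contrapositive step, which exploits the specific structure of $\eps$ as the homomorphism induced by $F$ from $\pi(\T')$ to $F(\pi(\T))$: the essential image of $F$, together with its canonical $G$-action, is tightly controlled by the image $\overline{G}$, so that a non-trivial quotient $G/\overline{G}$ should manifest itself either as a morphism in $\T'$ between objects in the essential image of $F$ which fails to lift, or as a subobject of some $F(X)$ which is not itself of the form $F(X')$.
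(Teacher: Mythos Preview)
Your reduction via Lemma~\ref{lem:injective_surjective} and Theorem~\ref{old_thm:Deligne_reconstr} matches the paper's opening moves, and your observation that $F$ is, up to the Deligne equivalence, the restriction of $R=\widetilde{F}$ to the full subcategory $\Rep_{\T'}(G,\eps)$ landing in $\T'=\Rep_{\T'}(G',\mathrm{id})$ is correct; with the care you indicate, this cleanly handles the direction ``$\eps$ is a quotient $\Rightarrow$ $F$ is full with subobject-closed essential image''. For the converse, however, you offer only a vague contrapositive strategy (``produce a witness inside $\Rep_{\T'}(G,\eps)$'') with no actual construction, and this is a genuine gap: there is no evident mechanism, working only with objects carrying the canonical $G'$-action, that detects the failure of $\eps$ to be surjective on the larger category $\Rep_{\T'}(G)$.

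The paper bridges the two functors differently. Instead of restricting $\widetilde F$ to a subcategory, it invokes Deligne's equivalences $\Rep_{\T'}(G)\simeq \T\boxtimes\T'$ and $\Rep_{\T'}(G')\simeq\T'\boxtimes\T'$ (\cite[Propositions~8.22, 8.23]{D1}), under which $\widetilde F$ is identified with $F\boxtimes\id_{\T'}$. This factorization is the structural input your plan is missing: it turns the comparison between $\widetilde F$ and $F$ into the question of whether $(-)\boxtimes\id_{\T'}$ preserves and reflects fullness and subobject-closure of the essential image, which is far more tractable than hunting for ad hoc witnesses. Your restriction-to-subcategory argument recovers one direction more elementarily than the paper, but without the $\boxtimes$ description you have no route back from $F$ to $\widetilde F$.
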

\begin{proof}
Recall that by Deligne's Theorem \ref{old_thm:Deligne_reconstr},
$Forget:Rep_{\T'}(G, \eps)
\to \T'$ satisfies the condition in the statement of the lemma iff $F$ does.

By Lemma \ref{lem:injective_surjective}, the
homomorphism $\eps$ is a quotient iff the functor $$\widetilde{F}: Rep_{\T'}(G)
\to Rep_{\T'}(G')$$ induced by $\eps$ is full, and its essential image
closed under taking subobjects.

Now, the functors
$$
\T \boxtimes \T' \xrightarrow{\,X \boxtimes Y \longmapsto F(X) \otimes Y\,}
\T', \;\;  \T' \boxtimes \T' \xrightarrow{\;Y \boxtimes Z \longmapsto Y
\otimes Z\;} \T' $$
induce equivalences $$
\T \boxtimes \T' \cong Rep_{\T'}(G), \;\; \T' \boxtimes \T' \cong
Rep_{\T'}(G')$$ (cf. \cite[Propositions 8.22, 8.23]{D1}), such that
$$ \xymatrix{ &{\T \boxtimes \T'} \ar[d] \ar[rr]^{F \, \boxtimes \,\id}
&{} &{\T' \boxtimes \T'} \ar[d] \\
&{Rep_{\T'}(G)} \ar[rr]^{\widetilde{F}} &{}
&{Rep_{\T'}(G')} }$$

Thus whenever $\widetilde{F}$ satisfies the conditions in the lemma, so does
$F$. The lemma is proved.
\end{proof}
\subsection{The fundamental group of \texorpdfstring{$\Rep_{\T}(G)$}{Rep(G)}}\label{ssec:fund_group_Rep_G}

 Let $G$ be a $\T$-group, and consider the tensor category $\Rep_{\T}(G)$ of
$G$-representations in $\T$.

This category comes with equipped with two functors:
\begin{itemize}
 \item A tensor functor
$I:\T \to Rep_{\T}(G)$, where
$M \mapsto
M_{triv}$, considered with trivial $G$-action.
\item A forgetful functor
$F:Rep_{\T}(G) \to \T$, taking $M \in Rep_{\T}(G)$ to the underlying
$\T$-object.
\end{itemize}
Clearly, we have a natural isomorphism $FI \cong \id$.

Let $\widetilde{G}$ denote the fundamental
group of $\Rep_{\T}(G)$.

By Deligne's Theorem
\ref{old_thm:Deligne_reconstr}, the functor $I$ induces a homomorphism of
$\Rep_{\T}(G)$-groups $$q:\widetilde{G} \to I(\pi(\T))$$ (here $I(\pi(\T))$ can
be thought of as the group $\pi(\T)$ with trivial $G$-action).

On the other
hand, we have a homomorphism of
$\Rep_{\T}(G)$-groups $i:G^{adj} \to \widetilde{G}$ where $G^{adj}$ is the
group $G$ with conjugation action.
\begin{proposition}\label{prop:semidirect}
 The
group
$\widetilde{G}$ is an
extension of $G^{adj}$ and
$I(\pi(\T))$. Moreover, the $\T$-group $F(\widetilde{G})$ is a
semidirect product of $G$ and $\pi(\T)$, the inclusion $\pi(\T) \to
F(\widetilde{G})$ again given as in
Theorem \ref{old_thm:Deligne_reconstr}.
\end{proposition}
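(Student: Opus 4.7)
The strategy is to establish the short exact sequence of $\Rep_{\T}(G)$-groups
\[
1 \to G^{adj} \xrightarrow{i} \widetilde{G} \xrightarrow{q} I(\pi(\T)) \to 1,
\]
by verifying separately that $q$ is a quotient, $i$ is a monomorphism, and $\ker(q)=\mathrm{im}(i)$; the semidirect product assertion for $F(\widetilde{G})$ will then follow by applying $F$ and exhibiting an explicit splitting via Theorem \ref{old_thm:Deligne_reconstr}.

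To see that $q$ is a quotient, apply Lemma \ref{lem:surjective_gen} to $I\colon\T\to\Rep_{\T}(G)$: the relation $F\circ I\cong \mathrm{id}_{\T}$ makes $I$ fully faithful, and its essential image (objects with trivial $G$-action) is closed under subobjects since the $G$-action on a subobject is inherited. To see that $i$ is a monomorphism, apply Lemma \ref{lem:injective_surjective}: by Theorem \ref{old_thm:Deligne_reconstr} (with $\T'=\Rep_{\T}(G)$) the category $\Rep_{\Rep_{\T}(G)}(\widetilde{G},\mathrm{id})$ is canonically identified with $\Rep_{\T}(G)$ itself, and under this identification $i^{*}$ sends $Y\in\Rep_{\T}(G)$ to itself equipped with the $G^{adj}$-action recovered from its native $G$-structure; every $X\in\Rep_{\Rep_{\T}(G)}(G^{adj})$ therefore appears as a subquotient of some $i^{*}(Y)$. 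Triviality of $q\circ i$ is immediate, since the composition records the $G$-action on objects of $I(\T)$, which is trivial by construction.

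The main obstacle is the reverse inclusion $\ker(q)\subseteq \mathrm{im}(i)$, which is a relative form of Tannakian reconstruction. Let $A\in\Alg_{\Rep_{\T}(G)}$ and $\theta\in\ker(q)(A)$, i.e.\ a monoidal $A$-linear automorphism of $i_{A}$ that is the identity on every $I(X)$. For any $X\in\Rep_{\T}(G)$ with coaction $\rho_{X}\colon X\to X\otimes \O(G)$, the canonical trivialization $X\otimes\O(G)\cong I(X)\otimes\O(G)$ (as $G$-representations, with $G$ acting on the right factor by right translation) together with monoidality and naturality of $\theta$ forces
\[
\rho_{X}\circ \theta_{X} = (\mathrm{id}_{X}\otimes\theta_{\O(G)})\circ\rho_{X}.
\]
Thus $\theta_{X}$ is completely determined by $g:=\theta_{\O(G)}$, and the monoidal and unit axioms on $\theta$ translate into the statement that $g$ is group-like, i.e.\ defines an element of $G(A)$ such that $\theta = i(g)$. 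This recovers, in the classical base case $\T=\Vect$, the reconstruction identity $\pi(\Rep(G))=G^{adj}$.

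For the semidirect product statement, apply the right-exact tensor functor $F$ to the established short exact sequence. Since $F(I(\pi(\T)))\cong \pi(\T)$ and $F(G^{adj})=G$ as $\T$-groups (the conjugation action becoming internal under forgetting), this yields the short exact sequence $1\to G\to F(\widetilde{G})\to \pi(\T)\to 1$ in $\Grps_{\T}$. Theorem \ref{old_thm:Deligne_reconstr} applied to $F$ furnishes the canonical $\T$-group homomorphism $\eps_{F}\colon\pi(\T)\to F(\widetilde{G})$; the compatibility $F\circ I\cong \mathrm{id}_{\T}$ forces $F(q)\circ \eps_{F}=\mathrm{id}_{\pi(\T)}$, providing the splitting and thereby the desired semidirect product decomposition.
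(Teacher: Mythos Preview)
Your proof is correct and takes a genuinely different route from the paper for the crucial inclusion $\ker(q)\subseteq\operatorname{Im}(i)$. The paper argues purely categorically: it observes that $G^{adj}$ acts trivially on an object $X\in\Rep_{\T}(G)$ if and only if $X$ lies in the essential image of $I$, and that this in turn is equivalent to $\pi_X$ factoring through $q$; it then concludes $\ker(q)=G^{adj}$ by (implicitly) invoking the Tannakian principle that a closed normal subgroup is determined by the full subcategory of objects on which it acts trivially. Your approach is instead the explicit reconstruction via the regular representation: any $\theta\in\ker(q)(A)$ is determined by $g:=\theta_{\O(G)^{\mathrm{reg}}}$ through the coaction embedding $\rho_X$, and the tensor-functor axioms force $g$ to come from an $A$-point of $G$. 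This is the relative version of the classical identification $\pi(\Rep(G))\cong G$ (as in Deligne--Milne), and it has the advantage of being self-contained rather than appealing to a subgroup--subcategory correspondence. You also supply an explicit argument for the semidirect-product claim via the splitting $\eps_F$, which the paper's proof omits entirely.

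Two small points on your write-up. First, in the trivialization $X\otimes\O(G)\cong I(X)\otimes\O(G)$ you mean $I(F(X))$, since $I$ has source $\T$. Second, the sentence ``the monoidal and unit axioms on $\theta$ translate into the statement that $g$ is group-like'' compresses several steps: naturality with respect to the multiplication and unit of $\O(G)$ makes $g$ an $A$-algebra automorphism, and compatibility with the regular coaction (which holds because $\theta$ lives over $\Rep_{\T}(G)$) forces $g$ to be a one-sided translation, hence an element of $G(F(A))$; one then needs a word on why this element actually lies in $G^{adj}(A)$, i.e.\ why the corresponding map $\O(G)\to A$ intertwines the conjugation coaction on $\O(G)$ with the given coaction on $A$. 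This last compatibility is again a consequence of $\theta$ being natural in $\Rep_{\T}(G)$, but it is worth making explicit.
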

\begin{proof}
 The fact that $i$ is injective and $q$ is surjective follows from Lemmata
\ref{lem:injective_surjective} and \ref{lem:surjective_gen}.
Next, we note that
$G^{adj}$ acts trivially on an object $X$ in $\Rep_{\T}(G)$ iff this object
belongs to the essential image of $I$. Hence for any object $X \in
Rep_{\T}(G)$, the action $\pi_X: \widetilde{G} \to GL(X)$ factors through $q$
iff $\pi_X \circ i = 1$. Hence $Ker(q) =i$, as required.
\end{proof}
 \subsection{Faithful representations}
We recall the notion of faithful representation.

Let $G \to GL(X)$ be a representation of $G$.

\begin{lemma}\label{lem:faithful_rep}
The following are equivalent:
\begin{enumerate}
 \item\label{itm:faith1} The $\T$-group homomorphism $G \to GL(X)$ is
injective.
 \item\label{itm:faith2} Any representation of $G$ in $\T$ is a subquotient of
$\bigoplus_{i
\in I}
X^{\otimes
a_i} \otimes (X^*)^{\otimes
b_i}\otimes U$ for some finite set $I$, $a_i, b_i \in \Z_{\geq 0}$ and
some $U
\in\T$ considered with a trivial $G$-action.
\end{enumerate}
\end{lemma}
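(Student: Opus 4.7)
The plan is to apply Lemma \ref{lem:injective_surjective} to the homomorphism $G \to GL(X)$ and then identify the essential image of the restriction functor $R:\Rep_{\T}(GL(X)) \to \Rep_{\T}(G)$ up to subquotients. Under this lens, condition \eqref{itm:faith1} says every $M\in\Rep_{\T}(G)$ is a subquotient of some $R(N)$ with $N\in\Rep_{\T}(GL(X))$, while condition \eqref{itm:faith2} is the corresponding statement with $N$ taken to be of the very concrete form $\bigoplus_i X^{\otimes a_i}\otimes (X^*)^{\otimes b_i}\otimes U$ (with $U\in\T$ carrying the trivial $GL(X)$-action, obtained from the tensor functor $I:\T\to\Rep_{\T}(GL(X))$ discussed in Section \ref{ssec:fund_group_Rep_G}). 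Thus the implication \eqref{itm:faith2}$\Rightarrow$\eqref{itm:faith1} is essentially immediate: any such concrete $N$ lies in the essential image of $R$, and Lemma \ref{lem:injective_surjective} then gives injectivity of $G\to GL(X)$.

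For the converse \eqref{itm:faith1}$\Rightarrow$\eqref{itm:faith2}, the reduction via Lemma \ref{lem:injective_surjective} leaves us with the task of showing that every object $N\in\Rep_{\T}(GL(X))$ is, as a $GL(X)$-representation, a subquotient of an object of the form $\bigoplus_i X^{\otimes a_i}\otimes (X^*)^{\otimes b_i}\otimes U$. The tool for this is the coaction map: for any comodule $N$ over the Hopf algebra $\mathcal O(GL(X))\in\mathrm{Ind}\text{-}\T$, the coaction $\rho_N:N\to \mathcal{O}(GL(X))\otimes N$ is injective (the counit axiom) and is $GL(X)$-equivariant when the target is given the action coming from left translation on $\mathcal{O}(GL(X))$ combined with the trivial action on the right tensor factor $F(N)$.

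The structural input is the presentation of $\mathcal{O}(GL(X))$ implicit in its functor of points $A\mapsto \Aut_{A\text{-}\Mod}(i_A(X))$: it is a commutative Hopf quotient of $Sym(X^*\otimes X)\otimes Sym(X\otimes X^*)$ (the first factor parametrizing an endomorphism and the second a two-sided inverse), and therefore a filtered colimit of subquotients in $\T$ of finite direct sums of objects of the form $X^{\otimes a}\otimes (X^*)^{\otimes b}$. Since $N\in\T$ (as opposed to merely $\mathrm{Ind}\text{-}\T$) and the colimit is filtered, the map $\rho_N$ factors through some finite level $Z\otimes F(N)\hookrightarrow \mathcal{O}(GL(X))\otimes F(N)$ with $Z\in\T$ a sub-ind-object of $\mathcal{O}(GL(X))$, and $Z$ is itself a subquotient of a finite direct sum $\bigoplus_i X^{\otimes a_i}\otimes (X^*)^{\otimes b_i}$. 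Composing, we exhibit $N$ as a subobject of $Z\otimes F(N)$, hence as a subquotient of $\bigoplus_i X^{\otimes a_i}\otimes (X^*)^{\otimes b_i}\otimes F(N)$, which is exactly the form required by \eqref{itm:faith2}.

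The main obstacle I anticipate is the finiteness step—i.e.\ that $\rho_N$ actually factors through a finite level of the ind-structure on $\mathcal{O}(GL(X))$ and that the resulting finite piece $Z$ admits a presentation as a subquotient (in $\T$) of a finite sum of tensor products of $X$ and $X^*$. This uses that $\Hom(N,\varinjlim_j A_j)=\varinjlim_j\Hom(N,A_j)$ for $N\in\T$ and the explicit description of $\mathcal O(GL(X))$ as a quotient of $Sym(X^*\otimes X)\otimes Sym(X\otimes X^*)$ via its functor of points; the remaining checks (equivariance of $\rho_N$, compatibility of tensoring with the trivial action, and closure of essential images under subquotients) are formal and parallel the classical Tannakian reconstruction story of \cite[\S2.11]{D1}.
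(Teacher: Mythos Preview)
Your proposal is correct and follows essentially the same approach as the paper: both directions hinge on Lemma~\ref{lem:injective_surjective} together with the coaction embedding $M \hookrightarrow \mathcal{O}(-)\otimes M_{\mathrm{triv}}$ and the explicit description of the relevant Hopf algebra as built from tensor powers of $X$ and $X^*$. The only minor difference is organizational: for \eqref{itm:faith1}$\Rightarrow$\eqref{itm:faith2} the paper works directly with the coaction $M \to \mathcal{O}(G)\otimes M_{\mathrm{triv}}$ and uses that injectivity of $G\to GL(X)$ makes $\mathcal{O}(G)$ a quotient of $\mathcal{O}(GL(X))$, whereas you first reduce via Lemma~\ref{lem:injective_surjective} to the case $G=GL(X)$ and then run the coaction argument there---but the content is the same.
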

\begin{proof}

\mbox{}

\begin{itemize}
 \item[\eqref{itm:faith1} $\Rightarrow$ \eqref{itm:faith2}]
 Let $(M, \rho)$ be a representation of $G$. Then we have
an injective map $M
\xrightarrow{\rho}
\O(G) \otimes M_{triv}$ where $M_{triv}$ has trivial $G$
action. Recall that $\O(G)$ is a quotient of $O(GL(X))$, which in turn is a
quotient of $Sym(X \otimes X^*)$; the required statement now follows.
 \item[\eqref{itm:faith2} $\Rightarrow$ \eqref{itm:faith1}]
Assume \eqref{itm:faith2} holds. Consider the functor $F:Rep_{\T}(GL(X)) \to
Rep_{\T}(G)$ induced by $\rho$. The essential image of this functor contains
representations of the form $\bigoplus_{i
\in I}
X^{\otimes
a_i} \otimes (X^*)^{\otimes
b_i}\otimes U$ as in \eqref{itm:faith2}. Hence by
Lemma \ref{lem:injective_surjective}, the homomorphism $\rho$ is
injective.

\end{itemize}
The statement
of
the lemma is now proved.
\end{proof}


\begin{thebibliography}{999999}
\bibitem[BDE$^+$16]{BDE+} M. Balagovi\'c, Z. Daugherty, I. Entova-Aizenbud, I.
Halacheva, J. Hennig, M. S. Im, G. Letzter, E. Norton, V. Serganova and C.
Stroppel, \textit{Translation functors and decomposition numbers for the
periplectic Lie superalgebra $\p(n)$}, to appear in Math. Res. Lett.;
arXiv:1610.08470.
\bibitem[BDE$^+$18]{BDE+2} M. Balagovi\'c, Z. Daugherty, I. Entova-Aizenbud, I.
Halacheva, J. Hennig, M. S. Im, G. Letzter, E. Norton, V. Serganova and C.
Stroppel, \textit{The affine VW supercategory}, arXiv:1801.04178.
\bibitem[BKN]{BKN} B. D. Boe, J. R. Kujawa, D. K.
Nakano, {\it Complexity and module varieties for classical Lie
superalgebras},International Mathematics Research Notices {\bf 2011.3} (2010),
 696-724;
arXiv:0905.2403.
\bibitem[BCNR14]{BCNR} J. Brundan, J. Comes, D. Nash, A. Reynolds, {\it A basis
theorem for the affine oriented Brauer category and its cyclotomic quotients},
Quantum Topol. {\bf 8.1} (2017),  75-112; arXiv:1404.6574.
\bibitem[BE17]{BE} J. Brundan, A.P. Ellis, {\it Monoidal supercategories.}
Comm. Math. Phys. {\bf 351} (2017), 1045-1089.
\bibitem[CF]{CF} C. Carmeli, R. Fioresi, {\it Superdistributions,
analytic and algebraic super Harish-Chandra pairs.} Pacific Journal of
Mathematics, {\bf 263} (1), 29-51.
\bibitem[Che15]{Chen} C.W. Chen, {\it Finite-dimensional representations of
periplectic Lie superalgebras}, J. Algebra {\bf 443} (2015), 99--125.
\bibitem[CP16]{ChenPeng} C.W. Chen, Y.N. Peng, {\it Affine periplectic Brauer
algebras}, to appear in J. Algebra (2018); arXiv:1610.07781.
\bibitem[CPS89]{CPS} E. Cline, B. Parshall, L. Scott, {\it Duality in
highest weight categories}, Contemp. Math. 82 (1989),  7--22.
\bibitem[CW12]{CW} J. Comes, B. Wilson, {\it Deligne's category
$\underline{Rep}(GL_{\delta})$ and representations of general linear
supergroups}, Represent. Theory {\bf 16} (2012),  568--609; arXiv:1108.0652.
\bibitem[Cou16]{CoulBrauer} K. Coulembier, \textit{The periplectic Brauer algebra}, arXiv:1609.06760.	
\bibitem[Cou17]{Coul} K. Coulembier, {\it Tensor ideals, Deligne categories and invariant theory}, arXiv:1712.06248.
\bibitem[CE17a]{CoulEhr2}  K. Coulembier, M. Ehrig, {\it The periplectic Brauer algebra II: decomposition multiplicities}, Journal Of Combinatorial Algebra, {\bf 2} (1), 19-46; arXiv:1701.04606.
\bibitem[CE17b]{CoulEhr} K. Coulembier, M. Ehrig, {\it The periplectic Brauer
algebra III: The Deligne category}, arXiv:1704.07547.
\bibitem[Del90]{D1} P. Deligne, {\it Cat\'{e}gories tannakiennes}, The
Grothendieck Festschrift, Birkhauser, Boston (1990),  111--195.
\bibitem[Del02]{D2} P. Deligne, {\it Cat\'{e}gories tensorielles},  Mosc. Math.
J. {\bf 2} (2002), no. 2,  227-248;
\url{https://www.math.ias.edu/files/deligne/Tensorielles.pdf}.
\bibitem[Del07]{D} P. Deligne, {\it La categorie des representations du
groupe
symetrique $S_t$, lorsque $t$ n'est pas un entier naturel}, Algebraic groups
and
homogeneous spaces, Tata Inst. Fund. Res. Stud. Math.,  209--273, Mumbai
(2007); \url{https://www.math.ias.edu/files/deligne/Symetrique.pdf}.
\bibitem[DLZ15]{DLZ} P. Deligne, G. I. Lehrer, R. B. Zhang, {\it The first
fundamental theorem of invariant theory for the orthosymplectic super group},
Commun. Math. Phys. (2017), 349-661; arXiv:1508.04202.

\bibitem[DM82]{DM} P. Deligne, J.S. Milne, {\it Tannakian Categories}, Hodge
Cycles, Motives, and Shimura Varieties, Lecture Notes in Math. {\bf 900}
(1982),  101--228.
\bibitem[DS05]{DS} M. Duflo, V. Serganova, {\it On associated variety for Lie superalgebras}, arXiv:math/0507198.
\bibitem[EHS15]{EHS} I. Entova-Aizenbud, V. Serganova, V. Hinich, {\it Deligne categories and the limit of categories $\Rep(GL(m|n))$}, to appear in IMRN; arXiv:1511.07699.
\bibitem[Et14]{Et} P. Etingof, {\it Representation theory in complex rank,
II}, Adv. Math. {\bf 300}, (2016),
 473-504;
arXiv:1407.0373.
\bibitem[EGNO15]{EGNO} P. Etingof, S. Gelaki, D. Niksych, V. Ostrik, {\it
Tensor Categories}, Math. Surveys Monogr., Volume {\bf 205}, AMS (2015).
\bibitem[Gor01]{Gor} M. Gorelik, {\it The center of a simple P-type Lie superalgebra}, Journal of Algebra {\bf 246.1} (2001), 414-428.
\bibitem[KT14]{KT} J. Kujawa, B. Tharp, {\it The marked Brauer category},
Journal of the London Mathematical Society {\bf 95.2} (2017),  393-413;
arXiv:1411.6929.
\bibitem[NSS]{NSS} R. Nagpal, S. V. Sam, A. Snowden,
{\it Noetherianity of some degree two twisted skew-commutative algebras},
Selecta Mathematica {\bf 22} (2),  913-937
arXiv:1610.01078.
\bibitem[Mil17]{Milne} J. S. Milne, {\it Algebraic Groups: The Theory of Group
Schemes of Finite Type over a Field} (Cambridge Studies in Advanced
Mathematics), Cambridge University Press (2017);
\url{http://www.jmilne.org/math/CourseNotes/AGS.pdf}.
\bibitem[Moo03]{Moon} D. Moon, {\it Tensor product representations of the Lie
  superalgebra $p(n)$ and their centralizers},
Comm. Algebra {\bf 31} (2003), no. 5, 2095--2140.
\bibitem[Ma12]{Ma} A. Masuoka, {\it Harish-Chandra pairs for algebraic affine
supergroup schemes over an
arbitrary field}. Transform. Groups {\bf 17} (2012), no. 4,pp 1085--1121.
\bibitem[PS11]{PS} I. Penkov, K. Styrkas, {\it Tensor representations of
infinite-dimensional root-reductive Lie algebras},
in Developments and Trends in Infinite-Dimensional Lie Theory, Progr.
Math. {\bf 288}, Birkh\"auser (2011),  127--150.
\bibitem[SS]{SS} S. V. Sam, A. Snowden, {\it Stability patterns in
representation theory}, Forum of Mathematics, Sigma, Vol. {\bf 3}, Cambridge
University Press (2015); arXiv:1302.5859.
\bibitem[Ser02]{Ser} V. Serganova, {\it On representations of the Lie
superalgebra $p(n)$}, J. Algebra, {\bf 258} (2) (2002),  615--630.
\bibitem[Ser11a]{Sersd} V. Serganova, {\it On the superdimension of an
irreducible representation of a basic classical Lie superalgebra},
Supersymmetry in mathematics and physics,  253--273, Lecture Notes in
Math., {\bf 2027}, Springer, Heidelberg (2011).
\bibitem[Ser11b]{Serqr} V. Serganova. {\it
Quasireductive supergroups}, New
developments in Lie theory
and its applications, Contemp. Math. {\bf 544}, 141--159. Amer.
Math. Soc., Providence, RI, 2011.
\bibitem[Ser14]{SerInf} V. Serganova {\it Classical Lie superalgebras at
infinity}, Advances in Lie Superalgebras (Gorelik M., Papi P. (eds)),
Springer INdAM Series, vol {\bf 7}, Springer, Cham. (2014).
\bibitem[Tha17]{Tharp} B. Tharp, {\it Representations of the marked Brauer algebra}, thesis, (2017).
\end{thebibliography}
\end{document}